\newcommand\ipic[1]		{\raisebox{-0.5\height}{\scalebox{.54}{\includegraphics{pic#1.eps}}}}
\theoremstyle{plain}
\newtheorem{theorem}{Theorem}
\newtheorem{lemma}[theorem]{Lemma}
\newtheorem{proposition}[theorem]{Proposition}
\theoremstyle{definition}
\newtheorem{remark}[theorem]{Remark}
\newtheorem{example}[theorem]{Example}
\newtheorem{definition}[theorem]{Definition}
 \numberwithin{equation}{section}
 \numberwithin{theorem}{section}
\DeclareMathOperator{\End}{End}
\DeclareMathOperator{\Endu}{\underline{End}}
\DeclareMathOperator{\Hom}{Hom}
\DeclareMathOperator{\Rep}{Rep}
\DeclareMathOperator{\ev}{ev}
\DeclareMathOperator{\coev}{coev}
\DeclareMathOperator{\sdim}{sdim}
\DeclareMathOperator{\Ind}{Ind}
\DeclareMathOperator{\Indtw}{\Ind_\mathrm{tw}}
\DeclareMathOperator{\Indtwutw}{\Ind_{(\mathrm{tw})}}
\def\be#1\ee{\begin{equation}#1\end{equation}}
\def\ba#1\ea{\begin{align}#1\end{align}}
\newcommand\nxt{\noindent\raisebox{.08em}{\rule{.44em}{.44em}}\hspace{.4em}}
\newcommand\vect{{\mathcal{V}\hspace{-.5pt}ect}}
\newcommand\svect{{\mathcal{S}\mathcal{V}\hspace{-.5pt}ect}}
\newcommand\eps           {\varepsilon}
\newcommand\id            {id}
\newcommand\Id            {I\hspace{-1pt}d}
\newcommand\one           {{\bf1}}
\newcommand\fd           {^\mathrm{fd}}
\newcommand\lf           {^\mathrm{lf}}
\newcommand\Pgs	{P_{\hspace{-1.5pt}\mathrm{gs}}}
\newcommand\h	{\mathfrak{h}}
\newcommand\Cb            {\mathbb{C}}
\newcommand\Rb            {\mathbb{R}}
\newcommand\Zb            {\mathbb{Z}}
\newcommand\Cc            {\mathcal{C}}
\newcommand\Ic            {\mathcal{I}}
\newcommand\Vc            {\mathcal{V}}
\newcommand{\void}[1]{}
\newcommand{\blockA}[3]{\ensuremath{
     \setlength{\unitlength}{1mm}
     \begin{picture}(10,15)(0,0)
          \put( 0, 4){\line(1,0){10}}
          \put(10, 4){\line(0,1){10}}
          \put( 2, 5){\makebox(0,0)[lb]{\small $#1$}}
          \put(11,13){\makebox(0,0)[lt]{\small $#2$}}
          \put(10, 3){\makebox(0,0)[ct]{\small $#3$}}
     \end{picture}}}
\newcommand{\blockB}[1]{\ensuremath{
     \setlength{\unitlength}{1mm}
     \begin{picture}(10,15)(0,0)
          \put( 0, 4){\line(1,0){10}}
          \put( 8, 5){\makebox(0,0)[rb]{\small $#1$}}
     \end{picture}}}
\newcommand{\blockC}[3]{\ensuremath{
     \setlength{\unitlength}{1mm}
     \begin{picture}(10,15)(0,0)
          \put( 0, 4){\line(1,0){10}}
          \put(10, 4){\line(0,1){10}}
          \put( 5, 5){\makebox(0,0)[cb]{\small $#1$}}
          \put(11,13){\makebox(0,0)[lt]{\small $#2$}}
          \put(10, 3){\makebox(0,0)[ct]{\small $#3$}}
     \end{picture}}}
\newcommand{\blockD}[3]{\ensuremath{
     \setlength{\unitlength}{1mm}
     \begin{picture}(8,15)(0,0)
          \put( 0, 4){\line(1,0){8}}
          \put( 8, 4){\line(0,1){10}}
          \put( 1, 5){\makebox(0,0)[cb]{\small $#1$}}
          \put( 9,13){\makebox(0,0)[lt]{\small $#2$}}
          \put( 8, 3){\makebox(0,0)[ct]{\small $#3$}}
     \end{picture}}}
\newcommand{\blockE}[0]{\ensuremath{
     \setlength{\unitlength}{1mm}
     \begin{picture}(3,15)(0,0)
          \put( 0, 4){\line(1,0){3}}
     \end{picture}}}
\newcommand{\blockUp}[1]{\ensuremath{
     \setlength{\unitlength}{1mm}
     \put( 0,10){#1} }}
\newcommand{\cbB}[7]{\blockA{#1}{#2}{#6}\blockC{#5}{#3}{#7}\blockB{#4}}
\newcommand{\cbC}[7]{\blockA{#1}{}{#6}%
     \blockUp{\blockD{#5}{#2}{#7}\blockB{#3}}%
     \blockE\blockB{#4}}
\newcommand{\bL}[1]{\raisebox{-3mm}{#1}}
\begin{document}

\thispagestyle{empty}
\def\thefootnote{\fnsymbol{footnote}}
\begin{flushright}
ZMP-HH/12-17\\
Hamburger Beitr\"age zur Mathematik 448
\end{flushright}
\vskip 3em
\begin{center}\LARGE
A braided monoidal category for free super-bosons
\end{center}

\vskip 2em
\begin{center}
{\large 
Ingo Runkel~\footnote{Email: {\tt ingo.runkel@uni-hamburg.de}}}
\\[1em]
Fachbereich Mathematik, Universit\"at Hamburg\\
Bundesstra\ss e 55, 20146 Hamburg, Germany
\end{center}

\vskip 2em
\begin{center}
  September 2012
\end{center}
\vskip 2em

\begin{abstract}
The chiral conformal field theory of free super-bosons is generated by weight one currents whose mode algebra is the affinisation of an abelian Lie super-algebra $\h$ with non-degenerate super-symmetric pairing. The mode algebras of a single free boson and of a single pair of symplectic fermions arise for even$|$odd dimension $1|0$ and $0|2$ of $\h$, respectively.

In this paper, the representations of the untwisted mode algebra of free super-bosons are equipped with a tensor product, a braiding, and an associator. In the symplectic fermion case, i.e.\ if $\h$ is purely odd, the braided monoidal structure is extended to representations of the $\Zb/2\Zb$-twisted mode algebra. 
The tensor product is obtained by computing spaces of vertex operators. The braiding and associator are determined by explicit calculations from three- and four-point conformal blocks. 
\end{abstract}

\setcounter{footnote}{0}
\def\thefootnote{\arabic{footnote}}

\newpage

{\small

\tableofcontents

}

\newpage

\section{Introduction}

Consider the mode algebra 
\be \label{eq:mode-alg-intro}
  [a_m,b_n] = m \, (a,b) \, \delta_{m+n,0} \cdot K \ ,
\ee
where $K$ is central, $a,b$ are elements of a vector space $\h$, $(-,-)$ is a non-degenerate bilinear form on $\h$ and $m,n \in \Zb$. If the bracket is anti-symmetric and the bilinear form symmetric, this is the mode algebra of $\dim(\h)$ free bosons. If, conversely, the bracket is symmetric and $(-,-)$ antisymmetric, non-degeneracy requires $\dim(\h)$ to be even and \eqref{eq:mode-alg-intro} is the mode algebra of $\dim(\h)/2$ pairs of symplectic fermions. Both cases can be treated on the same footing if one takes $\h$ to be a super-vector space and $(-,-)$ to be super-symmetric. One arrives at what Kac called free super-bosons \cite{Kac:1998}.

Denote the mode algebra \eqref{eq:mode-alg-intro} by $\hat\h$. Representations of $\hat\h$ which are `bounded below' are easy to describe: they are controlled by representations of the zero mode algebra on the subspace of ground states. It is worth pointing out that even in the free boson case, i.e.\ when $\h$ is purely (parity-)even, non-semisimple $\hat\h$-modules are included; this is in contrast to the more familiar case of unitary free boson models where the action of the zero modes can be diagonalised.

Starting from the category of bounded $\hat\h$-representations, one can now try to use the monodromy and factorisation properties of conformal blocks to endow this category with a braided monoidal structure. Actually, as outlined in Section \ref{sec:into-fsb} below, this computation is very straightforward and the result is not surprising: it gives an instance of Drinfeld's category for metric Lie algebras specialised to the abelian case \cite{Drinfeld:1990,Davydov:2010od}. 

The outcome is more interesting if one includes $\Zb/2\Zb$-twisted representations, i.e.\ representations of the mode algebra \eqref{eq:mode-alg-intro} with $m,n \in \Zb+\frac12$. 
Most of the effort (and pages) will be spent on this case.
In the approach I will take, the existence of a braided monoidal structure on twisted and untwisted representations together requires $\h$ to be purely odd. This is the symplectic fermion case, and the results are outlined in Section \ref{sec:into-sf} below. (If $\h$ has an even component, the tensor product of two twisted representations does not satisfy the required finiteness properties.)

\medskip

The main motivation behind this work is to provide the basic categorical data needed for the classification of conformal field theories whose chiral algebra includes the (even part of the) symplectic fermion vertex operator super-algebra. Symplectic fermion models are one of the most basic and best studied examples of logarithmic conformal field theories. They first appeared in \cite{Kausch:1995py} and their representation theory and fusion rules were investigated in \cite{Gaberdiel:1996np,Fuchs:2003yu,Abe:2005,Feigin:2005xs,Nagatomo:2009xp,Abe:2011ab,Tsuchiya:2012ru}.
Bulk and boundary conformal field theories built from symplectic fermions were studied in \cite{Gaberdiel:1998ps,Kawai:2001ur,Ruelle:2002jy,Bredthauer:2002ct,Gaberdiel:2006pp}. Some recent works focus on the relation to models with $\mathfrak{gl}(1|1)$ symmetry \cite{Read:2007qq,LeClair:2007aj,Creutzig:2008an,Creutzig:2011cu,Gainutdinov:2011ab}.

The results of the present paper constitute the first explicit computation of the associativity and braiding isomorphisms on the category of chiral algebra representations for a logarithmic conformal field theory.

\subsection{Free super-bosons}\label{sec:into-fsb}

Let $\h$ be a finite-dimensional abelian Lie super-algebra with non-degenerate supersymmetric bilinear form (see Section \ref{sec:lie-super} for details and conventions). The category $\Rep_{\flat,1}(\hat\h)$ of bounded-below $\hat\h$-modules on which $K$ acts as the identity (Section \ref{sec:h-mod-untwist}) is equivalent to the category $\Rep(\h)$ of $\h$-modules (Theorem \ref{thm:ind-equiv-untw}). For the sake of this introduction, we restrict our attention to finite dimensional $\h$-modules and to $\hat\h$-modules with finite-dimensional spaces of ground states; this will be indicated by the superscript `fd'. In the main text, this is relaxed to local finiteness.

Let $A \in \Rep\fd(\h)$ and $\hat B, \hat C \in \Rep\fd_{\flat,1}(\hat\h)$. By a vertex operator from $A \otimes \hat B$ to $\hat C$ we mean a map $V(x)$ depending smoothly on a positive real number $x$, and which satisfies the standard mode exchange relations (Definition \ref{def:vertex-op})
\be
  a_m \, V(x) = V(x) \, \big( x^m \cdot a \otimes \id_{\hat B} + \id_A \otimes a_m \big) \ .
\ee
One should think of $V(x)(a\otimes \hat b)$ as a ground state $a$ inserted at point $x$, mapping $\hat b$ into (a completion of) $\hat C$. Such vertex operators can be expressed as normal ordered exponentials and one finds that the space of vertex operators is naturally isomorphic to $\Hom_{\Rep(\h)}(A \otimes B,C)$, where $B,C$ are the representations of $\h$ on the space of ground states of $\hat B$, $\hat C$ (Theorem \ref{thm:G-bijective}). Thus, as expected, the spaces of vertex operators from any two representations into a third are controlled solely by the action of the zero mode algebra $\h$.  

Denote the space of vertex operators from $A \otimes \hat B$ to $\hat C$ by $\Vc_{A,B}^C$. If we fix $A$ and $B$, we can think of this as a functor from $\Rep\fd(\h)$ into vector spaces. The tensor product `$\ast$' on $\Rep\fd(\h)$ induced by the vertex operators is defined to be the representing object of this functor. That is, it is an object $A \ast B \in \Rep\fd(\h)$ such that $\Hom_{\Rep(\h)}(A \ast B,C)$ and $\Vc_{A,B}^C$ are naturally isomorphic (as functors in the argument $C$). As mentioned above, Theorem \ref{thm:G-bijective} implies $\Vc_{A,B}^C \cong \Hom_{\Rep(\h)}(A \otimes B,C)$, and so we simply have 
\be\label{eq:intro-tensor-untw}
  A \ast B = A \otimes B \ ,
\ee   
where the right hand side is the tensor product of $\h$-modules (Theorem \ref{thm:*-tensor}).

The monodromy properties of analytically continued vertex operators provide us with a braiding isomorphism $c_{A,B} : A \ast B \to B \ast A$ on $\Rep\fd(\h)$. Evaluating the asymptotic behaviour of a product $V(1)V(x)$ of two vertex operators for $x \to 1$ results in associativity isomorphisms $\alpha_{A,B,C} : A \ast (B\ast C) \to (A \ast B) \ast C$. These are computed in Proposition \ref{prop:braiding-iso-from-3pt} and Section \ref{sec:assoc-compute} and take the simple form
\be\label{eq:intro-a-c-untwist}
  \alpha_{A,B,C}  = \id_{A \otimes B \otimes C}
  \quad , \qquad
  c_{A,B} = \tau_{A,B} \circ \exp\!\big(-  i \pi \Omega \big) \ .
\ee
Here, $\tau_{A,B} : A \otimes B \to B \otimes A$ is the exchange of tensor factors for super-vector spaces (which involves a parity sign if both arguments are odd) and $\Omega \in \h \otimes \h$ is the copairing associated to $(-,-)$. (The associator of super-vector spaces was suppressed in writing `$\id_{A \otimes B \otimes C}$'.)
The exponential in the expression for the braiding converges because $A \otimes B$ is finite-dimensional. Indeed, this is the reason to impose finite-dimensionality in the first place  (or at least local finiteness as in the main text).

It is proved in Proposition \ref{prop:C0-monoidal} that the data \eqref{eq:intro-a-c-untwist} turn $\Rep\fd(\h)$ into a braided monoidal category. As mentioned above, this is a (very simple, since $\h$ is abelian) instance of Drinfeld's category for metric Lie algebras.

\subsection{Symplectic fermions}\label{sec:into-sf}

Let now $\h$ be purely odd. In this case we include representations of the twisted mode algebra $\hat\h_\mathrm{tw}$, which is the same as \eqref{eq:mode-alg-intro}, but with $m,n \in \Zb+\frac12$. Since there are no zero modes, the category of bounded-below $\hat\h_\mathrm{tw}$-representations is equivalent to the category $\svect$ of super-vector spaces (Theorem \ref{thm:ind-equiv-tw}). Consider the $\Zb/2\Zb$-graded category $\Cc$ with components
\be
  \Cc = \Cc_0 + \Cc_1
  \qquad , \quad \Cc_0 = \Rep(\h)
  ~~, \quad \Cc_1 = \svect \ .
\ee
(Since $\h$ is purely odd, it acts nilpotently and we can drop the assumption of finite-dimensionality.) As in the previous case, one defines spaces of vertex operators $\Vc_{A,B}^C$ for $A,B,C \in \Cc$. Via the same representing object condition, one obtains a tensor product on $\Cc$ (Theorem \ref{thm:*-tensor}): in addition to \eqref{eq:intro-tensor-untw} we have
\be
A \ast B ~=~  
\left\{\rule{0pt}{2.0em}\right.
\hspace{-.5em}\raisebox{.7em}{
\begin{tabular}{ccll}
   $A$ & $B$ & $A \ast B$ &
\\
 $\Cc_0$ & $\Cc_1$ & $F(A) \otimes B$ & $\in~\Cc_1$
\\
 $\Cc_1$ & $\Cc_0$ & $A \otimes F(B)$ & $\in~\Cc_1$
\\
 $\Cc_1$ & $\Cc_1$ & $U(\h) \otimes A \otimes B$ & $\in~\Cc_0$
\end{tabular}}
\ee
Here, $F$ is the forgetful functor $\Rep(\h) \to \svect$ and $U(\h)$ is the universal enveloping algebra of the abelian Lie super-algebra $\h$. Since $\h$ is purely odd, $U(\h)$ is finite-dimensional.\footnote{
  $U(\h)$ is the exterior algebra of $\h$, provided $\h$ is understood as a vector space rather than a super-vector space. In $\svect$, $U(\h)$ is the symmetric algebra of $\h$ (since $\h$ is purely odd).}

As above, one obtains braiding isomorphisms from the analytic continuation of vertex operators. In addition to \eqref{eq:intro-a-c-untwist} one finds (Sections \ref{sec:3pt-braid} and \ref{sec:C-hexagon}):
\be
c_{A,B} ~=~
\left\{\rule{0pt}{2.0em}\right.
\hspace{-.5em}\raisebox{.7em}{
\begin{tabular}{ccl}
 $A$ & $B$ & $c_{A,B} ~:~ A \ast B \to B \ast A$
\\[.3em]
 $\Cc_0$ & $\Cc_1$ & $\tau_{A,B} \circ \exp\!\big( \tfrac{i \pi}{2} \cdot \Omega^{(11)}\big)$
\\[.3em]
 $\Cc_1$ & $\Cc_0$ & $\tau_{A,B} \circ \exp\!\big( \tfrac{i \pi}{2} \cdot \Omega^{(22)}\big) \circ \big( \id_A \otimes \omega_B \big)$
\\[.3em]
 $\Cc_1$ & $\Cc_1$ & $e^{-i \pi  \frac{N}4} \cdot
  \big(\id_{U(\h)} \otimes \tau_{A,B}\big) \circ \exp\!\big(\! -\tfrac{i \pi}{2} \cdot \Omega^{(11)}\big)\circ \big( \id_{U(\h) \otimes A} \otimes \omega_B \big)$
\end{tabular}}
\ee
Here $N$ is the number of pairs of symplectic fermions, $N = \dim(\h)/2$, and as above, $\Omega \in \h \otimes \h$ is the copairing associated to $(-,-)$. The indices $i,j$ in $\Omega^{(ij)}$ determine on which tensor factors the two copies of $\h$ act. Finally, $\omega_B$ denotes the parity involution on the super-vector space $B$.

The most tedious part of the computation is to find the associator on $\Cc$. This is done by explicitly computing the four-point blocks for the eight possible ways to choose the representations from $\Cc_0$ and $\Cc_1$. These blocks are listed in Table \ref{tab:4pt-blocks}; they have been computed by conformal field theory methods, in particular by using the Knizhnik-Zamolodchikov differential equation. Because these methods lie outside of the formalism set up in Sections \ref{sec:mode+rep} and \ref{sec:VO+tensor}, one would now have to prove that the resulting blocks indeed agree with the composition of vertex operators. I have only been able to do this for two of the eight cases and for this reason, Section \ref{sec:4pt-assoc} is equipped with a `$\star$' -- in the context of the present formalism, the results in Section \ref{sec:4pt-assoc} should be treated as conjectures.

Nonetheless, the explicit four-point blocks allow one to determine the associativity isomorphisms $\alpha_{A,B,C}$ on $\Cc$. These are listed in Section \ref{sec:C-pentagon}. It is proved in Theorems \ref{thm:C-associator} and \ref{thm:C-braiding} that the data $\ast$, $\alpha$, $c$ turn $\Cc$ into a braided monoidal category. This is done by explicitly verifying the 16 pentagon and 16 hexagon identities, or, more precisely, by referring to \cite{Davydov:2012xg} where this has already been done in a more general setting.

\medskip

A determined person should be able to reproduce the braided monoidal structure presented here in the formalism of vertex operator algebras. Since the symplectic fermion vertex operator algebra is $C_2$-cofinite \cite{Abe:2005}, the tensor product theory of \cite{Huang:2010,Tsuchiya:2012ru} can be applied. As a step in this direction, a few intertwining operators were recently provided in \cite{Abe:2011ab}. A treatment in this setting would have the practical and conceptual advantage that the general results of \cite{Huang:2010} (and those announced in \cite{Tsuchiya:2012ru}) would guarantee that the pentagon and hexagon axioms hold.

\medskip

The paper concludes with two small applications to symplectic fermions. In Section \ref{sec:torusblock}, torus conformal blocks with insertions from $U(\h)$ and their relation to characters are discussed. In Section \ref{sec:Uh-algebra}, $U(\h)$ is turned into an algebra in $\Cc$ and the resulting multiplication is compared to a boundary operator product expansion from \cite{Gaberdiel:2006pp}.

\bigskip\noindent
{\bf Acknowledgements:}
It is my pleasure to thank
David B\"ucher,
Alexei Davydov,
J\"urgen Fuchs,
Matthias Gaberdiel,
Terry Gannon,
Yi-Zhi Huang,
Antun Milas,
Martin Mombelli,
David Ridout,
Christoph Schweigert,
Volker Schomerus,
Alexei Semikhatov and
Simon Wood
for helpful discussions.
J\"urgen Fuchs,
Matthias Gaberdiel, 
David Ridout, 
Alexei Semikhatov 
and the referee
provided many valuable comments on a draft of this paper.
I would also like to thank the Beijing International Center for Mathematical Research for hospitality during my stay from mid-May to mid-June in 2011, during which most of the research presented in this paper was done.

\section{Mode algebra and representations}\label{sec:mode+rep}

\subsection{Metric abelian Lie super-algebras}\label{sec:lie-super}

A {\em super-vector space} over a field $k$ is a $\Zb/2\Zb$-graded $k$-vector space $V = V_0 \oplus V_1$. If  $\dim(V_i)=n_i$, we write $\dim(V) = n_0+n_1$, $\sdim(V) = n_0-n_1$, and we also say that $V$ has dimension $n_0|n_1$. Let
\be
  k^{m|n} := (k^m)_0 \oplus (k^n)_1 
\ee
be the standard $m|n$-dimensional super-vector space over $k$. Given a homogeneous element $v$ of a super-vector space $V$, we write $|v| \in \{0,1\}$ for its parity; in formulas involving $|v|$ it is implicitly understood that $v$ is homogeneous.

Denote by $\Hom(V,W)$ the space of all {\em even} linear maps from $V$ to $W$. The category $\svect$ has super-vector spaces over $k$ as objects, and $\Hom(V,W)$ is the space of morphisms from $V$ to $W$. It is a symmetric monoidal category with symmetry map 
\be \label{eq:svect-tau}
\tau_{V,W} : V \otimes W \to W \otimes V
\quad , \quad \tau_{V,W}(v \otimes w) = (-1)^{|v||w|} \cdot w \otimes v \ .
\ee
The full subcategory of finite-dimensional super-vector spaces is denoted by $\svect\fd$.

\medskip

In a Lie super-algebra $\mathfrak{g}$, the Lie bracket is an even bilinear map $[-,-] : \mathfrak{g} \times \mathfrak{g} \to \mathfrak{g}$ which is anti super-symmetric. That is, the parity of $[a,b]$ is $|a|+|b|$, and $[a,b] = -(-1)^{|a||b|} [b,a]$. For more on Lie super-algebras, see \cite{Kac:1977,Scheunert:1979}. 
An example is $\Endu(V)$, the space of {\em all} linear maps from $V$ to $V$, which is itself a super-vector space. The Lie bracket is given by the super-commutator, $[A,B] = AB - (-1)^{|A||B|}BA$, for $A,B \in \Endu(V)$. A more trivial example is the zero bracket, $[a,b]=0$ for all $a,b \in \mathfrak{g}$; in this case the Lie super-algebra is called {\em abelian}.

Let $\{e_i\}_{i \in \Ic}$ be a homogeneous basis of $\mathfrak{g}$ and fix an ordering on $\Ic$. By the Poincar\'e-Birkhoff-Witt Theorem for Lie super-algebras (see \cite[\S\,2.3]{Scheunert:1979}), a basis of the universal enveloping algebra $U(\mathfrak{g})$ is given by
\be
  \Big\{  \, e_{i_1} e_{i_2}  \cdots e_{i_m} ~\Big|~ m \ge 0 \text{ and } 
  i_1  
  \makebox[0pt][l]{\raisebox{-.3em}{\footnotesize $(=)$}}\raisebox{.2em}{\hspace{3pt}$<$\,\,} \, 
  i_2 
  \makebox[0pt][l]{\raisebox{-.3em}{\footnotesize $(=)$}}\raisebox{.2em}{\hspace{3pt}$<$\,\,} \, 
  \cdots 
  \makebox[0pt][l]{\raisebox{-.3em}{\footnotesize $(=)$}}\raisebox{.2em}{\hspace{3pt}$<$\,\,} \, 
  i_m \Big\} \ ,
\ee
where the strict inequality $i_k<i_{k+1}$ is imposed if $e_{i_k}$ and $e_{i_{k+1}}$ are odd.

A {\em metric Lie super-algebra} is a pair $(\mathfrak{g},(-,-))$, where $\mathfrak{g}$ is a Lie super-algebra over a field $k$ and $(-,-) : \mathfrak{g} \times \mathfrak{g} \to k$ is an even, non-degenerate, invariant, supersymmetric pairing. That is, the pairing is non-degenerate and it satisfies
\begin{itemize}
\item $(a,b) = 0$ if $|a| \neq |b|$ (even), 
\item $(a,b) = (-1)^{|a||b|}(b,a)$ (supersymmetric), 
\item $([a,b],c) = (a,[b,c])$ (invariant). 
\end{itemize}

\begin{definition}
Let $(\h,(-,-))$ be a metric abelian Lie super-algebra over $\Cb$. The {\em affinisation} $\hat \h$ of $\h$ is the super-vector space $\h \otimes \Cb[t^{\pm 1}] \, \oplus \,\Cb\,K$, where $t$ is a parity-even formal parameter, together with Lie bracket
\be\label{eq:affine-commutator}
  [a_m,b_n] = m \, (a,b) \, \delta_{m+n,0} \cdot K
  \quad , \quad \text{where} \quad  x_m := x \otimes t^m ~~\text{for}~~x\in \h\,,~m\in\Zb \ ,
\ee
and where the generator $K$ is central.
\end{definition}

\begin{remark}
The two simplest cases are when $\h$ has dimension $1|0$, in which case $\hat\h$ is the mode algebra of a single free boson, and when $\h$ has dimension $0|2$, in which case $\hat\h$ is the mode algebra of a single pair of symplectic fermions \cite{Kausch:1995py,Kausch:2000fu}. Note that dimension $0|1$ is not allowed as there is no non-degenerate supersymmetric pairing.
\end{remark}

A representation of a Lie super-algebra $\mathfrak{g}$ is a super-vector space $V$ together with an even linear map $\rho : \mathfrak{g} \to \Endu(V)$ which is a homomorphism of Lie super-algebras. The map $\rho$ will usually be omitted from formulas. We will make use of three abelian categories of $\h$-representations:
\begin{center}
\begin{tabular}{lp{28em}}
$\Rep(\h)$ & All $\h$-representations and all $\h$-intertwiners.
\\[.3em]
$\Rep(\h)\lf$ & The full subcategory of $\h$-representations which are {\em locally finite}: 
  for any vector $v$ in the representation, the subspace $U(\h).v$ is finite-dimensional.
\\[.3em]
$\Rep(\h)\fd$ & The full subcategory of finite-dimensional $\h$-representations.
\end{tabular}
\end{center}
For $\h \neq \{0\}$, none of these categories is semisimple, irrespective of the dimensions of the even and odd component of $\h$. For example, $U(\h)$ is in $\Rep(\h)$ but not in $\Rep(\h)\lf$. And the `filtered dual' $\bigcup_{n \ge 0} U_n(\h)^* \subset U(\h)^*$, where $U_n(\h)$ is spanned by words of at most $n$ generators, is in $\Rep(\h)\lf$ but not in $\Rep(\h)\fd$.

\bigskip

For the rest of this paper we fix the following conventions:
\begin{itemize}
\item All vector spaces will be over the complex numbers.
\item We fix a non-zero finite-dimensional abelian metric Lie super-algebra $\h$ with pairing $(-,-)$; $\hat\h$ refers to the affinisation of $\h$.
\item We choose a homogeneous basis $\{ \alpha^i \}_{i \in \Ic}$ of $\h$ and denote by $\{ \beta^i \}_{i \in \Ic}$ the dual basis in the sense that
\be
  (\alpha^i,\beta^j) = \delta_{i,j} \ .
\ee
\item The copairing for $(-,-)$ is denoted by $\Omega$,
\be \label{eq:copair-Omega-def}
  \Omega = \sum_{i \in \Ic} \beta^i \otimes \alpha^i \quad \in~ \h \otimes \h \ .
\ee
\end{itemize}
Note that since $(\beta^j,\alpha^i) = (-1)^{|\alpha^i|} \delta_{i,j}$, the basis dual to $\{ \beta^i \}_{i \in \Ic}$ is $\{ (-1)^{|\alpha^i|} \alpha^i \}_{i \in \Ic}$. The copairing is independent of the choice of basis. In particular,
\be \label{eq:Omega-symmetric}
  \Omega = \sum_{i \in \Ic} (-1)^{|\alpha^i|} \, \alpha^i \otimes \beta^i = \tau_{\h,\h}(\Omega) \ .
\ee

\subsection{Untwisted representations}\label{sec:h-mod-untwist}

\begin{definition} \label{def:bounded-below-rep}
An $\hat\h$-representation $M$ is {\em bounded below} if for each $x \in M$ there exists $N_x>0$ such that
$$
  a^1_{m_1} a^2_{m_2} \cdots a^L_{m_L} x = 0
  \quad \text{for all} ~~ L>0 ~,~~ a^i \in \h ~,~~ m_i \in \Zb ~\text{with}~ m_1+\dots+m_L > N_x \ .
$$
The abelian category of $\hat\h$-representations which are bounded below and on which $K$ acts as the identity map will be denoted by $\Rep_{\flat,1}(\hat\h)$.
\end{definition}

In other words, if $M$ is bounded below, each element $x$ of $M$ is annihilated by every sufficiently positive composition of modes, where the total degree can depend on $x$. One important property of $M \in \Rep_{\flat,1}(\hat\h)$ is that the grading operator $H$ can be defined (cf.\,\cite[Sect.\,3.5]{Kac:1998}):
\be \label{eq:H-op-def}
  H = \sum_{m>0} \sum_{i \in \Ic} \beta_{-m}^i \alpha^i_{m}   \ .
\ee
For any $x \in M$, the infinite sum over $m$ in $Hx$ reduces to a finite sum. The operator $H$ satisfies $[H,a_m] = -m \, a_m$ for all $a\in \h$, $m \in \Zb$.

Let $\hat\h_{\ge 0}$ be the subalgebra of $\hat\h$ spanned by all $a_m$ with $m \ge 0$. A representation $R$ of $\h$ becomes a representation of $\hat\h_{\ge 0} \oplus \Cb K$ by declaring that $a_m$ acts as zero for $m>0$, that $a_0$ acts as $a \in \h$, and that $K$ acts as the identity. With this convention, we define the induced $\hat\h$ representation of $R$ as
\be
  \Ind(R) = U(\hat\h) \otimes_{U(\hat\h_{\ge 0} \oplus \Cb K)} R \ .
\ee
By construction, $\Ind(R)$ is an $\hat\h$-module which is bounded below. On morphisms $f : R \to S$ of $\h$-modules induction acts as $\Ind(f) = \id_{U(\hat\h)} \otimes_{U(\hat\h_{\ge 0} \oplus \Cb K)} f$.

\begin{theorem} \label{thm:ind-equiv-untw}
$\Ind : \Rep(\h) \to \Rep_{\flat,1}(\hat\h)$ is an equivalence of $\Cb$-linear categories.
\end{theorem}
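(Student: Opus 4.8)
The plan is to construct an explicit quasi-inverse to $\Ind$, namely the functor sending an $\hat\h$-module $M$ to its space of \emph{ground states}. First I would make the grading operator $H$ from \eqref{eq:H-op-def} the central tool. For $M \in \Rep_{\flat,1}(\hat\h)$ one checks that $H$ is well-defined (the sum is finite on each vector because $M$ is bounded below), that $[H,a_m]=-m\,a_m$, and — this is the first real step — that $M$ decomposes as a direct sum of generalised eigenspaces of $H$ with eigenvalues bounded below and lying in finitely many cosets of $\Zb$. The boundedness-below hypothesis forces the lowest generalised eigenvalue to exist; call the corresponding generalised eigenspace $\Omega(M)$, the space of ground states. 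Since $H$ commutes with $a_0$ and $K$ acts as the identity, $\Omega(M)$ is naturally an $\h$-module, and a morphism of $\hat\h$-modules restricts to a morphism of $\h$-modules on ground states, so $\Omega : \Rep_{\flat,1}(\hat\h) \to \Rep(\h)$ is a $\Cb$-linear functor.

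Next I would establish the two natural isomorphisms. For $\Omega \circ \Ind \cong \Id_{\Rep(\h)}$: by the PBW theorem for Lie super-algebras (quoted in Section~\ref{sec:lie-super}), $\Ind(R) = U(\hat\h)\otimes_{U(\hat\h_{\ge0}\oplus\Cb K)} R$ is freely generated over $R$ by monomials in the strictly negative modes $a_m$, $m<0$; such a monomial raises $H$-degree by $-\sum m_i>0$, so the $H$-degree-zero part is exactly $1\otimes R \cong R$ as an $\h$-module. This gives the first isomorphism, naturally in $R$. For $\Ind \circ \Omega \cong \Id_{\Rep_{\flat,1}(\hat\h)}$: the inclusion $\Omega(M)\hookrightarrow M$ is a map of $(\hat\h_{\ge0}\oplus\Cb K)$-modules (positive modes act as zero on ground states because they strictly lower the already-minimal $H$-degree, and $a_0$, $K$ act correctly), hence induces by adjunction an $\hat\h$-module map $\Phi_M : \Ind(\Omega(M)) \to M$, natural in $M$. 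It remains to show $\Phi_M$ is bijective.

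The main obstacle is precisely this bijectivity of $\Phi_M$. Surjectivity I would get from a PBW/filtration argument: filter $M$ by $H$-degree; the associated graded is generated over $\Omega(M)$ by negative modes since the Lie bracket \eqref{eq:affine-commutator} lets one reorder modes into PBW order modulo lower-degree (indeed central) terms, and an induction on $H$-degree — which terminates downward because degrees are bounded below — shows every vector lies in $U(\hat\h_{<0}).\Omega(M)$. Injectivity is the genuinely delicate point: one must show there are no relations among the PBW monomials acting on $\Omega(M)$ beyond those already present in the free module $\Ind(\Omega(M))$. Here I would exploit the non-degeneracy of the pairing $(-,-)$ — which makes the commutator $[a_m,b_{-m}] = m(a,b)K$ a perfect pairing between $\h_m$ and $\h_{-m}$ — to run a standard "lowering operators / Shapovalov-type" argument: if a nonzero element of $\Ind(\Omega(M))$ of minimal $H$-degree-drop mapped to zero, applying suitable positive modes and using non-degeneracy would produce a nonzero element of $\Omega(M)$ mapping to zero, contradicting $\Phi_M|_{\Omega(M)} = \id$. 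Assembling: $\Omega$ and $\Ind$ are mutually quasi-inverse $\Cb$-linear functors, so $\Ind$ is an equivalence.
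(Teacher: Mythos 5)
There is a genuine gap at what you call the ``first real step'': the assertion that an arbitrary $M \in \Rep_{\flat,1}(\hat\h)$ decomposes into generalised $H$-eigenspaces with eigenvalues bounded below. Definition \ref{def:bounded-below-rep} only guarantees that $Hx$ from \eqref{eq:H-op-def} is a finite sum for each $x$; it does not give local finiteness of $H$ on $M$, nor a generalised eigenspace decomposition, nor control of the spectrum (note also that complex generalised eigenvalues are not ordered, so ``the lowest eigenvalue'' needs care). This spectral statement is essentially equivalent to the theorem itself: it holds because every bounded-below module turns out to be induced, after which Lemma \ref{lem:H-eval-on-induced} gives diagonalisability with spectrum in $\Zb_{\ge 0}$ — but you use it as an input, to define $\Omega(M)$, to argue that positive modes kill $\Omega(M)$, to make $\Omega$ a functor (with your definition a morphism $M \to N$ maps the lowest generalised eigenspace of $M$ into the eigenspace of $N$ of the \emph{same} eigenvalue, which need not be the lowest one of $N$ unless you already know both minima are $0$), and to run the filtration argument for surjectivity. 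The paper avoids this circularity by defining ground states as the kernel of all positive modes, which is functorial with no spectral input; $H$-eigenspace decompositions are only ever used on induced modules (PBW, Lemma \ref{lem:H-eval-on-induced}) and on the image $M'$ of $\Ind(M_\mathrm{gs})$ inside $M$, and the nontrivial content is carried by Lemma \ref{lem:M_gs=0_implies_M=0} (a nonzero bounded-below module contains a nonzero singular vector) together with Theorem \ref{thm:tilde-h-all-induced}.

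Your injectivity argument — a Shapovalov-type non-degeneracy of the pairing between positive- and negative-mode PBW monomials, valid also in the super setting — is sound and is a legitimate alternative to the paper's route, which instead applies Lemma \ref{lem:M_gs=0_implies_M=0} to the kernel of $J(M_\mathrm{gs}) \to M$. The surjectivity sketch, however, again presupposes the grading and skips the actual difficulty: given $x \in M$ one must correct it by elements of the image so that the difference is annihilated by all positive modes, and one must then know that such a singular vector lies in the ground-state space; this is exactly the correction $w = v - \sum_{k>0} \tfrac1k m_k'$ and the subsequent argument in the surjectivity part of the proof of Theorem \ref{thm:tilde-h-all-induced}, which uses the eigenspace decomposition of $M'$ only (where it is available), not of $M$. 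If you redefine ground states as the kernel of the positive modes and first prove the analogue of Lemma \ref{lem:M_gs=0_implies_M=0}, your plan can be completed; as written, the decisive steps are asserted rather than proved.
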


For the convenience of the reader, in Appendix \ref{app:ind-equiv} we adapt the proof in \cite[Sect.\,1.7]{Frenkel:1987} and \cite[Sect.\,3.5]{Kac:1998} to the present (non-semisimple) setting. The inverse equivalence is given by passing to the subspace of {\em singular vectors} or {\em ground states} in an $\hat\h$-module $M$, that is, to the subspace of all vectors in $M$ annihilated by all $a_m$ with $m>0$.

Let $\Rep\lf_{\flat,1}(\hat\h)$ (resp.\ $\Rep\fd_{\flat,1}(\hat\h)$) be the full subcategory of $\Rep_{\flat,1}(\hat\h)$ consisting of representations $M$ whose subspace of ground states is locally finite as an $\h$-module (resp.\ finite-dimensional). Then also
\be
  \Ind : \Rep\lf(\h) \to \Rep_{\flat,1}\lf(\hat\h)
  \quad , \quad
  \Ind : \Rep\fd(\h) \to \Rep_{\flat,1}\fd(\hat\h)
\ee
are equivalences.

\begin{remark} (i) 
Let $\Cb$ be the trivial $\h$-representation. Then $\Ind(\Cb)$ can be turned into a vertex operator super-algebra with stress tensor $T = \tfrac12 \sum_{i \in \Ic} \beta^i_{-1} \alpha^i_{-1} \one$, see \cite[Sect.\,1.9]{Frenkel:1987} and \cite[Sect.\,3.5]{Kac:1998}. (Actually, there is a family of stress tensors parameterised by $\h_0$, the even part of $\h$.) The resulting action of the Virasoro algebra on modules $M \in\Rep_{\flat,1}(\hat\h)$ is
\ba
L_m &= \tfrac12 \sum_{i \in \Ic} \Big( 
\sum_{k \in \Zb_{<0}} \beta^i_k \alpha_{m-k}^i
+
\sum_{k \in \Zb_{\ge0}} \beta^i_{m-k} \alpha_{k}^i \Big) \qquad , ~ m \neq 0 \ ,
\nonumber \\
L_0 &=\tfrac12 \sum_{i \in \Ic}\beta^i_0 \alpha_{0}^i + H \quad , \qquad
C = \sdim(\h) \cdot \id_M \ ,
\label{eq:virasoro-untwisted}
\ea
with $H$ as in \eqref{eq:H-op-def}. The above expression for $L_m$, $m\neq 0$, makes the normal ordering explicit. It can also be written more compactly as 
\be \label{eq:Lm-nonzero}
  L_m = \tfrac12 \sum_{k \in \Zb} \sum_{i\in \Ic} \beta^i_k \alpha_{m-k}^i \ .
\ee   
This follows from basis independence, since, for $k+l \neq 0$, $\sum_{i \in\Ic} \beta^i_k \alpha^i_l = \sum_{i \in\Ic} (-1)^{|\alpha_i|} \alpha^i_l \beta^i_k = \sum_{i \in\Ic} \beta^i_l \alpha^i_k$.

\smallskip\noindent
(ii) Let $V$ be a vertex operator algebra. Following \cite{Nahm:1994by,Li:1998}, for a weak $V$-module $W$ set $C_1(W) = \mathrm{span}_{\Cb}\{ v_{-1} w \,|\, w \in W \,,\, v \in V \,,\, \text{$v$ has positive $L_0$-eigenvalue} \,\}$. Then $W$ is called $C_1$-cofinite if $W/C_1(W)$ is finite-dimensional. Since (in the standard grading convention for vertex algebras) we have $(\partial^n v)_{-1} = n!\, v_{-n-1}$, it is clear that the representations in $\Rep_{\flat,1}\fd(\hat\h)$ are $C_1$-cofinite. One important property of $C_1$-cofinite modules is that products of intertwining operators satisfy differential equations \cite[Thm.\,11.6]{Huang:2010} -- provided they are also quasi-finite-dimensional and generalized $V$-modules (as is the case for $\Rep_{\flat,1}\fd(\hat\h)$).
\end{remark}

The induced $\hat\h$-module $\Ind(R)$ for an $\h$-module $R$ is a direct sum of $H$-eigenspaces,
\be
 \Ind(R) = \bigoplus_{m \in \Zb_{\ge 0}} \Ind(R)_m
 \quad , \quad
 \text{where}
 \quad H\big|_{ \Ind(R)_m} = m \cdot \id_{ \Ind(R)_m} \ .
\ee
Note that $\Ind(R)$ is in general not a direct sum of $L_0$-eigenspaces (and not even of generalised $L_0$-eigenspaces, e.g.\ if $R=U(\h)$); for $H$ this does not happen as it does not involve zero modes. We will denote the algebraic completion of the above direct sum as\footnote{
  There is a potential pitfall with the above definition of $\overline\Ind(R)$ which originates from using the $H$-grading instead of the (in general non-existent) $L_0$-grading. Namely, consider the case $\h = \Cb^{1|0}$ and $R = \bigoplus_{m \in \Zb} \Cb_m$, where $\Cb_m$ is the one-dimensional $\h$-module on which the generator of $\h$ acts by multiplication by $m$. Then the vector $1_m \in \Cb_m$ has $L_0$-eigenvalue $\frac12 m^2$ and $H$-eigenvalue $0$. The infinite sum $\sum_{m \in \Zb} 1_m$ is therefore an element of the algebraic completion of $\Ind(R)$ with respect to the $L_0$-gradation, but it is not in the completion with respect to the $H$-gradation, i.e.\ $\sum_{m \in \Zb} 1_m \notin \overline\Ind(R)$. Since the $L_0$-gradation is often used in conformal field theory, this point should be kept in mind.}
\be
  \overline\Ind(R) =  \prod_{m \in \Zb_{\ge 0}} \Ind(R)_m \ .
\ee
Thus, elements of $\Ind(R)$ are finite sums of homogeneous vectors of distinct degrees, while elements of $\overline\Ind(T)$ can be formal infinite sums of such vectors. The {\em restricted dual} of $\Ind(R)$ is defined as
\be
  \Ind(R)' = \bigoplus_{m \in \Zb_{\ge 0}} \!\! \big(\Ind(R)_m\big)^*  ~ \subset \Ind(R)^* \ .
\ee
The space $\Ind(R)'$ becomes an $\hat h$-module which is bounded below by setting, for $\varphi \in \Ind(R)'$ and $v \in \Ind(R)$, 
\be \label{eq:am-dual-action}
  (a_m \varphi)(v) ~=~ (-1)^{|\varphi||a|}\,\varphi(-a_{-m} v)\ . 
\ee

\subsection{Indecomposable representations for $\h$ of dimension $0|2$}\label{sec:d=0|2-reps}

Let $\h = \Cb^{0|2}$ with basis $\chi^+$, $\chi^-$ and pairing $(\chi^+,\chi^-) = 1$. The mode algebra $\hat\h$ is (in the notation of \cite[Sect.\,2.1]{Kausch:2000fu})
\be
  [\chi^\alpha_m,\chi^\beta_n] = m \, d^{\alpha\beta} \delta_{m+n,0}
  \qquad , \quad \text{where} \quad d^{+-}=1~,~~d^{-+}=-1~,~~d^{++}=d^{--}=0 \ .
\ee
The indecomposable representations in $\Rep_{\flat,1}\fd(\hat\h)$ can be described explicitly. By Theorem \ref{thm:ind-equiv-untw} we reduce the problem to finding the indecomposable representations in $\Rep\fd(\h)$. The latter are classified in \cite{Zaitsev:1970}. On the other hand, if $\h$ has dimension $0|2n$ with $n>1$, the representation type is wild and indecomposable representations cannot be classified.

\medskip

To prepare the classification, we need to introduce two series of representations of $\h= \Cb^{0|2}$:
\begin{itemize}
\item
The representations $\xi_{k,\eps,\delta}$, where $k \in \Zb_{\ge 0}$ and $\eps,\delta \in \{0,1\}$, have underlying vector space $V = \Cb^{k+\eps+\delta|k+1}$. Fix a basis $e^{[\eps]}, e^1, \dots, e^k, e^{[\delta]}$ of $V_0$, where the basis vector $e^{[\eps]}$ is present only for $\eps{=}1$, and the basis vector $e^{[\delta]}$ is present only for $\delta{=}1$ (and $e^1,\dots,e^k$ are  present only for $k \ge 1$). Fix also a basis $o^1,\dots,o^{k+1}$ of $V_1$. Then
\ba
  &\chi^+ e^i = o^{i+1} ~,~~
  \chi^- e^i = o^i ~~ \text{for} ~~ 1 \le i \le k
  \quad ,  \quad 
  \chi^\pm o^i = 0 ~~ \text{for all $i$} \ ,
  \nonumber \\
  &
  \chi^+ e^{[\eps]} = o^1
  ~~ , \quad
  \chi^- e^{[\eps]} = 0
  ~~ , \quad
  \chi^+ e^{[\delta]} = 0 
  ~~ , \quad
  \chi^- e^{[\delta]} = o^{k+1} \ .
\label{eq:xi-action}
\ea
The subspace annihilated by both $\chi^+$ and $\chi^-$ is $V_1$, which has dimension $0|k{+}1$. The corresponding subspace of $\xi_{k,\eps,\delta} \otimes \Cb^{0|1}$ has dimension $k{+}1|0$. Thus the representations $\xi_{k,\eps,\delta}$ and $\xi_{k,\eps,\delta} \otimes \Cb^{0|1}$ are all mutually non-isomorphic. 
\item
The representations $\eta_{\mu,n}$, where $\mu \in \Cb^\times$ and $n \in \Zb_{>0}$, have underlying vector space $V = \Cb^{n|n}$. Denote by $e^1,\dots,e^n$ the standard basis of $V_0$ and by $o^1,\dots,o^n$ the standard basis of $V_1$. Then $\chi^+$ acts via a `Jordan block' and $\chi^-$ is `diagonal':
\be\label{eq:nu-action}
  \chi^+ e^i = \mu \cdot o^i + o^{i+1} ~\text{for}~1\le i < n
  ~~,~~
  \chi^+ e^n = \mu \cdot o^n
  ~~,~~
  \chi^- e^i = o^i \ ,
\ee
and again $\chi^\pm o^i = 0$ for all $i$.
The subspace annihilated by both $\chi^+$ and $\chi^-$ is $V_1$ of dimension $0|n$, and so $\eta_{\mu,n}$ and $\eta_{\mu,n}\otimes  \Cb^{0|1}$ are all mutually non-isomorphic. 
\end{itemize} 
We can ask whether $\eta_{\mu,n}$ could be isomorphic to some $\xi_{k,\eps,\delta}$ with $k=n{-}1$. Comparing \eqref{eq:xi-action} and \eqref{eq:nu-action}, we see that this would require $\eps=0$, $\delta=1$ (to make $\chi^-$ into an isomorphism $V_0 \to V_1$) and in particular $\mu=0$, which is excluded.

The representations $U(\h)$ and $U(\h) \otimes \Cb^{0|1}$ are the only indecomposable representations on which $\chi^+ \chi^-$ acts non-trivially \cite{Zaitsev:1970}. In particular, they cannot be isomorphic to one of  $\xi_{k,\eps,\delta}$ or $\eta_{\mu,n}$ or their parity shifted counterparts. 

We can now present:

\begin{theorem} \cite{Zaitsev:1970} Every non-zero finite-dimensional indecomposable representation of $\h= \Cb^{0|2}$ is isomorphic to precisely one of
\begin{enumerate}
\item $\xi_{k,\eps,\delta}$, where $k \in \Zb_{\ge 0}$ and $\eps,\delta \in \{0,1\}$; its dimension is $k{+}\eps{+}\delta\,|\,k{+}1$,
\item $\eta_{\mu,n}$, where $\mu \in \Cb^\times$ and $n \in \Zb_{>0}$; its dimension is $n|n$,
\item the regular representation of $\h$ on $U(\h)$ of dimension $2|2$,
\end{enumerate}
or to $\Cb^{0|1}$ tensored with one of the representations in 1.--3.
\end{theorem}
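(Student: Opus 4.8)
The plan is to reinterpret $\h$-representations as modules over the small local algebra $\Lambda := U(\h)$, to split off the ``projective-injective'' piece using self-injectivity, and then to reduce what remains to Kronecker's normal form for pairs of linear maps, tracking the $\Zb/2\Zb$-grading throughout.

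Since $\h=\Cb^{0|2}$ is abelian and purely odd, $\Lambda=U(\h)$ is the $4$-dimensional exterior algebra on $\chi^+,\chi^-$: it is local and self-injective (Frobenius), with maximal ideal $\mathfrak m=(\chi^+,\chi^-)$ satisfying $\dim\mathfrak m^2=1$ (spanned by $\chi^+\chi^-$), $\mathfrak m^3=0$, and $\mathrm{soc}(\Lambda)=\mathfrak m^2$. A finite-dimensional $\h$-module is exactly a finite-dimensional $\Zb/2\Zb$-graded $\Lambda$-module. First I would dispose of the modules $V$ on which $\chi^+\chi^-$ acts non-trivially: for $v$ with $\chi^+\chi^- v\neq 0$ the left ideal $\mathrm{Ann}(v)$ is proper and does not contain the simple socle of $\Lambda$, hence is zero, so $\Lambda\to\Lambda v$ is an isomorphism of $\Lambda$-modules; as $\Lambda$ is self-injective, this free submodule is a direct summand. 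Thus the only indecomposable with $\chi^+\chi^-$ acting non-trivially is $\Lambda$ itself with its two gradings — the regular representation $U(\h)$ and its parity shift $\Cb^{0|1}\otimes U(\h)$.

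It remains to treat the case $\chi^+\chi^-=0$, i.e.\ modules over $\bar\Lambda:=\Lambda/\mathfrak m^2$. Writing $a:=\rho(\chi^+)$, $b:=\rho(\chi^-)$ (odd, with $a^2=b^2=ab=ba=0$), one has $\mathrm{im}(a)+\mathrm{im}(b)\subseteq K:=\ker(a)\cap\ker(b)$. After splitting off the trivial summands coming from a complement of $\mathrm{im}(a)+\mathrm{im}(b)$ in $K$ and from a common-kernel vector outside $K$, one may assume $K$ equals both the socle and the radical $\mathrm{im}(a)+\mathrm{im}(b)$ of $V$. Choosing a graded complement $U$ of $K$, the module is recovered from the pair of odd maps $(a,b)\colon U\to K$, and two such data give isomorphic modules precisely when they differ by the action of $\mathrm{GL}(U)\times\mathrm{GL}(K)$ (the off-diagonal part of an isomorphism drops out of the intertwining condition). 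Because $a,b$ are odd, this pencil is the direct sum of $U_0\rightrightarrows K_1$ and $U_1\rightrightarrows K_0$, so an indecomposable $V$ keeps only one of them and the parity involution interchanges the two placements. Hence the indecomposable $\bar\Lambda$-modules are the two simples $\Cb^{1|0},\Cb^{0|1}$ together with, for each indecomposable pencil $\Cb^m\rightrightarrows\Cb^n$ with $m,n\ge 1$, no common kernel, and combined image all of $\Cb^n$, its two parity placements.

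The final step is to invoke Kronecker's classification of matrix pencils over $\Cb$, equivalently the indecomposable representations of the Kronecker quiver $\bullet\rightrightarrows\bullet$: the preprojectives and preinjectives, of dimension vectors $(n,n{+}1)$ and $(n{+}1,n)$, and the regular modules, of dimension vector $(n,n)$, forming one homogeneous tube per point of $\mathbb{P}^1$. Transporting each canonical form to a $\bar\Lambda$-module and comparing with the explicit actions \eqref{eq:xi-action} and \eqref{eq:nu-action}, one matches the preprojectives with $\xi_{k,0,0}$ (so that $\xi_{0,0,0}=\Cb^{0|1}$ is the socle-simple), the preinjectives with the $\xi_{k,1,1}$ together with the second simple $\Cb^{1|0}$, the regular tube over $\mu\in\Cb^\times$ with $\eta_{\mu,n}$, and the two remaining tubes, over $0$ and $\infty$, with the $\xi_{k,1,0}$ and $\xi_{k,0,1}$; adding back the parity shifts and the module $U(\h)$ from the first step reproduces exactly the list in the statement, with mutual non-isomorphy being the one already recorded above (via $\dim(\ker\chi^+\cap\ker\chi^-)$ restricted to the even and odd parts, and whether $\chi^+\chi^-$ acts non-trivially). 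I expect the main obstacle to be precisely this matching and the attendant bookkeeping: fixing the dictionary between Kronecker dimension vectors and the triples $(k,\eps,\delta)$, identifying the two exceptional tubes correctly, and confirming that the parity-shift operation accounts for the two-fold redundancy with no overcounting. A more conceptual but essentially equivalent route is to observe that $\Lambda$ is a special biserial algebra — quiver with one vertex and two loops $x,y$, relations $x^2=y^2=xy+yx=0$ — so that by the Butler--Ringel description its indecomposables are string modules (the $\xi$'s), band modules (the $\eta$'s), and the unique non-uniserial projective-injective ($U(\h)$); one would then still have to list the finitely many strings and bands and carry out the same grading bookkeeping.
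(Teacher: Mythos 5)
Your argument is sound, but be aware that the paper does not prove this theorem at all: the statement is quoted from Zaitsev--Nikolenko \cite{Zaitsev:1970}, and the surrounding text only verifies that the listed modules are pairwise non-isomorphic (via the parity and dimension of $\ker\chi^+\cap\ker\chi^-$, and whether $\chi^+\chi^-$ acts non-trivially), citing \cite{Zaitsev:1970} again for the fact that $U(\h)$ and $U(\h)\otimes\Cb^{0|1}$ are the only indecomposables on which $\chi^+\chi^-$ is non-zero. So where the paper has a citation, you supply a self-contained proof, and your route is the standard modern one: split off free summands of the $4$-dimensional local Frobenius algebra $U(\h)$ using that its simple socle is essential, pass to the radical-square-zero quotient, encode a module with socle equal to radical by the pencil $(\chi^+,\chi^-)\colon V/K\to K$, and invoke Kronecker's classification, with the $\Zb/2\Zb$-grading handled by the splitting of the pencil into its two parity placements (equivalently, $U(\h)$ is special biserial, the $\xi$'s are its strings, the $\eta$'s its bands, and $U(\h)$ the projective-injective). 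The dictionary you leave as bookkeeping does close up: $\xi_{k,0,0}$ and $\xi_{k,1,1}$ are the preprojective and preinjective series, $\xi_{k,1,0}$ and $\xi_{k,0,1}$ the two exceptional tubes, and $\eta_{\mu,n}$ the homogeneous tubes over $\mu\in\Cb^\times$. Two points are worth making explicit in a write-up: the splitting of the free graded submodule can indeed be chosen graded (replace an ungraded splitting projector $\pi$ by $\tfrac12(\pi+\omega\pi\omega)$ with $\omega$ the parity involution, which is again a morphism of $U(\h)$-modules), and the two simple Kronecker representations must be identified with the one-dimensional trivial modules already split off, so that the parity-shift redundancy you flag produces no overcounting -- the intrinsic invariants that settle this (parity and dimension of the common kernel, non-triviality of $\chi^+\chi^-$) are exactly the ones the paper records just before the theorem.
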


The two irreducible representations are $\xi_{0,0,0} \otimes \Cb^{0|1}$ of dimension $1|0$ and $\xi_{0,0,0}$ of dimension $0|1$. Their projective covers are $U(\h)$ and $U(\h)\otimes \Cb^{0|1}$, respectively.

\subsection{Twisted representations}

Let $\hat\h_\mathrm{tw}$ be the Lie algebra $\h \otimes (t^{\frac12} \Cb[t^{\pm 1}]) \oplus \Cb K$. We write $a_m := a \otimes t^m$, where now $m \in \Zb + \tfrac12$. As before, $K$ is central and
\be
  [a_m,b_n] = m \, (a,b) \, \delta_{m+n,0}
  \quad , \quad \text{where} \quad  a,b \in \h\,,~m,n\in\Zb+\tfrac12 \ .
\ee

A bounded-below $\hat\h_\mathrm{tw}$-representation and the category $\Rep_{\flat,1}(\hat\h_\mathrm{tw})$ are defined in the same way as in the untwisted case of Definition \ref{def:bounded-below-rep}. For a representation $M \in \Rep_{\flat,1}(\hat\h_\mathrm{tw})$ one can define the endomorphism $H$ as in \eqref{eq:H-op-def}, but with the sum over positive $m$ in $\Zb+\tfrac12$.

\begin{remark} \label{rem:twisted-reps}
(i) In conformal field theory language (or vertex operator algebra language), $\hat\h_\mathrm{tw}$ is the mode algebra for a representation around which the currents have the $\Zb/2\Zb$-monodromy $a(z) \mapsto -a(z)$, see \cite{Dixon:1986qv,Zamolodchikov:1987ae} and \cite[Sect.\,9]{Frenkel:1987} for the purely even and \cite{Kausch:2000fu} for the purely odd case. 

\smallskip\noindent
(ii) The Virasoro algebra acts on $M \in \Rep_{\flat,1}(\hat\h_\mathrm{tw})$ via $C = \sdim(\h) \cdot \id_M$ and
\be\label{eq:Virasoro-modes-twisted}
L_m = \tfrac12 \sum_{k \in \Zb+\frac12} \sum_{i \in \Ic} \beta^i_k \alpha_{m-k}^i ~~ (m \neq 0) 
\quad , \quad
L_0 = \hspace{-.8em} \sum_{m \in \Zb_{\ge 0} + \frac12} \sum_{i\in\Ic} \beta_{-m}^i \alpha_m^i +\tfrac{\sdim \h}{16} \ .
\ee
\end{remark}

Since $\hat\h_\mathrm{tw}$ contains no zero modes, the induction functor now starts with a super-vector space. Namely, let $\hat\h_{\mathrm{tw},>0}$ be the span of all $a_m$ with $m>0$, let $\hat\h_{\mathrm{tw},>0}$ act trivially on a super-vector space $V$, and let $K$ act by the identity map on $V$. The $\hat\h_\mathrm{tw}$-module induced by $V$ is then
\be
  \Indtw(V) = U(\hat\h_\mathrm{tw}) \otimes_{U(\hat\h_{\mathrm{tw},>0} \oplus \Cb K)} V \ .
\ee
Via the same argument as in Appendix \ref{app:ind-equiv} we obtain:

\begin{theorem} \label{thm:ind-equiv-tw}
$\Indtw : \svect \to \Rep_{\flat,1}(\hat\h_\mathrm{tw})$ is an equivalence of $\Cb$-linear categories.
\end{theorem}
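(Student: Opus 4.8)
The plan is to follow, step for step, the proof of Theorem~\ref{thm:ind-equiv-untw} given in Appendix~\ref{app:ind-equiv} (which itself adapts \cite{Frenkel:1987,Kac:1998} to the non-semisimple setting). The twisted case is in fact strictly easier: since $\hat\h_\mathrm{tw}$ has no zero modes, the ``vacuum space'' of a twisted module is a bare super-vector space carrying no $\h$-action, so there is nothing analogous to the representation of the zero-mode algebra to keep track of, and correspondingly no finiteness hypothesis is needed anywhere. I would define the candidate inverse functor $\Phi\colon\Rep_{\flat,1}(\hat\h_\mathrm{tw})\to\svect$ by sending $M$ to the $\Zb/2\Zb$-graded subspace $\Phi(M)=\{\,x\in M\mid a_m x=0\text{ for all }a\in\h,\ m>0\,\}$ of vacuum vectors, and a morphism to its restriction. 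It then remains to exhibit natural isomorphisms $\Phi\circ\Indtw\cong\id_{\svect}$ and $\Indtw\circ\Phi\cong\id$.

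For the first isomorphism I would invoke the Poincar\'e--Birkhoff--Witt theorem for Lie super-algebras to identify $\Indtw(V)\cong U(\hat\h_{\mathrm{tw},<0})\otimes V$ as super-vector spaces, where $\hat\h_{\mathrm{tw},<0}$ is the span of the modes $a_m$ with $m<0$; this space is graded by the operator $H$ of Remark~\ref{rem:twisted-reps}, with $1\otimes V$ sitting in degree $0$. A positive mode $b_n$ with $n>0$ annihilates $V$ and super-commutes to the right through $U(\hat\h_{\mathrm{tw},<0})$; since $[b_n,a_m]=n(b,a)\delta_{n+m,0}K$, it acts there as a (graded) derivation, essentially a partial derivative in the variable $a_{-n}$. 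By non-degeneracy of $(-,-)$, for every negative mode there is a positive mode acting on $U(\hat\h_{\mathrm{tw},<0})$ as the corresponding partial-derivative operator, so the common kernel of all positive modes is exactly the constants $1\otimes V$. This shows $\Phi(\Indtw(V))=1\otimes V$, naturally in $V$, and in passing shows that $\Indtw(V)$ has no singular vectors beyond $V$.

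For the second isomorphism, the universal property of induction supplies for each $M$ a natural $\hat\h_\mathrm{tw}$-module map $\psi_M\colon\Indtw(\Phi(M))\to M$ extending the inclusion $\Phi(M)\hookrightarrow M$, and one must show $\psi_M$ is a bijection. I would first record the elementary lemma that every nonzero bounded-below twisted module has a nonzero vacuum vector: given $x\neq0$, among the nonzero vectors $a^1_{m_1}\cdots a^L_{m_L}x$ with all $m_i>0$ — whose total degrees are bounded by $N_x$ — pick one, $y$, of maximal total degree; since $n>0$ and $m_i>0$ force $[b_n,a^i_{m_i}]=0$, any further positive mode applied to $y$ equals $\pm\,a^1_{m_1}\cdots a^L_{m_L}(b_n x)$, which has strictly larger total degree and therefore vanishes, so $y\in\Phi(M)$. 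Injectivity of $\psi_M$ then follows: $\ker\psi_M$ is again bounded below, its vacuum vectors lie in $\Phi(\Indtw(\Phi(M)))=\Phi(M)$ by the first isomorphism, and on $\Phi(M)$ the map $\psi_M$ is the identity inclusion, so $\ker\psi_M$ has no nonzero vacuum vector and hence is zero. Surjectivity of $\psi_M$ is the statement that $M$ is generated over $U(\hat\h_\mathrm{tw})$ by $\Phi(M)$; this is the one point requiring genuine work, carried out — just as in Appendix~\ref{app:ind-equiv} — by descending induction on degree, using the commutation relations and non-degeneracy of $(-,-)$ to cancel, in each degree, the leading part of a given vector against creation operators applied to vacuum vectors. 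I expect this generation statement to be the main obstacle; but it is verbatim the untwisted argument, and since the absence of zero modes only removes bookkeeping (no $\h$-action on $\Phi(M)$, no local-finiteness restriction), no new idea is required, and one concludes that $\Indtw$ and $\Phi$ are mutually inverse equivalences.
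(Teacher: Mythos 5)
Your overall route is the paper's: Theorem \ref{thm:ind-equiv-tw} is indeed obtained by running the argument of Appendix \ref{app:ind-equiv} for $\hat\h_\mathrm{tw}$ (which coincides with its zero-mode-free part), and your treatment of $\Phi\circ\Indtw\cong\Id$, of the existence of nonzero vacuum vectors in nonzero bounded-below modules (this is Lemma \ref{lem:M_gs=0_implies_M=0}), and of injectivity of $\psi_M$ all match the paper, up to the cosmetic difference that you identify the vacuum vectors of $\Indtw(V)$ by a PBW/derivation argument where the paper uses the $H$-grading of Lemma \ref{lem:H-eval-on-induced}.

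The gap is in surjectivity, exactly the step you yourself flag as the main obstacle and then dispose of by asserting it is ``verbatim the untwisted argument'', described as a descending induction on degree that cancels the leading part of a vector against creation operators applied to vacuum vectors. That is not what Appendix \ref{app:ind-equiv} does, and as described it does not work: a general bounded-below module $M$ carries no a priori grading (neither by $H$-eigenvalues nor by ``degree''), so there is nothing to induct on and no notion of ``leading part'' of an element of $M$; circumventing precisely this is the nontrivial point of the appendix. The actual mechanism (Theorem \ref{thm:tilde-h-all-induced}) is: let $M'$ be the image of $\psi_M$; if $M/M'\neq 0$ it contains a nonzero vacuum vector $u=v+M'$ by the lemma; since $M'$, being a quotient of an induced module, \emph{is} a direct sum of $H$-eigenspaces with non-negative eigenvalues (with $H$ as in \eqref{eq:H-op-def}, the sum over $m\in\Zb_{\ge0}+\frac12$), one writes $Hv=m_0'+\sum_{k>0}m_k'\in M'$ and corrects $w=v-\sum_{k>0}\frac1k\,m_k'$; the relation $[H,a_k]=-k\,a_k$ then shows that for $k>0$ the vector $a_k w$ lies in $M'$ and is an $H$-eigenvector of negative eigenvalue, hence zero, so $w\in M_\mathrm{gs}\subset M'$ and $u=0$, a contradiction. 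This $H$-correction trick (or an equivalent substitute, e.g.\ using it to complete your ``$a_m x\in M'$ for all $m>0$ implies $x\in M'$'' step) must be spelled out; without it your surjectivity paragraph is a gesture at an argument that, taken literally, fails at the outset.
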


In particular, since $\svect$ is semisimple with two simple objects (namely $\Cb^{1|0}$ and $\Cb^{0|1}$), so is $\Rep_{\flat,1}(\hat\h_\mathrm{tw})$.
As in the untwisted case we define $\Rep_{\flat,1}\fd(\hat\h_\mathrm{tw})$ as the full subcategory of representations with a finite dimensional subspace of ground states. By construction, $\Rep_{\flat,1}\fd(\hat\h_\mathrm{tw})$ is equivalent to $\svect\fd$. 

Just as $\Ind(R)$ for an $\h$-module $R$, the induced modules $\Indtw(X)$ for a super-vector space $X$ are direct sums of $H$-eigenspaces $\Indtw(X)_m$ ($m \in \Zb_{\ge 0}$) and we have $X \equiv \Indtw(X)_0$. Denote by $\overline\Indtw(X)$ the direct product of $H$-eigenspaces. The restricted dual $\Indtw(X)'$ is turned into a bounded-below $\hat h_\mathrm{tw}$-module by the same definition as in the untwisted case.

\section{Vertex operators and the tensor product functor}\label{sec:VO+tensor}

This section and Section \ref{sec:4pt-assoc} form the technical core of the paper. In this section, spaces of vertex operators are defined and computed. These spaces determine the tensor product functor and the braiding isomorphisms. The composition of two vertex operators, computed (and partially conjectured) in Section \ref{sec:4pt-assoc}, will later on fix the associator.

\subsection{Vertex operators} \label{sec:vertexop}


From Theorems \ref{thm:ind-equiv-untw} and \ref{thm:ind-equiv-tw} we see that the $\Zb/2\Zb$-graded category of representations of the untwisted and twisted mode algebra is equivalent to $\Rep(\h)$ and $\svect$, respectively. In the following we will only deal with locally finite $\h$-representations. This is necessary to ensure convergence of exponential series of zero modes occurring below. To abbreviate the notation, let us set
\be
  \Cc = \Cc_0 + \Cc_1 \quad , \quad \text{where} \quad
  \Cc_0 = \Rep\lf(\h) ~~,~~
  \Cc_1 = \svect \ .
\ee

\begin{definition} \label{def:vertex-op}
Let $A,B,C \in \Cc$. A {\em vertex operator from} $A,B$ {\em to} $C$ is a map $V$ with source and target as in
\begin{center}
\begin{tabular}{ccccl@{}c@{}l}
  & $A$ & $B$ & $C$ &&&
\\
a) & $\Cc_0$ & $\Cc_0$ & $\Cc_0$ & 
$\Rb_{>0} \times (A \otimes \Ind(B))$ &$\,\to~$& $\overline\Ind(C)$
\\
b) & $\Cc_0$ & $\Cc_1$ & $\Cc_1$ &
$\Rb_{>0} \times (A \otimes \Indtw(B))$ &$\,\to~$& $\overline\Indtw(C)$
\\
c) & $\Cc_1$ & $\Cc_0$ & $\Cc_1$ &
$\Rb_{>0} \times (A \otimes \Ind(B))$ &$\,\to~$& $\overline\Indtw(C)$
\\
d) & $\Cc_1$ & $\Cc_1$ & $\Cc_0$ &
$\Rb_{>0} \times (A \otimes \Indtw(B))$ &$\,\to~$& $\overline\Ind(C)$
\end{tabular}
\end{center}
such that 
\begin{enumerate}
\item[(i)] for all $x \in \Rb_{>0}$, $V(x)$ is an even linear map; 
for all $a\in A$, $\hat b \in \Indtwutw(B)$, $\hat\gamma \in \Indtwutw(C)'$, the function $x \mapsto \langle \hat \gamma , V(x)  (a \otimes \hat b) \rangle$ from $\Rb_{>0}$ to $\Cb$ is smooth;
\item[(ii)] $L_{-1} \circ V(x) - V(x)\circ (\id_A \otimes L_{-1}) = \tfrac{d}{dx} V(x)$, where $\tfrac{d}{dx} V(x)$ is defined in terms of matrix elements as in (i);
\item[(iii)] for all $a \in \h$,
\begin{itemize}
\item For all $m \in \Zb$ (case a) or $m \in \Zb {+} \frac12$ (case b)
\be
  a_m \circ V(x) = V(x) \circ \big( x^m \cdot a \otimes \id + \id \otimes a_m \big)
\ee
\item For all $m \in \Zb$ (case c) or $m \in \Zb {+} \frac12$ (case d)
\be\label{eq:mode-past-twisted}
  T(a)^{x,-}_{m+\frac12} \circ V(x) = i \, x^{\frac12} \cdot V(x) \circ \big( \id \otimes T(a)^{x,+}_m \big) \ ,
\ee
where $T(a)^{x,\eps}_m = \sum_{k=0}^\infty {{1/2} \choose k} (-x)^{-\eps k} \cdot a_{m + \eps k}$ for $\eps \in \{ \pm1\}$.
\end{itemize}
\end{enumerate}
The vector space of all vertex operators from $A,B$ to $C$ will be denoted by $\Vc_{A,B}^C$.
\end{definition}

\begin{remark}\label{rem:VO-def}
(i) Here, the commutation conditions in part (iii) are taken as a definition, but they are really obtained from the usual contour deformation argument. For cases c) and d) this is detailed in Section \ref{sec:contour-int}. Note that in cases c) and d) of part (iii), for $u \in \overline\Indtwutw(C)$, the infinite sum in $T(a)^{x,-}_{m+\frac12}u$ does not truncate because the modes $a_{m-k}$ become arbitrarily negative. Nonetheless, because the $H$-grading is bounded below by $0$, the contribution of $T(a)^{x,-}_{m+\frac12}u$ in each graded component of $\overline\Indtwutw(C)$ is a finite sum. 
\\[.3em]
(ii) In the theory of vertex operator algebras, one considers intertwining operators which have $\Ind(A) \otimes \Ind(B)$ as source instead of just $A \otimes \Ind(B)$. Logarithmic intertwining properties are treated in \cite{Milas:2001,Huang:2010,Tsuchiya:2012ru}. 
In the present explicit and simple example, I found it more straightforward to use the restricted definition above.
\\[.3em]
(iii) Vertex operators for twisted representations have been considered in many places in various contexts (though not precisely in the one treated here, to my knowledge). A small selection of references is  \cite[Sect.\,9]{Frenkel:1987} and \cite{Dolan:1989vr,Xu:1995,Gaberdiel:1996kf,Gaberdiel:2006pp}.
\end{remark}

Let $\Pgs$ be the map which projects to the ground states of an induced representation. That is, for $A \in \Cc$ we have $\Pgs : \overline\Indtwutw(A) \to A$. Denote by $F : \Rep(h) \to \svect$ the forgetful functor which takes a representation of $\h$ to the underlying super-vector space. To relate the spaces of vertex operators to morphisms in $\Cc$, define the map
\be\label{eq:VO-via-homs}
G ~:~ \Vc_{A,B}^C ~\longrightarrow ~
\left\{\rule{0pt}{2.8em}\right.\hspace{-.5em}\raisebox{.7em}{
\begin{tabular}{ccccl}
  & $A$ & $B$ & $C$ &
\\
a) & $\Cc_0$ & $\Cc_0$ & $\Cc_0$ & 
$\Hom_{\Rep(\h)}(A \otimes_{\Rep(\h)} B,C)$ ,
\\
b) & $\Cc_0$ & $\Cc_1$ & $\Cc_1$ &
$\Hom(F(A) \otimes B,C)$ ,
\\
c) & $\Cc_1$ & $\Cc_0$ & $\Cc_1$ &
$\Hom(A \otimes F(B),C)$ ,
\\
d) & $\Cc_1$ & $\Cc_1$ & $\Cc_0$ &
$\Hom(A \otimes B,F(C))$ .
\end{tabular}}
\ee
as
\be \label{eq:G-map-def}
  G(V) = \Pgs \circ V(1)|_{A \otimes B} ~~ : ~~ A \otimes B \to C \ .
\ee  
The tensor product $A \otimes_{\Rep(\h)} B$ in case a) stands for the tensor product of $\h$-representations. The underlying super-vector space is $A \otimes B$ and the action is via the coproduct $\Delta(x) = x \otimes 1 + 1 \otimes x$ ($x \in \h$) on $U(\h)$. That the image of $G$ in case a) is indeed an $\h$-intertwiner follows from condition (iii) in Definition \ref{def:vertex-op}.

Given $\h$-representations $R_1, \dots, R_n$, consider the product $W = R_1 \otimes R_2 \otimes \cdots \otimes R_n$ and define, for $1 \le i,j \le n$,
\be \label{eq:Omega-ij}
  \Omega^{(ij)} = \sum_{k \in \Ic} (\beta^k)^{(i)} \circ (\alpha^k)^{(j)}  \ ,
\ee
where $(\beta^k)^{(i)}$ acts on $R_i$ and $(\alpha^k)^{(j)}$ acts on $R_j$. With these notations, we can describe the restriction of vertex operators to ground states.

\begin{proposition} \label{prop:V-on-gnd}
Let $V \in \Vc_{A,B}^C$ and $f = G(V)$. Then $V$ restricted to ground states takes the form
\be\label{eq:VO-on-ground-states}
\Pgs \circ V(x)\big|_{A \otimes B} ~=~  
\left\{\rule{0pt}{2.8em}\right.
\hspace{-.5em}\raisebox{.7em}{
\begin{tabular}{ccccl}
  & $A$ & $B$ & $C$ &
\\
\rm a) & $\Cc_0$ & $\Cc_0$ & $\Cc_0$ & 
$f \circ \exp\!\big(\ln(x)\cdot \Omega^{(12)}\big)$ ,
\\
\rm b) & $\Cc_0$ & $\Cc_1$ & $\Cc_1$ &
$f \circ \exp\!\big(-\tfrac12\ln(x)\cdot \Omega^{(11)}\big)$ ,
\\
\rm c) & $\Cc_1$ & $\Cc_0$ & $\Cc_1$ &
$f \circ \exp\!\big(-\tfrac12\ln(x)\cdot \Omega^{(22)}\big)$ ,
\\
\rm d) & $\Cc_1$ & $\Cc_1$ & $\Cc_0$ &
$\exp\!\big(\tfrac12\ln(x)\,(-\tfrac{\sdim \h}4 + \Omega^{(11)})\big) \circ f$ ,
\end{tabular}}
\ee
each of which is an even linear map $A \otimes B \to C$.
\end{proposition}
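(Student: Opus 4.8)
The plan is to compute $\Pgs \circ V(x)|_{A \otimes B}$ directly from the defining relations (iii) of Definition \ref{def:vertex-op}, using that the ground states of $\Indtwutw(A)$ sit in $H$-degree $0$ and are annihilated by all positive modes. First I would fix a vector $v$ in $A \otimes B$ (a ground state of the source) and a ground-state functional $\gamma \in C^* \subset \Indtwutw(C)'$, and study the scalar function $g(x) = \langle \gamma, V(x)(v)\rangle$ together with its relatives obtained by inserting zero modes (in cases a, b, c) or the lowest twisted modes $a_{1/2}$ (in cases c, d). The key mechanism is that only finitely many modes can act nontrivially before one hits the ground state on one side and the restricted-dual ground state on the other, so all the infinite sums truncate and the computation is finite. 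I expect to set up a first-order ordinary differential equation in $x$ for the vector-valued function $\Pgs \circ V(x)|_{A\otimes B}$, solve it, and match the initial condition at $x=1$, which by \eqref{eq:G-map-def} is exactly $f = G(V)$.

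Concretely, in case a) the relevant identity is $a_0 \circ V(x) = V(x)\circ(a \otimes \id + \id \otimes a_0)$ together with $a_1 \circ V(x) = V(x)\circ (x \cdot a \otimes \id + \id \otimes a_1)$. Applying $\Pgs$ and using that $\Pgs \circ a_m = 0$ for $m>0$ and that $a_m$ kills ground states for $m>0$, one gets that $\Pgs \circ V(x)|_{A\otimes B}$ intertwines the $a_0$-actions (this is the $\h$-intertwiner statement already noted after \eqref{eq:G-map-def}), while the derivative comes from condition (ii): $L_{-1}\circ V(x) - V(x)\circ(\id_A \otimes L_{-1}) = \tfrac{d}{dx}V(x)$. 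Here I would use $L_{-1} = \sum_{k} \sum_i \beta^i_{-1-k}\alpha^i_k$ from \eqref{eq:Lm-nonzero} (adapted to each sector), note that after $\Pgs$ only the term with $\alpha^i_1$ acting on the source survives with the right power of $x$, and that $\id_A \otimes L_{-1}$ raises the degree in $\Ind(B)$ so it drops out under $\Pgs$ on ground states except for the piece producing $\Omega^{(12)}$ divided by $x$. This yields $\tfrac{d}{dx}\big(\Pgs\circ V(x)|_{A\otimes B}\big) = \tfrac1x\,\Omega^{(12)}\circ\big(\Pgs\circ V(x)|_{A\otimes B}\big)$, whose solution with value $f$ at $x=1$ is $f\circ\exp(\ln x \cdot \Omega^{(12)})$ after commuting $\Omega^{(12)}$ past $f$ using the intertwiner property. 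For cases b), c), d) the same strategy applies: the twisted relation \eqref{eq:mode-past-twisted} with its lowest modes plus condition (ii) gives a first-order ODE; the shift $-\tfrac{\sdim\h}{4}$ in case d) appears because $L_0$ on twisted representations carries the constant $\tfrac{\sdim\h}{16}$ (Remark \ref{rem:twisted-reps}(ii)) and $L_{-1}$ acting on the twisted source contributes this anomaly, while the $\Omega^{(11)}$ versus $\Omega^{(22)}$ distinction tracks whether the twisted sector is the source $B$ (so the doubled $\h$ acts on $A$, factor $11$) or the target $C$.

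The evenness of each map is immediate since $V(x)$ is even by (i), $\Pgs$ is even, and $\Omega \in \h\otimes\h$ is an even element, so the exponential is even. The main obstacle I anticipate is bookkeeping the twisted-mode relation \eqref{eq:mode-past-twisted}: one must carefully extract, from $T(a)^{x,-}_{1/2}$ and $T(a)^{x,+}_{-1/2}$ (or $T(a)^{x,+}_0$), precisely which finitely many modes act nontrivially after projecting to ground states, keeping track of the binomial coefficients $\binom{1/2}{k}$, the factors $i\,x^{1/2}$, and the relative normalizations, so that the resulting differential equation has exactly the coefficient $-\tfrac12\ln x$ (resp. the $-\tfrac{\sdim\h}{4}$ shift). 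A secondary subtlety is justifying that the vector-valued function is genuinely differentiable and that term-by-term differentiation of the (graded-finite) sums is legitimate — but this is controlled by condition (i) and the boundedness of the $H$-grading, as noted in Remark \ref{rem:VO-def}(i). Once the ODE is established in each of the four cases, reading off \eqref{eq:VO-on-ground-states} is routine.
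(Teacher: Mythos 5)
Your proposal follows essentially the same route as the paper: condition (ii) combined with the mode-exchange relations (iii) gives a first-order ODE in $x$ for $\Pgs \circ V(x)|_{A\otimes B}$ in each of the four cases, which is then solved with initial value $f = G(V)$ at $x=1$; this is exactly how the paper proceeds, writing $L_{-1}$ on source ground states as $\sum_i \beta^i_{-1}\alpha^i_0$ (untwisted) resp.\ $\tfrac12\sum_i \beta^i_{-\frac12}\alpha^i_{-\frac12}$ (twisted) and moving the modes through $V(x)$. Only your case a) bookkeeping is slightly off --- it is the term $L_{-1}\circ V(x)$ that is killed by the ground-state projection, while $\id_A\otimes L_{-1}$ produces the $x^{-1}$ term, with $\Omega^{(12)}$ acting on the source so the ODE reads $\tfrac{d}{dx}\big(\Pgs\circ V(x)|_{A\otimes B}\big) = x^{-1}\,\big(\Pgs\circ V(x)\big)\circ\Omega^{(12)}|_{A\otimes B}$ --- but this does not affect the validity of the argument.
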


\begin{proof}
In each of the four cases, we will give a first order differential equation which is satisfied by both sides of \eqref{eq:VO-on-ground-states}. Since for $x=1$, both sides of \eqref{eq:VO-on-ground-states} are equal to $f$, this will prove the proposition.

By condition (ii) in Definition \ref{def:vertex-op} we have, for all ground states $\gamma \in C^*$, $a \in A$, $b \in B$, 
\be
  \tfrac{d}{dx} \langle \gamma , V(x) (a \otimes b) \rangle = -\langle \gamma , V(x) (a \otimes L_{-1} b) \rangle \ .
\ee
From \eqref{eq:virasoro-untwisted} and \eqref{eq:Virasoro-modes-twisted} we see that
\be
  L_{-1}b = \sum_{i \in \Ic} \beta^i_{-1} \alpha^i_0 b
  \quad (B \in \Cc_0) \qquad , \qquad
  L_{-1}b = \tfrac12 \sum_{i \in \Ic} \beta^i_{-\frac12} \alpha^i_{-\frac12} b
  \quad (B \in \Cc_1) \ .
\ee

\noindent
{\em Case a).} 
By condition (iii) in Definition \ref{def:vertex-op} we have 
\be
  \langle \gamma , V(x) \, (\id \otimes (\beta^i_{-1} \alpha^i_0)) (a \otimes b) \rangle ~=~ 
- x^{-1} \langle \gamma , V(x)\,(\beta^i \otimes \alpha^i_0) (a \otimes b) \rangle \ , 
\ee
so that 
$\langle \gamma , V(x) (a \otimes L_{-1} b) \rangle = - x^{-1} \langle \gamma , V(x) \circ \Omega^{(12)} (a \otimes b) \rangle$. The resulting first order differential equation $\tfrac{d}{dx} \Pgs \circ V(x)|_{A\otimes B} = x^{-1}\,\Pgs \circ V(x) \circ \Omega^{(12)}|_{A\otimes B}$ is solved by expression a) in \eqref{eq:VO-on-ground-states}.

\medskip\noindent
{\em Case b).} 
Applying condition (iii) twice gives 
\be
  \langle \gamma , V(x)\, (\id \otimes (\beta^i_{-\frac12} \alpha^i_{-\frac12})) (a \otimes b) \rangle ~=~
  x^{-1} \langle \gamma , V(x) \, ((\beta^i \alpha^i) \otimes \id) (a \otimes b) \rangle\ , 
\ee
so that $\tfrac{d}{dx} \Pgs \circ V(x)|_{A\otimes B} = -\tfrac12 x^{-1}\,\Pgs \circ V(x) \circ \Omega^{(11)}|_{A\otimes B}$. This is solved by case b) on the right hand side of  \eqref{eq:VO-on-ground-states}.

\medskip\noindent
{\em Case c).} In condition (iii) we defined $T(\beta^i)^{x,+}_{-1} = \beta^i_{-1}
- \tfrac12 x^{-1} \beta^i_0 + \dots$, so that 
\ba
&  \langle \gamma , V(x)\,(\id \otimes (\beta^i_{-1} \alpha^i_0)) (a \otimes b) \rangle 
\nonumber\\
& = 
\langle \gamma , V(x)\, (\id \otimes (T(\beta^i)^{x,+}_{-1} \alpha^i_0)) (a \otimes b) \rangle +
\tfrac12 x^{-1} \langle \gamma ,  V(x)\, (\id \otimes (\beta^i_0 \alpha^i_0)) (a \otimes b) \rangle  \ .
\label{eq:x-dep-gs-case_c}
\ea
The first summand vanishes by condition (iii) and we are left with
$\tfrac{d}{dx} \Pgs \circ V(x)|_{A\otimes B} = -\tfrac12 x^{-1}\,\Pgs \circ V(x) \circ \Omega^{(22)}|_{A\otimes B}$. This shows case c) in \eqref{eq:VO-on-ground-states}.

\medskip\noindent
{\em Case d).} Here, $T(\beta^i)^{x,+}_{-\frac12} = \beta^i_{-\frac12} - \tfrac12 x^{-1} \beta^i_{\frac12} + \dots$. Using this and condition (iii), we find
\ba
&
\langle \gamma , V(x) \, (\id \otimes (\beta^i_{-\frac12} \alpha^i_{-\frac12})) (a \otimes b) \rangle 
\nonumber\\
&=
-i x^{-\frac12} \langle \gamma , T(\beta^i)^{x,-}_{0} \, V(x) \,  (\id \otimes  \alpha^i_{-\frac12}) (a \otimes b) \rangle +
\tfrac12 x^{-1} \langle \gamma , V(x) \, (\id \otimes (\beta^i_{\frac12} \alpha^i_{-\frac12})) (a \otimes b) \rangle
\nonumber\\
&=
-i x^{-\frac12} \langle \gamma , \beta^i_0 \, V(x) \,  (\id \otimes  T(\alpha^i)^{x,+}_{-\frac12}) (a \otimes b) \rangle +
\tfrac14 (\beta^i,\alpha^i) x^{-1} \langle \gamma ,  V(x)(a \otimes b) \rangle
\nonumber\\
&=
- x^{-1} \langle \gamma , \beta^i_0 \alpha^i_0 \, V(x) (a \otimes b) \rangle +
\tfrac14 (-1)^{|\alpha^i|} x^{-1} \langle \gamma ,  V(x)(a \otimes b) \rangle \ .
\ea
Thus, $\tfrac{d}{dx} \Pgs \circ V(x)|_{A\otimes B} = x^{-1} \big\{ \tfrac12 \Omega^{(11)}- \tfrac18 \sdim(\h)  \big\} \circ \Pgs \circ V(x)|_{A\otimes B}$, which shows case d) in \eqref{eq:VO-on-ground-states}.
\end{proof}

\subsection{Two examples: $\h$ of dimension $1|0$ and $0|2$}\label{sec:examples-VO}
 
\subsubsection*{Single free boson}
 
Take $\h = \Cb^{1|0}$ and choose $J \in \h$ with $(J,J)=1$. The affinisation $\hat\h$ is the mode algebra for a single free boson $[J_m,J_n] = m \, \delta_{m+n,0} \, K$. Let $\Cb_p$ be the one-dimensional representation of $\h$ for which $J$ acts by multiplication with $p$. Let $v_p$ be a highest weight vector in $\Ind(\Cb_p)$ and let $\sigma$ be a highest weight vector in $\Indtw(\Cb)$. From \eqref{eq:virasoro-untwisted} and \eqref{eq:Virasoro-modes-twisted} one sees that the eigenvalue of $C$ is $1$ and that
\be \label{eq:L0-eval-freebos}
  L_0 \, v_p = \tfrac12 p^2 \, v_p
  \quad , \qquad
  L_0 \,\sigma = \tfrac{1}{16} \, \sigma \ ,
\ee
as required for the conformal weights of a primary field in the untwisted and twisted sector of a single free boson. Then $\Omega^{(12)}(v_p \otimes v_q) = pq \cdot v_p \otimes v_q$, etc., and Proposition \ref{prop:V-on-gnd} gives:
\be\begin{array}{cccl}
\text{a)} & 
\big\langle\,v_r^*\,,\,V(x)\,(v_p \otimes v_q)\,\big\rangle &=& \text{(const)} \cdot \delta_{r,p+q} \, x^{pq}
\\[.3em]
\text{b)} & 
\big\langle\,\sigma^*\,,\,V(x)\,(v_p \otimes \sigma)\,\big\rangle &=& \text{(const)} \cdot x^{-\frac12 p^2}
\\[.3em]
\text{c)} & 
\big\langle\,\sigma^*\,,\,V(x)\,(\sigma \otimes v_q)\,\big\rangle &=& \text{(const)} \cdot x^{-\frac12 q^2}
\\[.3em]
\text{d)} & 
\big\langle\,v_r^*\,,\,V(x)\,(\sigma \otimes \sigma)\,\big\rangle &=& \text{(const)} \cdot x^{-\frac18 + \frac12 r^2}
\end{array}
\ee
Each of these is of the form $\langle a^*, V(x) (b \otimes c) \rangle = \text{(const)} \cdot x^{h_a-h_b-h_c}$, where $h_a$ is the conformal weight of $a$ as given in \eqref{eq:L0-eval-freebos}. The factor $\delta_{r,p+q}$ in case a) arises as according to \eqref{eq:VO-via-homs}, $G(V)$ has to be an $\h$-intertwiner from $\Cb_p \otimes \Cb_q$ to $\Cb_r$, and these can only be non-zero for $p+q=r$ (charge conservation).

\subsubsection*{Single pair of symplectic fermions}

Take $\h = \Cb^{0|2}$ with basis $\chi^\pm$ and metric $(\chi^+,\chi^-)=1$ as in Section \ref{sec:d=0|2-reps}. This amounts to a single pair of symplectic fermions. According to \eqref{eq:virasoro-untwisted}, the central charge is $c=-2$. The copairing \eqref{eq:copair-Omega-def} and its powers are
\be \label{eq:d=0|2-example-Omega}
  \Omega = \chi^- \otimes \chi^+ - \chi^+ \otimes \chi^- 
  ~~ , \quad
  \Omega^2 = -2 \cdot (\chi^+ \chi^-)  \otimes (\chi^+ \chi^-)
  ~~ , \quad
  \Omega^n = 0 ~~ (n>2) \ .
\ee

Consider the $2|2$-dimensional regular representation $U(\h)$. To match the usual nomenclature for symplectic fermions (as e.g.\ in \cite{Gaberdiel:1996np,Kausch:2000fu,Gaberdiel:2006pp}), let us write $1 \equiv \omega$ for the unit in $U(\h)$, as well as
\be
  \hat\Omega = L_0 \omega = - \chi^+ \chi^- \omega \ .
\ee
(The notation in \cite{Gaberdiel:1996np,Kausch:2000fu,Gaberdiel:2006pp} is $\Omega$ instead of $\hat\Omega$, but this could be confused with the copairing).
Then a basis of $U(\h)$ is $\{ \omega , \chi^+ \omega , \chi^- \omega , \hat\Omega \}$. Case a) in Proposition \ref{prop:V-on-gnd} -- with $A=B=C=U(\h)$ and applied to $\omega \otimes \omega$ -- becomes
\ba
  f \circ e^{ \ln(x) \cdot \Omega^{(12)}}(\omega \otimes \omega)
  = f(\omega \otimes \omega) &+ \ln(x) \cdot f( \chi^- \omega \otimes \chi^+ \omega - \chi^+ \omega\otimes \chi^-\omega)
  \nonumber \\ 
  &- (\ln(x))^2 \cdot f(\hat\Omega \otimes \hat\Omega) 
  \label{eq:case-a-omom}
\ea
for an $\h$-intertwiner $f : U(\h) \otimes U(\h) \to U(\h)$. To describe all such intertwiners, we need a little lemma on Hopf algebras. Recall Sweedler's notation for the coproduct $\Delta(a) = \sum_{(a)} a_{(1)} \otimes a_{(2)}$.

\begin{lemma}
For a Hopf algebra $H$ in $\svect$, the following map is an isomorphism:
$$
  \Hom(H,H) \to \Hom_{\Rep(H)}(H \otimes H, H) ~~ , \quad
  g \mapsto \Big( a \otimes b \mapsto \sum_{(a)} a_{(1)} \cdot g\big(S(a_{(2)}) \cdot b\big) \Big) \ .
$$
\end{lemma}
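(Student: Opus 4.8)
The plan is to exhibit an explicit two-sided inverse to the stated map, rather than checking injectivity and surjectivity separately. Write $\Phi : \Hom(H,H) \to \Hom_{\Rep(H)}(H\otimes H,H)$ for the map in the lemma, $\Phi(g)(a\otimes b) = \sum_{(a)} a_{(1)}\cdot g(S(a_{(2)})\cdot b)$. The natural candidate for the inverse is $\Psi : \Hom_{\Rep(H)}(H\otimes H,H) \to \Hom(H,H)$ given by restriction along $b \mapsto 1\otimes b$, i.e.\ $\Psi(h)(b) = h(1\otimes b)$. One first checks $\Phi$ and $\Psi$ are well-defined: $\Phi(g)$ is an $H$-module map because left multiplication by $c$ acts on $a\otimes b$ via $\Delta(c) = \sum c_{(1)}\otimes c_{(2)}$, and the computation $\Phi(g)\big(\sum c_{(1)}a \otimes c_{(2)}b\big) = c\cdot\Phi(g)(a\otimes b)$ follows from coassociativity and the antipode axioms $\sum S(c_{(1)})c_{(2)} = \epsilon(c)1$ — this is the standard argument showing $H\otimes H \cong H \otimes \underline{H}$ as $H$-modules (tensor-hom / the "unit isomorphism" for the regular module). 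Signs from $\svect$ must be tracked but are forced by the Koszul rule once the maps are written as compositions of the structure morphisms.

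\textbf{Key steps.} First, $\Psi\circ\Phi = \id$: compute $\Psi(\Phi(g))(b) = \Phi(g)(1\otimes b) = \sum_{(1)} 1_{(1)}\cdot g(S(1_{(2)})b) = g(b)$ using $\Delta(1) = 1\otimes 1$ and $S(1)=1$. Second, $\Phi\circ\Psi = \id$: for $h$ an $H$-module map,
\be
  \Phi(\Psi(h))(a\otimes b) = \sum_{(a)} a_{(1)}\cdot h\big(1\otimes S(a_{(2)})b\big)
  = \sum_{(a)} h\big(a_{(1)}\otimes a_{(2)}S(a_{(3)})b\big)
  = h\big(a\otimes b\big),
\ee
where the middle equality uses that $h$ intertwines the $H$-action (so $a_{(1)}\cdot h(1\otimes y) = h(a_{(1)}\otimes a_{(2)}y)$ after expanding the action on $H\otimes H$), and the last uses the antipode axiom $\sum a_{(2)}S(a_{(3)}) = \epsilon(a_{(2)})1$ together with the counit axiom $\sum a_{(1)}\epsilon(a_{(2)}) = a$. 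Both computations are purely formal manipulations with the Hopf axioms; in $\svect$ one inserts the symmetry $\tau$ wherever tensor factors are permuted, but since $H\otimes H$ carries the tensor-product $H$-action via $\Delta$ and all maps involved are even, no extra signs beyond those already implicit in $\Delta$ and $S$ appear.

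\textbf{Main obstacle.} There is no serious obstacle here — this is a well-known fact (the free/regular module $H$ satisfies $\Hom_{\Rep(H)}(H\otimes M, N)\cong\Hom(M,N)$ for the regular module in the first slot, specialised to $M=N=H$). The only thing requiring care is the bookkeeping of Koszul signs in $\svect$: I would phrase $\Phi(g)$ and $\Psi(h)$ as explicit composites of $\Delta$, $S$, the multiplication $\mu$, the unit $\eta$, $g$ (or $h$), and symmetry maps $\tau$, and then the two identities $\Psi\Phi=\id$ and $\Phi\Psi=\id$ reduce to the graphical/string-diagram identities for a Hopf algebra, which hold in any symmetric monoidal category and hence in $\svect$. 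So the proof is: write down $\Psi$, verify well-definedness of $\Phi$ and $\Psi$, then verify the two composition identities by diagrammatic Hopf-algebra calculation.
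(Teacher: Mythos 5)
Your proposal is correct and is essentially the paper's own argument: the paper proves injectivity via the same evaluation $\varphi(g)(1\otimes b)=g(b)$ and surjectivity via exactly your computation $\sum_{(a)} a_{(1)} f(1\otimes S(a_{(2)})b)=\sum_{(a)} f(a_{(1)}\otimes a_{(2)}S(a_{(3)})b)=f(a\otimes b)$, so packaging these as a two-sided inverse $\Psi(h)(b)=h(1\otimes b)$ is only a cosmetic reorganisation. The sign bookkeeping remark is also consistent with the paper, since the first tensor factor fed into the module action is the (even) unit.
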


\begin{proof}
Denote the above map by $\varphi$. Firstly, to see that $\varphi(g)$ is an $H$-module map, compute
\ba
  \varphi(g)\big( \Delta(c) \cdot (a \otimes b) \big)
  &= \sum_{(a),(c)} c_{(1)} a_{(1)} g\big( S(c_{(2)} a_{(2)}) c_{(3)} b \big)
  \nonumber\\
  &= \sum_{(a),(c)} c_{(1)} a_{(1)} g\big( S(a_{(2)}) S(c_{(2)}) c_{(3)} b \big)
  \nonumber\\
  &  \overset{(*)}= \sum_{(a)} c \, a_{(1)} g\big( S(a_{(2)}) b \big) = c \cdot \varphi(g)(a \otimes b) \ .
\ea
In step (*) the property $\sum_{(x)} S(x_{(1)}) x_{(2)} = \eps(x) \cdot 1$ of the antipode was used. 

Injectivity of $\varphi$ follows from $\varphi(g)(1\otimes b) = g(b)$. For surjectivity, let $f : H \otimes H \to H$ be an $H$-module map. Then for $g(x) = f(1 \otimes x)$ we have
\be
  \varphi(g)(a \otimes b)
=  \sum_{(a)}  a_{(1)} f\big(1 \otimes  S( a_{(2)}) b \big)
\overset{(*)}=  \sum_{(a)}  f\big(a_{(1)}  \otimes a_{(2)}  S( a_{(3)}) b \big)
\overset{(**)}=   f(a \otimes b) \ .
\ee
Here, (*) is the intertwining property $x\cdot f(u \otimes v) = f(\Delta(x) \cdot u \otimes v)$ of $f$, and (**) is the property $\sum_{(x)} S(x_{(1)}) x_{(2)} = \eps(x) \cdot 1$ of the antipode.
\end{proof}

The above lemma shows that the most general $\h$ intertwiner $U(\h) \otimes U(\h) \to U(\h)$ is of the form $f(a \otimes b) = \sum_{(a)} a_{(1)} g\big(S(a_{(2)}) b\big)$ for an arbitrary even linear map $g : U(\h) \to U(\h)$. The various coproducts in $U(\h)$ are
\ba
& \Delta(1) = 1 \otimes 1
\quad , \qquad
\Delta(\chi^\pm) = \chi^\pm \otimes 1 + 1 \otimes \chi^\pm \ ,
 \nonumber \\
&\Delta(\chi^+ \chi^-) = (\chi^+ \chi^-) \otimes 1 + \chi^+ \otimes \chi^- - \chi^- \otimes \chi^+ + 1 \otimes (\chi^+ \chi^-) \ .
\ea
Inserting all this into \eqref{eq:case-a-omom} gives (recall that $\omega=1$ in $U(\h)$)
\ba
  f \circ \exp( \ln(x) \cdot \Omega^{(12)})(\omega \otimes \omega)
  = g(\omega) &+ \ln(x) \cdot ( \chi^- g(\chi^+ \omega) - \chi^+ g(\chi^- \omega) + 2 g(\chi^+\chi^- \omega) )
  \nonumber \\ 
  &- \ln(x)^2 \cdot (\chi^+ \chi^- g( \chi^+ \chi^- \omega) ) \ ,
  \label{eq:sympferm-VA-gndstate}
\ea
in agreement with \cite[Eqn.\,(D.6)]{Gaberdiel:2006pp}. For example, if $g$ maps the basis vector $\hat\Omega$ to $\omega$ and the rest to zero, we have the following three-point functions for the vertex operator $V(x)$:
\ba
  \big\langle\,\omega^* \, , \, V(x) \,(\omega \otimes \omega) \big\rangle &= -2 \ln(x) \ ,
  \nonumber \\
  \big\langle\,(\chi^\pm \omega)^* \, , \, V(x) \,(\omega \otimes \omega) \big\rangle &= 0 \ ,
  \nonumber \\
  \big\langle\,\hat\Omega^* \, , \, V(x) \,(\omega \otimes \omega) \big\rangle
  &= - \ln(x)^2 \ ,
\ea
where $(-)^*$ denotes elements of the dual basis.

\subsection{Normal ordered exponentials}

This section contains a formulation of vertex operators in terms of normal ordered exponentials which can accommodate non-semisimple representations and parity odd generators. 

\subsubsection*{Untwisted representations}

For $\h$ purely even (the free boson case), and when restricting to irreducible representations, the vertex operators are the text-book normal ordered exponentials (see Example \ref{ex:VOs} below). Vertex operators between indecomposable representations are constructed in \cite[Thm.\,7.4]{Milas:2008}. The new contribution here is to allow for odd directions.

\medskip

To start with, we need the following map:\footnote{
  In the context of free boson vertex operators (assume that $\h$ is $1|0$-dimensional for simplicity), the map $Q$ is usually written as $\exp( (\text{const}) \pi_0)$, where  $\pi_0$ is an additional operator which is conjugate to $a_0$ in the sense that $[a_m,\pi_0] = \delta_{m,0}$. Here we avoid the problem of specifying what space $\pi_0$ acts on and give $Q$ directly.}


\begin{lemma} \label{lem:Qf-exists}
Let $R,S,T \in \Rep(\h)$ and let $f : R \otimes S \to T$ be an $\h$-intertwiner. There is a unique even linear map $Q_f : R \otimes \Ind(S) \to \Ind(T)$ such that 
\begin{enumerate}
\item for all $r \in R$, $s \in S$ we have $Q_f(r \otimes s) = f(r \otimes s)$,
\item for all $a \in \h$, $m \in \Zb$ we have
$a_m \circ Q_f = Q_f \circ \big( \delta_{m,0} \cdot a \otimes \id \,+\, \id \otimes a_m \big)$.
\end{enumerate}
\end{lemma}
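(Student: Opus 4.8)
The plan is to build $Q_f$ by induction along the $H$-grading of $\Ind(S)$, using the PBW basis of $U(\hat\h)$, and then to verify uniqueness by showing that properties 1 and 2 already force the value of $Q_f$ on every element of $R\otimes\Ind(S)$. First I would recall that, as a super-vector space, $\Ind(S)\cong U(\hat\h_{<0})\otimes S$, where $\hat\h_{<0}$ is spanned by the $a_m$ with $m<0$; by PBW a basis of $\Ind(S)$ is given by vectors of the form $\alpha^{i_1}_{-m_1}\cdots\alpha^{i_\ell}_{-m_\ell}\,s$ with $m_1\ge\cdots\ge m_\ell\ge 1$ (strict where the generators are odd) and $s$ ranging over a basis of $S$. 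The $H$-eigenvalue of such a vector is $m_1+\cdots+m_\ell$. I would define $Q_f$ on $H$-degree $0$ by $Q_f(r\otimes s)=f(r\otimes s)$, which is exactly property 1, and then extend to higher degree by the recursion forced by property 2: given a basis vector $\hat s = a_{-m}\,\hat s'$ with $m\ge 1$ and $\hat s'$ of strictly smaller $H$-degree, property 2 demands
\be
  Q_f(r\otimes a_{-m}\hat s') = a_{-m}\,Q_f(r\otimes \hat s') \ ,
\ee
since $\delta_{-m,0}=0$. This determines $Q_f(r\otimes\hat s)$ from the already-defined value $Q_f(r\otimes\hat s')\in\Ind(T)$, and sets up the induction. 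One must check this recursion is well defined, i.e. independent of how the chosen PBW monomial is peeled off; the cleanest way is to declare $Q_f$ on the spanning set $\{\,r\otimes a^1_{-m_1}\cdots a^L_{-m_L}s\,\}$ (no ordering constraint) by $Q_f(r\otimes a^1_{-m_1}\cdots a^L_{-m_L}s)=a^1_{-m_1}\cdots a^L_{-m_L}f(r\otimes s)$ and then verify this descends to $R\otimes\Ind(S)$, i.e. that it respects the relations in $U(\hat\h_{<0})$. The relations are generated by $a_{-m}b_{-n}-(-1)^{|a||b|}b_{-n}a_{-m}=[a_{-m},b_{-n}]=-m(a,b)\delta_{m+n,0}K=0$ for $m,n\ge 1$ (the central term vanishes because $m+n\ge 2$), and applying the corresponding pair of modes to $f(r\otimes s)\in\Ind(T)$ gives the same vanishing super-commutator; so the assignment is consistent.

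Next I would verify property 2 in general, for arbitrary $a\in\h$ and $m\in\Zb$ — not just $m<0$, which is how $Q_f$ was built. For $m<0$ this is the defining recursion. For $m=0$: on $H$-degree zero, $a_0\circ Q_f(r\otimes s)=a_0 f(r\otimes s)=f(a_0 r\otimes s)+f(r\otimes a_0 s)$ because $f$ is an $\h$-intertwiner for the coproduct $\Delta(a)=a\otimes 1+1\otimes a$; this is exactly $Q_f\circ(a\otimes\id+\id\otimes a_0)$ in degree zero, and then one pushes $a_0$ past the creation operators using $[a_0,b_{-n}]=0$ (for $n\ge 1$) on both sides, so the identity propagates up the grading. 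For $m>0$: here the annihilation modes act nontrivially, and one argues again by induction on the $H$-degree of the element of $\Ind(S)$, using $[a_m,b_{-n}]=m(a,b)\delta_{m,n}K$. Concretely, writing an element of $R\otimes\Ind(S)$ as $r\otimes b_{-n}\hat s'$, one computes $a_m\,Q_f(r\otimes b_{-n}\hat s')=a_m b_{-n}Q_f(r\otimes\hat s')=b_{-n}a_m Q_f(r\otimes\hat s')+m(a,b)\delta_{m,n}Q_f(r\otimes\hat s')$, applies the inductive hypothesis to the first term (the $H$-degree of $\hat s'$ is strictly smaller), and checks the bookkeeping on the right-hand side $Q_f\circ(x^m a\otimes\id+\id\otimes a_m)$ matches — the $\delta_{m,0}$ in property 2 kills the $a\otimes\id$ contribution, consistent with $\delta_{m,0}=0$ here, while the central term from the commutator reproduces the $\id\otimes a_m$ term acting through $b_{-n}$. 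The base case $m>0$ on ground states is immediate since $a_m$ annihilates both $S\subset\Ind(S)$ (as ground states) and $f(r\otimes s)\in T\subset\Ind(T)$.

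Finally, uniqueness: if $Q$ and $Q'$ both satisfy 1 and 2, then $Q-Q'$ vanishes on $R\otimes S$ by property 1, and property 2 with $m<0$ shows that if $Q-Q'$ vanishes on all of $R\otimes\Ind(S)_{<k}$ then it vanishes on $R\otimes\Ind(S)_k$, since every vector in $\Ind(S)_k$ is a sum of $a_{-m}$ applied to vectors of lower degree; hence $Q=Q'$ by induction. Evenness of $Q_f$ is built in from the start because $f$ is even (it is a morphism in $\svect$) and the modes $a_{-m}$ act with definite parity matching $|a|$, so the spanning-set formula is parity-homogeneous. The main obstacle I anticipate is the well-definedness check on the spanning set — i.e. confirming that the naive assignment $a^1_{-m_1}\cdots a^L_{-m_L}f(r\otimes s)$ really factors through $\Ind(S)=U(\hat\h_{<0})\otimes S$ rather than depending on the word in the $a^j_{-m_j}$; but as sketched this reduces to the observation that the only PBW relations among strictly-negative modes are the (super-)commutativity relations whose central term $m(a,b)\delta_{m+n,0}K$ vanishes for $m,n\ge 1$, so the same relations hold among the operators $a^j_{-m_j}$ acting on $\Ind(T)$, and there is nothing further to check.
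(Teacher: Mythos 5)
Your construction is essentially the paper's proof, just unpacked: the paper restricts $\Ind(S)$ to the zero-mode-free subalgebra $\tilde\h\subset\hat\h$, observes that as a $\tilde\h$-module $\Ind(S)$ is the induced module $J(S)$, and obtains $Q_f(r\otimes -)$ as the lift $\bar F_r$ of $s\mapsto f(r\otimes s)$ via the universal property of induced modules (which settles property 2 for all $m\neq 0$ at once), and then checks $m=0$ by exactly the spanning-set computation you describe, pushing $a_0$ through the negative modes using $[a_0,b_{-n}]=0$ and the intertwining property of $f$. Your PBW/spanning-set definition and your inductive verification for $m>0$ via $[a_m,b_{-n}]=m\,(a,b)\,\delta_{m,n}K$ (with $K=1$) are the hands-on version of that universal-property step, and your uniqueness argument coincides with the paper's. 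The one genuine slip is a parity sign: since $\id\otimes a_m$ acts with the Koszul convention in $\svect$, properties 1 and 2 force $Q_f(r\otimes a^1_{-m_1}\cdots a^k_{-m_k}s)=(-1)^{|r|(|a^1|+\cdots+|a^k|)}\,a^1_{-m_1}\cdots a^k_{-m_k}f(r\otimes s)$ --- the sign the paper writes out in its uniqueness remark --- so your recursion $Q_f(r\otimes a_{-m}\hat s')=a_{-m}Q_f(r\otimes\hat s')$ and your spanning-set formula need an extra factor $(-1)^{|a||r|}$ per mode; as literally written they violate property 2 by a sign when both $r$ and the mode are odd. With that sign inserted (and carried through the $m=0$ and $m>0$ checks), your argument is correct.
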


\begin{proof}
If $Q_f$ exists, it is clear that it is unique since by property 1 and 2, for all $a^i \in \h$, $m_i>0$,
\be
  Q_f\big(\, r \,\otimes\, a^1_{-m_1} \cdots a^k_{-m_k} s \,\big) \,=\, (-1)^{|r|(|a^1|+\cdots+|a^k|)}\, a^1_{-m_1} \cdots a^k_{-m_k} f(r \otimes s) \ ,
\ee
and the vectors $a^1_{-m_1} \cdots a^k_{-m_k} s$ span $\Ind(S)$. The existence proof is also easy but uses some notation introduced in the appendix and is therefore moved to Appendix \ref{app:lemma-Qf-exists}. 
\end{proof}

From property 2 we see that
\be
 H \circ Q_f = Q_f \circ (\id_R \otimes H) \ .
\ee
Write $\overline{R \otimes \Ind(S)}$ for the algebraic completion of $R \otimes \Ind(S)$ with respect to the $\id \otimes H$-grading. The above equality shows that $Q_f$ extends to a linear map $Q_f : \overline{R \otimes \Ind(S)} \to \overline\Ind(T)$.

\medskip

We now turn to exponentials of modes. To ensure these are well-defined we restrict ourselves to locally finite representations for the domain of the exponential maps. Pick thus
\be
  R,S \in \Rep\lf(\h)
  \quad , \quad
  T \in \Rep(\h) \ .
\ee
Define the three maps
\be \label{eq:E-op-untw}
\begin{array}{r@{\,=\,}lcr@{\,\to\,}l}
E_+(x) &
\displaystyle
\exp\!\Big(\sum_{m>0} \tfrac{x^{-m}}{-m} \sum_i \beta^i \otimes \alpha^i_m \Big)
&:& R \otimes \Ind(S) & R \otimes \Ind(S) \ ,
\\[1.3em]
E_0(x) &
\displaystyle
\exp\!\Big( \ln(x) \cdot \sum_i \beta^i \otimes \alpha^i_0 \Big) 
&:& R \otimes \Ind(S) & R \otimes \Ind(S) \ ,
\\[1.3em]
E_-(x) & 
\displaystyle
\exp\!\Big(\sum_{m<0} \tfrac{x^{-m}}{-m} \sum_i \beta^i \otimes \alpha^i_m \Big)
&:& R \otimes \Ind(S) & \overline{R \otimes \Ind(S)} \ .
\end{array}
\ee
The map $E_+(x)$ indeed has image in $R\otimes \Ind(S)$ since on a given element of $R\otimes \Ind(S)$, the exponential reduces to a finite sum. For $E_0(x)$, note that $[\id_R \otimes H, \sum_i \beta^i \otimes \alpha^i_0] = 0$ and pick an $\id \otimes H$-eigenvector $v$ of $R\otimes \Ind(S)$. Since $R,S \in \Rep\lf(\h)$, the subspace of the $(\id \otimes H)$-eigenspace generated by the action of $\beta^i \otimes \alpha^i_0$ on $v$ is finite-dimensional, and so the exponential $E_0$ converges to an endomorphism on that subspace. Finally, we define
\be \label{eq:Vf-untwist-def}
  V_f(x) = Q_f \circ E_-(x) \circ E_0(x) \circ E_+(x)
  \quad : ~ R \otimes \Ind(S) \to \overline\Ind(T) \ .
\ee

\begin{lemma}\label{lem:Vf-VO-untwisted}
$V_f$ is a vertex operator from $R,S$ to $T$. 
\end{lemma}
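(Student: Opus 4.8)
The strategy is to check the three defining conditions (i), (ii), (iii.a) of Definition~\ref{def:vertex-op} for $V_f(x) = Q_f \circ E_-(x) \circ E_0(x) \circ E_+(x)$, case~a). Condition (i) --- even linearity and smoothness of matrix elements --- is immediate: each of $E_\pm(x)$, $E_0(x)$ is even and, evaluated against a fixed ground state $\varphi \in \Ind(T)'$ and on a fixed vector of $R \otimes \Ind(S)$, reduces to a finite sum of monomials in $x^{\pm 1}$ and $\ln(x)$, hence is smooth on $\Rb_{>0}$. The substance is in (ii) and (iii.a).

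\textbf{The mode exchange relation (iii.a).} This is the computational core and I expect it to be the main obstacle, though only a mildly tedious one. The plan is to commute $a_m$ (for $a \in \h$, $m \in \Zb$) successively past $Q_f$, $E_-(x)$, $E_0(x)$, $E_+(x)$ from the left. Past $Q_f$ we use property~2 of Lemma~\ref{lem:Qf-exists}. Past the exponentials we use that $[a_m, b_n] = m(a,b)\delta_{m+n,0}$ is central, so $a_m$ commutes with $\sum_i \beta^i \otimes \alpha^i_k$ up to the scalar $\sum_i (a,\beta^i)\,\delta_{m+k,0}\,(\text{acting on the first factor}) = $ contraction with the first tensor slot; concretely, $[\id_R \otimes a_m, \sum_i \beta^i \otimes \alpha^i_k] = \delta_{m+k,0}\cdot (\sum_i (a,\beta^i)\beta^i) \otimes \id = \delta_{m+k,0}\cdot a \otimes \id$ using $(-,-)$-duality of the bases. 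Since the exponents in $E_\pm, E_0$ carry the coefficients $\tfrac{x^{-k}}{-k}$ (resp. $\ln x$ for $k=0$), pushing $a_m$ through produces exactly the extra term $x^m \cdot (a \otimes \id)$ on the right, assembled from whichever of $E_+$ (if $m>0$), $E_0$ (if $m=0$), or $E_-$ (if $m<0$) contains the matching mode. One must also track parity signs $(-1)^{|r||a|}$, which are harmonized by the evenness of $Q_f$ and the $\beta^i \otimes \alpha^i$ form of the exponents. Care is needed about convergence/well-definedness when commuting into $E_-(x)$, but this is exactly the setting (locally finite $R,S$) in which \eqref{eq:E-op-untw} was shown to make sense, and the resulting identity holds in $\overline\Ind(T)$ degreewise.

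\textbf{The $L_{-1}$-derivative relation (ii).} Here I would use the explicit formula $L_{-1} b = \sum_{i} \beta^i_{-1}\alpha^i_0 b$ on ground states together with the already-verified relation (iii.a) to translate $\id_A \otimes L_{-1}$ through $V_f(x)$, and separately compute $\tfrac{d}{dx}V_f(x)$ by differentiating the exponentials: $\tfrac{d}{dx}E_\pm(x) = (\sum_{\pm m>0} x^{-m-1}\sum_i \beta^i \otimes \alpha^i_m)\,E_\pm(x)$ and $\tfrac{d}{dx}E_0(x) = x^{-1}(\sum_i \beta^i \otimes \alpha^i_0)E_0(x)$ (using centrality to pull the derivative of the argument out of the exponential), and likewise $[L_{-1}, -]$ through $Q_f$. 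Summing the three contributions reassembles $\tfrac1x \sum_{k \in \Zb}\sum_i \beta^i \otimes \alpha^i_k$ acting appropriately, which is precisely $\tfrac{d}{dx}V_f(x)$; matching this against $L_{-1}V_f(x) - V_f(x)(\id \otimes L_{-1})$ uses \eqref{eq:Lm-nonzero} for $m=-1$ on the target side and the mode-exchange relation on the source side. Alternatively, and more cleanly, one verifies (ii) first on ground states --- where it reduces to the differential equation already solved in Proposition~\ref{prop:V-on-gnd}, case~a), namely $\tfrac{d}{dx}\Pgs V(x) = x^{-1}\Pgs V(x)\Omega^{(12)}$ --- and then extends to all of $R \otimes \Ind(S)$ by commuting modes through using (iii.a), since $L_{-1}$ and $\tfrac{d}{dx}$ both intertwine with the mode action in compatible ways. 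I would adopt this second route, as it lets Proposition~\ref{prop:V-on-gnd} do the analytic work and leaves only bookkeeping.

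\textbf{Assembling.} Once (i), (ii), (iii.a) are checked, $V_f(x) \in \Vc_{R,S}^T$ by definition, completing the proof. The only genuinely delicate point is ensuring all manipulations are legitimate in the completion $\overline\Ind(T)$ and that the exponential series $E_0(x)$, $E_-(x)$ converge on each fixed vector --- both of which are guaranteed by local finiteness of $R$ and $S$ as already discussed after \eqref{eq:E-op-untw}.
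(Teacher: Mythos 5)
Your proposal is correct. For conditions (i) and (iii) it follows the same route as the paper: (i) is immediate, and (iii) is obtained by pushing $a_m$ through $Q_f$ via property 2 of Lemma \ref{lem:Qf-exists} and through the normal ordered exponentials via the central commutator, which is exactly the paper's combination of property 2 with \eqref{eq:h-Em-comm}. Two harmless bookkeeping slips there: the commutator is $[\id\otimes a_m,\sum_i\beta^i\otimes\alpha^i_k]=m\,\delta_{m+k,0}\cdot a\otimes\id$ (the factor $m$ matters, and the contraction is $\sum_i(a,\alpha^i)\beta^i=a$, cf.\ \eqref{eq:metric-sum-rules}), and the cross term for $m>0$ is produced by $E_-(x)$ (which contains $\alpha^i_{-m}$), for $m<0$ by $E_+(x)$, while for $m=0$ it comes from $Q_f$ itself rather than from $E_0(x)$, since zero modes commute with all $\alpha^i_k$; both slips wash out in your final identity $a_m V_f(x)=V_f(x)(x^m\,a\otimes\id+\id\otimes a_m)$.

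Where you genuinely depart from the paper is condition (ii), which is the substantive part of its proof. The paper verifies (ii) head-on: it computes $[\id\otimes L_{-1},E^{(12)}_m(x)]$ and $\tfrac{d}{dx}E^{(12)}_m(x)$ factor by factor, finds the two sides differ by $\Omega^{(12)}_{-1}\,E_-(x)E_0(x)E_+(x)$, and cancels this against $L_{-1}Q_f=Q_f(\Omega^{(12)}_{-1}+\id\otimes L_{-1})$, which follows from \eqref{eq:Lm-nonzero} and property 2 of $Q_f$. You instead check (ii) only after projecting to ground states and then bootstrap using (iii): the defect $D(x)=L_{-1}V_f(x)-V_f(x)(\id\otimes L_{-1})-\tfrac{d}{dx}V_f(x)$ obeys the same exchange relation as $V_f$, because $[L_{-1},a_m]=-m\,a_{m-1}$ on source and target cancels against $\tfrac{d}{dx}x^m=m\,x^{m-1}$, so all matrix elements of $D$ reduce to ground-state ones, which vanish since $\Pgs\circ V_f(x)|_{R\otimes S}=f\circ e^{\ln(x)\,\Omega^{(12)}}$ satisfies $\tfrac{d}{dx}F=x^{-1}F\circ\Omega^{(12)}$ while, by (iii), $-\Pgs\circ V_f(x)\circ(\id\otimes L_{-1})|_{R\otimes S}$ equals the same expression. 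This is a legitimate alternative; it is essentially the reduction-to-ground-states argument the paper only uses later, in the injectivity part of Theorem \ref{thm:G-bijective}, imported to establish (ii). Two caveats to make it airtight: derive the ground-state form of $V_f$ directly from the construction rather than quoting Proposition \ref{prop:V-on-gnd}, whose hypotheses already include (ii); and spell out the cancellation in the computation of $a_m\circ D(x)$, since that is the one place where the normalisation $\tfrac{x^{-m}}{-m}$ of the exponents and the commutator $[L_{-1},a_m]$ actually interact. The paper's direct computation has the advantage of being self-contained within the exponential formalism and of carrying over almost verbatim to the twisted case (Lemma \ref{lem:Vf-VO-twisted}, where the extra complication at $m=\tfrac12$ surfaces); your route trades that computation for a uniqueness argument.
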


Before turning to the proof, let us recover the more accustomed free boson case:

\begin{example} \label{ex:VOs}
Single free boson, $\h = \Cb^{1|0}$: Let $\Cb_p$ denote the irreducible representation of $J$-eigenvalue $p$ as in Section \ref{sec:examples-VO}. Choose $R = \Cb_r$, $S = \Cb_s$. We have $\Omega = J \otimes J$ and so $J \otimes J_m$ acts on $R \otimes \Ind(S)$ as $r \, J_m$. The normal ordered exponential \eqref{eq:Vf-untwist-def} becomes
\be
  V_f(x) = x^{rs} \cdot Q_f \, 
  \exp\!\Big(r \sum_{m<0} \tfrac{x^{-m}}{-m} J_m \Big) \,
  \exp\!\Big(r \sum_{m>0} \tfrac{x^{-m}}{-m} J_m \Big) \ ,
\ee
which is the more familiar expression for vertex operators in the free boson case. 
\end{example}

The notation needed for the proof of lemma \ref{lem:Vf-VO-untwisted} will be set up slightly more generally in order to avoid repetition later on. Fix representations $R_1, \dots, R_s \in \Rep\lf(\h)$ and consider the product 
\be\label{eq:W-product-aux}
  W = R_1 \otimes R_2 \otimes \cdots \otimes R_{s-1} \otimes \Ind(R_s) \ .
\ee
For $a \in \h$ let $a^{(i)}$ be the endomorphism which acts as $a$ on the $i$'th tensor factor and as identity on the others, e.g.\ $a^{(1)} \equiv a \otimes \id_{R_2} \otimes \cdots \otimes \id_{\Ind(R_s)}$. We will also write $a^{(s)}_m \equiv \id_{R_1} \otimes \cdots \otimes \id_{R_{s-1}} \otimes a_m$. 
Similar to \eqref{eq:Omega-ij}, on a product of the form $W$ we set, for $1 \le i,j < s$,
\be \label{eq:Omega-ij-v1}
  \Omega^{(ij)} = \sum_{k \in \Ic} (\beta^k)^{(i)} \circ (\alpha^k)^{(j)} \qquad , \qquad
  \Omega^{(is)}_m = \sum_{k \in \Ic} (\beta^k)^{(i)} \circ (\alpha^k_m)^{(s)} \ ,
\ee
where $\{\alpha^k\}$ and $\{\beta^k\}$ is the dual basis pair from Section \ref{sec:lie-super}. For $m \neq 0$ define $E^{(is)}_m(x)  = \exp( \frac{x^{-m}}{-m} \Omega_m^{(is)})$. The normal ordered exponentials in \eqref{eq:E-op-untw}, acting on the $i$'th and the last factor in the product $W$, can be written as
\be \label{eq:E-ij-v1}
  E^{(is)}_+(x) = \prod_{m>0} E^{(is)}_m(x)
  ~~ , \quad
  E^{(is)}_0(x) = \exp( \ln(x) \Omega_0^{(is)})
  ~~ , \quad
  E^{(is)}_-(x) = \prod_{m<0} E^{(is)}_m(x)
\ee  
With this notation, the vertex operator \eqref{eq:Vf-untwist-def} is $V_f = Q_f \circ E_-^{(12)}(x) \circ E_0^{(12)}(x) \circ E_+^{(12)}(x)$. We shall need that for all $h \in \h$, $m,n \in \Zb$,
\ba
  h^{(s)}_m \Omega^{(is)}_n
  &= ~
  \sum_{k} h^{(s)}_m \, (\beta^k)^{(i)} \, (\alpha^k_n)^{(s)}
  =~
  \sum_{k} (-1)^{|h||\beta^k|} \, (\beta^k)^{(i)} \, h^{(s)}_m \, (\alpha^k_n)^{(s)}
\nonumber \\
  &=~
  \sum_{k} (-1)^{|h||\beta^k|}  \,  (\beta^k)^{(i)} \big( (-1)^{|h||\alpha^k|} (\alpha^k_n)^{(s)}  \,  h^{(s)}_m + [h^{(s)}_m , (\alpha^k_n)^{(s)} ]  \big)
\nonumber \\
  &=~ \Omega^{(is)}_n \, h^{(s)}_m ~+~ 
  m \, \delta_{m+n,0}  \sum_{k} (-1)^{|h||\beta^k|}  \,  (h,\alpha^k) \cdot (\beta^k)^{(i)}  
\nonumber \\
  &=~   \Omega^{(is)}_n \, h^{(s)}_m ~+~ m\, \delta_{m+n,0}  \, h^{(i)} \ .
  \label{eq:h-Om-comm}
\ea
In the last step we used the second of the sum rules ($h \in \h$)
\be\label{eq:metric-sum-rules}
  h = \sum_{k} (\alpha^k,h) \cdot \beta^k = \sum_{k} (-1)^{|h||\beta^k|} \, (\beta^k,h) \cdot \alpha^k \ .
\ee
Since $[h^{(i)},\Omega^{(is)}_n] = 0$, the identity \eqref{eq:h-Om-comm} implies that for $n \neq 0$,
\ba
  h^{(s)}_m  \, E^{(is)}_n(x) 
  &=  
  E^{(is)}_n(x) \, E^{(is)}_n(x)^{-1} \, h^{(s)}_m  \, E^{(is)}_n(x)
  =
  E^{(is)}_n(x) \, e^{-\frac{x^{-n}}{-n} \mathrm{ad}_{\Omega^{(is)}_n}}(h^{(s)}_m)
\nonumber \\
  &=
  E^{(is)}_n(x) \, \big( \, h^{(s)}_m + x^m \delta_{m+n,0} \cdot h^{(i)} \, \big) \ .
  \label{eq:h-Em-comm}
\ea

\begin{proof}[Proof of Lemma \ref{lem:Vf-VO-untwisted}]
We need to verify the properties in case a) of Definition \ref{def:vertex-op}. Conditions (i) and (iii) are easy, (ii) takes a small calculation.

\medskip\noindent
\nxt Condition (i): In a given matrix element, $E_\pm(x)$ result in finite sums and $E_0(x)$ is smooth for each summand. 

\medskip\noindent
\nxt Condition (iii): For $m=0$ this is immediate from property 2 of $Q_f$ in Lemma \ref{lem:Qf-exists} and for $m\neq 0$ combine property 2 with \eqref{eq:h-Em-comm}.

\medskip\noindent
\nxt Condition (ii): As usual one verifies that $[L_m,a_n] = - n a_{m+n}$ for all $a \in \h$ and $m,n\in \Zb$ (combine \eqref{eq:affine-commutator}, \eqref{eq:virasoro-untwisted} and \eqref{eq:metric-sum-rules}). From this it follows that for $m \neq 0$
\be \label{lem:Vf-untw-aux3}
  \big[\id \otimes L_{-1}, E^{(12)}_m(x)\big] = \big[ \id \otimes L_{-1},  \tfrac{x^{-m}}{-m} 
  \sum_{k } \beta^k \otimes \alpha^k_m \big]  E^{(12)}_m(x)
  = x^{-m} \, \Omega^{(12)}_{m-1} \, E^{(12)}_m(x) \ .
\ee
Note that we implicitly used $[\Omega^{(12)}_{m-1},\Omega^{(12)}_{m}]=0$ for all $m \in \Zb$. (In the twisted case this will cause a complication for $m=\frac12$, see Lemma \ref{lem:Vf-VO-twisted} below.) It follows that
\ba
   \big[\id \otimes L_{-1}, E_-(x) \big] &= \Big( \sum_{m=-\infty}^{-1} x^{-m} \, \Omega^{(12)}_{m-1} \Big) E_-(x) \ ,
\nonumber \\
   \big[\id \otimes L_{-1}, E_0(x)\, \big] &= 0 \ ,
\nonumber \\
   \big[\id \otimes L_{-1}, E_+(x) \big] &= \Big( \sum_{m=1}^\infty x^{-m} \, \Omega^{(12)}_{m-1} \Big) E_+(x) \ .
\ea
For the $x$-derivatives, on the other hand,  we find
\ba
   \tfrac{d}{dx}  E_-(x) &= \Big( \sum_{m=-\infty}^{-1} x^{-m-1}\, \Omega^{(12)}_{m} \Big) E_-(x) 
  = \Big( \sum_{m=-\infty}^{0} x^{-m} \,\Omega^{(12)}_{m-1} \Big) E_-(x) 
   \ ,
\nonumber \\
   \tfrac{d}{dx}  E_0(x) &= x^{-1} \,\Omega^{(12)}_0 E^{(12)}_0(x) \ ,
\nonumber \\
   \tfrac{d}{dx}  E_+(x) &= \Big( \sum_{m=1}^\infty x^{-m-1} \,\Omega^{(12)}_{m} \Big) E_+(x) 
   =  \Big( \sum_{m=2}^\infty x^{-m} \,\Omega^{(12)}_{m-1} \Big) E_+(x) \ .
\ea
From this we read off
\ba
  \big[ \, \id \otimes L_{-1} \,,\, E_-(x) E_0(x) E_+(x) \,\big] + \Omega^{(12)}_{-1} \, E_-(x) E_0(x) E_+(x) ~=~ \tfrac{d}{dx} \big( E_-(x) E_0(x) E_+(x) \big)
    \ .
    \label{lem:Vf-untw-aux1}
\ea
The difference will cancel against a corresponding term coming from $Q_f$. Namely,
\ba
L_{-1} \, Q_f 
& \overset{(*)}= Q_f
\sum_{i \in \Ic} \sum_{k \in\Zb} \tfrac12 
\big( \delta_{k,0} \beta^i \otimes \id + \id \otimes \beta^i_k\big)
\big( \delta_{k,-1} \alpha^i \otimes \id + \id \otimes \alpha^i_{-k-1} \big)
\nonumber \\
&= Q_f \, \big( \Omega^{(12)}_{-1} + \id \otimes L_{-1} \big) \ ,
    \label{lem:Vf-untw-aux2}
\ea
where (*) amounts to combining the expression \eqref{eq:Lm-nonzero} for $L_{-1}$ with property 2 in Lemma \ref{lem:Qf-exists}. Together, \eqref{lem:Vf-untw-aux1} and \eqref{lem:Vf-untw-aux2} establish condition (ii).
\end{proof}

\subsubsection*{Twisted representations}

The construction of vertex operators from (untwisted)$\times$(twisted)$\to$(twisted) proceeds parallel to the untwisted case. To start with, define the map $\tilde Q_f$ via:

\begin{lemma} \label{lem:Qf-exists-tw}
Let $R \in \Rep(\h)$ and $X,Y \in \svect$ and let $f : R \otimes X \to Y$ be an even linear map. There is a unique even linear map $\tilde Q_f : R \otimes \Indtw(X) \to \Indtw(Y)$ such that 
\begin{enumerate}
\item for all $r \in R$, $x \in X$ we have $\tilde Q_f(r \otimes x) = f(r \otimes x)$,
\item for all $a \in \h$, $m \in \Zb+\tfrac12$ we have
$a_m \circ \tilde Q_f = \tilde Q_f \circ (\id \otimes a_m )$.
\end{enumerate}
\end{lemma}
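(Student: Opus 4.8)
The plan is to mimic the proof of Lemma~\ref{lem:Qf-exists} almost verbatim, since the twisted setting only removes the zero modes and otherwise changes nothing structurally. First I would prove \emph{uniqueness}. Properties 1 and 2 force, for all $a^i \in \h$ and all $m_i \in \Zb_{>0}+\tfrac12$,
\be
  \tilde Q_f\big(\, r \otimes a^1_{-m_1} \cdots a^k_{-m_k} x \,\big) = (-1)^{|r|(|a^1|+\cdots+|a^k|)}\, a^1_{-m_1} \cdots a^k_{-m_k}\, f(r \otimes x) \ ,
\ee
because property~2 lets one move each negative mode $a^j_{-m_j}$ past the $R$-factor (picking up the sign from the parities, as $\tilde Q_f$ is even) and onto $f(r\otimes x) \in \Indtw(Y)$, while property~1 evaluates the base case. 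By the PBW theorem for $\hat\h_\mathrm{tw}$ (equivalently, by Theorem~\ref{thm:ind-equiv-tw}, which realises $\Indtw(Y)$ as $U(\hat\h_{\mathrm{tw},<0}) \otimes Y$), the vectors $a^1_{-m_1}\cdots a^k_{-m_k} x$ with $m_1 \ge \cdots \ge m_k > 0$ span $\Indtw(X)$, so the value of $\tilde Q_f$ is completely determined; hence it is unique if it exists.

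For \emph{existence}, I would take the displayed formula above as the \emph{definition} of $\tilde Q_f$ on a spanning set and check it is well-defined and satisfies properties 1 and 2. Concretely, one uses the PBW basis of $U(\hat\h_{\mathrm{tw},<0})$ to write $\Indtw(X) \cong U(\hat\h_{\mathrm{tw},<0}) \otimes X$ as a super-vector space, defines $\tilde Q_f$ on $R \otimes (U(\hat\h_{\mathrm{tw},<0}) \otimes X)$ by $r \otimes (u \otimes x) \mapsto (-1)^{|r||u|}\, u \cdot f(r\otimes x)$ (where $u \cdot (-)$ is the module action on $\Indtw(Y) \cong U(\hat\h_{\mathrm{tw},<0}) \otimes Y$), and observes this is manifestly even and linear. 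Property~1 is immediate ($u=1$). For property~2, write $a_m \circ \tilde Q_f(r \otimes (u\otimes x))$; if $m<0$ then $a_m u$ is again a product of negative modes and $a_m \cdot (u \cdot f(r\otimes x)) = (a_m u)\cdot f(r\otimes x)$ gives the claim directly after accounting for the sign $(-1)^{|r||u|}$, while if $m>0$ one commutes $a_m$ to the right through $u$ using the bracket \eqref{eq:affine-commutator} in its twisted form; each commutator $[a_m, a^j_{-m_j}] = m(a,a^j)\delta_{m-m_j,0}K$ is a scalar (acting as the identity since $K=1$), and because there is no zero mode and $a_m x = 0$ for $m>0$, all terms with $a_m$ reaching the far right vanish. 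One then matches the surviving terms with $\tilde Q_f \circ (\id \otimes a_m)$, again tracking the global sign. This is exactly the computation carried out in Appendix~\ref{app:lemma-Qf-exists} for the untwisted case, only simpler because the $\delta_{m,0}\cdot a\otimes\id$ term is absent.

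The one genuinely new point — and the place I would be most careful — is the bookkeeping of Koszul signs: $\tilde Q_f$ is required to be \emph{even}, so moving a homogeneous mode $a_m$ (of parity $|a|$) past the $R$-slot produces $(-1)^{|a||r|}$, and this sign must be threaded consistently through both the definition and the verification of property~2, exactly as the factor $(-1)^{|r|(|a^1|+\cdots+|a^k|)}$ appears in Lemma~\ref{lem:Qf-exists}. Since there are no zero modes here, there is no subtlety about $a_0$ acting on $X$, and the convergence/completion issues that affect $E_0(x)$ later do not arise at this stage. I therefore expect the proof to be a routine, sign-careful adaptation, and I would simply refer to Appendix~\ref{app:lemma-Qf-exists} for the existence half, noting only the simplifications (drop the $\hat\h_{\ge 0}$ zero mode, replace $\Zb$ by $\Zb+\tfrac12$ throughout). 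The main obstacle, such as it is, is purely notational: ensuring the parity signs in the spanning-set formula and in the commutation check agree.
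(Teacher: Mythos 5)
Your proposal is correct and is essentially the paper's own (omitted) argument: uniqueness by evaluating on the spanning set $a^1_{-m_1}\cdots a^k_{-m_k}x$ with the Koszul sign, and existence from the free action of the negative modes on the induced module, with property 2 simplified by the absence of zero modes. One small inaccuracy worth noting: in Appendix \ref{app:lemma-Qf-exists} existence is obtained by lifting $F_r : s \mapsto f(r\otimes s)$ through the universal property of the induced $\tilde\h$-module (so the $m\neq 0$ exchange relation is automatic and only the $m=0$ case is checked by the displayed commutator computation); your explicit PBW-basis definition with a direct commutator check of the $m>0$ relation is an equivalent substitute, and the zero-mode check you rightly drop in the twisted case is the only part of that appendix calculation that was ever nontrivial.
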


The proof is a simplified version of the proof of Lemma \ref{lem:Qf-exists} given in Appendix \ref{app:lemma-Qf-exists} (due to the absence of zero modes) and has been omitted.

\medskip

Fix $R \in \Rep\lf(\h)$ and $X,Y \in \svect$. As before, the reason to take $R$ locally finite is to make the normal ordered exponential well-defined. Define maps
$\tilde E_+(x) : R \otimes \Indtw(X) \to R \otimes \Indtw(X)$
and
$\tilde E_-(x) : R \otimes \Indtw(X) \to \overline{R \otimes\Indtw(X)}$
by the same expressions as $E_\pm(x)$ in \eqref{eq:E-op-untw}, except that the summand $m$ lies in $\Zb+\frac12$. The expression for $\tilde E_0$ is different from the untwisted case, namely
\be
  \tilde E_0(x) = \exp\big(- \tfrac12 \ln(x) \, \Omega^{(11)}\big) 
  ~:~ R \otimes \Indtw(X) \to R \otimes \Indtw(X) \ .
\ee
Finally, set
\be \label{eq:Vf-twisted-def}
  V_f(x) = \tilde Q_f \circ \tilde E_-(x) \circ \tilde E_0(x) \circ \tilde E_+(x) \ .
\ee  
If $\h = \Cb^{1|0}$ and $R$ is one-dimensional, this agrees with \cite[Eqn.\,(3.16)]{Dolan:1989vr}.

\begin{lemma} \label{lem:Vf-VO-twisted}
$V_f$ is a vertex operator from $R,X$ to $Y$.
\end{lemma}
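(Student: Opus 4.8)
\textbf{Proof plan for Lemma \ref{lem:Vf-VO-twisted}.}

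The plan is to follow the structure of the proof of Lemma \ref{lem:Vf-VO-untwisted}, checking conditions (i), (ii) and (iii) of case b) of Definition \ref{def:vertex-op} for the operator $V_f(x) = \tilde Q_f \circ \tilde E_-(x) \circ \tilde E_0(x) \circ \tilde E_+(x)$. Conditions (i) and (iii) should go through essentially as before. For (i), in any fixed matrix element $\langle\hat\gamma, V_f(x)(r\otimes\hat b)\rangle$ the operators $\tilde E_\pm(x)$ reduce to finite sums (the exponents involve modes $a_m$ with $m\in\Zb+\tfrac12$, which raise or lower the $H$-grading and hence truncate against a fixed element and a fixed graded piece of the target), and $\tilde E_0(x)$ acts on a finite-dimensional subspace since $R$ is locally finite, so each summand depends smoothly on $x$. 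For (iii), I would re-derive the analogue of \eqref{eq:h-Em-comm}: the commutation computation \eqref{eq:h-Om-comm} carries over verbatim with $m,n\in\Zb+\tfrac12$, except that now $\delta_{m+n,0}$ can only be non-zero for half-integer $m=-n$, and there is no zero mode, so the ``$+\,m\,\delta_{m+n,0}h^{(i)}$'' term is the only correction; then conjugating $h^{(2)}_m$ through each factor $\tilde E^{(12)}_n(x)$ and through $\tilde E_0(x)$ (which commutes with all $h^{(2)}_m$ since $[\Omega^{(11)}, h^{(2)}_m]=0$), and using property 2 of $\tilde Q_f$ from Lemma \ref{lem:Qf-exists-tw}, gives exactly $a_m\circ V_f(x) = V_f(x)\circ(x^m\cdot a\otimes\id + \id\otimes a_m)$.

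The real work is condition (ii), $L_{-1}\circ V_f(x) - V_f(x)\circ(\id\otimes L_{-1}) = \tfrac{d}{dx}V_f(x)$, and this is where the complication flagged in the untwisted proof (after \eqref{lem:Vf-untw-aux3}) actually bites. One first checks, as before, that $[L_m, a_n] = -n\,a_{m+n}$ on twisted modules (now using \eqref{eq:Virasoro-modes-twisted} and the sum rules \eqref{eq:metric-sum-rules}), so that for $m\neq\tfrac12$ one has $[\id\otimes L_{-1}, \tilde E^{(12)}_m(x)] = x^{-m}\,\Omega^{(12)}_{m-1}\,\tilde E^{(12)}_m(x)$ exactly as in \eqref{lem:Vf-untw-aux3}, because $[\Omega^{(12)}_{m-1},\Omega^{(12)}_m]=0$. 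For $m=\tfrac12$, however, the derivative $[\id\otimes L_{-1}, \cdot]$ produces $\Omega^{(12)}_{-1/2}$, and $\Omega^{(12)}_{-1/2}$ does \emph{not} commute with $\Omega^{(12)}_{1/2}$ inside the exponent of $\tilde E^{(12)}_{1/2}(x)$; in fact, from \eqref{eq:affine-commutator} (half-integer version), $[\Omega^{(12)}_{-1/2},\Omega^{(12)}_{1/2}]$ produces a term proportional to $\Omega^{(11)}$ (this is precisely the mechanism behind the $-\tfrac12\ln(x)\,\Omega^{(11)}$ in $\tilde E_0$). So I would compute $[\id\otimes L_{-1}, \tilde E^{(12)}_{1/2}(x)]$ carefully using the BCH/Duhamel formula $\tfrac{d}{dt}e^{A(t)} = \int_0^1 e^{sA}\dot A\,e^{(1-s)A}\,ds$ or, more simply, $[B, e^{A}] = (e^{\mathrm{ad}_A}-1)/\mathrm{ad}_A$ applied to $[B,A]$, tracking the extra $\tfrac12$-order term that is quadratic in the $\Omega^{(12)}_{\pm1/2}$ and produces a multiple of $\ln(x)$ or $x^{-1}$ times $\Omega^{(11)}$. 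I expect this to be the main obstacle.

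Once that single correction term is pinned down, the rest is bookkeeping parallel to the untwisted case. I would assemble $[\id\otimes L_{-1}, \tilde E_-(x)]$, $[\id\otimes L_{-1},\tilde E_0(x)]$ and $[\id\otimes L_{-1},\tilde E_+(x)]$ (the latter two now both potentially contributing $\Omega^{(11)}$-terms, since $\tilde E_0$ depends on $x$ and $\tilde E_{1/2}$ is the lowest factor of $\tilde E_+$), compare with the $x$-derivatives $\tfrac{d}{dx}\tilde E_\pm(x)$ and $\tfrac{d}{dx}\tilde E_0(x) = -\tfrac12 x^{-1}\Omega^{(11)}\tilde E_0(x)$, and collect everything into an identity of the form
\be
  \big[\id\otimes L_{-1},\, \tilde E_-\tilde E_0\tilde E_+\big] + \big(\text{leftover}\big)\circ \tilde E_-\tilde E_0\tilde E_+ ~=~ \tfrac{d}{dx}\big(\tilde E_-\tilde E_0\tilde E_+\big)\ ,
\ee
where ``leftover'' should come out to be $\Omega^{(12)}_{-1/2}$ together with a multiple of $\Omega^{(11)}$ — exactly $L_{-1}$ expressed on ground states via $\tilde Q_f$. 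The final step mirrors \eqref{lem:Vf-untw-aux2}: using the expression for $L_{-1}$ in \eqref{eq:Virasoro-modes-twisted} together with property 2 of $\tilde Q_f$, one shows $L_{-1}\circ\tilde Q_f = \tilde Q_f\circ(\text{leftover} + \id\otimes L_{-1})$, the two ``leftover'' terms cancel, and condition (ii) follows. The consistency of the $\tfrac{\sdim\h}{16}$ shift in $L_0$ and the half-integer mode structure is automatic from \eqref{eq:Virasoro-modes-twisted}, so no separate check is needed there.
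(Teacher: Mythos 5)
Your overall route is the paper's: check (i) and (iii) essentially as in the untwisted case, isolate the $m=\tfrac12$ factor of $\tilde E_+$ as the one place where $[\id\otimes L_{-1},-]$ does not simply reproduce \eqref{lem:Vf-untw-aux3}, and then compare the assembled commutators with the $x$-derivatives before pushing $L_{-1}$ through $\tilde Q_f$. The paper treats the $m=\tfrac12$ factor by an inductive formula for $[\id\otimes L_{-1},(\Omega^{(12)}_{1/2})^n]$, which is the same computation you propose to do via Duhamel/BCH; either way one lands on \eqref{lem:Vf-tw-aux4}, i.e.\ $[\id\otimes L_{-1},\tilde E^{(12)}_{1/2}(x)] = \big(x^{-1/2}\Omega^{(12)}_{-1/2}-\tfrac12 x^{-1}\Omega^{(11)}\big)\tilde E^{(12)}_{1/2}(x)$. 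Up to this point your plan matches the paper's proof.

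Your endgame, however, contains a step that would fail. You predict a nonzero leftover ``$\Omega^{(12)}_{-1/2}$ plus a multiple of $\Omega^{(11)}$'' in the comparison of $[\id\otimes L_{-1},\tilde E_-\tilde E_0\tilde E_+]$ with $\tfrac{d}{dx}(\tilde E_-\tilde E_0\tilde E_+)$, to be cancelled by an identity $L_{-1}\circ\tilde Q_f = \tilde Q_f\circ(\text{leftover}+\id\otimes L_{-1})$ analogous to \eqref{lem:Vf-untw-aux2}. That identity is false in the twisted case: property 2 of Lemma \ref{lem:Qf-exists-tw} reads $a_m\circ\tilde Q_f=\tilde Q_f\circ(\id\otimes a_m)$ with no zero-mode correction (there are no zero modes in $\hat\h_\mathrm{tw}$), and $L_{-1}$ in \eqref{eq:Virasoro-modes-twisted} is built entirely from half-integer modes, so $L_{-1}\circ\tilde Q_f=\tilde Q_f\circ(\id\otimes L_{-1})$ exactly. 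Correspondingly, the leftover you expect actually vanishes: the $m=\tfrac12$ term $x^{-1/2}\Omega^{(12)}_{-1/2}$ in $\tfrac{d}{dx}\tilde E_-$ is matched by the $x^{-1/2}\Omega^{(12)}_{-1/2}$ produced by $[\id\otimes L_{-1},\tilde E^{(12)}_{1/2}(x)]$ (it can be moved past $\tilde E_-$ and $\tilde E_0$ since $[\Omega^{(12)}_{-1/2},\Omega^{(12)}_{n}]=0$ for $n<0$ and $\Omega^{(11)}$ is central), the $-\tfrac12 x^{-1}\Omega^{(11)}$ term matches $\tfrac{d}{dx}\tilde E_0$, and $[\id\otimes L_{-1},\tilde E_0]=0$ (the $x$-dependence of $\tilde E_0$ only enters on the derivative side). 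So the correct statement is $[\id\otimes L_{-1},\tilde E_-\tilde E_0\tilde E_+]=\tfrac{d}{dx}(\tilde E_-\tilde E_0\tilde E_+)$ with no extra term, after which (ii) follows from the trivial exchange of $L_{-1}$ with $\tilde Q_f$. If you carry out your computation you will be forced to this conclusion, but as written your final cancellation rests on a $\tilde Q_f$-identity that cannot be shown, so the proof does not close in the form you state it.
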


Before giving the proof, let us adapt the notation used in the untwisted case as follows. On a product $W$ as in \eqref{eq:W-product-aux} with $R_i \in \Cc_0$ for $i=1,\dots,s-1$ and $R_s \in \Cc_1$ define $\Omega^{(is)}_m$ as in \eqref{eq:Omega-ij-v1}, but with $m \in \Zb+\frac12$. If we write $\tilde E^{(is)}_m(x)  = \exp( \frac{x^{-m}}{-m} \Omega_m^{(is)})$ for $m \in \Zb+\frac12$, the expressions for $\tilde E_\pm$, $\tilde E_0$ take the form
\be \label{eq:tilde-E^is_m-def}
  \tilde E^{(is)}_+(x) = \prod_{m>0} \tilde E^{(is)}_m(x)
  ~~ , \quad
  \tilde E^{(is)}_0(x) = \exp\!\big( - \tfrac12 \ln(x) \, \Omega^{(ii)}\big)
  ~~ , \quad
  \tilde E^{(is)}_-(x) = \prod_{m<0} \tilde E^{(is)}_m(x) \ .
\ee  
The commutation relations \eqref{eq:h-Om-comm} and \eqref{eq:h-Em-comm} still hold, but now with $m,n \in\Zb + \tfrac12$. Finally we need the following identity, which is an immediate consequence of \eqref{eq:h-Om-comm}: for all $1 \le i,j <s$
\be
  \big[ \, \Omega^{(is)}_m \,,\, \Omega^{(js)}_n \big] ~=~ m \, \delta_{m+n,0} \cdot \Omega^{(ij)}  \ .
\label{eq:Omis-Omjs-com}
\ee
This identity holds for $m,n \in \Zb$ and $m,n \in \Zb+\tfrac12$ alike.

\begin{proof}[Proof of Lemma \ref{lem:Vf-VO-twisted}]
Condition (i) in case b) of Definition \ref{def:vertex-op} holds by construction and condition (iii) is immediate from property 2 of $\tilde Q_f$ in Lemma \ref{lem:Qf-exists-tw} and \eqref{eq:h-Em-comm}. It remains to verify condition (ii). As in \eqref{lem:Vf-untw-aux3} in the untwisted case, one checks that for $m \neq \tfrac12$
\be \label{lem:Vf-tw-aux3}
  \big[\,\id \otimes L_{-1} \,,\, \tilde E^{(12)}_m(x) \,\big] 
  ~=~ x^{-m} \, \Omega^{(12)}_{m-1} \, \tilde E^{(12)}_m(x) \ .
\ee
For $m=\tfrac12$ this fails as $\Omega^{(12)}_{1/2}$ and $[L_{-1},\Omega^{(12)}_{1/2}]$ do not commute. To treat this case, first note that $[\id \otimes L_{-1},\Omega^{(12)}_{1/2}] = - \tfrac12 \Omega^{(12)}_{-1/2}$ and then verify inductively that
\be
   \big[ \, \id \otimes L_{-1} \,,\, (\Omega^{(12)}_{1/2})^n \big] ~=~ - \tfrac n2 \cdot \Omega^{(12)}_{-1/2} \,(\Omega^{(12)}_{1/2})^{n-1} - \tfrac{n(n-1)}8 \cdot \Omega^{(11)} \, (\Omega^{(12)}_{1/2})^{n-2} \ .
\ee
From this one finds an extra contribution to \eqref{lem:Vf-tw-aux3} in the case $m=\frac12$:
\be \label{lem:Vf-tw-aux4}
  \big[\, \id \otimes L_{-1} \,,\, \tilde E^{(12)}_{1/2}(x) \,\big] 
  ~=~ \big( x^{-\frac12}  \, \Omega^{(12)}_{-1/2} - \tfrac12 x^{-1} \Omega^{(11)} \big) \, \tilde E^{(12)}_{1/2}(x) \ .
\ee
Altogether this gives
\ba
   \big[\id \otimes L_{-1}, \tilde E_-(x) \big] &= \Big( \sum_{m=-\infty}^{-1/2} x^{-m} \, \Omega^{(12)}_{m-1} \Big) \tilde E_-(x) \ ,
\nonumber \\
   \big[\id \otimes L_{-1}, \tilde E_0(x)\, \big] &= 0 \ ,
\nonumber \\
   \big[\id \otimes L_{-1}, \tilde E_+(x) \big] &= 
   - \tfrac12 x^{-1} \Omega^{(11)} \, \tilde E_+(x) ~+~
   \Big( \sum_{m=1/2}^\infty x^{-m} \, \Omega^{(12)}_{m-1} \Big) \tilde E_+(x) \ .
\ea
For the $x$-derivatives the above complication of non-commuting modes does not arise as all modes $\Omega^{(12)}_m$ with $m>0$ commute amongst themselves, and the same holds for all modes with $m<0$. Thus we simply get (after shifting the summating index as in \eqref{lem:Vf-untw-aux1})
\ba
   \tfrac{d}{dx}  \tilde E_-(x) &= \Big( \sum_{m=-\infty}^{1/2} x^{-m} \,\Omega^{(12)}_{m-1} \Big) \tilde E_-(x) 
   \ ,
\nonumber \\
   \tfrac{d}{dx}  \tilde E_0(x) &= - \tfrac12 x^{-1} \,\Omega^{(11)} \tilde E_0(x) \ ,
\nonumber \\
   \tfrac{d}{dx}  \tilde E_+(x) &= \Big( \sum_{m=3/2}^\infty x^{-m} \,\Omega^{(12)}_{m-1} \Big) \tilde E_+(x) \ .
\ea
It follows that
\be
  \big[ \, \id \otimes L_{-1} \,,\, \tilde E_-(x) \tilde E_0(x) \tilde E_+(x) \,\big]  ~=~ \tfrac{d}{dx} \big( \tilde E_-(x) \tilde E_0(x) \tilde E_+(x) \big)
    \ .
\ee
From property 2 in Lemma \ref{lem:Qf-exists-tw} one sees that $L_{-1} \tilde Q_f = \tilde Q_f ( \id \otimes L_{-1})$, and together this establishes condition (ii).
\end{proof}

\subsection{Tensor product functor}

The tensor product functor will be defined indirectly via representing objects. Recall that
\be
  \Cc = \Cc_0 + \Cc_1 \quad , \quad \text{where} \quad
  \Cc_0 = \Rep\lf(\h) ~~,~~
  \Cc_1 = \svect \ .
\ee
The map $\Cc \times \Cc \times \Cc \to \vect$, $(A,B,C) \mapsto \Vc_{A,B}^C$ becomes a functor, contravariant in the first two arguments and covariant in the last, if we assign to a triple of morphisms $(f,g,h) : (A,B,C) \to (A',B',C')$ the linear map
$\Vc_{f,g}^h = \overline\Indtwutw(h) \circ (-) \circ (f \otimes \Indtwutw(g))$ from $\Vc_{A',B'}^C$ to $\Vc_{A,B}^{C'}$.

\begin{definition} \label{def:*-tensor-def}
The {\em tensor product} $A \ast B$ of two objects $A,B \in \Cc$ is the representing object in $\Rep(\h)+\svect$ for the functor $\Vc_{A,B}^{(-)} : \Cc \to \vect$.
\end{definition}

We will see in a moment that for $X,Y \in \Cc_1$ the representing object $X \ast Y$ is a locally finite $\h$-module if and only if $\h$ is purely odd. This will be discussed in more detail in Remark \ref{rem:tensor-not-finite} below.

\begin{remark}
A tensor product theory for logarithmic modules was developed in \cite{Huang:2010} and announced in \cite{Tsuchiya:2012ru}. In both approaches, the tensor product is a representing object for spaces of intertwining operators (by definition in \cite[Def.\,4.15]{Huang:2010} or as a consequence in \cite[Thm.\,29]{Tsuchiya:2012ru}).
\end{remark}

By definition, the tensor product of $A,B \in \Cc$ is an object $A \ast B \in \Rep(\h)+\svect$, together with a family of natural isomorphisms (in $C$) $\{ \beta_{A,B}^C \}_{C \in \Cc}$, where
\be
  \beta_{A,B}^C : \Hom_{\Cc}(A \ast B,C) \xrightarrow{~\sim~} \Vc_{A,B}^C \ .
\ee
If they exist, representing objects are unique up to unique isomorphism. Since $\Vc_{A,B}^C$ is functorial in $A,B$, the tensor product `$\ast$' can be defined on morphisms as 
\be
 \raisebox{2em}{\xymatrix{
\Hom_\Cc(A' \ast B',C) \ar[r]^(.65){\beta^C_{A',B'}} 
\ar[d]_{(-) \circ (f \ast g)} & 
\Vc^C_{A',B'}
\ar[d]^{\Vc^{\id}_{f,g}} \\
\Hom_\Cc(A \ast B,C) \ar[r]^(.65){\beta^C_{A,B}} 
& 
\Vc^C_{A,B} 
}}
\ee
where $f : A\to A'$, $g: B \to B'$ and for all $C$. In other words, $f \ast g : A \ast B \to A' \ast B'$ is obtained by specialising to $C = A' \ast B'$ and acting on $\id_{A' \ast B'}$:
\be
  f \ast g = (\beta^{A' \ast B'}_{A,B})^{-1} \circ \Vc_{f,g}^{\id} \circ \beta_{A',B'}^{A' \ast B'}(\id_{A' \ast B'}) \ .
\ee  

The main technical tool in finding the representing objects is the map $G$ defined in \eqref{eq:G-map-def}. We will need:

\begin{theorem} \label{thm:G-bijective}
The map $G$ in \eqref{eq:G-map-def} is an isomorphism.
\end{theorem}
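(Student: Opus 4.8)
The plan is to show $G$ is bijective in each of the four cases a)--d) by constructing an explicit two-sided inverse out of the normal ordered exponential vertex operators built in Lemmas \ref{lem:Vf-VO-untwisted} and \ref{lem:Vf-VO-twisted}. Concretely, given $f$ in the appropriate $\Hom$-space on the right-hand side of \eqref{eq:VO-via-homs}, set $\Psi(f) = V_f$, where $V_f$ is the vertex operator defined in \eqref{eq:Vf-untwist-def} (cases a), since there $A,B \in \Cc_0$) or in \eqref{eq:Vf-twisted-def} (cases b),c),d), after reinterpreting $f$ as an even linear map of super-vector spaces via the forgetful functor). Those lemmas already guarantee $V_f \in \Vc_{A,B}^C$, so $\Psi$ is a well-defined linear map into $\Vc_{A,B}^C$; the content of the theorem is that $\Psi$ and $G$ are mutually inverse.

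First I would check $G \circ \Psi = \id$. By definition $G(V_f) = \Pgs \circ V_f(1)|_{A\otimes B}$. Evaluating the normal ordered exponential at $x=1$, each of $E_\pm(1)$, $\tilde E_\pm(1)$ reduces to a map that differs from the identity only by terms raising or lowering the $H$-degree, and $E_0(1) = \tilde E_0(1) = \id$ since $\ln(1)=0$. Hence on ground states $V_f(1)|_{A\otimes B} = Q_f|_{A\otimes B}$ (resp. $\tilde Q_f|_{A\otimes B}$), and projecting back to ground states gives $\Pgs \circ Q_f|_{A\otimes B} = f$ by property 1 of $Q_f$ in Lemma \ref{lem:Qf-exists} (resp. Lemma \ref{lem:Qf-exists-tw}). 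So $G(\Psi(f)) = f$.

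The substantive direction is $\Psi \circ G = \id$, i.e. every vertex operator $V \in \Vc_{A,B}^C$ is recovered from its ground-state restriction $f = G(V)$ as $V = V_f$. Here the strategy is a uniqueness-of-matrix-elements argument driven by the mode exchange relations. Fix $a \in A$, $\hat b \in \Indtwutw(B)$ and $\hat\gamma \in \Indtwutw(C)'$. Writing $\hat b$ as a sum of vectors of the form $a^1_{-m_1}\cdots a^k_{-m_k}\,b$ with $b$ a ground state and $m_i > 0$, and similarly expanding $\hat\gamma$ in terms of the dual action \eqref{eq:am-dual-action}, repeated use of condition (iii) of Definition \ref{def:vertex-op} moves all modes either onto the first tensor factor (where they act as the copairing via $\Omega$, using \eqref{eq:h-Em-comm}) or off onto $\hat\gamma$, reducing $\langle \hat\gamma, V(x)(a\otimes\hat b)\rangle$ to a finite $\Cb$-linear combination (with coefficients that are Laurent monomials in $x$, or in $\ln x$ and powers of $x$ once $E_0$ is taken into account) of matrix elements of the form $\langle \gamma, V(x)(a' \otimes b')\rangle$ with $a' \in A$, $b' \in B$ ground states and $\gamma \in C^*$ a ground state. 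Those remaining matrix elements are pinned down by Proposition \ref{prop:V-on-gnd}, which expresses $\Pgs \circ V(x)|_{A\otimes B}$ solely in terms of $f$ and the operators $\Omega^{(ij)}$. Since $V_f$ was built precisely so that it satisfies condition (iii) and restricts to $f$ on ground states, the identical reduction applies to $\langle\hat\gamma, V_f(x)(a\otimes\hat b)\rangle$, yielding the same finite combination; hence all matrix elements of $V$ and $V_f$ agree, so $V = V_f$.

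The main obstacle is bookkeeping in this reduction: one must verify that pushing modes past $V(x)$ in cases c) and d) — where \eqref{eq:mode-past-twisted} replaces the single mode $a_m$ by the infinite series $T(a)^{x,\pm}$ — still terminates in each fixed $H$-graded component of the target, and produces exactly the combinatorial pattern encoded by the $\tilde E_\pm(x)$, $\tilde E_0(x)$ in \eqref{eq:Vf-twisted-def}. The convergence is controlled by the $H$-grading being bounded below by $0$ (Remark \ref{rem:VO-def}(i)), and the matching of coefficients is exactly the computation already carried out in the proofs of Lemmas \ref{lem:Vf-VO-untwisted} and \ref{lem:Vf-VO-twisted}, so no genuinely new identity is needed; the work is to organize the induction on word length in the modes cleanly. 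Local finiteness of $A$ and $B$ (so $A,B \in \Rep\lf(\h)$ in $\Cc_0$) is what makes the relevant exponentials of zero modes, and hence $E_0(x)$, well-defined throughout.
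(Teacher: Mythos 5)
There is a genuine gap in the surjectivity half of your argument: the normal ordered exponential construction does \emph{not} cover cases c) and d). Lemma \ref{lem:Vf-VO-untwisted} gives $V_f$ only for case a), and Lemma \ref{lem:Vf-VO-twisted} only for case b), i.e.\ for a source of the form $R \otimes \Indtw(X)$ with $R \in \Rep\lf(\h)$ carrying an $\h$-action and with source and target modes of the \emph{same} (half-integer) type. In case c) the first tensor factor $A$ is a bare super-vector space with no $\h$-action, the source $\Ind(B)$ carries integer modes while the target $\overline\Indtw(C)$ carries half-integer modes, and the required exchange relation is not $a_m \circ V = V \circ (x^m a \otimes \id + \id \otimes a_m)$ but the mixed relation \eqref{eq:mode-past-twisted} involving the infinite series $T(a)^{x,\pm}$; similarly in case d) the target is untwisted and contains zero modes. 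The maps $\tilde Q_f$, $\tilde E_\pm$, $\tilde E_0$ of \eqref{eq:Vf-twisted-def} simply do not have the right sources and targets to be ``reinterpreted via the forgetful functor'' here, so $\Psi(f)$ is undefined in cases c) and d) and your claim $G\circ\Psi = \id$ has nothing to apply to. This is exactly why the paper proves surjectivity for a) and b) by reading off $G(V_f)=f$ from the explicit exponentials, but relegates c) and d) to a separate, much longer construction (Appendix \ref{app:G-surjective-cd}): there one builds the candidate $V(1)$ as a bilinear pairing $P$ on PBW bases adapted to an auxiliary Lie algebra $\tilde\h_\alpha$ whose bracket matches the commutators of the $T(a)^{1,\pm}_m$, proves the exchange relation \eqref{eq:P-pairing-mode-moving} by induction, and then extends $V(1)$ to $V(x)$ via \eqref{eq:V(x)-from-V(1)}, which requires establishing the genuinely new identity $[L_0 - L_{-1}, V(1)] = \tfrac{\sdim(\h)}{16}\,V(1)$ together with \eqref{eq:L0L1-T-commute}. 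So your assertion that ``no genuinely new identity is needed'' is where the proposal breaks down: condition (ii) of Definition \ref{def:vertex-op} (smoothness and the $L_{-1}$-derivative property) cannot be extracted from Lemmas \ref{lem:Vf-VO-untwisted} and \ref{lem:Vf-VO-twisted} in the mixed cases.

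The other half of your argument is sound and matches the paper: uniqueness of $V$ given $f = G(V)$ (equivalently injectivity of $G$) does follow by using condition (iii) to reduce arbitrary matrix elements $\langle \hat\gamma, V(x)(a\otimes\hat b)\rangle$ to ground-state matrix elements, which are pinned down by Proposition \ref{prop:V-on-gnd}; and $G(V_f)=f$ in cases a) and b) is read off as you describe. What is missing is an existence proof of a vertex operator with prescribed ground-state restriction in cases c) and d), and that is the substantive content of the theorem that your proposal does not supply.
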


\begin{proof}
{\em Injectivity:}
Let $V \in  \Vc_{A,B}^C$ and set $f = G(V)$. We need to show that $f=0$ implies $V(x)=0$ for all $x$. Proposition \ref{prop:V-on-gnd} shows that if $f=0$, then the restriction of $V(x)$ to ground-states is zero. But using condition (iii) in Definition \ref{def:vertex-op}, any matrix element $\langle \hat \gamma , V(x) \, a \otimes \hat b \rangle$ for $a\in A$, $\hat b \in \Indtwutw(B)$, $\hat\gamma \in \Indtwutw(C)'$ can be reduced to a sum of polynomials in $x^{\frac12}$ multiplying matrix elements of $V(x)$ on ground states only. The latter are zero, and so $V(x) = 0$.

\medskip\noindent
{\em Surjectivity:} From the explicit construction of $V_f$ via normal ordered exponentials in \eqref{eq:Vf-untwist-def} and \eqref{eq:Vf-twisted-def} we can read off directly that $G(V_f) = f$, so that $G$ is surjective in cases a) and b). For cases c) and d) the proof of surjectivity is more tedious and has been moved to Appendix \ref{app:G-surjective-cd}. 
\end{proof}

Using this result, it is easy to give the tensor product in each of the cases a)--d):

\begin{theorem} \label{thm:*-tensor}
The tensor product $A \ast B$ can be chosen as follows ($F$ is the forgetful functor):
\be\label{eq:*-tensor}
A \ast B ~=~  
\left\{\rule{0pt}{2.8em}\right.
\hspace{-.5em}\raisebox{.7em}{
\begin{tabular}{cccll}
  & $A$ & $B$ & $A \ast B$ &
\\
\rm a) & $\Cc_0$ & $\Cc_0$ & $A \otimes_{\Rep(\h)} B$ & $\in~\Cc_0$
\\
\rm b) & $\Cc_0$ & $\Cc_1$ & $F(A) \otimes B$ & $\in~\Cc_1$
\\
\rm c) & $\Cc_1$ & $\Cc_0$ & $A \otimes F(B)$ & $\in~\Cc_1$
\\
\rm d) & $\Cc_1$ & $\Cc_1$ & $U(\h) \otimes A \otimes B$ & $\in~\Rep(\h)$
\end{tabular}}
\ee
A possible choice for the corresponding natural isomorphisms $\beta_{A,B}^C$ is
\be\label{eq:*-tensor-beta}
\beta_{A,B}^C ~:~ f \mapsto 
\left\{\rule{0pt}{2.8em}\right.
\hspace{-.5em}\raisebox{.7em}{
\begin{tabular}{cccl}
  & $A$ & $B$ & $\beta_{A,B}^C(f)$ 
\\[.2em]
\rm a) & $\Cc_0$ & $\Cc_0$ & $G^{-1}(f) \circ \exp\!\big(  \ln(4) \, \Omega^{(12)} \big)$
\\[.2em]
\rm b) & $\Cc_0$ & $\Cc_1$ & $G^{-1}(f) \circ \exp\!\big(\! - \ln(4) \, \Omega^{(11)} \big)$
\\[.2em]
\rm c) & $\Cc_1$ & $\Cc_0$ & $G^{-1}(f) \circ \exp\!\big(\! - \ln(4) \, \Omega^{(22)} \big)$
\\[.2em]
\rm d) & $\Cc_1$ & $\Cc_1$ & $G^{-1}(f \circ (1\otimes \id))$
\end{tabular}}
\ee
In case d), $1$ is the unit of $U(\h)$.
\end{theorem}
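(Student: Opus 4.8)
**Proof plan for Theorem \ref{thm:*-tensor}.**

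The plan is to establish, in each of the four cases, that the object $A\ast B$ proposed in \eqref{eq:*-tensor} together with the map $\beta^C_{A,B}$ in \eqref{eq:*-tensor-beta} satisfies the defining property of a representing object: namely that $\beta^C_{A,B}$ is a bijection $\Hom_\Cc(A\ast B,C)\to\Vc^C_{A,B}$, natural in $C$. By Theorem \ref{thm:G-bijective} we already know $G:\Vc^C_{A,B}\to\Hom(\cdots)$ is an isomorphism onto the appropriate Hom-space (with the $\h$-intertwiner refinement in case a)), so it suffices to exhibit, for each case, an isomorphism $\Hom_\Cc(A\ast B,C)\to$ (codomain of $G$) and check that precomposing with $G^{-1}$ gives exactly the stated formula for $\beta^C_{A,B}$, and that the whole thing is natural in $C$. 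The exponential factors $\exp(\ln(4)\,\Omega^{(12)})$ etc.\ are invertible endomorphisms (the exponent is locally nilpotent on a locally finite $\h$-module since $\Omega$ is built from the action of $\h$, which is locally finite), so including them does not affect bijectivity — their role is purely normalisation, to be fixed later by the braiding/associator computations, and naturality in $C$ is clear because they act on the source side $A\otimes B$, not on $C$.

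The key case-by-case input is the following identification of Hom-spaces. In case a), $A\ast B=A\otimes_{\Rep(\h)}B$ and $\Hom_{\Rep(\h)}(A\otimes_{\Rep(\h)}B,C)$ is literally the codomain of $G$ — nothing to prove beyond Theorem \ref{thm:G-bijective}. In cases b) and c), the target $C$ lives in $\Cc_1=\svect$, so $\Hom_\Cc(F(A)\otimes B,C)$ is just $\Hom(F(A)\otimes B,C)$ in $\svect$, again the codomain of $G$; here one only needs that $F(A)\otimes B$ carries no $\h$-action, which is the point of landing in $\Cc_1$. The substantive case is d): here $A\ast B=U(\h)\otimes A\otimes B\in\Rep(\h)$ with $\h$ acting by left multiplication on the $U(\h)$-factor, and we must show
\be
\Hom_{\Rep(\h)}\big(U(\h)\otimes A\otimes B,\,F(C)\big)\;\cong\;\Hom\big(A\otimes B,\,F(C)\big)
\ee
naturally in $C$, via the map $g\mapsto g\circ(1\otimes\id_{A\otimes B})$ with inverse sending $h$ to the unique $\h$-linear extension determined by $h$ on $1\otimes A\otimes B$. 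This is exactly the statement that $U(\h)\otimes(A\otimes B)$, with $\h$ acting only on the first factor, is the free $\h$-module (equivalently free $U(\h)$-module) on the super-vector space $A\otimes B$ — i.e.\ $U(\h)\otimes(-)$ is left adjoint to the forgetful functor $F$ — restricted to the locally finite setting. Since $\h$ is finite-dimensional, $U(\h)$ is finite-dimensional (it is the exterior algebra of $\h$ qua vector space by PBW), so $U(\h)\otimes A\otimes B$ is genuinely in $\Rep\lf(\h)=\Cc_0$ whenever $A,B$ are super-vector spaces; this is where finite-dimensionality of $\h$ is used and is the content of the remark after Definition \ref{def:*-tensor-def}.

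The main obstacle is bookkeeping rather than conceptual: one must check that in case d) the identification above is compatible with the representing-object data in the precise normalisation demanded by \eqref{eq:*-tensor-beta} — i.e.\ that $\beta^C_{A,B}(f)=G^{-1}(f\circ(1\otimes\id))$ really is natural in $C$ and really does land in $\Vc^C_{A,B}$ — and that across all four cases the chosen isomorphisms assemble into a natural transformation. Concretely, for d) one takes $g\in\Hom_{\Rep(\h)}(U(\h)\otimes A\otimes B,F(C))$, restricts along $1\otimes\id$ to get $f\circ(1\otimes\id)\in\Hom(A\otimes B,F(C))$ — wait, rather: one takes $f\in\Hom_{\Rep(\h)}(A\ast B,C)$, forms $f\circ(1\otimes\id)\in\Hom(A\otimes B, F(C))$, applies $G^{-1}$; bijectivity then follows from freeness (the adjunction isomorphism) composed with Theorem \ref{thm:G-bijective}, and naturality in $C$ from the fact that both the adjunction and $G$ are natural. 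I expect the verification that the exponential normalisation factors are honest isomorphisms on $\Cc_0$-objects (local nilpotence of $\Omega^{(ij)}$) to be the only place requiring a genuine, if short, argument; everything else is assembling the adjunction $U(\h)\otimes(-)\dashv F$ with Theorem \ref{thm:G-bijective}.
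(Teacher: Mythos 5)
Your proposal is correct and follows essentially the paper's own route: everything is reduced to Theorem \ref{thm:G-bijective}, case d) is handled via the induction adjunction $\Hom(X,F(R))\cong\Hom_{\Rep(\h)}(U(\h)\otimes X,R)$ with inverse $f\mapsto f(1\otimes -)$, and naturality in $C$ is verified case by case (the paper does this through the identity $f\circ\Pgs=\Pgs\circ\overline{\Indtwutw}(f)$, supplemented in case d) by the $\h$-intertwining property of $f$). One minor quibble: your justification that the $\ln(4)$-exponentials are isomorphisms via local nilpotence of $\Omega^{(ij)}$ fails unless $\h$ is purely odd (for the free boson, $\Omega^{(12)}$ acts on $\Cb_p\otimes\Cb_q$ by the nonzero scalar $pq$), but local finiteness alone already gives convergence of the series on finite-dimensional invariant subspaces and invertibility with inverse $\exp(-\ln(4)\,\Omega^{(ij)})$, so the conclusion stands.
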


A quick comment before giving the proof: omitting all the factors involving the unnatural looking $\ln(4)$ gives another possible choice for the natural isomorphisms $\beta$. However, these factors are chosen with hindsight to make the associator as simple as possible.

\begin{proof}
It remains to show that $\beta_{A,B}^C$ is natural in $C$ in cases a)--d) and that it is an isomorphism in case d). Let us start with the latter statement. For $R \in \Rep(\h)$ and $X \in \svect$ we have the adjunction isomorphism $\Hom(X,F(R)) \cong \Hom_{\Rep(\h)}(U(\h) \otimes X, R)$ which sends the even linear map $g: X \to R$ to the $\h$-module map $a \otimes x \mapsto a.g(x)$. Its inverse sends $f : U(\h) \otimes X \to R$ to $x \mapsto f(1 \otimes x)$. The map $\beta$ is the composition of this inverse with $G^{-1}$.

In cases a)--c), naturality amounts to checking that for all $f: C \to C'$ the diagram
\be
 \raisebox{2em}{\xymatrix{
\Vc_{AB}^C  \ar[r]^(.5){\overline{\Indtwutw}(f) \circ(-)} 
\ar[d]_{G} &  \Vc^{C'}_{A,B}
\ar[d]^{G} \\
\Hom(A \otimes B,C) \ar[r]^(.5){f \circ (-)} 
& 
\Hom(A \otimes B,C')
}}
\label{eq:tens-thm-aux1}
\ee
commutes.  (In case a) the image of $G$ actually lies in $\Hom_{\Rep(\h)}$.) Substituting the definition of $G$ in \eqref{eq:G-map-def} shows that \eqref{eq:tens-thm-aux1} is implied by $f \circ \Pgs = \Pgs \circ\overline{\Indtwutw}(f)$.

For case d) we need to supplement this by another commuting diagram. Namely for all $A,B \in \svect$, $C,C' \in \Rep(\h)$ and $f: C \to C'$ an $\h$-intertwiner,
\be
 \raisebox{2em}{\xymatrix{
\Hom(A \otimes B,C) \ar[r]^(.5){f \circ (-)}  \ar[d]^{\sim}
& 
\Hom(A \otimes B,C') \ar[d]^{\sim}
\\
\Hom_{\Rep(\h)}(U(\h) \otimes A \otimes B,C) \ar[r]^(.5){f \circ (-)} 
& 
\Hom_{\Rep(\h)}(U(\h) \otimes A \otimes B,C')
}}
\label{eq:tens-thm-aux2}
\ee
commutes. This is immediate from the definition of the isomorphism given above and the $\h$-intertwining property of $f$.
\end{proof}

\begin{example} \label{ex:reps}
(i) Single free boson, $\h = \Cb^{1|0}$: Let $\Cb_p$ be the irreducible representation of $J$-eigenvalue $p$ as in Section \ref{sec:examples-VO}. Let $T = \Cb^{1|0}$ be the one-dimensional even super-vector space in $\Cc_1$. The tensor product given in Theorem \ref{thm:*-tensor} reads
\be
\mathrm{a)}~
\Cb_p \ast \Cb_q \cong \Cb_{p+q}
~,~~~
\mathrm{b)}~
\Cb_p \ast T \cong T
~,~~~
\mathrm{c)}~
T \ast \Cb_p \cong T
~,~~~
\mathrm{d)}~
T \ast T \cong U(\h) \ .
\ee
In case d), the space $\Hom_\Cc(T \ast T,\Cb_p) \cong \Hom_{\Rep(\h)}(U(\h),\Cb_p)$ is one-dimensional for all $p\in\Cb$, as required. However, the universal enveloping algebra $U(\h)$ is not locally finite and thus not an object of $\Cc_0$.

\smallskip\noindent
(ii) Single pair of symplectic fermions, $\h = \Cb^{0|2}$: Since $\h$ is purely odd, $U(\h)$ is finite dimensional and the tensor product closes on $\Cc$. In $\svect$, $U(\h)$ is the symmetric algebra of $\h$; if we forget the parity-grading (i.e.\ apply the forgetful functor to vector spaces) then $U(\h)$ becomes the exterior algebra of $\h$. Let $T = \Cb^{1|0} \in \Cc_1$ be one of the two simple objects in $\Cc_1$. Case d) again states that $T \ast T \cong U(\h)$. This corresponds to the statement that in the $W_{1,2}$-model (the even part of a pair of symplectic fermions) the fusion product of the irreducible representation of lowest $L_0$-weight $-\frac18$ with itself gives the projective cover of the vacuum representation \cite{Gaberdiel:1996np} (in the notation used there, $(\Vc_{-1/8} \otimes \Vc_{-1/8})_\mathrm{f} = \mathcal{R}_0$).
\end{example}

\begin{remark} \label{rem:tensor-not-finite}
(i)
As illustrated in the example above, the tensor product $X \ast Y$, for $X,Y \in \Cc_1$, lies in $\Cc_0$ (i.e.\ is locally finite) if and only if $\h$ is purely odd. In the latter case we are dealing with symplectic fermions and the corresponding vertex operator algebra is $C_2$-cofinite \cite{Abe:2005}. For general $\h$, the tensor product $\ast$ (together with the associator given in Section \ref{sec:assoc-compute}) turns $\Cc_0$ into a tensor category and $\Cc_1$ into a bimodule category over $\Cc_0$. 

\noindent
One can extend the definition of $\ast$ to all of $\Rep(\h) + \svect$ by the same expressions as in \eqref{eq:*-tensor}. However, this will cause problems with the associators as these involve exponentials of $\Omega$, see Section \ref{sec:assoc-compute} below.

\smallskip\noindent
(ii)
For $\h$ purely odd (the symplectic fermion case), the dimension of the spaces of vertex operators $\Vc_{A,B}^C$ for $A,B,C$ simple was derived in the vertex operator algebra setting in \cite{Abe:2011ab}. The $\Zb/2\Zb \times \Zb/2\Zb$ rule found in \cite[Thm.\,5.7]{Abe:2011ab} is recovered in the present setting as follows:
Denote by $\Pi$ the parity shift functor on $\svect$ and as above let  $T \in \Cc_1$ be the even simple object in $\svect$. The four simple objects in $\Cc$ (for $\h$ purely odd) are
\be \label{eq:sympferm-simple}
  \one \,,\, \Pi\one ~\in~ \Cc_0
  \quad  , \quad
  T\,,\,\Pi T ~\in~ \Cc_1 \ .
\ee
If we think of $\Pi\one$ and $T$ as generators of a $\Zb/2\Zb \times \Zb/2\Zb$, one can verify from Theorem \ref{thm:*-tensor} that for $A,B,C$ simple, $\dim\Hom(A \ast B,C)$ is $1$ if the three corresponding group elements add to 0 and it is 0 else. 
Note, however, that this does not mean that the tensor product of simple objects is simple. Indeed, $T \ast T \cong U(\h)$, which is simple only for $\h = \{0\}$.
\end{remark}

In the following we will denote the vertex operator corresponding to $f : A \ast B \to C$ by $V(f;x)$. More specifically, we set
\be \label{eq:V(f;x)-def}
V(f;-) = \beta_{A,B}^C(f) ~:~ \Rb_{>0} \times(A \otimes \Indtwutw(B)) \to \overline\Indtwutw(C) \ .
\ee
Note that $V(f;-)$ restricted to ground states looks slightly different from the expressions in Proposition \ref{prop:V-on-gnd} due to the $\ln(4)$-factors in $\beta_{A,B}^C$. The explicit expressions are:
\be\label{eq:V(f)-on-ground-states}
\Pgs \circ V(f;x)\big|_{A \otimes B} ~=~  
\left\{\rule{0pt}{3.6em}\right.
\hspace{-.5em}\raisebox{.7em}{
\begin{tabular}{cccl}
  & $A$ & $B$ & 
\\
\rm a) & $\Cc_0$ & $\Cc_0$ & 
$f \circ \exp\!\big((\ln(x)+\ln(4))\cdot \Omega^{(12)}\big)$ 
\\[.5em]
\rm b) & $\Cc_0$ & $\Cc_1$ & 
$f \circ \exp\!\big(- ( \tfrac12\ln(x)+\ln(4) ) \cdot \Omega^{(11)}\big)$ 
\\[.5em]
\rm c) & $\Cc_1$ & $\Cc_0$ & 
$f \circ \exp\!\big(-( \tfrac12\ln(x)+\ln(4) )\cdot \Omega^{(22)}\big)$ 
\\[.5em]
\rm d) & $\Cc_1$ & $\Cc_1$ &
$\exp\!\big(\tfrac12\ln(x)\,(-\tfrac{\sdim \h}4 + \Omega^{(11)})\big) \circ f \circ (1 \otimes \id_{A \otimes B})$ 
\end{tabular}}
\ee

\section{Three-point blocks and the braiding}\label{sec:3pt-braid}

We have now gathered enough information about vertex operators and tensor product to extract the braiding and associativity isomorphisms. We will do this as usual \cite{Moore:1989vd} by investigating the behaviour of three-point conformal blocks under analytic continuation (braiding) and that of four-point blocks under taking different limits of coinciding points (associator).

The formalism in Sections \ref{sec:mode+rep} and \ref{sec:VO+tensor} has not been developed far enough to apply the general approach of \cite{Huang:2010} or \cite{Tsuchiya:2012ru}, which would guarantee that we obtain a braided monoidal structure on the category of representations. One down-side of the present approach therefore is, that we have to check pentagon and hexagon explicitly; this is done in Section \ref{sec:braided-mon-cat}. On the plus side, the present approach allows one to arrive at an explicit answer with basic means and in not too many pages (though, it appears, still quite a few).

\medskip

Let $A,B,C \in \Cc$ and let $f$ be morphism from $A \ast B$ to $C$. By a conformal three-point block on the complex plane, restricted to ground states, I mean a function of the form
\be
  \nu(f;z,w)
  := \Pgs \circ V(f;z{-}w)\big|_{A \otimes B}
  ~ : ~ A \otimes B \to C \ ,
\ee
where
\be\label{eq:braiding-branchcut}
  (z,w) \in C_\mathrm{slit} := \big\{ (u,v) \in \Cb \times \Cb \,\big|\, u\,{-}\,v \notin i\, \Rb_{\le 0} \big\} \ .
\ee
That is, $z-w$ is non-zero and does not lie on the negative imaginary axis. The function $\nu$ is initially defined for $z-w$ real and positive, and is then extended by analytic continuation.
The restriction that $(z,w)$ has to lie in $C_\mathrm{slit}$ is to some extend arbitrary. It was made to, firstly, make the domain of $\nu(f;z,w)$ simply connected, and, secondly, to single out a path for the analytic continuation which exchanges $z$ and $w$. 

\medskip

To extract the braiding -- or rather a map $\tilde c$ that will lead to the braiding in Section \ref{sec:C-hexagon} -- we will implement the following procedure. Suppose for now that $A,B \in \Cc_0$ and consider a conformal block $\nu(f;z,w)$ for $f : A \ast B \to C$. Think of this as ``$A$ inserted at $z$ and $B$ inserted at $w$''. We take $\nu(f;z,w)$ to be initially defined for $z,w \in \Rb$ and $z>w$. In this situation, ``$A$ is inserted to the right of $B$''. We then analytically continue $\nu(f;z,w)$ to all of $C_\mathrm{slit}$. This allows one to evaluate $\nu(f;z,w)$ for $z,w \in \Rb$ with $z<w$, i.e.\ for ``$A$ inserted to the left of $B$''. The result can be compared to a conformal three point block $\nu(f',w,z)$ for $f' : B \ast A \to C$ and $w,z \in \Rb$, $w>z$. 

If we allow $A$ or $B$ to be objects in $\Cc_1$, the above procedure should be modified slightly. To explain how, we first need to look at the $\Zb/2\Zb$-automorphism $\varphi$ of the mode algebra $\hat\h$ (resp.\ $\hat\h_\mathrm{tw}$). 
The automorphism $\varphi$ acts as $\varphi(K) = K$ and $\varphi(a_m) = -a_m$ for all $a \in \h$ and $m \in \Zb$ (resp.\ $m \in \Zb + \tfrac12$). The pullback of representations along $\varphi$ gives a $\Zb/2\Zb$-action on $\Rep(\hat\h)$ and $\Rep(\hat\h_\mathrm{tw})$. 
Next we will transport the pullback functor to $\Cc$. Let $f$ be any automorphism of $\hat\h$ and $R \in \Rep(\h)$, $X \in \svect$. Then we have natural isomorphisms
\be
  f^* \Ind(R) \xrightarrow{~~f^{-1} \otimes id_R~~} \Ind( f^*|_{\h} R ) \quad , \quad 
  f^* \Indtw(X) \xrightarrow{~~f^{-1} \otimes id_X~~} \Indtw( X ) \ .
\ee
In more detail, the first isomorphism is
\ba
f^* \Ind(R) 
&= 
{}_{f}U(\h) \otimes_{U(\h_{\ge 0} \oplus \Cb K)} R
= 
{}_{f}U(\h)_{f} \otimes_{U(\hat\h_{\ge 0} \oplus \Cb K)} {}_{f}R
\nonumber \\
&\xrightarrow{~~f^{-1} \otimes \id_R~~}
U(\h) \otimes_{U(\hat\h_{\ge 0} \oplus \Cb K)} {}_{f}R
=
\Ind(f^*|_{\h} R) \ .
\ea
The same holds for $\hat\h_\mathrm{tw}$ with the additional observation that $\hat\h_{\mathrm{tw},> 0}$ acts trivially on super-vector spaces, so that actually $f^* \circ \Indtw \cong \Indtw$. 
Write $G(\varphi)$ for the pair of functors $(\varphi^*|_{\h},\Id)$ on $\Cc_0 + \Cc_1$. The above argument shows that induction relates the pullback by the $\Zb/2\Zb$-automorphism $\varphi$ to the functor $G$ on $\Cc$. 

\medskip

For $A \in \Cc_j$ write $|A|=j$ and let $\varphi^0 = \id$, $\varphi^1 = \varphi$. We will abbreviate ${}^{|A|}\hspace{-1pt}B := G(\varphi^{|A|})(B)$. The modification of the above analytic continuation procedure is that $f'$ should be taken to be a map from ${}^{|A|}\hspace{-1pt}B \ast A$ to $C$. In the present setting, this may seem a moot point since one easily checks from \eqref{eq:*-tensor} that in all four instances ${}^{|A|}\hspace{-1pt}B \ast A$ is equal to $B \ast A$ (and not just isomorphic). The following remark gives a reason to include $G$ anyway.

\begin{figure}[bt]
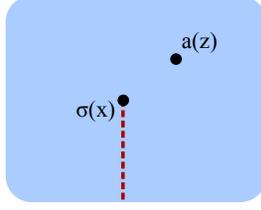

\begin{center}
\ipic{1}
\end{center}
\caption{Convention for the position of the branch cut for currents $a(z)$ in the presence of a single twist field insertion $\sigma(x)$.}
\label{fig:branch-cut-twisted}
\end{figure}

\begin{figure}
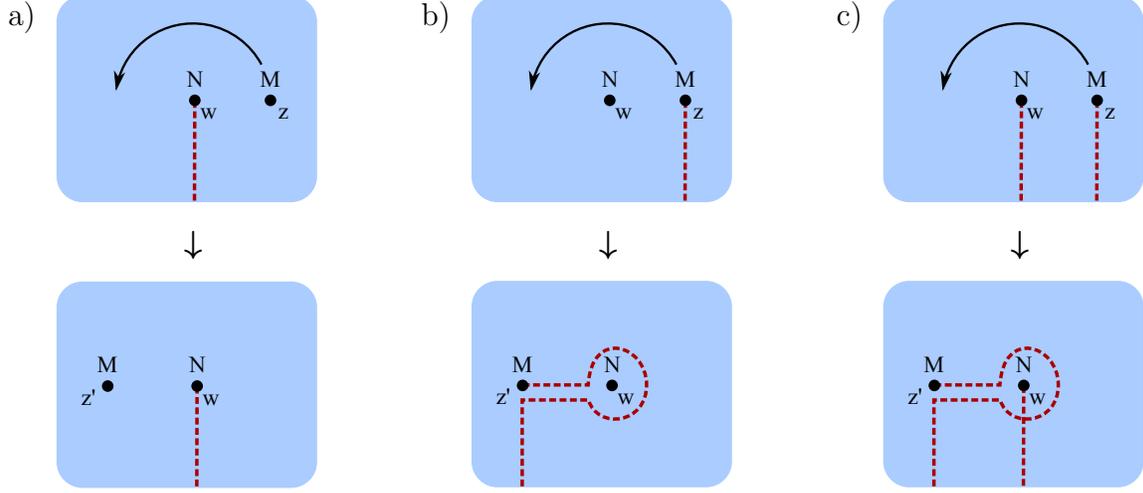

\begin{center}
\begin{tabular}{rrr}
\raisebox{2.4em}{a)}~
\ipic{4a} 
& 
\hspace{2em}
\raisebox{2.4em}{b)}~
\ipic{4c} 
&
\hspace{2em}
\raisebox{2.4em}{c)}~
\ipic{4e} 
\\
\\[-.5em]
$\bm{\downarrow}$\hspace{3.7em}
&
$\bm{\downarrow}$\hspace{3.7em}
&
$\bm{\downarrow}$\hspace{3.7em}
\\[.5em]
\ipic{4b} 
& 
\ipic{4d} 
&
\ipic{4f} 
\end{tabular}
\end{center}
\caption{Analytic continuation of insertions in a conformal block for the three cases involving twisted representations. In cases b) and c) the action of the currents on the insertion of $N$ at point $w$ is twisted by the $\Zb/2\Zb$-automorphism $\varphi$.}
\label{fig:braid-branch}
\end{figure}

\begin{remark}
Suppose we were to consider conformal blocks which are not restricted to ground states and which may have current insertions. For the currents we need to fix a convention for the branch cuts close to a twist field insertion (i.e.\ an insertion labelled by a representation from $\Rep(\hat\h_\mathrm{tw})$). Let us agree that the branch cut is along the negative imaginary axis, shifted to the insertion point of the twist field, see Figure \ref{fig:branch-cut-twisted}. 
From this point of view, the $\Zb/2\Zb$-action appears in the braiding if during the analytic continuation of insertion points, one of the points moves through a branch cut. This is illustrated in Figure \ref{fig:braid-branch}. The combined process of analytic continuation of insertions in a conformal block and of rearranging the branch cuts turns the pair of representations $M, N$ labelling the insertions into $(\varphi^{|M|})^* N$ and $M$. 
\end{remark}

The outcome of the analytic continuation procedure is summarised in the following proposition (recall the map $\tau_{A,B} : A \otimes B \to B \otimes A$ from \eqref{eq:svect-tau}):

\begin{proposition} \label{prop:braiding-iso-from-3pt}
Let $A,B,C \in \Cc$. For each $f : A \ast B \to C$ there exists a unique $f' : {}^{|A|}\hspace{-1pt}B \ast A \to C$ such that, for all $x,y \in \Rb$ with $x>y$,
\be
  \nu(f;y,x) = \nu(f';x,y) \circ \tau_{A,B} 
  ~ : ~ A \otimes B \to C \ .
\ee
Furthermore, $f$ and $f'$ are related by $f = f'  \circ \tilde c_{A,B}$, 
where $\tilde c_{A,B} : A \ast B \to {}^{|A|}\hspace{-1pt}B \ast A = B \ast A$ is the isomorphism
\be\label{eq:braiding-explicit}
\tilde c_{A,B} ~=~  
\left\{\rule{0pt}{2.8em}\right.
\hspace{-.5em}\raisebox{.7em}{
\begin{tabular}{cccl}
  & $A$ & $B$ & 
\\
\rm a) & $\Cc_0$ & $\Cc_0$ & $\tau_{A,B} \circ \exp\!\big(- i \pi \cdot \Omega^{(12)}\big) $
\\
\rm b) & $\Cc_0$ & $\Cc_1$ & $\tau_{A,B} \circ \exp\!\big( \tfrac{i \pi}{2} \cdot \Omega^{(11)}\big)$
\\
\rm c) & $\Cc_1$ & $\Cc_0$ & $\tau_{A,B} \circ \exp\!\big( \tfrac{i \pi}{2} \cdot \Omega^{(22)}\big)$
\\
\rm d) & $\Cc_1$ & $\Cc_1$ & $\exp\!\big(\tfrac{i \pi \, \sdim(\h)}8 \big) \, \cdot \, 
  \big(\id_{U(\h)} \otimes \tau_{A,B}\big) \circ \exp\!\big( -\tfrac{i \pi}{2} \cdot \Omega^{(11)}\big)$ .
\end{tabular}}
\ee
Case d) requires $\h$ to be purely odd.
\end{proposition}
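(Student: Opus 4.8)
The plan is to take the explicit formula \eqref{eq:V(f)-on-ground-states} for the ground-state restriction of $V(f;-)$, analytically continue it, and read off $f'$ and $\tilde c_{A,B}$. First note that $\nu(f;z,w)=\Pgs\circ V(f;z{-}w)|_{A\otimes B}$ depends on $(z,w)$ only through $z-w$, and that on $C_\mathrm{slit}$ the difference $z-w$ ranges over the simply connected domain $\Cb\setminus i\Rb_{\le 0}$, on which $\ln$ has a single-valued branch with argument in $(-\tfrac{\pi}{2},\tfrac{3\pi}{2})$. Continuing $\nu(f;z,w)$ from $z-w\in\Rb_{>0}$ (where $A$ sits to the right of $B$) to $z-w\in\Rb_{<0}$ therefore forces the path through the upper half-plane, so that evaluation at $(z,w)=(y,x)$ with $x>y$ amounts to the substitution $\ln(x-y)\mapsto\ln(x-y)+i\pi$ in \eqref{eq:V(f)-on-ground-states}. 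In each of the four cases the $\ln(x-y)$-dependence of \eqref{eq:V(f)-on-ground-states} is confined to a single exponential $\exp(c\,\ln(x-y)\,\Theta)$ for a fixed constant $c$ and a fixed operator $\Theta$ built from $\Omega$, together with the scalar $-\tfrac{\sdim(\h)}{4}$ in case d); since $\Theta$ commutes with itself, one splits off the continuation factor $\exp(c\,i\pi\,\Theta)$ and gets $\nu(f;y,x)=\exp(c\,i\pi\,\Theta)\circ\big(\Pgs\circ V(f;x{-}y)|_{A\otimes B}\big)$ in cases a)--c), and with $\exp(c\,i\pi\,\Theta)$ acting after $f$ in case d).

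Next I would compute $\nu(f';x,y)\circ\tau_{A,B}$ and compare. By direct inspection of \eqref{eq:*-tensor} one has ${}^{|A|}B\ast A=B\ast A$, and since the $\Zb/2\Zb$-twist $G(\varphi)$ merely negates the $\h$-action, every operator $\Omega^{(ij)}$ built from $\h$ acting on ${}^{|A|}B$ agrees with the one for $B$ (the sign enters quadratically). Thus $\nu(f';x,y)$ is again given by the relevant case of \eqref{eq:V(f)-on-ground-states}, now for the triple $(B,A,C)$. Composing with $\tau_{A,B}$ and using the symmetry $\tau_{\h,\h}(\Omega)=\Omega$ of \eqref{eq:Omega-symmetric} -- i.e.\ that $\tau_{A,B}$ intertwines the $\Omega$-action on $A\otimes B$ with that on $B\otimes A$ -- one commutes each $\Omega$-operator past $\tau_{A,B}$. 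In cases a)--c) this shows $\nu(f';x,y)\circ\tau_{A,B}$ equals $f'\circ\tau_{A,B}$ composed with exactly the same invertible $\ln(x-y)$- and $\ln(4)$-dependent prefactor of \eqref{eq:V(f)-on-ground-states} that occurs in $\nu(f;y,x)$; this prefactor cancels, so the required identity $\nu(f;y,x)=\nu(f';x,y)\circ\tau_{A,B}$ becomes equivalent to $f\circ\exp(c\,i\pi\,\Theta)=f'\circ\tau_{A,B}$, i.e.\ to $f=f'\circ\tau_{A,B}\circ\exp(-c\,i\pi\,\Theta)$. Since $\tau_{A,B}\circ\exp(-c\,i\pi\,\Theta)$ is invertible, this has for each $f$ a unique solution $f'$, and reading off $(c,\Theta)$ from a), b), c) of \eqref{eq:V(f)-on-ground-states} -- namely $(1,\Omega^{(12)})$, $(-\tfrac12,\Omega^{(11)})$, $(-\tfrac12,\Omega^{(22)})$ -- one finds $\tau_{A,B}\circ\exp(-c\,i\pi\,\Theta)$ equal to the maps $\tilde c_{A,B}$ listed in a), b), c) of \eqref{eq:braiding-explicit}.

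Case d) needs two more ingredients. Here $f\colon U(\h)\otimes A\otimes B\to C$ and $f'\colon U(\h)\otimes B\otimes A\to C$ are $U(\h)$-module maps for the free-module structure (left multiplication on the $U(\h)$-factor), and the continuation factor $\exp\!\big(\tfrac{i\pi}{2}(-\tfrac{\sdim(\h)}{4}+\Omega^{(11)})\big)$ acts on $C$, with $\Omega^{(11)}$ the action on $C$ of the central element $\sum_i\beta^i\alpha^i\in U(\h)$ and $\sdim(\h)$ a scalar. Because $f'$ is $U(\h)$-linear, this central operator on $C$ can be transported through $f'$ onto the $U(\h)$-factor of $U(\h)\otimes B\otimes A$, where it commutes with $\id_{U(\h)}\otimes\tau_{A,B}$; using also $(1\otimes\id_{B\otimes A})\circ\tau_{A,B}=(\id_{U(\h)}\otimes\tau_{A,B})\circ(1\otimes\id_{A\otimes B})$, the comparison reduces to an identity of maps $A\otimes B\to C$, i.e.\ one obtained after precomposing with $1\otimes\id_{A\otimes B}$. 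Since $U(\h)\otimes A\otimes B$ is freely generated as a $U(\h)$-module by $1\otimes A\otimes B$, two $U(\h)$-linear maps out of it that agree there coincide, so this upgrades to $f=f'\circ\tilde c_{A,B}$ on all of $U(\h)\otimes A\otimes B$ with $\tilde c_{A,B}=\exp\!\big(\tfrac{i\pi\,\sdim(\h)}{8}\big)\,(\id_{U(\h)}\otimes\tau_{A,B})\circ\exp\!\big(-\tfrac{i\pi}{2}\Omega^{(11)}\big)$, where $\Omega^{(11)}$ now acts on the $U(\h)$-factor of $A\ast B$; this is d) of \eqref{eq:braiding-explicit}. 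All exponentials here are finite sums precisely because $\h$ is purely odd -- whence $U(\h)$ is finite-dimensional and $\Omega$ is nilpotent -- which is the only place that hypothesis enters, and is also what keeps $U(\h)\otimes A\otimes B$ inside $\Cc_0$.

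Uniqueness of $f'$ is built into the equivalences above -- in cases a)--c) it is forced by $f'=f\circ\exp(c\,i\pi\,\Theta)\circ\tau_{A,B}^{-1}$, in case d) by the freeness argument -- and it also follows directly from Theorem \ref{thm:G-bijective}, since $\nu(f';x,y)$ determines $V(f';-)$ and hence $f'$. Each $\tilde c_{A,B}$ in \eqref{eq:braiding-explicit} is an isomorphism because $\tau$ and the exponentials involved are invertible. The only genuinely laborious part of the argument is the second paragraph -- tracking which tensor factor each $\Omega^{(ij)}$ acts on, and the ordering and parity arithmetic when commuting these operators past $\tau_{A,B}$, together with the $C$-versus-$U(\h)$-factor bookkeeping in case d). I do not anticipate a conceptual obstacle.
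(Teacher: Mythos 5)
Your proposal is correct and follows essentially the same route as the paper: uniqueness via Theorem \ref{thm:G-bijective}, analytic continuation of the explicit ground-state formulas \eqref{eq:V(f)-on-ground-states} through the upper half-plane (equivalent to the paper's path $e^{i\theta}(x-y)$, $\theta\in[0,\pi]$, i.e.\ $\ln(x-y)\mapsto\ln(x-y)+i\pi$), and comparison with $\nu(f';x,y)\circ\tau_{A,B}$ after cancelling the common invertible prefactors, with the $\h$-intertwining property of $f$ (equivalently $f'$, plus freeness of $U(\h)\otimes A\otimes B$) supplying the extra step in case d), exactly as in the paper's proof. The extra bookkeeping you flag ($\tau$ intertwining the $\Omega$-actions, insensitivity of $\Omega^{(ii)}$ to the twist $\varphi$) is routine and matches what the paper leaves implicit.
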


\begin{proof}
If $f'$ exists, it is unique by Theorem \ref{thm:G-bijective}. It remains to show that $f' = f \circ \tilde c_{A,B}^{-1}$ does the job. 

To compute $\nu(f;y,x)$ one can proceed as follows. By assumption, $x,y$ are real with $x>y$. Define $y' := e^{i \theta}(x-y)+x$. Setting $\theta=0$ gives $y'>x$ so that $\nu(f;y',x) = \Pgs \circ V(f;y'-x)|_{A \otimes B}$, and setting $\theta=\pi$ results in $y'=y$. Along the path $\theta \in [0,\pi]$ the pair $(y',x)$ lies in $C_\mathrm{slit}$, as required. Finally, substituting the explicit expressions from \eqref{eq:V(f)-on-ground-states} and taking $\theta$ from $0$ to $\pi$ in $V(f;y'-x) = V(f;e^{i \theta}(x-y))$ gives
\ba
& \nu(f;y,x) = \Big[ \Pgs \circ V(f;e^{i \theta}(x-y))\big|_{A \otimes B} \Big]_{\theta = \pi} 
\nonumber\\
&
=~  
\left\{\rule{0pt}{2.8em}\right.
\hspace{-.5em}\raisebox{.7em}{
\begin{tabular}{cccl}
  & $A$ & $B$ & 
\\
a) & $\Cc_0$ & $\Cc_0$ & 
$\displaystyle
f \circ e^{i \pi  \Omega^{(12)}} \circ e^{(\ln(x-y)+\ln(4))\cdot \Omega^{(12)}}$ ,
\\
b) & $\Cc_0$ & $\Cc_1$ & 
$\displaystyle
f \circ e^{-\frac12 i \pi \Omega^{(11)}} \circ e^{-(\frac12\ln(x-y)+\ln(4))\cdot \Omega^{(11)}}$ ,
\\
c) & $\Cc_1$ & $\Cc_0$ & 
$\displaystyle
f \circ e^{-\frac12 i \pi \Omega^{(22)}} \circ e^{-(\frac12\ln(x-y)+\ln(4))\cdot \Omega^{(22)}}$ ,
\\
d) & $\Cc_1$ & $\Cc_1$ & 
$\displaystyle
e^{-\frac18 i \pi \sdim \h}
e^{\frac12 i \pi \Omega^{(11)}} \circ e^{\frac12\ln(x-y)\,(-\frac{\sdim \h}4 + \Omega^{(11)})} \circ f \circ (1 \otimes \id)$ .
\end{tabular}}
\label{eq:braiding-proof-aux1}
\ea
We now have to check that in all cases,  \eqref{eq:braiding-proof-aux1} agrees with $\Pgs \circ V(f \circ \tilde c_{A,B}^{-1};x-y) \circ \tau_{A,B}$. For a)--c) this is immediate, for d) one uses in addition that $f$ is an $\h$-intertwiner.
\end{proof}

We will see in Section \ref{sec:C0-braidedmon} that the isomorphism $\tilde c_{A,B}$ defines a braiding on $\Cc_0$. If $\h$ is purely odd the tensor product closes on $\Cc$ and we can consider $\tilde c_{A,B}$ on all of $\Cc$. In this case, $\tilde c$ would give a ``braiding in a crossed $\Zb/2\Zb$-category'' in the sense of \cite{Turaev:2000ug}. To obtain a braiding on $\Cc$ we will precompose $\tilde c_{A,B}$ with an additional isomorphism formulated in terms of the parity involution. The further discussion of this has been postponed to Section \ref{sec:C-hexagon} since by then the associators will have been introduced and we can go on to verify the hexagon. 

\section{Four-point blocks and the associator (${}^\star$)} \label{sec:4pt-assoc}

The `$\star$' in the section heading means that in this section we will not state definitions and prove theorems, but instead carry out a series of conformal field theory calculations which will lead to the compositions of vertex operators listed in Table \ref{tab:4pt-blocks} and to the associativity isomorphisms collected in Section \ref{sec:C-pentagon}.
From the point of view of the mathematical setup in Sections \ref{sec:mode+rep} and \ref{sec:VO+tensor}, the results in Table \ref{tab:4pt-blocks} (at least most of them) are conjectures. The resulting associators are taken as input in Section \ref{sec:braided-mon-cat} and are proven there to define a monoidal structure on $\Cc$.

\subsection{Four-point blocks}\label{sec:4pt-block-overview}

Recall the definition of the vertex operator $V(f;x)$ in terms of a morphism $f : A \ast B \to C$ in \eqref{eq:V(f;x)-def}. Given representations $A,B,C \in \Cc$ we will also use the following more illustrative notation for vertex operators:
\be
  V(f;x) ~\equiv~ \bL{\blockA{C}{A}{f;x}\blockB{B}} \quad : ~~ A \otimes \Indtwutw(B) \to \overline\Indtwutw(C)  \ .
\ee
Given representations $A,B,C,D,P \in \Cc$ and morphisms $f : A \ast P \to D$, $g : B \ast C \to P$, consider the composition
\be
     \bL{\cbB DABCP{f;z}{\rule{0pt}{.68em}g;w}} 
  ~~ \equiv~~
  \Pgs \circ V(f;z) \circ (\id_A \otimes V(g;w))  \big|_{A \otimes B \otimes C}
  ~~:~~ A \otimes B \otimes C \to D 
\ee
of two vertex operators restricted to ground states.
Here $z>w>0$ and the restriction from $\Indtwutw(C)$ to $C$ as well as the final $\Pgs$ are omitted in the pictorial notation. 

\medskip

\begin{table}[p!]
\footnotesize
\begin{align*}
A\,B\,C \qquad &  \hspace{5em}    \bL{\cbB DABCP{f;1}{\rule{0pt}{.68em}g;x}} 
\nonumber\\
0\,~0\,~0\, \qquad & 
f \circ (\id_A \otimes g) \circ 
  \exp\!\Big( \ln(1{-}x) \cdot \Omega^{(12)} + \ln(x)\cdot \Omega^{(23)} 
\nonumber\\
 & \hspace{11em} + \ln(4) \cdot \big( \Omega^{(12)} + \Omega^{(13)} + \Omega^{(23)} \big) \Big) 
\\[.5em]
0\,~0\,~1\, \qquad & 
f \circ (\id_A \otimes g) \circ 
  \exp\!\Big(\!
  - \tfrac12 \ln(x)  \cdot \Omega^{(22)}
  +\ln\tfrac{1-\sqrt{x}}{1+\sqrt{x}} \cdot \Omega^{(12)} 
\nonumber\\
 & \hspace{11em} - \ln(4) \cdot \big( \Omega^{(11)} + \Omega^{(22)}  \big) \Big) 
\\[.5em]
0\,~1\,~0\, \qquad & 
f \circ (\id_A \otimes g) 
  \circ  \exp\!\Big(\!
  - \tfrac12 \ln(1{-}x) \cdot \Omega^{(11)}
  - \tfrac12 \ln(x)  \cdot \Omega^{(33)}
  +\ln\!\Big(\tfrac{\sqrt{1-x}-i\sqrt{x}}{\sqrt{1-x}+i\sqrt{x}}\Big) \cdot \Omega^{(13)} 
\nonumber\\
 & \hspace{11em} - \ln(4) \cdot \big( \Omega^{(11)} + \Omega^{(33)}  \big) \Big) 
\\[.5em]
1\,~0\,~0\, \qquad & 
f \circ (\id_A \otimes g) \circ  \exp\!\Big(\!
  - \tfrac12 \ln(1{-}x) \cdot \Omega^{(22)}
  +\ln\!\Big(4\,\tfrac{1-\sqrt{1-x}}{1+\sqrt{1-x}}\,\Big) \cdot \Omega^{(23)} 
\nonumber\\
 & \hspace{11em} - \ln(4) \cdot \big( \Omega^{(22)} + \Omega^{(23)} + \Omega^{(33)} \big) \Big) 
\\[.5em]
0\,~1\,~1\, \qquad & x^{- \frac{\sdim(\h)}8 } \cdot 
f \circ \exp\!\Big( 
\ln(x) \cdot \big( \tfrac12 \Omega^{(11)} +  \Omega^{(12)} + \tfrac12 \Omega^{(22)} \big) 
+  \ln \tfrac{x}{1-x} \cdot \tfrac12 \Omega^{(11)}
\nonumber\\
& \hspace{8em} 
-\ln\!\Big(4\,\tfrac{1-\sqrt{1-x}}{1+\sqrt{1-x}}\,\Big) \cdot \big( \Omega^{(11)} + \Omega^{(12)} \big) 
\nonumber\\
& \hspace{11em} 
  + \ln(4) \cdot \Omega^{(12)} \Big)  
\circ \big\{ \id_A \otimes \big( g \circ (1 \otimes \id_{B \otimes C})\big) \big\}
\\[.5em]
1\,~0\,~1\, \qquad & (\id_D \otimes \ev_B) \circ
\exp\!\Big( 
\ln \tfrac{\sqrt{1{-}x}-i\sqrt{x}}{\sqrt{1{-}x}+i\sqrt{x}} \cdot \Omega^{(13)}
- \ln(x(1{-}x)) \cdot \tfrac12 \Omega^{(33)} 
\nonumber\\
& \hspace{11em} 
 - \ln(4) \cdot \Omega^{(33)} \Big)
\circ \big\{  (f \circ (1 \otimes \id_A \otimes g))  \otimes \id_{B^* \otimes B} \big\} 
\nonumber\\
& \hspace{12em}  
\circ 
\big\{ \id_A \otimes \big[ (\tau_{C,B} \otimes \id_{B^* \otimes B}) \circ (\id_C \otimes \coev_B \otimes \id_B)
\circ \tau_{B,C} \big] \big\}
\\[.5em]
1\,~1\,~0\, \qquad & (1{-}x)^{- \frac{\sdim(\h)}8} \cdot (\id_D \otimes \ev_C) \circ 
\exp\!\Big(  \ln(1{-}x) \cdot \tfrac12 \Omega^{(11)} +  \ln \tfrac{1-x}x \cdot \tfrac12\Omega^{(33)} 
- \ln \tfrac{1-\sqrt{x}}{1+\sqrt{x}} \cdot \Omega^{(13)} \Big)
\nonumber\\
& \hspace{11em}  
- \ln(4) \cdot \Omega^{(33)} \Big)
\circ \big\{  (f \circ (1 \otimes \id_A \otimes g))  \otimes \id_{C^* \otimes C} \big\}
\nonumber\\
& \hspace{20em}  
\circ (\id_A \otimes \id_B \otimes \coev_C \otimes \id_C)
\\[.5em]
1\,~1\,~1\, \qquad & 
\big( x(1{-}x) \big)^{- \frac{\sdim(\h)}8 }
\big( \tfrac{2}{\pi} K(x) \big)^{- \frac{\sdim(\h)}2 }
\cdot  f
\circ 
\exp\!\Big(\! \big(\! - \tfrac\pi2 \, \tfrac{K(1{-}x)}{K(x)} + \ln(4)\big)  \Omega^{(22)} 
\nonumber\\
& \hspace{11em} 
  - \ln(4) \cdot \Omega^{(22)} \Big)  
\circ \big(\id_A \otimes g\big) \,
\end{align*}

\caption{
Composition of two vertex operators, restricted to ground states, for all eight combinations of choosing $A,B,C$ from $\Cc_0$ or $\Cc_1$. 
In cases 101 and 110 the copairing $\Omega$ acts on source and target of $f \circ (1 \otimes \id_A \otimes g)$ simultaneously; this is implemented using the (co)evaluation maps.
In case 111, 
$K(x) = \int_0^1 \big[ (1-t^2)(1-x t^2) \big]^{-\frac12} dt$  
is the complete elliptic integral of the first kind.
The contribution of the additional $\ln(4)$-factors in \eqref{eq:V(f)-on-ground-states} arising from the choice for the natural isomorphism made in Theorem \ref{thm:*-tensor} is written out separately in a new line in each case. This is the reason not to carry out the obvious cancellation in case 111.
}
\label{tab:4pt-blocks}
\end{table}

Table \ref{tab:4pt-blocks} lists these compositions specialised to $z=1$ and $w=x \in (0,1)$ for the eight possible ways of choosing $A,B,C$ from $\Cc_0$ or $\Cc_1$. Subsequently, the $x \to 1$ limit of these expressions will be used to determine the associativity isomorphism on $\Cc$. 

I do only have proofs for cases 000 and 001 in Table \ref{tab:4pt-blocks}. The remaining six cases are based on the usual contour integral arguments in calculating with conformal blocks and have to be treated as conjectures in the present setup. In any case, the calculations for all eight cases are given in Appendix \ref{app:4pt-calc}. 

\subsection{Definition of associators via asymptotic behaviour}\label{sec:assoc-asymp}

The associator of $\Cc$ is a family of isomorphism
\be
  \alpha_{A,B,C} : A \ast (B \ast C) \to (A \ast B) \ast C \ ,
\ee
natural in $A,B,C$. Recall from Remark \ref{rem:tensor-not-finite} that the tensor product of two objects in $\Cc$ may not lie in $\Cc$.  For the associator to have well-defined source and target we require $A \ast B$ and $B \ast C$ to lie in $\Cc$. If $\h$ is purely odd this poses no restriction, otherwise we are not allowed to choose both $A$ and $B$ from $\Cc_1$ nor to choose both $B$ and $C$ from $\Cc_1$. 

The associator is determined by the following pictorial condition, which will be explained below:
\be
     \bL{\cbB{\hspace{-2.2em}\text{\footnotesize$(A {\ast} B) {\ast} C$}}{\text{\footnotesize$A$}}{\text{\footnotesize$B$}}{\text{\footnotesize$C$}}{\text{\footnotesize$B {\ast} C$}}{\hspace{-.7em}\alpha;1}{\hspace{+.7em}\id;1{-}\eps}} \hspace{1.2em} \sim \hspace{2em}
     \bL{\cbC{\hspace{-2.2em}\text{\footnotesize$(A {\ast} B) {\ast} C$}}{\text{\footnotesize$A$}}{\text{\footnotesize$B$}}{\text{\footnotesize$C$}}{\text{\footnotesize$A{\ast}B$}}{\id;1{-}\eps}{\id;\eps}} \qquad .
\ee
As an equation, this pictorial condition stands for $D(\eps) = C(\eps) + \text{(lower order in $\eps$)}$, where
\ba
  D(\eps)
  &= \Pgs \circ V(\alpha;1) \circ \big(\id_A \otimes V(\id_{B \ast C};1{-}\eps)\big) \ ,
  \nonumber \\
  C(\eps) &= 
  \Pgs \circ V(\id_{(A \ast B) \ast C};1{-}\eps) \circ \big((\Pgs \circ V(\id_{A \ast B};\eps))|_{A \otimes B} \otimes \id_C\big) \ .
  \label{eq:assoc-asymp-def}
\ea
Here, $D(\eps)$ (the ``direct channel'') is of the form discussed in Section \ref{sec:4pt-block-overview}. It depends on a morphism $\alpha : A \ast (B \ast C) \to (A \ast B) \ast C$ which is to be determined. In $C(\eps)$ (the ``crossed channel'') we first go from $A \otimes B$ to $A \ast B$ by restricting the vertex operator $V(\id_{A \ast B};\eps)$ to ground states. The result serves as input for the vertex operator $V(\id_{(A \ast B) \ast C};1{-}\eps)$ which, once restricted to ground states, is a map $(A \ast B) \otimes C \to (A \ast B) \ast C$. 

The morphism $\alpha$ has to be fixed such that $C(\eps)$ describes the leading behaviour in $\eps$ of $D(\eps)$ for $\eps\to0$. This turns out to fix $\alpha$ uniquely except if $A,B,C$ are all taken from $\Cc_1$. This latter case is treated separately in Section \ref{sec:assoc111} below.

\begin{remark}
Using the appropriate exchange conditions for modes one can try to extend vertex operators to maps $V : \Ind(A) \otimes \Ind(B) \to \overline\Ind(C)$ instead of just allowing ground states from $A$. The asymptotic condition \eqref{eq:assoc-asymp-def} can then be strengthened to the identity
\be\label{eq:assoc-VO-id}
  V(\alpha;1) \circ \big(\id \otimes V(\id_{B \ast C};x)\big)
  = 
  V(\id_{(A \ast B) \ast C};x) \circ \big(V(\id_{A \ast B};1{-}x)) \otimes \id \big) 
\ee
for all $x \in (0,1)$. In vertex operator algebra language, this describes the associativity of intertwining operators, see \cite{Huang:2010}. In the present context, conjecturally all vertex operators can be extended and the maps $\alpha$ determined below by the asymptotic condition also satisfy \eqref{eq:assoc-VO-id}.
\end{remark}

\subsection{Computation of associators}\label{sec:assoc-compute}

We now turn to determining $\alpha$ in each case. 



\subsubsection*{Associator for $\Cc_0 \times \Cc_0 \times \Cc_0 \to \Cc_0$}

In this case, both $A \ast (B \ast C)$ and $(A \ast B) \ast C$ result in the tensor product $A \otimes B \otimes C$ in $\Rep(\h)$.
Inserting the expressions in \eqref{eq:V(f)-on-ground-states} into the definition of $C(\eps)$ from \eqref{eq:assoc-asymp-def} gives the endomorphism
\ba
  C(\eps) ~&=~ \exp\!\big( (\ln(1{-}\eps)+\ln(4)) \cdot (\Omega^{(13)}+\Omega^{(23)}) + (\ln(\eps)+\ln(4)) \cdot \Omega^{(12)} \big)
  \nonumber\\
  &\sim~ \exp\!\big( \ln(\eps) \cdot\Omega^{(12)} + \ln(4) \cdot \big( \Omega^{(12)} + \Omega^{(13)} + \Omega^{(23)} \big) \big) 
\ea
of $A \otimes B \otimes C$. In the direct channel the map $\alpha$ appears, which in the present case is an $\h$-endomorphism of $A \otimes B \otimes C$. Case 000 in Table \ref{tab:4pt-blocks} has the asymptotic behaviour
\ba
  D(\eps) ~&=~ \alpha \circ 
  \exp\!\big( \ln(\eps) \cdot \Omega^{(12)} + \ln(1{-}\eps)\cdot \Omega^{(23)} 
  + \ln(4) \cdot \big( \Omega^{(12)} + \Omega^{(13)} + \Omega^{(23)} \big) \big) 
  \nonumber\\
  &\sim~ \alpha \circ \exp\!\big( \ln(\eps) \cdot\Omega^{(12)} + \ln(4) \cdot \big( \Omega^{(12)} + \Omega^{(13)} + \Omega^{(23)} \big) \big) \ .
\ea
For the asymptotics of $D(\eps)$ and $C(\eps)$ to agree we must take
\be \label{eq:000-assoc}
  \alpha = \id_{A \otimes B \otimes C} \ .
\ee
As an aside, this particular associator would still just be the identity if we had not included the $\ln(4)$-factors in \eqref{eq:*-tensor-beta}; it is the associators involving objects from $\Cc_1$ which become simpler this way.

\subsubsection*{Associator for $\Cc_0 \times \Cc_0 \times \Cc_1 \to \Cc_1$}

Both $A \ast (B \ast C)$ and $(A \ast B) \ast C$ result in the tensor product $A \otimes B \otimes C$ in $\svect$. For $C(\eps)$ one finds
\be
  C(\eps) \sim \exp\!\big( (\ln(\eps)+\ln(4)) \cdot \Omega^{(12)} - \ln(4) \cdot (\Omega^{(11)} + 2\Omega^{(12)} + \Omega^{(22)}) \big)
\ee
Here it was used that the action of $\Omega^{(11)}$ on $A \ast B = A \otimes_{\Rep(\h)} B$ is given by
$\Delta(\Omega^{(11)}) = \Omega^{(11)} + 2\Omega^{(12)} + \Omega^{(22)}$. The asymptotic behaviour of $D(\eps)$ reads
\be
  D(\eps) \sim \alpha \circ \exp\!\big( \ln \tfrac{\eps}4 \cdot \Omega^{(12)} - \ln(4) \cdot (\Omega^{(11)}+\Omega^{(22)}) \big)
\ee
This shows that 
\be
  \alpha = \id_{A \otimes B \otimes C} \ .
\ee
Without the $\ln(4)$-factors in \eqref{eq:*-tensor-beta}, the condition on $\alpha$ would have read
$\exp( \ln(\eps) \cdot \Omega^{(12)} ) \sim \alpha \circ \exp( \ln \tfrac{\eps}4 \cdot \Omega^{(12)} )$, forcing $\alpha$ to be $e^{\ln(4) \cdot \Omega^{(12)}}$ instead of just the identity.

\subsubsection*{Associator for $\Cc_0 \times \Cc_1 \times \Cc_0 \to \Cc_1$}

Here again $\alpha$ is an endomorphism of $A \otimes B \otimes C$. For $C(\eps)$ one obtains
\be
C(\eps) \sim  \exp\!\big( - \tfrac12 \ln(\eps)  \cdot \Omega^{(11)} - \ln(4) \cdot (\Omega^{(11)} + \Omega^{(33)}) \big) \ .
\ee
For $D(\eps)$ one has to be careful to choose the correct branch of the logarithm in case 010 of Table \ref{tab:4pt-blocks}. Namely,
\be \label{eq:log-case-010-branch}
  l(x) = \ln\!\Big(\tfrac{\sqrt{1-x}-i\sqrt{x}}{\sqrt{1-x}+i\sqrt{x}}\Big)
  ~~,\quad
  0 = l(0) \xrightarrow{\text{ anal. cnt. $x$ from $0$ to $1$ }} l(1) = - \pi i \ .
\ee
Using this, one finds
\be
D(\eps) \sim \alpha \circ \exp\!\big( - \tfrac12 \ln(\eps) \cdot \Omega^{(11)} - i \pi \cdot \Omega^{(13)} - \ln(4) \cdot (\Omega^{(11)} + \Omega^{(33)})\big) \ .
\ee
The result is
\be
  \alpha = \exp\!\big( i \pi \cdot \Omega^{(13)} \big) \ .
\ee

\subsubsection*{Associator for $\Cc_1 \times \Cc_0 \times \Cc_0 \to \Cc_1$}

We have
\ba
C(\eps) &\sim \exp\!\big( - \tfrac12 \ln(\eps)  \cdot \Omega^{(22)} - \ln(4) \cdot (\Omega^{(22)} + \Omega^{(33)}) \big) \ ,
\nonumber\\
D(\eps) &\sim \alpha \circ  \exp\!\big( - \tfrac12 \ln(\eps)  \cdot \Omega^{(22)} + \ln(4) \cdot \Omega^{(23)}  - \ln(4) \cdot (\Omega^{(22)} + \Omega^{(23)} + \Omega^{(33)}) \big) \ ,
\ea
so that
\be
  \alpha = \id_{A \otimes B \otimes C} \ .
\ee

\subsubsection*{Intermezzo on the Hopf algebra structure of $U(\h)$}

To treat the cases with two objects chosen from $\Cc_1$ we need to use the fact that $U(\h)$ is a Hopf algebra.
Since $\h$ is abelian, $U(\h)$ is equal to the symmetric algebra of $\h$ in $\svect$. In particular, $\h$ is graded by the number of generators. The coproduct is, for all $a \in \h$,
\be
  \Delta(a) = a \otimes 1 + 1 \otimes a \ .
\ee
For all other elements of $U(\h)$ the coproduct is determined by the fact that it is an algebra map. Some care must be taken with the parity signs:
\be
  \Delta(ab) = \Delta(a)\Delta(b) 
  = (ab) \otimes 1 + a \otimes b + (-1)^{|a||b|} b \otimes a + 1 \otimes (ab) \ .
\ee
The antipode is fixed by declaring that $S(a) = -a$ for all $a \in \h$ and extending it as an algebra anti-automorphism. This results in $(-1)^{\text{grade}}$, independent of parity. For example,
\be
   S(ab) = (-1)^{|a||b|} S(b)S(a) = (-1)^{|a||b|} \, ba = ab
\ee
for all $a,b \in \h$. The counit $\eps$ satisfies $\eps(1) = 1$ and vanishes outside of grade $0$.

The Hopf algebra $U(\h)$ is commutative and cocommutative in $\svect$.

\subsubsection*{Associator for $\Cc_0 \times \Cc_1 \times \Cc_1 \to \Cc_0$}

Evaluating source and target of the associator results in
\be
  A \ast (B \ast C) = \underline{A \otimes U(\h)} \otimes B \otimes C \quad , \qquad
  (A \ast B) \ast C = \underline{U(\h)} \otimes A \otimes B \otimes C \ .
\ee
The underline indicates which objects contribute to the $\h$-action. So in the first case, the $\h$ action is on $A$ and $U(\h)$ while in the second case $\h$ acts only on $U(\h)$ (since $A \ast B = F(A) \otimes B$, forgetting the $\h$-action on $A$).
The crossed channel $C(\eps)$ is a map from $A \otimes B \otimes C$ to $U(\h) \otimes A \otimes B \otimes C$. Explicitly, 
\be
  C(\eps) \sim 
\exp\!\big(- ( \tfrac12\ln(\eps)+\ln(4) ) \cdot \Omega^{(22)}\big) \circ (1 \otimes \id_{A \otimes B \otimes C}) \ .
\ee
The direct channel $D(\eps)$ is a map from $A \otimes B \otimes C$ to $A \otimes U(\h) \otimes B \otimes C$ and has the asymptotic behaviour
\be
  D(\eps) \sim \alpha \circ
\exp\!\big( - ( \tfrac12\ln(\eps)+\ln(4) ) \cdot \Omega^{(11)} \big) \circ (\id_A \otimes 1 \otimes \id_{B \otimes C}) \ .
\ee
Composing from the right with the endomorphism $\exp\!\big(( \tfrac12\ln(\eps)+\ln(4) ) \cdot \Omega^{(11)}\big)$ shows that
the asymptotic behaviour of $C(\eps)$ and $D(\eps)$ agrees if and only if
\be \label{eq:alpha-011-aux1}
  \alpha \circ (\id_A \otimes 1 \otimes \id_{B \otimes C}) 
  = 1 \otimes \id_A \otimes \id_{B \otimes C} \ .
\ee
Since $\alpha$ is an $\h$-module map, we also know that 
\be
  a^{(1)} \circ \alpha = \alpha \circ (\Delta(a))^{(12)} \qquad \text{for all}~~  a \in \h \ . 
\ee  
As before, in $a^{(1)}$, $a$ acts on the first tensor factor of $U(\h) \otimes A \otimes B \otimes C$, and $(\Delta(a))^{(12)}$ means that $\Delta(a) \in \h \otimes \h$ acts on the first two factors of $A \otimes U(\h) \otimes B \otimes C$. This should not be confused with Sweedler's notation $\Delta(h) = \sum_{(h)} h_{(1)} \otimes h_{(2)}$ for the coproduct. We need the following lemma about Hopf algebras in $\svect$:

\begin{lemma}\label{lem:Hopf-aux2}
Let $H$ be a Hopf algebra in $\svect$. Let $M,N$ be $H$-modules in $\svect$ and let $V \in \svect$. Suppose an even linear map $f : M \otimes H \otimes V \to N$ satisfies
$h^{(1)} \circ f = f \circ (\Delta(h))^{(12)}$ for all $h \in H$. Then
$$
  f(m \otimes h \otimes v)
  ~=~ \sum_{(h)} (-1)^{|m||h|+|h_{(1)}||h_{(2)}|} \, (h_{(2)})^{(1)} \circ f\big( \, S^{-1}(h_{(1)}).m \, \otimes 1 \otimes v \,\big) \ .
$$
\end{lemma}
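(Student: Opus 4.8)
The plan is to follow the same "untwist by the antipode" strategy used in the proof of the earlier Hopf-algebra lemma (the one with $\varphi(g)(a\otimes b)=\sum_{(a)}a_{(1)}g(S(a_{(2)})b)$), but now moving the $H$-argument to the left and being careful about Koszul signs. First I would record exactly what the hypothesis $h^{(1)}\circ f = f\circ(\Delta(h))^{(12)}$ says on elements: for $h\in H$, $m\in M$, $h'\in H$, $v\in V$,
\[
  h.f(m\otimes h'\otimes v) ~=~ \sum_{(h)}(-1)^{|h_{(2)}||m|}\, f\big(h_{(1)}.m \otimes h_{(2)}h' \otimes v\big),
\]
the sign arising because in $(\Delta(h))^{(12)}$ the factor $h_{(2)}$ must pass the element $m$ sitting in $M$. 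I would also fix sign conventions for Sweedler notation in $\svect$ (so that e.g. $\sum_{(h)}|h_{(1)}|+|h_{(2)}|=|h|$ on homogeneous elements, and the coassociativity/antipode axioms carry their standard signs), and note that $U(\h)$ — the only case we need — is commutative and cocommutative, so if desired one can check the formula there with all signs made fully explicit; but I would aim to prove it for a general Hopf algebra in $\svect$.

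The core computation is to plug the claimed right-hand side, call it $\tilde f(m\otimes h\otimes v):=\sum_{(h)}(-1)^{|m||h|+|h_{(1)}||h_{(2)}|}(h_{(2)})^{(1)}\,f\big(S^{-1}(h_{(1)}).m\otimes 1\otimes v\big)$, into the defining property and check it holds, and separately check that $\tilde f$ reduces to $f$ when restricted appropriately, then invoke uniqueness. Uniqueness is the easy half: any $f$ satisfying the hypothesis is determined by its restriction to $M\otimes 1\otimes V$, because $f(m\otimes h\otimes v)$ can be rewritten, using the hypothesis with roles reversed, in terms of $f(\,\cdot\,\otimes 1\otimes v)$ — indeed applying the hypothesis to $h$ acting on $S^{-1}(?)\!.m\otimes 1\otimes v$ and using $\sum h_{(1)}S^{-1}(h_{(2)})$-type antipode identities collapses the Sweedler sums. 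Concretely I would verify directly that $\tilde f$ satisfies $h^{(1)}\circ\tilde f=\tilde f\circ(\Delta(h))^{(12)}$: expand $h.\tilde f(m\otimes h'\otimes v)$, use coassociativity to split $\Delta(h')$ into three legs, and use the antipode axiom $\sum_{(x)}S^{-1}(x_{(2)})x_{(1)}=\eps(x)1$ (the "$S^{-1}$" version of $\sum x_{(1)}S(x_{(2)})=\eps$) to cancel one pair of legs, landing back on $\tilde f$ of the shifted arguments. Then $\tilde f(m\otimes 1\otimes v)=f(m\otimes 1\otimes v)$ is immediate since $\Delta(1)=1\otimes 1$ and $S^{-1}(1)=1$, and all signs vanish. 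By the uniqueness statement, $\tilde f=f$.

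The main obstacle I anticipate is purely bookkeeping: getting every Koszul sign right. There are three sources of signs — (a) the braiding $\tau$ implicit whenever $h_{(2)}$ (or $h'$) is moved past $m\in M$; (b) the sign $(-1)^{|h_{(1)}||h_{(2)}|}$ already present in the formula, which is exactly what is needed to make the cocommutative-looking manipulations consistent; (c) the sign in $S^{-1}$ versus $S$ on products, namely $S(xy)=(-1)^{|x||y|}S(y)S(x)$ and correspondingly $S^{-1}(xy)=(-1)^{|x||y|}S^{-1}(y)S^{-1}(x)$. I would handle this by doing the whole calculation once on homogeneous elements with every sign exponent written out, then simplifying; I expect the prefactor $(-1)^{|m||h|+|h_{(1)}||h_{(2)}|}$ in the statement is precisely the combination that survives. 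A sanity check I would run is the special case $M=H$ with the regular action and $V=k$: then the formula should reduce, after using $\sum_{(h)}(h_{(2)})\cdot(S^{-1}(h_{(1)})?)$, to the "multiplication map" statement, and it should be compatible with the cocommutativity of $U(\h)$, giving a quick independent confirmation of the signs. Since the only application is to $H=U(\h)$ with $\h$ purely odd (hence $U(\h)$ both commutative and cocommutative and $S=S^{-1}$), even a reader uneasy about the general signs can verify the needed case directly.
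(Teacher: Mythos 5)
Your proposal is correct and rests on the same key step as the paper's proof: precomposing with the $S^{-1}$-twist $m\mapsto \sum_{(h)} S^{-1}(h_{(1)}).m$ (with Koszul signs) and collapsing the Sweedler sum via the super antipode identity $\sum_{(h)} (-1)^{|h_{(1)}||h_{(2)}|}\, h_{(2)} S^{-1}(h_{(1)}) = \eps(h)\cdot 1$. Note that this computation (your ``uniqueness'' half, which is exactly what the paper writes out, specialising the middle slot to $1$) already yields the stated formula directly, so your separate verification that the candidate $\tilde f$ satisfies the intertwining relation is logically redundant.
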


\begin{proof}
Precomposing both sides of the identity $h^{(1)} \circ f(m \otimes k \otimes v) = f \circ (\Delta(h))^{(12)}(m \otimes k \otimes v)$ (thought of as a map $H \otimes M \otimes H \otimes V \to N$ with the first argument being $h$) with the map
\be
  h \otimes m \otimes k \otimes v ~\mapsto~
  \sum_{(h)} (-1)^{|h_{(1)}||h_{(2)}|} \, h_{(2)} \otimes S^{-1}(h_{(1)}).m \otimes k \otimes v
\ee
and specialising to $k=1$ results in 
\ba
 &\sum_{(h)} (-1)^{|h_{(1)}||h_{(2)}|} \, (h_{(2)})^{(1)} \circ f\big( \, S^{-1}(h_{(1)}).m \, \otimes 1 \otimes v \,\big) 
 \nonumber\\
 &=  \sum_{(h)} (-1)^{|h_{(1)}|(|h_{(2)}|+|h_{(3)}|) + |h_{(3)}|(|h_{(1)}|+|m|)} \, f\big( h_{(2)} S^{-1}(h_{(1)}).m \otimes h_{(3)} \otimes v \big)
 \nonumber\\
 &\overset{(*)}=  (-1)^{|m||h|} \, f\big( m \otimes h \otimes v \big) \ .
\ea
In (*) the property $\sum_{(h)} (-1)^{|h_{(1)}||h_{(2)}|} \, h_{(2)} S^{-1}(h_{(1)}) = \eps(h) \cdot 1$ of the antipode was used.
\end{proof}

Specialising this lemma to $H = U(\h)$, $M=A$, $N=U(\h) \otimes A \otimes B \otimes C$ and $V=B \otimes C$ shows
\ba
\alpha(a \otimes h \otimes b \otimes c)
&\overset{\text{Lem.\,\ref{lem:Hopf-aux2}}}= \sum_{(h)} (-1)^{|a||h|+|h_{(1)}||h_{(2)}|} \, (h_{(2)})^{(1)} \circ \alpha\big( \, S^{-1}(h_{(1)}).a \, \otimes 1 \otimes b \otimes c \big)
\nonumber\\
&~\overset{\eqref{eq:alpha-011-aux1}}= \sum_{(h)} (-1)^{|a||h|+|h_{(1)}||h_{(2)}|} \, h_{(2)} \otimes  S^{-1}(h_{(1)}).a  \otimes b \otimes c \ .
\ea
We can write this without parity signs by using the symmetric braiding $\tau$ of $\svect$. Denote by $\rho^A : H \otimes A \to A$ the action of $H$ on $A$. Then
\be
  \alpha = \Big[\big\{ \id_{U(\h)} \otimes (\rho^A \circ (S^{-1} \otimes \id_A))  \big\} \circ \big\{ (\tau_{U(\h),U(\h)} \circ \Delta) \otimes \id_A \big\} \circ \tau_{A,U(\h)} \Big] \otimes \id_{B \otimes C} \ .
\ee
Of course, for $U(\h)$ we have $S^{-1}= S$ and $\tau_{U(\h),U(\h)} \circ \Delta = \Delta$ and this could be used to simplify the above expression. However, the above form allows for a more direct comparison with the results of \cite{Davydov:2012xg}, where such associators are considered for not necessarily (co)commutative Hopf algebras.

\subsubsection*{Associator for $\Cc_1 \times \Cc_0 \times \Cc_1 \to \Cc_0$}

The source and target of the associator are
\be
  A \ast (B \ast C) = \underline{U(\h)} \otimes A \otimes B \otimes C \quad , \qquad
  (A \ast B) \ast C = \underline{U(\h)} \otimes A \otimes B \otimes C \ .
\ee
As in the previous case, the underline indicates which objects contribute to the $\h$-action. The asymptotic behaviour of the crossed and direct channel are
\ba
  C(\eps) &\sim 
\exp\!\big(- ( \tfrac12\ln(\eps)+\ln(4) ) \cdot \Omega^{(33)}\big) \circ (1 \otimes \id_{A \otimes B \otimes C}) \ ,
\nonumber\\
 D(\eps) &\sim
(\id_{U(\h) \otimes A \otimes B \otimes C} \otimes \ev_B) \circ
\exp\!\Big( - i \pi \cdot  \Omega^{(16)}
- \big( \tfrac12 \ln(\eps) + \ln(4) \big) \cdot  \Omega^{(66)} \Big)
\nonumber\\ & \hspace{4em}
\circ \big\{  (\alpha \circ (1 \otimes \id_{A \otimes B \otimes C}))  \otimes \id_{B^* \otimes B} \big\} 
\nonumber\\ & \hspace{4em}
\circ 
\big\{ \id_A \otimes \big[ (\tau_{C,B} \otimes \id_{B^* \otimes B}) \circ (\id_C \otimes \coev_B \otimes \id_B)
\circ \tau_{B,C} \big] \big\} \ .
\ea
The exponential in $D(\eps)$ is an endomorphism of $U(\h) \otimes A  \otimes B  \otimes C  \otimes B^*  \otimes B$, so that for example $\Omega^{(16)}$ acts on $U(\h)$ and $B$. The coefficient $- i \pi$ of $\Omega^{(16)}$ arises via \eqref{eq:log-case-010-branch}.
We can simplify the condition $C(\eps) \sim D(\eps)$ by cancelling the $\Omega^{(33)}$ contribution in $C(\eps)$ against the $\Omega^{(66)}$  contribution in $D(\eps)$, and by acting with $\exp(\pi i \cdot \Omega)$ on both sides such that the first leg acts on the target factor $U(\h)$ and the second leg on the source factor $B$. The result is $\alpha \circ (1 \otimes \id_{A \otimes B \otimes C}) = \exp\!\big( \pi i \cdot \Omega^{(13)}\big) \circ (1 \otimes \id_{A \otimes B \otimes C})$. Since $\alpha$ is an $\h$-intertwiner, this condition determines $\alpha$ uniquely to be
\be
  \alpha = \exp\!\big( \pi i \cdot \Omega^{(13)}\big) \ .
\ee

\subsubsection*{Associator for $\Cc_1 \times \Cc_1 \times \Cc_0 \to \Cc_0$}

Source and target are
\be
  A \ast (B \ast C) = \underline{U(\h)} \otimes A \otimes B \otimes C \quad , \qquad
  (A \ast B) \ast C = \underline{U(\h)} \otimes A \otimes B \otimes \underline{C\raisebox{-2pt}{\rule{0em}{.9em}}} \ ,
\ee
with underlines giving the $\h$-action.
The asymptotic behaviour in the crossed and direct channel is
\ba
  C(\eps) &\sim \eps^{- \frac{\sdim(\h)}8} \cdot 
\exp\!\big( \ln(\eps) \cdot \tfrac12 \Omega^{(11)} + \ln(4) \cdot \Omega^{(14)} \big) \circ (1 \otimes \id_{A \otimes B \otimes C}) \ ,
\nonumber\\
 D(\eps) &\sim 
\eps^{- \frac{\sdim(\h)}8} \cdot (\id_{U(\h) \otimes A \otimes B \otimes C} \otimes \ev_C) \circ 
\exp\!\Big(  \ln(\eps) \cdot \big( \tfrac12 \Omega^{(11)} + \Omega^{(14)} 
+ \tfrac12 \Omega^{(44)} + \tfrac12 \Omega^{(66)} \big)
 \nonumber\\ 
& \hspace{19em}  
- \ln\tfrac\eps4 \cdot \big( \Omega^{(16)} + \Omega^{(46)} \big)
- \ln(4) \cdot  \Omega^{(66)} \Big)
\nonumber\\
& \hspace{5em}  
\circ \big\{  (\alpha \circ (1 \otimes \id_{A \otimes B \otimes C}))  \otimes \id_{C^* \otimes C} \big\}
\circ (\id_A \otimes \id_B \otimes \coev_C \otimes \id_C) \ .
\ea
The exponential in $D(\eps)$ is an endomorphism of $U(\h) \otimes A  \otimes B  \otimes C  \otimes C^*  \otimes C$. 
To arrive at the expression for $D(\eps)$, fix the representation $D$ in case 110 of Table \ref{tab:4pt-blocks} to be $ (A \ast B) \ast C = U(\h) \otimes A \otimes B \otimes C$ with $\h$ acting on $U(\h)$ and $C$. Because of this, $\Omega^{(11)}$ in case 110 -- where the `$1$' refers to $D$ -- turns into $\Omega^{(11)} + 2 \Omega^{(14)} + \Omega^{(44)}$ when acting on $U(\h) \otimes A \otimes B \otimes C$. Similarly, $\Omega^{(13)}$ turns into $\Omega^{(16)} +  \Omega^{(46)}$.

To simplify the condition $C(\eps) \sim D(\eps)$, move all exponential factors to the left hand side. After a short calculation one finds that they all cancel, resulting simply in 
\be\label{eq:assoc-case110-aux1}
  \alpha \circ (1 \otimes \id_{A \otimes B \otimes C}) = 1 \otimes \id_{A \otimes B \otimes C} \ .
\ee
Since $\alpha$ is an $\h$-intertwiner, it satisfies $(\Delta(u))^{(14)} \circ \alpha = \alpha \circ u^{(1)}$ for all $h \in U(\h)$. Then
\ba
  \alpha(u \otimes a \otimes b \otimes c)
  &~\,=\,~ (\Delta(u))^{(14)} \circ \alpha(1 \otimes a \otimes b \otimes c)
  \nonumber \\
  &\overset{\eqref{eq:assoc-case110-aux1}}= \textstyle \sum_{(u)} (-1)^{|u_{(2)}|(|a|+|b|)} \cdot u_{(1)} \otimes a \otimes b \otimes u_{(2)}.c \ .
\ea
As in case 011 one can hide the parity signs by using the braiding of $\svect$:
\be
  \alpha = (\id_{U(\h) \otimes A \otimes B} \otimes \rho^C)
  \circ 
  (\id_{U(\h)} \otimes \tau_{U(\h),A \otimes B} \otimes \id_C)
  \circ
  (\Delta \otimes \id_{A \otimes B \otimes C})
  \ .
\ee

\subsection{Associator for $\Cc_1 \times \Cc_1 \times \Cc_1 \to \Cc_1$}\label{sec:assoc111}

This case is set apart from the previous seven since it turns out that the associator cannot be determined from the restriction of the four-point block to ground states. To stress this point, let us briefly go through the ground state calculation before we turn to the evaluation of four-point blocks on descendent states.

\medskip

Source and target of $\alpha$ are
\be
  A \ast (B \ast C) = A \otimes U(\h) \otimes B \otimes C \quad , \qquad
  (A \ast B) \ast C = U(\h) \otimes A \otimes B \otimes C \ ,
\ee
both of which lie in $\Cc_1$. The asymptotic behaviour in the crossed channel is
\be
  C(\eps) \sim \eps^{- \frac{\sdim(\h)}8} \cdot 
\exp\!\big( \tfrac12  \ln \tfrac{\eps}{16}  \cdot \Omega^{(11)} \big) \circ (1 \otimes \id_{A \otimes B \otimes C}) \ .
\ee
This has to be reproduced by the expression in the direct channel (case 111 in Table \ref{tab:4pt-blocks}), \ba
  D(\eps) ~=~ & \big( \eps(1{-}\eps) \big)^{- \frac{\sdim(\h)}8 } \big( \tfrac{2}{\pi}\, K(1{-}\eps) \big)^{- \frac{\sdim(\h)}2 }
\nonumber \\
&\cdot ~ \alpha
\circ 
\exp\!\big(\!  - \tfrac\pi2 \, \tfrac{K(\eps)}{K(1{-}\eps)} \cdot  \Omega^{(22)} \big) 
\circ (\id_A \otimes 1 \otimes \id_{B \otimes C}) \ .
\ea
Recall from the beginning of Section \ref{sec:assoc-asymp} that in the present case -- as for all associators where either $A,B$ or $B,C$ are from $\Cc_1$ -- we have to restrict $\h$ to be purely odd. Define
\be 
  N =  - \tfrac12 \sdim(\h) \in \Zb_{> 0}  \ ,
\ee
so that $(\Omega^{(11)})^{N+1}$ acts as zero on all $\h$-modules. The direct channel can be rewritten as
\ba
  D(\eps) &= \big( \eps(1{-}\eps) \big)^{\frac{N}4 } 
\sum_{k=0}^N  \frac{(-1)^k}{k!}  \big(\tfrac{2}{\pi}\big)^{N-k} K(\eps)^k K(1{-}\eps)^{N-k}  
\cdot \alpha \circ (\Omega^{(22)})^k \circ (\id_A \otimes 1 \otimes \id_{B \otimes C})
\nonumber \\
& \sim ~ \eps^{\frac{N}4 } 
\sum_{k=0}^N  \frac{(-1)^{N} \big(\tfrac{\pi}{2}\big)^{2k-N}}{k!}  \big(\tfrac12 \ln\tfrac{\eps}{16} \big)^{N-k}  
\cdot \alpha \circ (\Omega^{(22)})^k \circ (\id_A \otimes 1 \otimes \id_{B \otimes C}) \ ,
\label{eq:111-D-asymp}
\ea
where we used the leading order of the expansions
\be \label{eq:EllipticK-asymp}
  K(x) = \tfrac{\pi}2 \big( 1 + \tfrac{x}{4} + O(x^2) \big) \quad , \quad
  K(1-x) = - \tfrac12 \ln\tfrac{x}{16} \cdot \big( 1 + \tfrac14 x + O(x^2) \big) ~-~ \tfrac14 x + O(x^2) \ .
\ee
Comparing the crossed and direct channel gives $N$ conditions on $\alpha$,
\be
\alpha \circ (\Omega^{(22)})^k \circ (\id_A \otimes 1 \otimes \id_{B \otimes C})
=  \frac{ (-1)^{N}  \big(\tfrac{\pi}{2}\big)^{N-2k}  \,k!  }{(N{-}k)!} \cdot 
   (\Omega^{(11)})^{N-k} \circ (1 \otimes \id_{A \otimes B \otimes C})  \ .
\ee
These conditions only determine $\alpha$ on elements of the form $(\Omega^{(11)})^k 1 \in U(\h)$, that is, only on an $(N{+}1)$-dimensional subspace of the $2^{2N}$ dimensional $U(\h)$. In the cases 011, 101, 110, the map $\alpha$ was a morphism in $\Cc_0$ and the $\h$-intertwiner property was then enough to fix $\alpha$ uniquely from its action on $1 \in U(\h)$. In the present case, however, $\alpha$ is a morphism in $\Cc_1$ and we need a different argument to determine $\alpha$ completely.

\subsubsection*{Direct and crossed channel for descendent states}

Let $p = a^1_{1/2} \cdots a^n_{1/2}$ and $\overline p = a^1 \cdots a^n$ for some $a^1,\dots,a^n \in \h$. We will compute
\be
  D(p;\eps)
  = \Pgs \circ p \circ V(\alpha;1) \circ \big(\id_A \otimes V(\id_{B \ast C};1{-}\eps)\big)  \ ,
\ee
where $p$ acts on $\overline\Indtw(U(\h) \otimes A \otimes B \otimes C)$, 
and compare its $\eps \to 0$ behaviour to
\be
  C(p;\eps) = 
  \Pgs \circ p \circ V(\id_{(A \ast B) \ast C};1{-}\eps) \circ \big((\Pgs \circ V(\id_{A \ast B};\eps))|_{A \otimes B} \otimes \id_C\big) \ .
\ee
The expression for $C(p,\eps)$ is immediate from the commutation relation in case b) of Definition \ref{def:vertex-op} and the explicit form of the vertex operators restricted to ground states in \eqref{eq:V(f)-on-ground-states}:
\ba
C(p;\eps) &= 
  \Pgs \circ V(\id_{(A \ast B) \ast C};1{-}\eps) \circ \big((\overline p \circ \Pgs \circ V(\id_{A \ast B};\eps))|_{A \otimes B} \otimes \id_C\big)  
  \nonumber\\
  & \sim \eps^{- \frac{\sdim(\h)}8} \cdot 
\exp\!\big(  \tfrac12  \ln \tfrac{\eps}{16}  \cdot \Omega^{(11)} \big) \circ (\overline p \otimes \id_{A \otimes B \otimes C}) \ .
\label{eq:111-Cp-asymp}
\ea
The calculation for $D(p;\eps)$ is more involved. As an auxiliary quantity we introduce, for $q \in U(\h)$,
\be
  D(p,q;\eps)
  = \Pgs \circ p \circ V(\alpha;1) \circ q^{(2)} \circ \big(\id_A \otimes V(\id_{B \ast C};1{-}\eps)\big)  \ .
\ee
In Appendix \ref{app:4ptblock-111} the following relation is given (see \eqref{eq:case111desc-J-rec}), which allows one to compute $D(p,q;\eps)$ recursively:
\be \label{eq:D(p,q,eps)-recurs}
  \tfrac{i \pi}{2} \cdot D(p,a q;\eps)
  = 
  K(1{-}\eps) \cdot D(p a_{\frac12},q;\eps)
  +
   \big(\tfrac{\eps}2  K(1{-}\eps) -  E(1{-}\eps) \big) \cdot D(p a_{-\frac12},q;\eps)    \ .
\ee
Using the asymptotic behaviour of $K(1{-}\eps)$ in \eqref{eq:EllipticK-asymp} together with 
$E(1-x) = 1  + O( x \ln(x) )$
we see that in the leading behaviour of $D(p,q;\eps)$ satisfies
\be \label{eq:D(p,q,eps)-recurs-asymp}
  \tfrac{i \pi}{2} \cdot D(p,a q;\eps)
  ~\sim~  
  - \tfrac12 \ln\tfrac{\eps}{16}  \cdot D(p a_{\frac12},q;\eps) 
  - D(p a_{-\frac12},q;\eps) 
  \ .
\ee
This allows one to express $D(1,q;\eps)$ as a sum of terms $D(p;\eps)$. For the latter, the asymptotic behaviour has to match that of $C(p;\eps)$ and for the former, the asymptotic behaviour is given by
\be
  D(1,q;\eps) \sim  \eps^{\frac{N}4 } 
\sum_{k=0}^N  \frac{  (-1)^N \big(\tfrac{\pi}{2}\big)^{2k-N}}{k!}  \big( \tfrac12 \ln\tfrac{\eps}{16} \big)^{N-k}  
\cdot \alpha \circ (\Omega^{(22)})^k \circ (\id_A \otimes q \otimes \id_{B \otimes C}) 
  \ ,
  \label{eq:111-Dq-asymp} 
\ee
as follows from repeating the calculation leading to \eqref{eq:111-D-asymp}. Since $q \in U(\h)$ is arbitrary, \eqref{eq:111-Cp-asymp}, \eqref{eq:D(p,q,eps)-recurs-asymp} and \eqref{eq:111-Dq-asymp} determine $\alpha$ uniquely. It remains to find an $\alpha$ which solves these conditions. 

\medskip

We make the ansatz
\be \label{eq:111-alpha-ansatz}
  \alpha = (\phi \otimes \id_{A \otimes B \otimes C}) \circ (\tau_{A,U(\h)} \otimes \id_{B \otimes C}) \ ,
\ee
where $\phi$ is an even linear endomorphism of $U(\h)$. To define $\phi$ we introduce the linear form $\lambda : U(\h) \to \Cb$ which is non-vanishing only in the top degree (i.e.\ in degree $2N$) and satisfies
\be \label{eq:111-lambda-def}
  \lambda\big(  \big(\Omega^{(11)}\big)^N \big) ~=~  N! \, \big(\tfrac{-2}\pi\big)^N \ ,
\ee
where $\Omega^{(11)}$ is understood as an element of $U(\h)$. Since the degree $2N$-component of $U(\h)$ is one-dimensional, this fixes $\lambda$ uniquely. Let furthermore $\mu_{U(\h)} : U(\h) \otimes U(\h) \to U(\h)$ be the multiplication of elements. Then
\be \label{eq:111-phi-def}
  \phi ~=~ \big( \id_{U(\h)} \otimes ( \lambda \circ \mu_{U(\h)}) \big) \circ \big(\exp( - \pi i \Omega ) \otimes \id_{U(\h)}\big) \ ,
\ee
where $\exp(- \pi i \Omega ) \in U(\h) \otimes U(\h)$. Note that $\phi$ maps an element of degree $n$ in $U(\h)$ to an element of degree $2N{-}n$.

\medskip

To check that this ansatz for $\alpha$ and $\phi$ solves the condition $D(p;\eps) \sim C(p;\eps)$ we will work in a symplectic basis. That is, for the rest of this section we will assume that the basis $\{ \alpha^i \}$ of $\h$ fixed in Section \ref{sec:lie-super} satisfies $(\alpha^{2m-1},\alpha^{2m}) = 1 = - (\alpha^{2m},\alpha^{2m-1})$ for all $m$ and is zero else. In this case, the dual basis satisfies $\beta^1 = \alpha^2$, $\beta^2 = - \alpha^1$, etc. The copairing reads
\be
  \Omega = \sum_{m=1}^N \Big( \alpha^{2m} \otimes \alpha^{2m-1} -  \alpha^{2m-1} \otimes \alpha^{2m} \Big) \ .
\ee
Then $\Omega^{(11)} = -2 \sum_{m=1}^N \alpha^{2m-1} \alpha^{2m}$ and $\big(\Omega^{(11)}\big)^N = (-2)^N N! \cdot \alpha^1 \alpha^2 \cdots \alpha^d$. Comparing this to \eqref{eq:111-lambda-def} shows that the one-form $\lambda$ obeys
\be \label{eq:111-lambda-sympbas}
  \lambda\big( \alpha^1 \alpha^2 \cdots \alpha^d \big) = \pi^{-N} \ .
\ee

The condition $D(p;\eps) \sim C(p;\eps)$ can now be factorised into $N$ two-dimensional problems. Namely, a monomial in $U(\h)$ can be written as $x_1 \cdots x_N$, where the individual factors $x_m$ are taken from $\{1 , \alpha^{2m-1}, \alpha^{2m} , \alpha^{2m-1} \alpha^{2m} \}$. 

Let $p$ be such that $\overline p = x_1 \cdots x_N$. Abbreviate $L \equiv  \tfrac12  \ln \tfrac{\eps}{16}$. According to \eqref{eq:111-Cp-asymp}, each factor $x_m$ contributes to the singular behaviour as $C(p,\eps) \sim \eps^{\frac N4} \prod_{m=1}^N C_m \otimes \id_{A \otimes B \otimes C}$ with
\be
{\renewcommand{\arraystretch}{1.4}
\begin{array}{l|l|l|l|l}
  x_m 
  & 1
  &  \alpha^{2m-1}
  &   \alpha^{2m}
  &  \alpha^{2m-1} \alpha^{2m}
\\
\hline
  C_m
  &  1 - 2L \alpha^{2m-1} \alpha^{2m}
  &  \alpha^{2m-1}
  &   \alpha^{2m}
  &  \alpha^{2m-1} \alpha^{2m}
\end{array}
} \qquad .
\label{eq:C_m-table}
\ee
For $D$, consider the relation \eqref{eq:D(p,q,eps)-recurs-asymp} in the case that $p,q$ do not contain elements form the $m$'th factor:
\ba
D(p \alpha^{2m-1}_{\frac12} \,, q;\eps) &\sim  
- i \tfrac{\pi}{2L}  \cdot D(p ,\, \alpha^{2m-1} q;\eps) 
~ , ~~~
D(p \alpha^{2m}_{\frac12} \,,q;\eps) \sim
- i \tfrac{\pi}{2L}  \cdot D(p ,\, \alpha^{2m} q;\eps)  \ ,
\nonumber \\
D(p \alpha^{2m-1}_{\frac12} \alpha^{2m}_{\frac12} \,,q;\eps) &\sim
- \big(\tfrac{\pi}{2L}\big)^2 \cdot D(p \, ,\alpha^{2m-1} \alpha^{2m} q;\eps) - \tfrac{1}{2L} \cdot D(p,q;\eps) \ .
\label{eq:111-Dpq-mth-factor}
\ea
Next, rewrite \eqref{eq:111-Dq-asymp} as
\be
  D(1,q;\eps) \sim  \eps^{\frac{N}4 } \, \big(\tfrac{-2L}{\pi}\big)^{N}
\cdot \alpha \circ 
\exp\!\Big(\!  - \tfrac{\pi^2}{2L} \sum_{m=1}^N \alpha^{2m-1} \alpha^{2m} \Big) \circ (\id_A \otimes q \otimes \id_{B \otimes C}) 
\ee
Combining this with \eqref{eq:111-Dpq-mth-factor} gives $D(p,\eps) \sim \eps^{\frac N4} 
\cdot \alpha \circ (\id_A \otimes \prod_{m=1}^N D_m \otimes \id_{B \otimes C})$, where
\be
{\renewcommand{\arraystretch}{1.4}
\begin{array}{l|l|l|l|l}
  x_m 
  & 1
  &  \alpha^{2m-1}
  &   \alpha^{2m}
  &  \alpha^{2m-1} \alpha^{2m}
\\
\hline
  D_m
  & - \tfrac{2L}{\pi} \, 1 + \pi \, \alpha^{2m-1} \alpha^{2m}
  & i \,  \alpha^{2m-1} 
  & i \,  \alpha^{2m} 
  & \tfrac{1}{\pi} \, 1
  \end{array}
}\qquad .
\label{eq:D_m-table}
\ee
Comparing the tables \eqref{eq:C_m-table} and \eqref{eq:D_m-table} shows that $\phi(x_1 \cdots x_N) = \prod_{m=1}^N \phi_m(x_m)$ with
\be
{\renewcommand{\arraystretch}{1.4}
\begin{array}{l|l|l|l|l}
  x_m 
  & 1
  &  \alpha^{2m-1}
  &   \alpha^{2m}
  &  \alpha^{2m-1} \alpha^{2m}
\\
\hline
  \phi_m
  & \pi\,\alpha^{2m-1} \alpha^{2m}
  & -i\, \alpha^{2m-1}
  & -i\, \alpha^{2m}
  & \pi^{-1}\,1
  \end{array}
}\qquad .
\ee
This is indeed the same as \eqref{eq:111-phi-def}: each factor is given by (recall \eqref{eq:d=0|2-example-Omega})
\ba
\phi_m(x_m) 
~=~ & 1 \cdot \lambda_m(1 \cdot x_m) - \pi i \, \alpha^{2m} \cdot \lambda_m( \alpha^{2m-1} x_m)  
+  \pi i \, \alpha^{2m-1} \cdot  \lambda_m( \alpha^{2m} x_m) 
\nonumber \\
&+ \pi^2 \, \alpha^{2m-1} \alpha^{2m} \cdot \lambda_m( \alpha^{2m-1} \alpha^{2m} x_m )
\ea
where $\lambda_m(\alpha^{2m-1} \alpha^{2m}) = \pi^{-1}$. We have now completed the verification of the ansatz \eqref{eq:111-alpha-ansatz}.

\section{Braided monoidal category} \label{sec:braided-mon-cat}

In this section we will verify that the braiding and associativity isomorphism calculated in Sections \ref{sec:3pt-braid} and \ref{sec:4pt-assoc} do indeed define a braided monoidal category. We will start in Section \ref{sec:C0-braidedmon} by restricting to the sector of untwisted $\hat\h$-representations, $\Cc_0$. There, the verification of pentagon and hexagon is immediate and it is not necessary to assume that $\h$ is purely odd. 

On the other hand, only for $\h$ purely odd do the results of Sections \ref{sec:VO+tensor}--\ref{sec:4pt-assoc} give a well-defined tensor product, associator and braiding on all of $\Cc$. In that case, the pentagon and hexagon identity on all of $\Cc$ are verified in Sections \ref{sec:C-pentagon}--\ref{sec:C-hexagon} via the results in \cite{Davydov:2012xg}.

\begin{figure}
\begin{tabular}{cc}
(P) &
$\raisebox{-8.8em}{
\xygraph{ !{0;/r4.5pc/:;/u4.5pc/::}[]*+{A\otimes_\Cc(B\otimes_\Cc(C\otimes_\Cc D))} (
  :[u(1.0)r(1.9)]*+{(A\otimes_\Cc B)\otimes_\Cc(C\otimes_\Cc D)} ^{\alpha_{A,B,C D}}
  :[d(1.0)r(1.9)]*+{((A\otimes_\Cc B)\otimes_\Cc C)\otimes_\Cc D}="r" ^{\alpha_{A B, C,D}}
  ,
  :[r(.7)d(1.0)]*+!R(.3){A\otimes_\Cc((B\otimes_\Cc C)\otimes_\Cc D)} _{id_A\otimes_\Cc \alpha_{B,C,D}}
  :[r(2.4)]*+!L(.3){(A\otimes_\Cc(B\otimes_\Cc C))\otimes_\Cc D} ^{\alpha_{A,B C,D}}
  : "r" _{\alpha_{A,B,C}\otimes_\Cc id_D}
)
}}
$
\\
\\[2em]
(H1) &
$
\xymatrix{& A\otimes_\Cc(B\otimes_\Cc C) \ar[dl]_{\alpha_{A,B,C}} \ar[rr]^{c_{A,B C}} && (B\otimes_\Cc C)\otimes_\Cc A \\
(A\otimes_\Cc B)\otimes_\Cc C \ar[rd]_{c_{A,B}\otimes_\Cc id_C~~~~} &&&& B\otimes_\Cc(C\otimes_\Cc A) \ar[ul]_{\alpha_{B,C,A}} \\
& (B\otimes_\Cc A)\otimes_\Cc C && B\otimes_\Cc(A\otimes_\Cc C) \ar[ll]^{\alpha_{B,A,C}}  \ar[ur]_{~~~~id_B\otimes_\Cc c_{A,C}} }
$
\\
\\[2em]
(H2) &
$
\xymatrix{& (A\otimes_\Cc B)\otimes_\Cc C\ar[rr]^{c_{A B,C}} && C\otimes_\Cc(A\otimes_\Cc B) \ar[dr]^{\alpha_{C,A,B}}  \\
A\otimes_\Cc(B\otimes_\Cc C) \ar[rd]_{id_A\otimes_\Cc c_{B,C}~~~} \ar[ru]^{\alpha_{A,B,C}} &&&& (C\otimes_\Cc A)\otimes_\Cc B \\
& A\otimes_\Cc(C\otimes_\Cc B) \ar[rr]^{\alpha_{A,C,B}} && (A\otimes_\Cc C)\otimes_\Cc B \ar[ur]_{~~c_{A,C}\otimes_\Cc id_B} }
$
\end{tabular}

\caption{(P): The pentagon condition for the associator of a monoidal category $\Cc$. (H1,H2): The hexagon conditions for the braiding isomorphisms in a braided monoidal category $\Cc$. In the indices of $\alpha$ and $c$ we have omitted the $\otimes_{\Cc}$ for better readability.} \label{fig:pent-hex}
\end{figure}

\subsection{$\Cc_0$ is a braided monoidal category}\label{sec:C0-braidedmon}

Recall that $\Cc_0 = \Rep\lf(\h)$ and that according to Theorem \ref{thm:*-tensor} the tensor product is given by $A \ast B = A \otimes_{\Rep(\h)} B$. The associativity isomorphism was found to be the identity in \eqref{eq:000-assoc}  (more precisely, the associator of the underlying category of super-vector spaces, but we will not write these out) and the braiding was given in Proposition \ref{prop:braiding-iso-from-3pt}:
\be
  \alpha_{A,B,C} = \id_{A \otimes B \otimes C} \quad , \qquad
  c_{A,B} = \tau_{A,B} \circ \exp\!\big(- i \pi \cdot \Omega^{(12)}\big) \ .
\ee

\begin{proposition}\label{prop:C0-monoidal}
$\Cc_0$ with $\ast$ as tensor product, $\alpha$ as associator and $c$ as braiding, is a braided monoidal category.
\end{proposition}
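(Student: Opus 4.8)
The plan is to verify directly the axioms of a braided monoidal category for $(\Cc_0, \ast, \alpha, c)$, exploiting the fact that everything is built from the tensor product of $U(\h)$-modules and a single ``$R$-matrix''-type element $\exp(-i\pi\Omega)$. First I would note that $\Cc_0 = \Rep\lf(\h)$ with $A\ast B = A\otimes_{\Rep(\h)}B$ is already a monoidal category in the obvious way: the unit object is the trivial module $\one = \Cb$, the associator and unit constraints are those of the underlying symmetric monoidal category $\svect$ (suppressed in the notation), and the only thing to check on the monoidal side is that the stated $\alpha_{A,B,C}=\id$ is a well-defined $\h$-module map and satisfies the pentagon (P). Since on $A\otimes B\otimes C$ the $\h$-action is via the iterated coproduct and coassociativity of $\Delta$ gives the same action whether we bracket as $A\ast(B\ast C)$ or $(A\ast B)\ast C$, the identity map is an $\h$-intertwiner, and the pentagon reduces to the pentagon for $\svect$, which holds. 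Local finiteness is preserved under $\otimes$ because $U(\h).(v\otimes w)\subseteq (U(\h).v)\otimes(U(\h).w)$ via $\Delta$, so $\Cc_0$ is closed under $\ast$.

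Next I would check that $c_{A,B} = \tau_{A,B}\circ\exp(-i\pi\,\Omega^{(12)})$ is a well-defined isomorphism and is natural. Well-definedness: because $A,B$ are locally finite, $\Omega^{(12)}$ acts locally nilpotently on $A\otimes B$ (on the finite-dimensional subspace $U(\h).v\otimes U(\h).w$ it is nilpotent), so the exponential converges; it is invertible with inverse $\exp(i\pi\,\Omega^{(12)})$ precomposed appropriately. That $c_{A,B}$ lands in $B\ast A$ as an $\h$-module map follows from two facts: $\tau_{A,B}$ intertwines the $\Delta$-action because $\h$ is abelian and $\Delta$ is cocommutative in $\svect$ (i.e. $\tau_{\h,\h}\Delta = \Delta$), and $\Omega^{(12)}$ commutes with $\Delta(x)^{(12)}$ for all $x\in\h$ since $\Omega = \sum_i \beta^i\otimes\alpha^i$ is $\h\otimes\h$-invariant (this is just invariance of the pairing, together with $\Omega = \tau_{\h,\h}(\Omega)$ from \eqref{eq:Omega-symmetric}). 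Naturality in $A$ and $B$ is immediate: $\tau$ is natural, and for any $\h$-module maps $f,g$ one has $(g\otimes f)\circ\Omega^{(12)} = \Omega^{(12)}\circ(f\otimes g)$ after the twist, because $f,g$ commute with the $\beta^i$, $\alpha^i$ actions up to the parity signs already accounted for in $\Omega$.

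Then I would verify the two hexagons (H1) and (H2) of Figure \ref{fig:pent-hex}. With $\alpha=\id$ throughout, (H1) collapses to the single identity
\be
  c_{A,B\ast C} = (\id_B \otimes c_{A,C})\circ(c_{A,B}\otimes\id_C)
\ee
as maps $A\otimes B\otimes C \to B\otimes C\otimes A$, and (H2) to $c_{A\ast B,C} = (c_{A,C}\otimes\id_B)\circ(\id_A\otimes c_{B,C})$. For (H1), write $c_{A,B\ast C} = \tau_{A,B\otimes C}\circ\exp(-i\pi\,\Omega^{(1,23)})$ where $\Omega^{(1,23)} = \Omega^{(12)}+\Omega^{(13)}$ because the copairing couples the first factor to the $\Delta$-action on $B\otimes C$, i.e. to $(\id\otimes\Delta)(\Omega)$ acting on legs $1$ and $23$. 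Since $\Omega^{(12)}$ and $\Omega^{(13)}$ commute (they involve disjoint-enough legs and the $\alpha^i,\beta^i$ on factor $1$ commute with those on factors $2,3$ up to signs absorbed in $\Omega$), $\exp(-i\pi\Omega^{(1,23)}) = \exp(-i\pi\Omega^{(13)})\exp(-i\pi\Omega^{(12)})$, and carefully tracking how $\tau$ moves factor $1$ past $B$ then past $C$ matches the right-hand side; the parity-sign bookkeeping works out because $\tau$ is the symmetry of $\svect$ and $\Omega$ is symmetric under $\tau_{\h,\h}$. The argument for (H2) is the mirror image, using $(\Delta\otimes\id)(\Omega)$ and $\Omega^{(1,3)}\!+\!\Omega^{(2,3)}$.

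The main obstacle I anticipate is \textbf{not} any of the coherence diagrams individually — each is a short computation — but rather the careful sign and leg bookkeeping: making sure that all parity signs introduced by $\tau_{V,W}$ in $\svect$, by the convention $\Omega = \sum_i\beta^i\otimes\alpha^i$, by the relation $(\beta^j,\alpha^i)=(-1)^{|\alpha^i|}\delta_{ij}$, and by the super-coproduct are consistently tracked in the hexagons. A clean way to sidestep this is to observe, as the text hints, that $(\Cc_0,\ast,\alpha,c)$ is precisely the abelian (and in fact super-) specialization of Drinfeld's category for the metric Lie (super-)algebra $\h$, equivalently the category of modules over $U(\h)$ as a symmetric monoidal category equipped with the quasi-triangular structure $R = \exp(-i\pi\Omega)$; one checks $R$ satisfies the (trivially-associated) quasi-triangularity relations $(\Delta\otimes\id)R = R_{13}R_{23}$ and $(\id\otimes\Delta)R = R_{13}R_{12}$ directly from invariance and symmetry of $\Omega$ together with $\h$ being abelian, and then the pentagon and hexagons are the standard consequences. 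I would present the direct verification but cite \cite{Drinfeld:1990,Davydov:2010od} for the structural statement, and note that convergence of the exponentials is exactly what local finiteness guarantees.
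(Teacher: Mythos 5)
Your proposal is correct and follows essentially the same route as the paper: the pentagon is just that of $\svect$, and both hexagons reduce to $c_{A,B\ast C} = (\id_B \otimes c_{A,C}) \circ (c_{A,B} \otimes \id_C)$ (and its mirror), which follow from $(\id \otimes \Delta)\,\Omega^{(12)} = \Omega^{(12)} + \Omega^{(13)}$ together with $\Delta$ being an algebra map, exactly as in the paper's proof; your extra checks (unit, closure of $\Cc_0$ under $\ast$, naturality, intertwiner property of $c$) and the remark identifying the structure with Drinfeld's category are consistent with the surrounding text. One small correction: $\Omega^{(12)}$ is in general not locally nilpotent on $A\otimes B$ (e.g.\ it acts by the scalar $pq$ on $\Cb_p \otimes \Cb_q$ for the free boson), but your conclusion stands because local finiteness gives a finite-dimensional invariant subspace on which the exponential series converges regardless of nilpotence.
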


\begin{proof}
The pentagon and hexagon identities are listed in Figure \ref{fig:pent-hex}. The pentagon is trivial (i.e.\ it is that of $\svect$), and the hexagon (H1) reads $c_{A,B \otimes C} = (\id_B \otimes c_{A,C}) \circ (c_{A,B} \otimes \id_C)$. This identity follows since $(\id_{U(\h)} \otimes \Delta) \circ \Omega^{(12)} = \Omega^{(12)} + \Omega^{(13)}$ and since $\Delta$ is an algebra map. The hexagon (H2) is checked analogously.
\end{proof}

The category $\Cc_0$ can be understood as a version of Drinfeld's category for metric Lie algebras specialised to the abelian case \cite{Drinfeld:1990,Davydov:2010od}.

\subsection{Pentagon identity for $\Cc$}\label{sec:C-pentagon}

Assume from here on that $\h$ is purely odd. Recall that $\Cc$ is the $\Zb/2\Zb$-graded category with $\Cc_0 = \Rep\lf(\h)$ and $\Cc_1 = \svect$ with tensor product functor `$\ast$' given in Theorem \ref{thm:*-tensor}. (Of course, for $\h$ purely odd every $\h$-module is locally finite.) The associators $\alpha_{A,B,C}$ found in Section \ref{sec:assoc-compute} and \ref{sec:assoc111} were:
\begin{align*}
A\,B\,C \qquad & \alpha_{A,B,C} : A \ast (B \ast C) \to (A \ast B) \ast C
\nonumber\\[.3em]
0\,~0\,~0\, \qquad & \id_{A \otimes B \otimes C}
\nonumber\\[.3em]
0\,~0\,~1\, \qquad & \id_{A \otimes B \otimes C} 
\nonumber\\[.3em]
0\,~1\,~0\, \qquad & \exp\!\big( i \pi \cdot \Omega^{(13)} \big)
\nonumber\\[.3em]
1\,~0\,~0\, \qquad &  \id_{A \otimes B \otimes C}
\nonumber\\[.3em]
0\,~1\,~1\, \qquad & \Big[\big\{ \id_{U(\h)} \otimes (\rho^A \circ (S^{-1} \otimes \id_A))  \big\} \circ \big\{ (\tau_{U(\h),U(\h)} \circ \Delta) \otimes \id_A \big\} \circ \tau_{A,U(\h)} \Big] \otimes \id_{B \otimes C}
\nonumber\\[.3em]
1\,~0\,~1\, \qquad &  \exp\!\big( i \pi \cdot \Omega^{(13)} \big)
\nonumber\\[.3em]
1\,~1\,~0\, \qquad &  \big\{ \id_{U(\h) \otimes A \otimes B} \otimes \rho^C \big\}
  \circ 
  \big\{ \id_{U(\h)} \otimes \tau_{U(\h),A \otimes B} \otimes \id_C  \big\}
  \circ
  \big\{ \Delta \otimes \id_{A \otimes B \otimes C} \big\}
\nonumber\\[.3em]
1\,~1\,~1\, \qquad & \big\{ \phi \otimes \id_{A \otimes B \otimes C} \big\} \circ \big\{\tau_{A,U(\h)} \otimes \id_{B \otimes C}\big\}
\end{align*}
In case 010, $\alpha$ is an endomorphism of $A \otimes B \otimes C$ while in case 101, it is an endomorphism of $U(\h) \otimes A \otimes B \otimes C$. 
Let $N = - \sdim(\h)/2$.
The endomorphism $\phi$ in the last case was defined in \eqref{eq:111-lambda-def} and \eqref{eq:111-phi-def} to be  
\be \label{eq:111-phi-def-repeat}
  \phi ~=~ \big( \id_{U(\h)} \otimes ( \lambda \circ \mu_{U(\h)}) \big) \circ \big(\exp( - \pi i \Omega ) \otimes \id_{U(\h)}\big) \ ,
\ee
with $\lambda$ the top form on $U(\h)$ which satisfies $\lambda\big(  (i \pi \cdot \Omega^{(11)})^N \big) =  (-1)^N \, N! \, (2 i )^N$. The work for the following theorem was already done in \cite{Davydov:2012xg}.

\begin{theorem} \label{thm:C-associator}
$\Cc$ with $\ast$ as tensor product and $\alpha$ as associator is a monoidal category.
\end{theorem}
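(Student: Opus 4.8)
The plan is to verify the pentagon identity (P) of Figure~\ref{fig:pent-hex} for all $16$ choices of the four-fold grading $|A|,|B|,|C|,|D| \in \{0,1\}$. The key observation is that each of the associators $\alpha_{A,B,C}$ listed above is an instance of the associativity constraint on the category of modules over the (co)commutative Hopf algebra $U(\h)$ in $\svect$, built from the structure maps $\mu_{U(\h)}$, $\Delta$, $S$, $\rho^{(-)}$, the copairing $\Omega$, and the top form $\lambda$. Precisely this kind of graded monoidal structure --- the ``bosonic part'' of a lattice-type construction associated to a metric vector space together with its exterior algebra --- has been analysed in \cite{Davydov:2012xg}, where the pentagon was checked in a general setting that specialises to ours. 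So the first step is to set up the dictionary: identify $\Cc_0 = \Rep\lf(\h)$ with $U(\h)$-modules in $\svect$, identify $\Cc_1 = \svect$ with $U(\h)$-modules on which $\h$ acts trivially (equivalently, $\Cc_1$ is the ``regular bimodule direction''), and match the eight associators above with the corresponding cells of the general associator of \cite{Davydov:2012xg}. Once this identification is in place, the pentagon for $\Cc$ follows from the pentagon proved there.

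\textbf{Key steps, in order.} First I would recall that for the $000$ and $001$ cases the associator is the identity (of $\svect$), so the corresponding pentagon cells reduce to the pentagon of $\svect$, which holds trivially; these are the cases where at most one argument is twisted and $\Omega$ never enters. Second, I would record the compatibility of each associator with the $\h$-action: every $\alpha_{A,B,C}$ above is by construction a morphism in $\Cc$ (an $\h$-intertwiner when the target lies in $\Cc_0$), which is already built into the formulas. Third, I would organise the $16$ pentagon diagrams by how many of $A,B,C,D$ lie in $\Cc_1$: for $0$ or $1$ twisted arguments all maps are identities; for $2$ twisted arguments the relevant identities are the standard (co)associativity, module-axiom, and antipode identities for $U(\h)$, combined with $(\id\otimes\Delta)(\Omega^{(12)}) = \Omega^{(12)}+\Omega^{(13)}$ and the fact that $\Omega$ is $\tau$-symmetric (equation~\eqref{eq:Omega-symmetric}); for $3$ or $4$ twisted arguments --- where the linear form $\lambda$ and the element $\exp(-\pi i\Omega)\in U(\h)\otimes U(\h)$ enter --- one must additionally use that $\lambda$ is the (up to normalisation unique) top form and that $\exp(-\pi i\Omega)$ plays the role of the $R$-matrix / integral pairing making $\phi$ into the appropriate ``Fourier transform'' on $U(\h)$. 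Fourth, rather than expanding all of these by hand, I would invoke \cite[the pentagon verification therein]{Davydov:2012xg}, after checking that our normalisation of $\lambda$ (namely $\lambda((i\pi\,\Omega^{(11)})^N) = (-1)^N N!\,(2i)^N$) is exactly the one required there for the pentagon to close; this normalisation is not arbitrary, and confirming it is essentially the content of the $111$-case calculation in Section~\ref{sec:assoc111}.

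\textbf{Main obstacle.} The genuinely delicate point is the triple- and quadruple-twisted cells, in particular any pentagon face involving the $111$ associator $\alpha = (\phi\otimes\id)\circ(\tau_{A,U(\h)}\otimes\id)$ together with one of the ``mixed'' associators $011$, $101$, $110$. Here one is composing a Fourier-transform-type map $\phi$ (which reverses the grading on $U(\h)$ and involves $\lambda$) with maps that multiply into or comultiply out of $U(\h)$, and the pentagon becomes a nontrivial identity relating $\lambda$, $\mu_{U(\h)}$, $\Delta$ and $\exp(-\pi i\Omega)$ --- essentially the statement that the Fourier transform on the exterior algebra squares correctly and intertwines multiplication with comultiplication up to the $R$-matrix. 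This is exactly where the precise coefficient in \eqref{eq:111-lambda-def} is forced, and it is the reason one cannot avoid the cross-check with \cite{Davydov:2012xg}: verifying it from scratch would mean redoing, in a symplectic basis as in Section~\ref{sec:assoc111}, the factorised $N$-fold reduction to the $0|2$-dimensional case and checking the single two-dimensional pentagon by hand. The remaining $13$--$14$ cells are routine once the dictionary is fixed.
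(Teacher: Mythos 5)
Your proposal follows essentially the same route as the paper: the published proof of Theorem \ref{thm:C-associator} consists precisely of matching the data (the tensor product, the eight associators, $C = i\pi\Omega$, $\zeta = (-1)^N$, and $\lambda$ as the cointegral) with the general setting of \cite{Davydov:2012xg} and invoking the pentagon verified there, which is exactly your final step. One small slip in your organisational remarks: with exactly one argument in $\Cc_1$ the associator is not always the identity (case $010$ is $\exp(i\pi\cdot\Omega^{(13)})$), but since the actual verification is delegated to \cite{Davydov:2012xg} this does not affect the argument.
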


\begin{proof}
The associator is precisely of the form treated in \cite{Davydov:2012xg}. Namely, in \cite[Sect.\,3.8.2]{Davydov:2012xg} choose $C = i \pi \Omega$ and $\zeta = (-1)^N$. Then $\lambda$ agrees with the cointegral in \cite[Sect.\,3.8.2]{Davydov:2012xg} and the associativity isomorphisms above are precisely those in \cite[Fig.\,3]{Davydov:2012xg}. The pentagon holds due to \cite[Cor.\,3.17]{Davydov:2012xg}; that the conditions of that corollary are met is shown in \cite[Sect.\,3.8.2]{Davydov:2012xg}. 
\end{proof}

\begin{remark}
If $\h$ is not purely odd, the tensor products and associators with only one object taken from $\Cc_1$ are still all well-defined. The pentagon conditions which only involve such associators are satisfied, so that $\Cc_1$ becomes a bimodule category over $\Cc_0$.
\end{remark}

\subsection{Hexagon identity for $\Cc$}\label{sec:C-hexagon}

Let $\h$ be purely odd. 

Recall from Section \ref{sec:3pt-braid} that $G(\varphi)$ denotes the pair of functors $(\varphi^*|_{\h},\Id)$ on $\Cc_0 + \Cc_1$, and that we abbreviated ${}^{|A|}\hspace{-1pt}B = G(\varphi^{|A|})(B)$. In Proposition \ref{prop:braiding-iso-from-3pt} we defined the family of natural isomorphism $\tilde c_{A,B} : A \ast B \to {}^{|A|}\hspace{-1pt}B \ast A$ for $A,B \in \Cc$. As already mentioned in Section \ref{sec:3pt-braid}, we have ${}^{|A|}\hspace{-1pt}B \ast A = B \ast A$ for all $A,B \in \Cc$. Nonetheless, it would be misleading to write $\tilde c_{A,B} : A \ast B \to B \ast A$ because $\tilde c$ is not a braiding. For example, (H2) in Figure \ref{fig:pent-hex} fails for $A \in \Cc_1$ and $B,C \in \Cc_0$. To turn $\tilde c$ into a braiding, we will construct an isomorphism ${}^{|A|}\hspace{-1pt}B \to B$.

\medskip

Since $\varphi$ leaves $\Omega$ and $\lambda$ (from \eqref{eq:111-lambda-def}) invariant, via \cite[Prop.\,3.19]{Davydov:2012xg}, $G(\varphi)$ becomes a monoidal equivalence. The only non-trivial part of the monoidal structure on the functor $G(\varphi)$ is that for $X,Y \in \Cc_1$, $G(\varphi)_{X,Y} = \varphi \otimes \id_{X \otimes Y}$.

For a super-vector space $V$ denote by $\omega_V : V \to V$ the parity involution $\omega(v) = (-1)^{|v|} v$. The family of isomorphisms $\{ \omega_V \}_{V \in \svect}$ is a natural monoidal isomorphism of the identity functor on $\svect$.
Write $\omega^0_V = \id_V$ and $\omega^1_V = \omega_V$. Since $\h$ is purely odd, the action of $\varphi$ on $U(\h)$ is just the parity involution $\omega_{U(\h)}$. Given a morphism $f : R \to S$ in $\Cc_0$, the diagram
\be
\raisebox{2em}{\xymatrix{
  R \ar[rr]^f \ar[d]^{\omega_R} && S \ar[d]^{\omega_S}
  \\
  \varphi^*R \ar[rr]^{\varphi^*(f)} && \varphi^* S
}}
\ee
commutes. In other words, $\omega : \Id \Rightarrow \varphi^*$ is a natural isomorphism on $\Cc_0$. One verifies that this isomorphism is in addition monoidal. On all of $\Cc$, $G(\varphi)$ is naturally isomorphic to $\Id_{\Cc}$ via the pair $(\omega, \id)$; however, the natural isomorphism $(\omega, \id)$ is not monoidal (e.g.\ for $X = \Cb^{1|0}$ and $Y =  \Cb^{0|1}$, $G(\varphi)_{X,Y} \circ \omega_{X \ast Y} \neq \id_X \ast \id_Y$). Instead, we should take the pair $(\omega, \omega)$. Altogether,
\be
  \omega ~:~ \Id \, \Longrightarrow \, G(\varphi)
\ee
is a natural monoidal isomorphism of functors on $\Cc$. Combining this with $\tilde c$, we obtain our candidate for the braiding on $\Cc$:
\be
  A \ast B \xrightarrow{ ~~\tilde c_{A,B}~~ } {}^{|A|}\hspace{-1pt}B \ast A \xrightarrow{ ~~\omega_B^{|A|} \ast \id_A ~~ } B \ast A \ .
\ee
Using naturality of $\tilde c$ we can alternatively place $\omega$ before $\tilde c$ and define for $A,B \in \Cc$ and $N = - \sdim(\h)/2$:
\be \label{eq:braiding-def-h-odd}
\raisebox{.7em}{
\begin{tabular}{ccl}
 $A$ & $B$ & $c_{A,B} ~:~ A \ast B \to B \ast A$
\\[.3em]
 $\Cc_0$ & $\Cc_0$ & $ \tau_{A,B} \circ \exp\!\big(- i \pi \cdot \Omega^{(12)}\big) $
\\[.3em]
 $\Cc_0$ & $\Cc_1$ & $\tau_{A,B} \circ \exp\!\big( \tfrac{i \pi}{2} \cdot \Omega^{(11)}\big)$
\\[.3em]
 $\Cc_1$ & $\Cc_0$ & $\tau_{A,B} \circ \exp\!\big( \tfrac{i \pi}{2} \cdot \Omega^{(22)}\big) \circ \{ \id_A \otimes \omega_B \}$
\\[.3em]
 $\Cc_1$ & $\Cc_1$ & $e^{-i \pi  \frac{N}4} \cdot
  \big(\id_{U(\h)} \otimes \tau_{A,B}\big) \circ \exp\!\big(\! -\tfrac{i \pi}{2} \cdot \Omega^{(11)}\big)\circ \{ \id_{U(\h) \otimes A} \otimes \omega_B \}$ \ .
\end{tabular}}
\ee
To say that $c_{A,B}$ defines a braiding on $\Cc$ means that the hexagon identities (H1) and (H2) in Figure \ref{fig:pent-hex} hold. This is the content of the next theorem, which is also contained in \cite{Davydov:2012xg}.

\begin{theorem}\label{thm:C-braiding}
The natural family of isomorphisms $c_{A,B}$ defines a braiding on the monoidal category $(\Cc , \ast , \alpha)$ from Theorem \ref{thm:C-associator}.
\end{theorem}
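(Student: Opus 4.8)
\textbf{Proof proposal for Theorem \ref{thm:C-braiding}.}

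The plan is to verify that the family of isomorphisms $c_{A,B}$ defined in \eqref{eq:braiding-def-h-odd} satisfies the two hexagon identities (H1) and (H2) of Figure \ref{fig:pent-hex}, for all eight choices of the $\Zb/2\Zb$-degrees of $A,B,C$. The key structural observation, already set up in Section \ref{sec:3pt-braid} and the preamble of Section \ref{sec:C-hexagon}, is that $c_{A,B}$ factors through the twisted braiding $\tilde c_{A,B}$ of Proposition \ref{prop:braiding-iso-from-3pt} composed with the natural monoidal isomorphism $\omega : \Id \Rightarrow G(\varphi)$. First I would note that $\tilde c$ is, by construction, a ``braiding in a $\Zb/2\Zb$-crossed category'' in the sense of Turaev \cite{Turaev:2000ug}: this means it satisfies the two ``crossed hexagon'' identities in which the functor $G(\varphi^{|A|})$ appears on the appropriate legs. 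Then the statement that $c = (\omega^{|A|}_B \ast \id_A) \circ \tilde c_{A,B}$ is an honest braiding on $\Cc$ follows from the general principle that twisting a crossed braiding by a monoidal natural isomorphism trivializing the crossing functor produces a genuine braiding. This is exactly the content invoked from \cite{Davydov:2012xg}, and I would structure the proof as an application of that result rather than a bare-hands computation.

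Concretely, the main steps are: (1) recall from the discussion preceding Theorem \ref{thm:C-braiding} that $G(\varphi)$ is a monoidal auto-equivalence of $\Cc$ with nontrivial monoidal structure constraint only on $\Cc_1 \times \Cc_1$, where it is $\varphi \otimes \id$; and that $\omega : \Id \Rightarrow G(\varphi)$ is a monoidal natural isomorphism (using $\varphi|_{U(\h)} = \omega_{U(\h)}$ since $\h$ is purely odd). (2) Verify the crossed-hexagon identities for $\tilde c$. Since $\tilde c_{A,B}$ was obtained from analytic continuation of three-point blocks and the associators from four-point blocks, the compatibility is essentially forced by consistency of conformal blocks; but in the present formalism it must be checked directly, case by case, using the explicit formulas \eqref{eq:braiding-explicit} for $\tilde c$ and the eight associators listed at the start of Section \ref{sec:C-pentagon}. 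The algebra reduces to identities about the copairing $\Omega \in \h \otimes \h$, namely $(\id \otimes \Delta)(\Omega^{(12)}) = \Omega^{(12)} + \Omega^{(13)}$, the commutation relation \eqref{eq:Omis-Omjs-com}, and the symmetry $\tau_{\h,\h}(\Omega) = \Omega$ from \eqref{eq:Omega-symmetric}, together with the fact that $\Delta$, $S$ are (co)algebra maps on $U(\h)$ — this is where the $\ln(4)$-normalizations chosen in Theorem \ref{thm:*-tensor} pay off by making the $\eps$-dependent prefactors cancel. (3) Compose with the trivialization $\omega$: precisely because $\omega$ is monoidal, the crossing functors $G(\varphi^{|A|})$ disappear from the hexagons and one is left with (H1) and (H2) for $c$. (4) Alternatively, and more economically, observe that after the identifications $C = i\pi\Omega$, $\zeta = (-1)^N$ of Theorem \ref{thm:C-associator}, the braiding \eqref{eq:braiding-def-h-odd} coincides with the braiding written down in \cite{Davydov:2012xg}, whose hexagon identities are proved there; this is the route the theorem statement indicates, so the proof can be short.

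The main obstacle I anticipate is not conceptual but bookkeeping: one must be scrupulous about the crossing functor $G(\varphi)$ and its monoidal structure constraint. In the mixed cases — e.g. $A \in \Cc_1$, $B \in \Cc_0$ with the ${}^{1}\!B = \varphi^*B$ appearing, or the cases where one side of a hexagon has an object of $\Cc_1$ fed into a tensor product with an object of $\Cc_0$ so that the forgetful functor $F$ intervenes — the source and target of the two composites in (H1)/(H2) are literally equal as objects of $\Cc$ (as noted after Proposition \ref{prop:braiding-iso-from-3pt}), yet the intermediate maps carry parity involutions $\omega_B$ that must be tracked through $\tau$'s and through the $U(\h)$-coproduct. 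Getting all the parity signs consistent, and confirming that the $\omega^{|A|}_B$ insertions interact correctly with the non-trivial monoidal constraint $G(\varphi)_{X,Y} = \varphi \otimes \id$ on $\Cc_1 \times \Cc_1$, is the delicate point; invoking \cite[Prop.\,3.19]{Davydov:2012xg} for the monoidality of $G(\varphi)$ and then \cite{Davydov:2012xg}'s hexagon verification is what lets us sidestep doing all sixteen diagrams by hand.
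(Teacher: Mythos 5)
Your proposal is correct and, via its step (4), takes essentially the same route as the paper: Theorem \ref{thm:C-braiding} is proved simply by matching the braiding \eqref{eq:braiding-def-h-odd} with the form treated in \cite{Davydov:2012xg} (with $C = i\pi\Omega$ and $\zeta = (-1)^N$ as fixed in Theorem \ref{thm:C-associator}) and invoking the hexagon verification carried out there, so the case-by-case crossed-hexagon check in your steps (2)--(3) is not needed. The only detail you leave implicit is that the scalar $e^{-i\pi N/4}$ appearing in the $\Cc_1 \ast \Cc_1$ case plays the role of the constant $\beta$ of \cite{Davydov:2012xg} and must satisfy the compatibility $\beta^2 = \zeta \cdot i^N$, which the paper checks explicitly.
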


\begin{proof}
The braiding defined in \eqref{eq:braiding-def-h-odd} is of the form \cite[Eqn.\,(4.3)]{Davydov:2012xg}; the defining morphisms are stated in \cite[Eqn.\,(4.108)]{Davydov:2012xg} with $C = i \pi \Omega$ as in the proof of Theorem \ref{thm:C-associator}, as well as $\beta = e^{-i \pi N/4}$. We have $\beta^2 = \zeta \cdot i^N$ where $\zeta = (-1)^N$ from Theorem \ref{thm:C-associator} as required by \cite[Eqn.\,(4.107)]{Davydov:2012xg}. By \cite[Thm.\,1.3\,\&\,Rem.\,4.11\,(ii)]{Davydov:2012xg}, $c$ defines a braiding on $\Cc$; that the conditions of the theorem are satisfied is checked in \cite[Sect.\,4.7.2]{Davydov:2012xg}.
\end{proof}

\begin{remark} \label{rem:C-braiding}
(i)
It is proved in \cite[Prop.\,4.20]{Davydov:2012xg} that the braiding is non-degenerate in the sense that $c_{B,A} \circ c_{A,B}$ for all $B \in \Cc$ implies that $A$ lies in $\Cc_0$, is purely even, and has trivial $\h$-action. If $A$ is finite-dimensional, it is thus a finite direct sum of copies of the trivial representation $\Cb^{1|0}$. 

\smallskip\noindent
(ii) The full subcategory $\Cc\fd$ of all objects in $\Cc_0$ and $\Cc_1$ which have finite-dimensional underlying super-vector spaces carries the structure of a ribbon category. That is, it has right duals and twist automorphisms \cite[Sect.\,4.6]{Davydov:2012xg}. For $R \in \Cc_0\fd$ and $X \in \Cc_1\fd$, the twist automorphisms are given by \cite[Prop.\,4.18]{Davydov:2012xg}
\be
  \theta_R = \exp\!\big(\!- \pi i \cdot \Omega^{(11)}\big) \quad , \qquad
  \theta_X = e^{i \pi N/4} \cdot \omega_X \ .
\ee
In $\Cc_0$, this is of the form $\exp( - 2 \pi i L_0 )$ as expected, cf.\ \eqref{eq:virasoro-untwisted}. In $\Cc_1$, on the other hand, it is not, due to the parity involution $\omega_X$, cf.\ \eqref{eq:Virasoro-modes-twisted}. This may seem worry-some at first, but note that $\exp( - 2 \pi i L_0 )$ is not actually a morphism in $\Cc_1$ because it does not commute with the action of $a_m$ ($m \in \Zb + \frac12$). Instead, the ribbon structure presented here is geared towards the ``even sub-theory of symplectic fermions'': $\theta_X$ is the eigenvalue of $\exp( - 2 \pi i L_0 )$ on the even subspace of $\Indtw(X)$. This will be elaborated in more detail in \cite{DR-holo}.
\end{remark}

\section{Two applications to symplectic fermions}

\subsection{Torus blocks}\label{sec:torusblock}


In rational conformal field theory, the characters of the irreducible representations of the underlying vertex operator algebra transform into each other under the action of the modular group \cite{Zhu1996}. In logarithmic conformal field theories this is typically not the case, as the space of conformal blocks on the torus is no longer spanned by characters \cite{Miyamoto:2002od,Flohr:2005cm}. One may attempt to span the space of torus blocks by one-point blocks instead of zero-point blocks. In doing so one must ensure that the field inserted on the torus does not contribute a conformal factor under modular transformations. In this section we will investigate this approach for the even sub-theory of symplectic fermions.

\medskip

Consider $N$ pairs of symplectic fermions, $\h = \Cb^{0|2N}$. To each object in $\Cc\fd$ we can assign the character of its induced module with or without parity involution. Let $A \in \Cc\fd$ and abbreviate $\hat A = \Indtwutw(A)$. We set
\be
  \chi_A^+(\tau) = \mathrm{tr}_{\hat A} \big(\, q^{L_0 - c/24} \, \big)
  \quad , \quad
  \chi_A^-(\tau) = \mathrm{tr}_{\hat A}\big(\,  \omega_{\hat A} \, q^{L_0 - c/24} \, \big) \ ,
\ee
where $q = \exp(2\pi i \tau)$ and $\tau$ a complex number in the upper half plane. From these we can obtain the character of the even subspace of $\hat A$ via
\be
  \chi_A^\mathrm{ev}(\tau) = \tfrac12\big( \chi_A^+(\tau) + \chi_A^-(\tau) \big) \ .
\ee
Denote by $\Pi$ the parity shift functor on $\svect$. We have $\chi_A^\pm(\tau) = \pm \chi_{\Pi A}^\pm(\tau)$. Recall from \eqref{eq:sympferm-simple} the simple objects $\one, \Pi\one, T, \Pi T$ of $\Cc$. Each character $\chi_A^+(q)$ is a $\Zb_{\ge 0}$ linear combination of the characters of the four irreducibles. The latter characters are \cite{Kausch:1995py}, \cite[Sect.\,5.2]{Abe:2005}:
\ba
\chi_\one^+(\tau)
&=  \Big( q^{\frac1{24}} \prod_{n=1}^\infty (1+q^n) \Big)^{2N}
\quad , \quad &
\chi_\one^-(\tau) &=  \Big( q^{\frac1{24}} \prod_{n=1}^\infty (1-q^n) \Big)^{2N} \quad ,
\nonumber \\
\chi_T^+(\tau) &=  \Big( q^{-\frac1{48}} \prod_{n=1}^\infty (1+q^{n-\frac12}) \Big)^{2N}
\quad , \quad &
\chi_T^-(\tau)  &=  \Big( q^{-\frac1{48}} \prod_{n=1}^\infty (1-q^{n-\frac12}) \Big)^{2N} \ .
\label{eq:symp-ferm-irrep-char}
\ea
We are interested in the modular transformation properties of the corresponding characters $\chi^\mathrm{ev}$. The $T$-transformation $\tau \mapsto \tau+1$ produces a phase, while the $S$-transformation $\tau \mapsto -1/\tau$ gives  \cite{Kausch:1995py}, \cite[Sect.\,5.2]{Abe:2005}
\ba
\chi_{\one}^\mathrm{ev}(-1/\tau)
&= 
2^{-N-1} 
\big( \chi_{T}^\mathrm{ev}(\tau) - \chi_{\Pi T}^\mathrm{ev}(\tau) \big)
\,+\,
\tfrac12(-i \tau)^{N}
\big( \chi_{\one}^\mathrm{ev}(\tau) - \chi_{\Pi\one}^\mathrm{ev}(\tau) \big) \ ,
\nonumber \\  
\chi_{\Pi\one}^\mathrm{ev}(-1/\tau)
&=
2^{-N-1} 
\big( \chi_{T}^\mathrm{ev}(\tau) - \chi_{\Pi T}^\mathrm{ev}(\tau) \big)
\,-\,
\tfrac12(-i \tau)^{N}
\big( \chi_{\one}^\mathrm{ev}(\tau) - \chi_{\Pi\one}^\mathrm{ev}(\tau) \big) \ ,
\nonumber \\  
\chi_{T}^\mathrm{ev}(-1/\tau)
&=
\tfrac12 
\big( \chi_{T}^\mathrm{ev}(\tau) + \chi_{\Pi T}^\mathrm{ev}(\tau) \big)
\,+\,
2^{N-1} 
\big( \chi_{\one}^\mathrm{ev}(\tau) + \chi_{\Pi\one}^\mathrm{ev}(\tau) \big) \ ,
\nonumber \\  
\chi_{\Pi T}^\mathrm{ev}(-1/\tau)
&=
\tfrac12 
\big( \chi_{T}^\mathrm{ev}(\tau) + \chi_{\Pi T}^\mathrm{ev}(\tau) \big)
\,-\,
2^{N-1} 
\big( \chi_{\one}^\mathrm{ev}(\tau) + \chi_{\Pi\one}^\mathrm{ev}(\tau) \big) \ .
\label{eq:symp-ferm-Strans}
\ea
We see that the even characters do not close onto themselves under the $S$-transformation but instead produce terms multiplying a power of $\tau$. We will now produce these terms via one-point functions on the torus.

\medskip

More specifically, we will look at torus one-point blocks with insertions from $U(\h)$. For $A \in \Cc\fd$, $f : U(\h) \ast A \to A$, $u \in U(\h)$, $\tau$ in the upper half plane and $z \in \Cb$, define
\be \label{eq:Z-1pt-torus-def}
  Z^\pm_A(f,u;\tau,z) = \mathrm{tr}_{\hat A} \Big( \omega_{\hat A}^{\pm} \, q^{L_0-c/24} \, V(f;e^{2 \pi i z}) \circ \big\{ (e^{2 \pi i z L_0} \tilde u) \otimes \id \big\} \Big) \ ,
\ee
where $\omega^+ = \id$, $\omega^- = \omega$, and $\tilde u = e^{\ln(4) \, L_0} u$ for $A \in \Cc_0$ while $\tilde u = e^{2\ln(4) \, L_0} u$ for $A \in \Cc_1$. This redefinition of $u$ makes the final answer look simpler.

We can think of \eqref{eq:Z-1pt-torus-def} as a conformal one-point block on the torus $\Cb / (\Zb + \tau \Zb)$ with an insertion of $u$ at the point $z$. The next proposition expresses $Z^\pm_A(f,u;\tau,z)$ in terms of the characters \eqref{eq:symp-ferm-irrep-char} and finite dimensional traces; it also shows that $Z$ is actually independent of $z$ as one would expect by translation invariance on the torus.

\begin{proposition}\label{prop:torus-1-pt}
Let $A \in \Cc\fd$ and fix a morphism $f : U(\h) \ast A \to A$. Then
$$
Z^\eps_A(f,u;\tau,z) = 
\begin{cases}
A \in \Cc_0 : &
  \mathrm{tr}_A\big( \omega_{A}^{\eps} \circ f \circ (u \otimes e^{2 \pi i \tau L_0}) \big) 
  \cdot \chi_\one^{\nu}(\tau) 
\\
A \in \Cc_1 : &
  \mathrm{tr}_A\big( \omega_{A}^{\eps} \circ f \circ (u \otimes \id_A) \big) 
  \cdot \chi_T^{\nu}(\tau) 
\end{cases}
$$
where $\nu = \eps (-1)^{|u|}$.
\end{proposition}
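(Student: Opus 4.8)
The plan is to substitute the normal ordered exponential description of the vertex operator and then evaluate the trace over the induced module as a (fermionic) Wick computation which factorises into an oscillator part, reproducing the character, and a finite dimensional part over the ground states $A$. Write $x=e^{2\pi i z}$, $q=e^{2\pi i\tau}$. Since $V(f;x)(\tilde u\otimes-)$ is parity odd when $|u|=1$, its supertrace against $\omega^\eps q^{L_0-c/24}$ vanishes, and so does the right hand side (a parity odd operator has zero supertrace); hence it suffices to treat $|u|=0$, where $\nu=\eps$, and afterwards reinstate the factor $(-1)^{|u|}$.

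\textbf{Reduction to normal ordered form.} By Theorem \ref{thm:*-tensor} and Lemma \ref{lem:Vf-VO-untwisted} (resp.\ Lemma \ref{lem:Vf-VO-twisted}) one writes $V(f;x)$ as $Q_f\circ E_-(x)E_0(x)E_+(x)$ composed with the $\exp(\ln(4)\cdot\Omega^{(12)})$-twist from \eqref{eq:*-tensor-beta} for $A\in\Cc_0$, resp.\ $\tilde Q_f\circ\tilde E_-(x)\tilde E_0(x)\tilde E_+(x)$ with the $\exp(-\ln(4)\cdot\Omega^{(11)})$-twist for $A\in\Cc_1$. The grading operator $L_0$ on the inserted object $U(\h)\in\Cc_0$ is $\tfrac12\Omega^{(11)}_{U(\h)}$, an even and hence central element of $U(\h)$, so $e^{2\pi i z L_0}$ applied to $\tilde u$ commutes with every $E_\pm(x)$ and $E_0(x)$. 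For $A\in\Cc_1$ this factor, the $\ln(4)$-twist, and the $e^{2\ln(4)L_0}$ hidden in $\tilde u$ cancel $\tilde E_0(x)=\exp(-\tfrac12\ln x\,\Omega^{(11)})$ outright, leaving $\tilde Q_f\circ\tilde E_-(x)\tilde E_+(x)(u\otimes-)$, which is manifestly $z$-independent. For $A\in\Cc_0$ one merges $E_0(x)$, the $\ln(4)$-twist and $e^{(2\pi i z+\ln 4)L_0}\tilde u$ using $\Delta(\Omega^{(11)})=\Omega^{(11)}_{(1)}+2\,\Omega^{(12)}+\Omega^{(11)}_{(2)}$ and the fact that $f$ is a $U(\h)$-module map; after applying $Q_f$ the zero mode dependence on ground states becomes the conjugation $e^{(2\pi i z+\ln 4)L_0^A}\circ f\circ\bigl(u\otimes e^{-(2\pi i z+\ln 4)L_0^A}(-)\bigr)$ with $L_0^A=\tfrac12\Omega^{(11)}_A$, so that the only remaining $z$-dependence sits in $E_\pm(x)$ and in these conformal prefactors.

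\textbf{$z$-independence and stripping the zero modes.} Split $L_0$ on the induced module into the ground state piece $L_0^A$ and the oscillator piece $H$ of \eqref{eq:H-op-def}; these commute, as do $E_\pm(x)$, $\omega$ and $q^{L_0}$ with $L_0^A$, and the oscillator creation and annihilation operators with $L_0^A$ (they act on the factor $U(\hat\h_{<0})$ in $\Ind(A)\cong U(\hat\h_{<0})\otimes A$, resp.\ $U(\hat\h_{\mathrm{tw},<0})\otimes A$, on which $L_0^A$ is trivial). Cyclicity of the trace then slides the output prefactor $e^{(2\pi i z+\ln 4)L_0^A}$ onto the input, cancelling $e^{-(2\pi i z+\ln 4)L_0^A}$; this proves $z$-independence and replaces the ground state operator by $f\circ(u\otimes q^{L_0^A}(-))=f\circ(u\otimes e^{2\pi i\tau L_0}(-))$, since $L_0$ on the $\Cc_0$-object $A$ is $L_0^A$. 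For $A\in\Cc_1$ there is no zero mode, the ground state operator is $f\circ(u\otimes(-))$, and there is an overall offset $\tfrac{\sdim\h}{16}$ from \eqref{eq:Virasoro-modes-twisted}. Via Lemma \ref{lem:Qf-exists} (resp.\ Lemma \ref{lem:Qf-exists-tw}) the extension $Q_f$ of this operator acts as the parity involution on the oscillator factor (raised to the power $|u|$, which is $0$ here) tensored with the operator on $A$.

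\textbf{Wick contraction and assembly.} What remains is $\mathrm{tr}\bigl(\omega^\eps q^{L_0-c/24}\,Q_f\,E_-(x)E_+(x)\,(u\otimes(-))\bigr)$. Using property 2 of $Q_f$ to move the annihilation modes in $E_+(x)$ and the creation modes in $E_-(x)$ to the left of $Q_f$, and then contracting creation against annihilation modes around the trace --- the $\beta^i$-coefficients on the $U(\h)$-leg being nilpotent and commuting with all modes, so that the exponentials truncate because $U(\h)$ is finite dimensional --- one expects the sum over contraction patterns to telescope, so that $E_-(x)E_+(x)$ contributes nothing beyond the free oscillator partition function; the cancellation should come out of \eqref{eq:Omis-Omjs-com} together with the sum rules \eqref{eq:metric-sum-rules} and \eqref{eq:Omega-symmetric}, which pin the contraction coefficients to combinations summing to zero. \emph{This step is the main obstacle.} Granting it, the trace factorises as $\mathrm{tr}_{U(\hat\h_{<0})}\!\bigl(\omega^\eps q^{H-c/24}\bigr)\cdot\mathrm{tr}_A\!\bigl(\omega_A^\eps\circ f\circ(u\otimes e^{2\pi i\tau L_0})\bigr)$ for $A\in\Cc_0$, and the analogue with $e^{2\pi i\tau L_0}$ replaced by $\id_A$ and the $\tfrac{\sdim\h}{16}$-offset for $A\in\Cc_1$. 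The oscillator trace has $2N=-\sdim(\h)$ fermionic modes at each level $m\in\Zb_{>0}$ (resp.\ $m\in\Zb_{>0}+\tfrac12$), each contributing a factor $1-(-1)^\eps q^m$; inserting $c=\sdim(\h)$ and comparing with \eqref{eq:symp-ferm-irrep-char} identifies it with $\chi_\one^\eps(\tau)$, resp.\ $\chi_T^\eps(\tau)$. Reinstating the sign from the vanishing observation of the first paragraph gives the stated formula with $\nu=\eps(-1)^{|u|}$.
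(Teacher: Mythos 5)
Your reduction steps are fine and in fact mirror the second half of the paper's argument: the observation that both sides vanish for odd $u$, the cancellation of the $\ln(4)$-factors and of $\tilde E_0$ against $e^{2\pi i z L_0}\tilde u$ in the twisted case, and (for $A\in\Cc_0$) the rewriting of the ground-state operator as a conjugation by $e^{(2\pi i z+\ln 4)L_0}$ which is then removed by cyclicity and the intertwiner property of $f$ -- this is exactly how the paper disposes of the $z$-dependence and produces $f\circ(u\otimes e^{2\pi i\tau L_0})$ resp.\ $f\circ(u\otimes\id_A)$.

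However, the heart of the proposition is the factorisation of the trace over $\hat A$ into (oscillator character)$\times$(finite-dimensional ground-state trace), i.e.\ the claim that the cross-couplings between the $U(\h)$-leg and the oscillators contained in $E_\pm(x)$ (resp.\ $\tilde E_\pm(x)$) drop out of the trace, leaving only $\chi^\nu_\one$ resp.\ $\chi^\nu_T$. You state this yourself: ``one expects the sum over contraction patterns to telescope \ldots This step is the main obstacle. Granting it, \ldots''. So the central step is asserted, not proved, and the mechanism you gesture at (commutators \eqref{eq:Omis-Omjs-com} together with the sum rules \eqref{eq:metric-sum-rules}, \eqref{eq:Omega-symmetric}) is not what makes it work: those identities govern the modes $\Omega^{(is)}_m$ among themselves and do not by themselves kill the terms in which a creation mode of the basis vector is traded for a zero-mode action $a\otimes\id$ on the $U(\h)$-leg. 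The paper avoids all Wick/contraction combinatorics: it computes the trace diagonal matrix element by diagonal matrix element in the PBW basis $\alpha^{i_1}_{-m_1}\cdots\alpha^{i_n}_{-m_n}e_j$ of the induced module and uses the mode-exchange relation of Definition \ref{def:vertex-op}\,(iii) (cases a) and b)) to move $V$ past the creation modes, arguing that the $x^m\cdot a\otimes\id$ summands alter the oscillator content and hence do not contribute when contracted with the dual basis vector $\psi$; the surviving term then yields the factor $(\eps(-1)^{|u|})^{\#\text{odd modes}}q^{m_1+\cdots+m_n}$, which resums to the character, times the ground-state matrix element. If you want to close your gap, this is the argument you need to supply (or an equivalent one); your normal-ordered-exponential set-up does not make it automatic, because after applying $E_-(x)E_+(x)$ the $U(\h)$-leg entering $Q_f$ depends on the oscillator term, so the operator is not of tensor-product form with respect to $\hat A\cong U(\hat\h_{<0})\otimes A$ and the trace does not factorise without an argument of exactly this kind.
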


\begin{proof}
We compute the trace over $\hat A$ in two steps: first over the $U(\h_{<0})$ factor of the induced module and then over the space of ground states. Let $\{ e_j \}$ be a basis of the space ground states $A$.
Abbreviate $V \equiv V(f;e^{2 \pi i z}) \circ \big\{ (e^{2 \pi i z L_0} \tilde u) \otimes \id \big\}$. 
Consider a basis vector of $\hat A$ of the form $\alpha^{i_1}_{-m_1} \cdots \alpha^{i_n}_{-m_n} e_j$ and abbreviate the dual basis element by $\psi \equiv \psi^{i_1,\dots,i_n}_{m_1,\dots,m_n;j}$. Then
\ba
  &\big\langle \psi , \omega_{\hat A}^{\eps} \, q^{L_0-c/24} \, V
  \alpha^{i_1}_{-m_1} \cdots \alpha^{i_n}_{-m_n} e_j \big\rangle
  \nonumber \\ &
  \overset{(1)}=  (-1)^{|u|(|\alpha^{i_1}| + \cdots + |\alpha^{i_n}|)}
  \big\langle \psi , \omega_{\hat A}^{\eps} \, q^{L_0-c/24} \, 
  \alpha^{i_1}_{-m_1} \cdots \alpha^{i_n}_{-m_n} V e_j \big\rangle \ ,
  \nonumber \\ &
  \overset{(2)}= \big(\eps(-1)^{|u|}\big)^{|\alpha^{i_1}| + \cdots + |\alpha^{i_n}|} q^{m_1+\cdots+m_n-c/24}
  \big\langle e_j^* , \omega_{A}^{\eps} \, q^{L_0} \, 
   V e_j \big\rangle \ ,
\ea
Step 1 follows from the mode exchange relation in cases a) and b) of Definition \ref{def:vertex-op}\,(iii). The $x^m \cdot a \otimes \id$ summand in the exchange relation removes one mode $a^k_{-m_k}$ from the vector in $\hat A$ and so gives zero when contracted with $\psi$. In step 2 the negative modes are moved past $\omega_{\hat A}^{\eps} \, q^{L_0-c/24}$ and contracted with $\psi$, leaving only the ground state part. This shows that
\be
  Z^\pm_A(f,u;\tau,z) = 
  \begin{cases}
A \in \Cc_0 : &
  \mathrm{tr}_A\big( \omega_{A}^{\eps} \, e^{2 \pi i \tau L_0} \, V \big) 
  \cdot \chi_\one^{\nu}(\tau) 
\\
A \in \Cc_1 : &
  \mathrm{tr}_A\big( \omega_{A}^{\eps} \, V \big) 
  \cdot \chi_T^{\nu}(\tau) 
\end{cases}
\ee
where the explicit action of $L_0$ on ground states has been inserted. For $A \in \Cc_0$ this was given in \eqref{eq:virasoro-untwisted}.
For $A \in \Cc_1$, according to \eqref{eq:Virasoro-modes-twisted} the Virasoro mode $L_0$ acts on $A$ as $-\frac{N}8 \cdot \id_A$, and this factor now forms part of the character $\chi_T$. Hence the $e^{2 \pi i \tau L_0}$ is absent for $A \in \Cc_1$.

In the above expression, $V$ is evaluated on ground states and we can insert the explicit form \eqref{eq:V(f)-on-ground-states}. 

\nxt $A \in \Cc_0$:
\be
\mathrm{tr}_A\big( \omega_{A}^{\eps} \circ e^{2 \pi i \tau L_0} \circ V \big) 
=
\mathrm{tr}_A\big( \omega_{A}^{\eps} \circ e^{2 \pi i \tau L_0} \circ F \circ (\tilde u \otimes \id_A) \big) 
\ee
where, for $\ell := 2 \pi i z+\ln(4)$,
\ba
  F &= 
f \circ \exp\!\big(
\ell \cdot \Omega^{(12)}+ 2 \pi i z \cdot \tfrac12 \Omega^{(11)} \big)
 \nonumber \\
&=
f \circ \exp\!\big(
\ell \cdot \tfrac12 (\Omega^{(11)} + 2\Omega^{(12)} + \Omega^{(22)})
- \ell \cdot \tfrac12  \Omega^{(22)}
- \ln(4) \cdot  \tfrac12 \Omega^{(11)}
\big)
 \nonumber \\
&=
\exp\!\big( \ell \cdot \tfrac12 \Omega^{(11)}
\big)
\circ
f \circ \exp\!\big(
-  \ell \cdot \tfrac12  \Omega^{(22)}
- \ln(4) \cdot \tfrac12 \Omega^{(11)}
\big)
\ea
The factor $\exp\!\big( \ell \cdot \tfrac12 \Omega^{(11)}\big)$ to the left of $f$ can be taken around the trace and will cancel with the corresponding exponential to the right of $f$. This results in the expression claimed in the statement for $A \in \Cc_0$.

\nxt $A \in \Cc_1$: We have
\be
  \mathrm{tr}_A\big( \omega_{A}^{\eps} \, V \big) 
  = 
  \mathrm{tr}_A\big( \omega_{A}^{\eps} \,
  f \circ \exp\!\big(- ( \pi i z +\ln(4) ) \cdot \Omega^{(11)} \big)
  \circ \big\{ (e^{2 \pi i z L_0} \tilde u) \otimes \id_A \big\} \big) \ .
\ee
Since $(e^{2 \pi i z L_0} \tilde u) \otimes \id_A = e^{(\pi i z + \ln(4)) \Omega^{(11)}} \circ ( u \otimes \id_A )$, 
this gives the statement of the proposition for $A \in \Cc_1$.
\end{proof}

Below we will drop the the argument $z$ from the notation $Z^\eps_A(f,u;\tau,z)$.

\medskip

Using $\chi_{\one}^\mathrm{ev}(\tau) - \chi_{\Pi\one}^\mathrm{ev}(\tau)  = \chi_\one^-(\tau)$, from \eqref{eq:symp-ferm-Strans} one sees that the vector space of functions on the upper half plane spanned by the irreducible characters and their modular transformations is the $(N{+}4)$-dimensional space
\be\label{eq:modularclosure}
  \mathrm{span}_{\Cb} \big\{ \, \chi_{\one}^\mathrm{ev}(\tau) , \chi_{\Pi\one}^\mathrm{ev}(\tau) , \chi_{T}^\mathrm{ev}(\tau) , \chi_{\Pi T}^\mathrm{ev}(\tau) \,\big\}
  ~\oplus~
  \mathrm{span}_{\Cb} \big\{ \tau^k \cdot \chi_\one^-(\tau) \, \big| \, k = 1, \dots,N \big\} \ .
\ee
In \cite{Feigin:2005zx}, an analogous structure for the modular closure of the characters was found for the $W_{1,p}$-models.

Proposition \ref{prop:torus-1-pt} shows that one can produce terms of the form $\tau^k \cdot \chi_\one^\pm(\tau)$ if $A$ is in $\Cc_0$ and has a non-trivial (necessarily nilpotent) $L_0$-action. 
An argument similar to Lemma \ref{lem:Hopf-aux2} shows that the most general $\h$-module map $f : U(\h) \otimes A \to A$ is of the form
$f_g(u \otimes a) = \sum_{(u)} u_{(1)}.g\big(S(u_{(2)}).a\big)$ where $g : A \to A$ is an arbitrary even linear map. 

\begin{lemma}\label{lem:odd-trace}
For $A \in \Cc_0$, $g \in \End(A)$, $u \in U(\h)$ and $f_g$ as above,
\be
  Z_{A}^-(f_g,u;\tau) = \eps(u) \cdot \mathrm{str}_A\big( g \circ e^{2 \pi i \tau L_0} \big) 
  \cdot \chi_\one^{-}(\tau) \ .
\ee
\end{lemma}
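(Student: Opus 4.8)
The plan is to start from Proposition~\ref{prop:torus-1-pt}, specialised to $A\in\Cc_0$ and $\eps=-1$, which gives
\be
  Z_A^-(f_g,u;\tau) ~=~ \mathrm{tr}_A\big( \omega_A \circ f_g \circ (u\otimes e^{2\pi i\tau L_0})\big)\cdot\chi_\one^{\nu}(\tau)\,,
\ee
where $\nu=\eps(-1)^{|u|}=-(-1)^{|u|}$. The task is thus purely algebraic: to evaluate the trace $\mathrm{tr}_A\big(\omega_A\circ f_g\circ(u\otimes e^{2\pi i\tau L_0})\big)$ and show it equals $\eps(u)\cdot\mathrm{str}_A\big(g\circ e^{2\pi i\tau L_0}\big)$, and separately to reconcile $\chi_\one^\nu$ with $\chi_\one^-$.

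First I would insert the explicit form $f_g(u\otimes a)=\sum_{(u)}u_{(1)}.g\big(S(u_{(2)}).a\big)$ recalled just before the lemma, so that the operator whose trace we take becomes $a\mapsto\sum_{(u)}\omega_A\big(u_{(1)}.\,g\big(S(u_{(2)}).(e^{2\pi i\tau L_0}a)\big)\big)$. Since $L_0$ commutes with the $\h$-action on $A$ (it is central, being built from $\beta^i_0\alpha^i_0$ which lies in the centre of $U(\h)$ as $\h$ is abelian) and with $g$ is \emph{not} assumed --- but $e^{2\pi i\tau L_0}$ can still be moved freely inside the trace by cyclicity --- I would use the trace property to bring $e^{2\pi i\tau L_0}$ next to $g$. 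The key computation is then $\mathrm{tr}_A\big(\omega_A\circ(\text{left mult.\ by }u_{(1)})\circ g\circ(\text{action of }S(u_{(2)}))\big)$ summed over $(u)$. Here I would use that for $x\in\h$ (purely odd!) the operator ``act by $x$'' on the super-vector space $A$ is parity-odd, so it anticommutes with $\omega_A$; more usefully, for the trace of a product of an operator and a parity-odd operator one gets a sign, and this is exactly the mechanism that converts an ordinary trace into a supertrace. The cleanest route is: the map $a\mapsto u.a$ (left action, $u\in U(\h)$ homogeneous of parity $|u|$) satisfies $\omega_A\circ(u\cdot-)=(-1)^{|u|}(u\cdot-)\circ\omega_A$, and by cyclicity of the (ordinary) trace and the relation $\mathrm{tr}(\omega_A\circ T)=\mathrm{str}(T)$ valid for any $T$, one rewrites everything as a supertrace, at which point the module-map structure collapses via $\sum_{(u)}u_{(1)}S(u_{(2)})=\eps(u)\cdot 1$ (the antipode axiom for the Hopf algebra $U(\h)$).

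In more detail, the step I expect to be the crux is showing
\be
  \mathrm{str}_A\Big(\textstyle\sum_{(u)}(u_{(1)}\cdot-)\circ g\circ(S(u_{(2)})\cdot-)\circ e^{2\pi i\tau L_0}\Big)
  ~=~\eps(u)\cdot\mathrm{str}_A\big(g\circ e^{2\pi i\tau L_0}\big)\,.
\ee
For this one uses graded cyclicity of the supertrace to move the operator ``act by $u_{(1)}$'' from the far left to the far right, past $g$, $S(u_{(2)})\cdot-$ and $e^{2\pi i\tau L_0}$; this produces a Koszul sign $(-1)^{|u_{(1)}|(|g|+|u_{(2)}|)}$ (and no sign from $L_0$, which is even). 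Because $g$ is \emph{even} by hypothesis this sign is $(-1)^{|u_{(1)}||u_{(2)}|}$, and one is left with $\sum_{(u)}(-1)^{|u_{(1)}||u_{(2)}|}S(u_{(2)})u_{(1)}$ acting. This is precisely the expression appearing in the antipode identity used in the proof of Lemma~\ref{lem:Hopf-aux2}, namely $\sum_{(u)}(-1)^{|u_{(1)}||u_{(2)}|}u_{(2)}S^{-1}(u_{(1)})=\eps(u)\cdot 1$ — after relabelling via cocommutativity of $U(\h)$ and $S=S^{-1}$ — so it equals $\eps(u)\cdot\id_A$, yielding the claimed supertrace. Since $\eps(u)=0$ unless $u$ is of degree $0$, i.e.\ $u\in\Cb\cdot 1$ with $|u|=0$, in the non-vanishing case $\nu=-(-1)^0=-1$, so $\chi_\one^\nu=\chi_\one^-$ and the stated formula follows; for $u$ not of degree $0$ both sides vanish, so the identity holds trivially there as well. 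The only genuine care needed is bookkeeping the Koszul signs produced by graded cyclicity and by $\omega_A$, and checking that ``$g$ even'' is what kills the would-be obstruction — which is exactly why the lemma is stated for even linear maps.
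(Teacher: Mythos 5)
Your proposal is correct and follows essentially the same route as the paper: apply Proposition \ref{prop:torus-1-pt} with $\eps=-1$, insert the explicit form $f_g(u\otimes a)=\sum_{(u)}u_{(1)}.g\big(S(u_{(2)}).a\big)$, cycle the action of $u_{(1)}$ through the supertrace (the paper's sign $(-1)^{|u_{(1)}|}$ is your Koszul sign $(-1)^{|u_{(1)}||u_{(2)}|}$ specialised to even $u$), and collapse via the antipode identity, with the odd-$u$ case disposed of by the vanishing of the (super)trace of an odd endomorphism. The only differences are cosmetic bookkeeping of signs and of the $\chi_\one^{\nu}$ versus $\chi_\one^{-}$ distinction, both of which you handle correctly.
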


\begin{proof}
By Proposition \ref{prop:torus-1-pt}, $Z_{A}^-(f_g,u;\tau) =  \mathrm{str}_A\big( f_g \circ (u \otimes e^{2 \pi i \tau L_0}) \big)  \cdot \chi_\one^{-}(\tau)$.
If $u$ is odd, the trace is over an odd endomorphism and hence zero. For $u$ even we have
\ba
&  \mathrm{str}_A\big( f_g \circ (u \otimes e^{2 \pi i \tau L_0}) \big) 
=
\sum_{(u)} 
  \mathrm{str}_A\big( u_{(1)}g(S(u_{(2)}).-) \circ e^{2 \pi i \tau L_0}) \big) 
  \nonumber \\
&\overset{(1)}=
\sum_{(u)} (-1)^{|u_{(1)}|}
  \mathrm{str}_A\big( g(S(u_{(2)}) u_{(1)}.-) \circ e^{2 \pi i \tau L_0}) \big) 
\overset{(2)}=
 \eps(u) \cdot \mathrm{str}_A\big( g \circ e^{2 \pi i \tau L_0} \big) 
\ea
In (1) the endomorphism $u_{(1)}$ is taken through the super-trace. In (2) the identities 
$\sum_{(u)} (-1)^{|u_{(1)}|} S(u_{(2)}) u_{(1)} = \sum_{(u)} u_{(1)}  S(u_{(2)})  = \eps(u) \, 1$ were used.
\end{proof}

Consider the case $A = U(\h)$ and $g(x) = 1 \cdot \alpha(L_0^{\,N-k}x)$ where $\alpha$ is a top-form on $U(\h)$, say the one satisfying $\alpha(L_0^{\,N}) = 1$. Since the image of $g$ is $\Cb\,1$, the trace in Lemma \ref{lem:odd-trace} can be taken just over $\Cb\,1$. The result is
\be\label{eq:vacu-pseudo-trace}
  Z_{U(\h)}^-(f_g,1;\tau) = 
  \alpha\big((L_0)^{N-k} e^{2 \pi i \tau L_0} \big) \cdot  \chi_\one^{-}(\tau) 
  = \tfrac{(2 \pi i \tau)^k}{k!} \cdot  \chi_\one^{-}(\tau) \ ,
\ee
i.e.\ precisely the ``non-character directions'' in the modular closure \eqref{eq:modularclosure}.
The idea to obtain torus blocks which are not in the span of characters as torus one-point functions of $\omega$ is mentioned in \cite{Gainutdinov:2007tc} (for the $W_{1,p}$-models) but without computing the corresponding traces. The above calculation is the first explicit evaluation of such a trace.

The functions \eqref{eq:vacu-pseudo-trace} are examples of so-called pseudo-traces \cite{Miyamoto:2002od} (which are defined differently from $Z^\pm_A$, i.e.\ not as traces over vertex operators). Pseudo-traces where studied in \cite{Adamovic:2009} for the triplet $\mathcal{W}(p)$-models and in \cite{Arike:2011ab} for (the even part of) symplectic fermions. Indeed, the pseudo-traces with vacuum insertion computed in \cite[Thm.\,6.3.2]{Arike:2011ab} agree with \eqref{eq:vacu-pseudo-trace} up to normalisation.

\medskip

As in Section \ref{sec:examples-VO} (in the case $N=1$), let us write $\omega \equiv 1 \in U(\h)$ to avoid confusion with the `identity field'. The function $Z_{U(\h)}^-(f,\omega;\tau)$ should be interpreted as a conformal one-point block on the torus with an insertion of $\omega$. One may now wonder why this can transform in the same way as a zero-point block on the torus, given that an $S$-transformation introduces an action of rotation and rescaling of the form $\omega \mapsto e^{\ln(-1/\tau) L_0} \omega$ (for $z=0$).
The resolution is that 
\be \label{eq:S-trans-aux2}
  Z_{U(\h)}^-(f_g,L_0^{\,k}\omega;\tau) = 0
  \qquad \text{for} ~~ k>0 \ ,
\ee
as is immediate from Lemma \ref{lem:odd-trace} and the fact that $\eps(L_0^{\,k}\omega)=0$ for all $k>0$.

\subsection{Algebra structure on $U(\h)$}\label{sec:Uh-algebra}

In this application we compute the algebra structure on $U(\h)$ which lies in the Morita class of $\one$. In the case $\h = \Cb^{0|2}$, this algebra structure can be compared to the operator product expansion of boundary fields in the triplet model calculated in \cite{Gaberdiel:2006pp}.

\medskip

The rigid structure on $\Cc\fd$ defined in \cite[Sect.\,3.7]{Davydov:2012xg} assignes to a finite dimensional $X \in \Cc_1$ the dual vector space $X^*$ of all (even and odd) linear maps to $\Cb$. Let $T = \Cb^{1|0} \in \Cc_1$ as above. Then $T^* = (\Cb^{1|0})^*$ and $T \ast T^* = U(\h) \otimes \Cb^{1|0} \otimes (\Cb^{1|0})^* \cong U(\h)$.

The object $T \ast T^*$ carries a canonical structure of an associative algebra (namely that of the internal End of $T$ with respect to the action of $\Cc$ on itself, hence it is in the same Morita class as $\one$). The product $\mu : U(\h) \ast U(\h) \to U(\h)$ on $U(\h) \cong T \ast T^*$ is computed as follows.
Denote by $\ev_T : T^* \ast T \to \one$ the evaluation map of the right duality on $\Cc\fd$. This is given in  \cite[Sect.\,3.7]{Davydov:2012xg} to be $\ev_T(u \otimes \varphi \otimes z) = \eps(u) \cdot \varphi(z)$, where $u \otimes \varphi \otimes z \in U(\h) \otimes (\Cb^{1|0})^* \otimes \Cb^{1|0}$ and $\eps$ is the counit on $U(\h)$. Write $m : U(\h) \otimes U(\h) \to U(\h)$ the Hopf algebra product on $U(\h)$; note that $m$ cannot be the sought-for product on $T \ast T^*$ since it is not an intertwiner of $\h$-modules and thus not a morphism in $\Cc_0$. Then
\ba
  \mu 
  &= \Big[
  U(\h) \ast U(\h)
  \cong
  (T \ast T^*) \ast (T \ast T^*)
  \xrightarrow{\alpha^{-1}_{T,T^*,TT^*}}
  T \ast (T^* \ast (T \ast T^*))
\nonumber \\
& \hspace{3em}  
  \xrightarrow{\id \ast \alpha_{T^*,T,T^*}}
  T \ast ((T^* \ast T) \ast T^*)
  \xrightarrow{\id \ast (\ev_T \ast \id)}
  T \ast (\one \ast T^*)
  \cong T \ast T^* \cong U(\h)
  \Big]  
\nonumber \\
  &= \big\{ \id \otimes \eps \big\} \circ \big\{ \id \otimes \phi \big\} \circ \big\{ (\id \otimes m) \circ (\id \otimes S \otimes \id) \circ (\Delta \otimes \id) \big\}
\nonumber \\
  &= (\id \otimes (\lambda \circ m)) \circ (\id \otimes S \otimes \id) \circ (\Delta \otimes \id)  \ .
\ea
The three sets of curly brackets in the second line indicate how the morphisms arise from those in the previous line. In the last line we simplified $\eps \circ \phi = \lambda$, with $\phi$ from \eqref{eq:111-phi-def-repeat} and $\lambda$ from \eqref{eq:111-lambda-def}.

The unit for this algebra is given by the Hopf algebra integral $\Lambda = (-\pi)^N / N! \cdot L_0^{\,N}1$. Indeed, since $\Lambda$ is an integral on $U(\h)$ we have $\mu \circ (\id \otimes \Lambda) = \id \cdot \lambda(\Lambda) = \id$. That also $\mu \circ (\Lambda \otimes \id) = \id$ follows for example from the identity in \cite[Lem.\,2.3\,b)]{Davydov:2012xg}.

As in Section \ref{sec:examples-VO} we set $\omega \equiv 1 \in U(\h)$. For comparison to \cite{Gaberdiel:2006pp} it is convenient to normalise the algebra such that the unit is given by $\hat\Omega := L_0^{\,N}\omega$. Namely, we pass to the isomorphic algebra with
\be
\text{mult. :}~~
  \hat\mu =\tfrac{ (- \pi)^N }{N!} \cdot (\id \otimes (\lambda \circ m)) \circ (\id \otimes S \otimes \id) \circ (\Delta\otimes \id)
  ~~,\quad
\text{unit :} ~~\hat\Omega \ .
\ee 
Via Theorem \ref{thm:*-tensor}, the morphism $\hat\mu : U(\h) \ast U(\h) \to U(\h)$ determines a vertex operator $V(\hat\mu;x) : U(\h) \otimes \Ind(U(\h)) \to \overline\Ind(U(\h))$. Let us look at this vertex operator explicitly in the case that $\h = \Cb^{0|2}$. The ground state contribution of $V(\hat\mu;x)(\omega \otimes \omega)$ can be read off from \eqref{eq:V(f)-on-ground-states} and was already worked out in \eqref{eq:case-a-omom} (up to the normalisation $\ln(4)$ from \eqref{eq:V(f)-on-ground-states} which amounts to replacing $\ln(x)$ by $\ln(4x)$):
\ba
V(\hat\mu;x)(\omega \otimes \omega) &= \hat\mu(\omega \otimes \omega) + \ln(4x) \cdot \hat\mu( \chi^- \omega \otimes \chi^+ \omega - \chi^+ \omega\otimes \chi^-\omega)
  \nonumber \\ 
  & \hspace{7em}- (\ln(4x))^2 \cdot \hat\mu(\hat\Omega \otimes \hat\Omega) ~+~\text{(higher terms)} \ ,
  \nonumber \\ 
  &= - (\ln(4x))^2 \cdot \hat\Omega - 2\ln(4x) \cdot \omega
  ~+~\text{(higher terms)}  \ .
\label{eq:om-om-OPE}
\ea
Apart from the fact that $\hat\Omega$ is the unit for $\hat\mu$ we used
$\hat\mu(\omega \otimes \omega) = -\pi\, \lambda(1 \cdot 1) \cdot 1  = 0$, 
$\hat\mu( \chi^- \omega \otimes \chi^+ \omega ) = - \pi\,\lambda(-\chi^- \chi^+) \cdot 1  = - \omega$ and
$\hat\mu( \chi^+ \omega \otimes \chi^- \omega ) = \omega$.

\medskip

We will now compare this result to \cite{Gaberdiel:2006pp}. There, the triplet model of central charge $c=-2$ was investigated on the upper half plane. The chiral symmetry of the triplet model is the even sub-vertex operator algebra of a single pair of symplectic fermions. In the ``Cardy case'' one expects a boundary condition whose space of boundary fields is just the vacuum representation.\footnote{
  This is the case for rational conformal field theory but may fail for logarithmic models \cite{Gaberdiel:2009ug,Runkel:2012rp}. However, for the triplet model
  there is such a boundary condition \cite{Gaberdiel:2006pp}.}
The other boundary conditions should then be labelled by representations $M$ of the chiral algebra, and the spaces of boundary fields should be given by the fusion product of $M$ with the dual $M^*$.\footnote{ 
  For logarithmic models not all $M$ are allowed. One needs to require the existence of a non-degenerate pairing on
  the resulting space of boundary fields, see \cite{Gaberdiel:2009ug,Runkel:2012rp}. }
This was tested in \cite{Gaberdiel:2006pp} for the irreducible representations. Consider the boundary condition labelled by the representation $\mathcal{V}_{-1/8}$ of the triplet algebra of lowest conformal weight $h = -\frac18$. This representation is the even subspace of $\Indtw(T)$, cf.\ Remark \ref{rem:C-braiding}\,(ii). The space of boundary fields is the representation $\mathcal{R}_0$ of the triplet algebra \cite[Sect.\,2.3]{Gaberdiel:2006pp}, which is the even subspace of $\Ind(U(\h)) = \Ind(T \ast T^*)$. The boundary OPE found in \cite[Eqn.\,(2.23)]{Gaberdiel:2006pp} reads
\be
  \omega^\text{GR}(x) \omega^\text{GR}(0) ~=~ - (\ln(x))^2 \cdot \Omega^\text{GR}(0) - 2 \ln(x) \cdot \omega^\text{GR}(0) ~+~ \cdots \ ,
  \label{eq:om-om-OPE-GR}
\ee
where $\Omega^\text{GR} = L_0 \, \omega^\text{GR}$. The expansions \eqref{eq:om-om-OPE} and \eqref{eq:om-om-OPE-GR} agree upon setting $\omega^\text{GR} = \omega + \ln(4) \hat\Omega$ which then forces $\Omega^\text{GR} = \hat\Omega$.

\appendix

\section{Proofs omitted in Sections \ref{sec:mode+rep} and \ref{sec:VO+tensor}}

\subsection{Proof of Theorem \ref{thm:ind-equiv-untw}} \label{app:ind-equiv}

The proof follows \cite[Sect.\,1.7]{Frenkel:1987} and \cite[Sect.\,3.5]{Kac:1998}. It proceeds by first investigating induced modules of a Lie subalgebra $\tilde\h$ which is defined as ``$\hat\h$ without zero modes''. Since the essential difference between $\hat\h$ and $\hat\h_\mathrm{tw}$ is the presence of zero modes in $\hat\h$, analogous results hold for $\tilde\h_\mathrm{tw}$ and imply Theorem \ref{thm:ind-equiv-tw} (in fact we even have $\hat\h_\mathrm{tw} = \tilde\h_\mathrm{tw}$). Here, only the proof of Theorem \ref{thm:ind-equiv-untw} will be given explicitly.

\medskip

Let
\be
  \tilde\h = \mathrm{span}\{ a_m \,|\, m \neq 0 , a \in \h \} \oplus \Cb K ~\subset~ \hat\h \ .
\ee
This is a Lie-subalgebra of $\hat\h$. For all $\tilde\h$-modules below we will assume that $K$ acts as the identity. Given an $\tilde\h$-module $M$, we define the {\em space of ground states} as
\be
  M_\mathrm{gs} = \{ v \in M \,|\, a_m v = 0 \text{ for all $m > 0$ and $a \in \h$}\,\} \ .
\ee
A $\tilde\h$-module is {\em bounded below} if the condition in Definition \ref{def:bounded-below-rep} holds (with all $m_i \neq 0$).

\begin{lemma} \label{lem:M_gs=0_implies_M=0}
Let the $\tilde\h$-module $M$ be bounded below. If $M_\mathrm{gs} = \{0\}$ then also $M = \{0\}$.
\end{lemma}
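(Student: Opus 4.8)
The statement is a standard "no ground states implies trivial" lemma for graded modules, and I would prove its contrapositive: assume $M \neq \{0\}$ and produce a nonzero ground state. The key is to exploit the bounded-below condition together with the grading behaviour of the modes. Pick any nonzero $v \in M$. By the bounded-below hypothesis applied to $v$, there is $N_v > 0$ so that every composition $a^1_{m_1}\cdots a^L_{m_L} v$ with $m_1 + \dots + m_L > N_v$ (and all $m_i \neq 0$) vanishes. Among all vectors of the form $w = a^1_{m_1}\cdots a^L_{m_L} v$ with $m_i > 0$ that are nonzero (this set is nonempty since it contains $v$ itself, taking $L=0$), choose one, call it $w$, for which the total degree $d(w) := m_1 + \dots + m_L$ is \emph{maximal}; such a maximum exists and is bounded by $N_v$ because of the bounded-below condition. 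I claim $w \in M_\mathrm{gs}$.

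To verify the claim, take any $a \in \h$ and $m > 0$ and consider $a_m w = a_m a^1_{m_1}\cdots a^L_{m_L} v$. This is again a vector obtained from $v$ by applying only positive modes, and its "intended" total degree is $m + d(w) > d(w)$. By maximality of $d(w)$ among nonzero such vectors, $a_m w = 0$ — \emph{provided} one knows that $a_m w$ is itself of the admissible form. It is: it is literally a product of positive modes applied to $v$. Hence $a_m w = 0$ for all $a$ and all $m>0$, i.e.\ $w \in M_\mathrm{gs}$, and $w \neq 0$, contradicting $M_\mathrm{gs} = \{0\}$. Therefore $M = \{0\}$.

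\textbf{Main subtlety.} The one point requiring care is the existence of the maximal-degree nonzero vector: a priori the set of degrees $d(w)$ of nonzero vectors $w = a^1_{m_1}\cdots a^L_{m_L} v$ (positive modes, $L \ge 0$) could be unbounded if we allowed arbitrary $v$, but for a \emph{fixed} $v$ the bounded-below property forces every such $d(w)$ to be $\le N_v$, so the set of attained degrees is a nonempty subset of $\{0, 1, \dots, N_v\}$ (or of $\{0,\tfrac12,1,\dots\}$ up to $N_v$ in the would-be twisted analogue) and hence has a maximum. One should also note the implicit use of homogeneity conventions: the modes $a_m$ shift the (would-be) $H$-grading by $-m$, so "applying a positive mode strictly increases the degree label" is exactly the statement $[H, a_m] = -m\, a_m$ from \eqref{eq:H-op-def}; here, though, since $\tilde\h$ has no zero modes, one does not even need $H$ to be defined — the combinatorial degree $d(w)$ suffices, and that is why the argument is cleaner for $\tilde\h$ than for $\hat\h$. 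No genuine obstacle is expected; this is a short pigeonhole-style argument.
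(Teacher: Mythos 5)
Your proposal is correct and is essentially the paper's own argument: the paper's phrase ``by iteration we can find $m_1,\dots,m_n>0$ such that $w \neq 0$ but $a_m w = 0$ for all $m>0$'' is exactly your maximal-degree construction, with the bounded-below condition guaranteeing termination. Your write-up merely makes the termination step (degrees of nonzero positive-mode expressions lie in a finite set bounded by $N_v$) explicit, which is a fine, faithful elaboration.
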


\begin{proof}
Suppose $v \in M$ with $v \neq 0$. By boundedness there is $N$ such that $a^1_{m_1} a^2_{m_2} \cdots a^L_{m_L} v = 0$ whenever $m_1+\dots+m_L > N$. By iteration we can find $m_1,\dots,m_n>0$ such that $w := a^1_{m_1} \cdots a^n_{m_n} v \neq 0$ but $a_m w = 0$ for all $m>0$. But then $w \in M_\mathrm{gs}$, in contradiction to $M_\mathrm{gs} = \{0\}$.
\end{proof}

The $\tilde\h$-module $U(\tilde\h)$ shows that the boundedness condition in the above lemma is necessary. 

A super-vector space $V$ will be understood as a $\tilde\h_{>0} \oplus \Cb K$ module by letting $\tilde\h_{>0}$ act as $0$ and $K$ as the identity. Denote the induced $\tilde\h$-module as
\be
  J(V) := U(\tilde\h) \otimes_{U(\tilde\h_{>0} \oplus \Cb K)} V \ .
\ee
The induced modules $J(V)$ are bounded below. Recall from \eqref{eq:H-op-def} the operator $H$ which acts on $\tilde\h$-modules that are bounded below.

\begin{lemma}\label{lem:H-eval-on-induced}
$H$ is diagonalisable on $J(V)$ with eigenvalues in $\Zb_{\ge 0}$. The $H$-eigenspace of eigenvalue $0$ is $V$.
\end{lemma}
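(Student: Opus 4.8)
The plan is to analyze the structure of $J(V)$ using the Poincar\'e--Birkhoff--Witt theorem for Lie super-algebras. First I would fix a homogeneous basis $\{\alpha^i\}_{i\in\Ic}$ of $\h$ (as in Section~\ref{sec:lie-super}) and observe that, as a super-vector space, $J(V) \cong U(\tilde\h_{<0}) \otimes V$ by PBW: a spanning set of $U(\tilde\h_{<0})$ is given by ordered monomials $\alpha^{i_1}_{-m_1}\cdots\alpha^{i_k}_{-m_k}$ with $m_1,\dots,m_k>0$ (and the usual strict-versus-non-strict ordering rule in the indices depending on parity), and tensoring these with a basis of $V$ gives a basis of $J(V)$. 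The operator $H$ satisfies $[H,a_n]=-n\,a_n$, and $H$ annihilates $V$ (since for $v\in V$ the defining sum $\sum_{m>0}\sum_i\beta^i_{-m}\alpha^i_m$ kills $v$ because $\alpha^i_m v=0$ for $m>0$). Hence on a PBW basis vector $\alpha^{i_1}_{-m_1}\cdots\alpha^{i_k}_{-m_k}\otimes v$, repeated use of the commutation relation gives $H\big(\alpha^{i_1}_{-m_1}\cdots\alpha^{i_k}_{-m_k}\otimes v\big) = (m_1+\cdots+m_k)\cdot \alpha^{i_1}_{-m_1}\cdots\alpha^{i_k}_{-m_k}\otimes v$.

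This immediately shows that each PBW basis vector is an $H$-eigenvector with non-negative integer eigenvalue equal to the total degree $\sum m_j$, so $H$ is diagonalisable on $J(V)$ with spectrum in $\Zb_{\ge0}$. The eigenspace for eigenvalue $0$ consists precisely of the span of those basis vectors with $k=0$, i.e.\ of $1\otimes V \equiv V$. The key computational step is the claim $H\big(a^1_{-m_1}\cdots a^k_{-m_k}\otimes v\big) = (\sum_j m_j)\,a^1_{-m_1}\cdots a^k_{-m_k}\otimes v$, which I would prove by induction on $k$: for $k=0$ it is $Hv=0$; for the inductive step write $H\,a^1_{-m_1} = a^1_{-m_1} H + [H,a^1_{-m_1}] = a^1_{-m_1}H + m_1\,a^1_{-m_1}$ and apply the inductive hypothesis to $a^2_{-m_2}\cdots a^k_{-m_k}\otimes v$. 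One must check that the infinite sum defining $H$ really does act as this finite expression on each such vector --- but that is exactly the boundedness property recorded after \eqref{eq:H-op-def}, and it is automatic for induced modules.

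The only genuine subtlety --- and the place I would be most careful --- is making the PBW decomposition precise in the super setting: one needs the correct statement that $J(V)$, as a module over $U(\tilde\h_{<0})$, is free of rank $\dim V$, together with the parity-sensitive ordering convention for the monomials (strict inequality on indices of odd generators, as quoted from \cite[\S2.3]{Scheunert:1979} in the excerpt). Once that is in hand, the eigenvalue computation is routine bookkeeping and the identification of the eigenvalue-$0$ space with $V$ is immediate. I do not expect any serious obstacle; the lemma is essentially a direct consequence of PBW plus the commutation relation $[H,a_n]=-n a_n$.
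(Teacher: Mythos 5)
Your proof is correct and follows essentially the same route as the paper: a PBW basis of $J(V)$ built from negative modes acting on a basis of $V$, combined with $[H,a_m]=-m\,a_m$ and $H|_V=0$, gives diagonalisability with eigenvalues $\sum_j m_j \in \Zb_{\ge 0}$ and identifies the zero-eigenspace with $V$. The extra care you take with the inductive commutator computation and the truncation of the infinite sum defining $H$ is fine but not beyond what the paper leaves implicit.
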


\begin{proof}
Let $\{v_j\}$ be a basis of $V$ and let $\{ \alpha^i \}$ be a basis of $\h$.
By the PBW-theorem, $\{ \alpha^{i_1}_{-m_1} \cdots \alpha^{i_n}_{-m_n} v_j \}$ (where $m_k>0$ and some ordering prescription on the $\alpha^i_{-m}$ is implicit) is a basis of $J(V)$. Since $[H,a_m] = -m a_m$, this basis consists of eigenvectors of $H$ with eigenvalue in $\Zb_{\ge 0}$. The only basis vectors of eigenvalue $0$ are the $v_j$.
\end{proof}

\begin{theorem}\label{thm:tilde-h-all-induced}
Let the $\tilde\h$-module $M$ be bounded below. Then the embedding $M_\mathrm{gs} \hookrightarrow M$ lifts to a unique isomorphism $J(M_\mathrm{gs}) \cong M$ of $\tilde\h$-modules.
\end{theorem}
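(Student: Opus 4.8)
The plan is to construct the inverse to the natural map $J(M_\mathrm{gs}) \to M$ and show it is an isomorphism of $\tilde\h$-modules. First I would use the universal property of induction: since $M_\mathrm{gs}$ is a $\tilde\h_{>0} \oplus \Cb K$-submodule of $M$ on which $\tilde\h_{>0}$ acts as zero and $K$ as the identity, the inclusion $M_\mathrm{gs} \hookrightarrow M$ extends uniquely to a $\tilde\h$-module homomorphism $\Phi : J(M_\mathrm{gs}) \to M$. Uniqueness of the lift is immediate from the universal property, so the whole content is that $\Phi$ is bijective.

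For surjectivity, let $M' = \Phi(J(M_\mathrm{gs}))$ be the image, a $\tilde\h$-submodule of $M$, and consider the quotient $\bar M = M/M'$. Since $M$ is bounded below, so is $\bar M$. By Lemma \ref{lem:M_gs=0_implies_M=0} it suffices to show $\bar M_\mathrm{gs} = \{0\}$. Suppose $\bar v = v + M'$ is a nonzero ground state of $\bar M$; then $a_m v \in M'$ for all $m>0$. Using boundedness one can assume (after acting with sufficiently many positive modes, as in the proof of Lemma \ref{lem:M_gs=0_implies_M=0}) that $v$ may be taken to be an $H$-eigenvector, and then a short induction on the $H$-eigenvalue — using that $M'$ contains all of $M_\mathrm{gs}$ and is closed under the action of the negative modes that raise $H$-degree — shows $v \in M'$, contradiction. (Concretely: write $M$ as the union of its $H$-generalized eigenspaces; on the lowest nonzero $H$-degree not contained in $M'$, every vector is a ground state of $\bar M$, hence lies in $M_\mathrm{gs} \subseteq M'$.) Hence $M' = M$.

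For injectivity, I would filter both sides by $H$-degree. By Lemma \ref{lem:H-eval-on-induced}, $J(M_\mathrm{gs}) = \bigoplus_{m \ge 0} J(M_\mathrm{gs})_m$ with $J(M_\mathrm{gs})_0 = M_\mathrm{gs}$, and $\Phi$ intertwines $H$ since $[H,a_m] = -m\,a_m$ and $\Phi$ is the identity on ground states. So $\Phi$ restricts to maps $\Phi_m : J(M_\mathrm{gs})_m \to M_m$ on each $H$-eigenspace, and it is enough to show each $\Phi_m$ is injective. The PBW theorem for Lie super-algebras (\cite[\S 2.3]{Scheunert:1979}) gives a basis of $J(M_\mathrm{gs})_m$ consisting of vectors $\alpha^{i_1}_{-m_1}\cdots\alpha^{i_n}_{-m_n} v_j$ with $\sum m_k = m$; the key point is that in $M$ these vectors are linearly independent, which follows from the commutation relations \eqref{eq:affine-commutator}: pairing with the "opposite" monomials $\alpha_{m_1}\cdots$ built from the dual basis and commuting modes to the right, one reduces any hypothetical linear relation to a relation among the $v_j$ in $M_\mathrm{gs}$, which is trivial by hypothesis. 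This is the standard "contravariant pairing" argument; the main obstacle, and the only place where care is needed relative to the classical (semisimple) case, is that $M_\mathrm{gs}$ is an arbitrary — possibly non-semisimple, possibly infinite-dimensional — representation of $\h$, so one must phrase the PBW independence argument purely in terms of the mode commutation relations and never appeal to decomposing $M_\mathrm{gs}$ into weight spaces. Once both $\Phi_m$ injective and $\Phi$ surjective are established, $\Phi$ is the desired isomorphism, and its uniqueness has already been noted.
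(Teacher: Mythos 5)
Your reduction of surjectivity to $\bigl(M/M'\bigr)_\mathrm{gs}=\{0\}$ via Lemma \ref{lem:M_gs=0_implies_M=0} is the right idea, but the execution has a genuine gap: you assume that $M$ decomposes into $H$-(generalized) eigenspaces, equivalently that a lift $v$ of a ground state of $M/M'$ can be chosen to be an $H$-eigenvector. For a general bounded-below $\tilde\h$-module this is not known a priori --- it is essentially a consequence of the theorem being proved (it follows once $M\cong J(M_\mathrm{gs})$, via Lemma \ref{lem:H-eval-on-induced}), so invoking it here is circular. Your proposed fix, ``acting with sufficiently many positive modes as in Lemma \ref{lem:M_gs=0_implies_M=0}'', does not help: since $a_m\bar v=0$ in $M/M'$, acting with positive modes on $v$ produces elements of $M'$, i.e.\ lifts of $0$, not a better lift of $\bar v$. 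Moreover the parenthetical step ``every vector of lowest degree not contained in $M'$ is a ground state of $\bar M$, hence lies in $M_\mathrm{gs}$'' is a non-sequitur: being a ground state of $\bar M$ only gives $a_m v\in M'$, not $a_m v=0$. This is exactly the delicate point; the paper's proof avoids any assumption on the $H$-action on $M$ by taking an arbitrary lift $v$, noting $Hv\in M'$, using that the image $M'$ (a quotient of an induced module) \emph{is} a sum of $H$-eigenspaces with non-negative eigenvalues, and correcting $w=v-\sum_{k>0}\tfrac1k m_k'$ so that $Hw=m_0'$; a short commutator computation then forces $a_k w=0$ for all $k>0$, whence $w\in M_\mathrm{gs}\subseteq M'$ and $\bar v=0$.

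Your injectivity argument is a different (and in principle viable) route from the paper's: a PBW/contravariant-pairing argument showing the vectors $\alpha^{i_1}_{-m_1}\cdots\alpha^{i_n}_{-m_n}v_j$ are linearly independent in $M$. It can be completed because all commutators of modes are scalars (as $K$ acts as $1$), but it rests on the non-degeneracy of the pairing between degree-$d$ monomials in positive and negative modes, which you only gesture at and which needs an explicit check in the super setting. The paper's injectivity proof is shorter and bypasses this: the kernel $K$ of $j$ is bounded below, its ground states sit in the $H$-eigenvalue-$0$ subspace $M_\mathrm{gs}$ of $J(M_\mathrm{gs})$ by Lemma \ref{lem:H-eval-on-induced}, where $j$ is injective, so $K_\mathrm{gs}=\{0\}$ and $K=\{0\}$ by Lemma \ref{lem:M_gs=0_implies_M=0}. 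In any case, the surjectivity gap above is the part that must be repaired before the proof stands.
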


\begin{proof}
Denote the isomorphism $J(M_\mathrm{gs}) \to M$ by $j$. Existence and uniqueness of the $\tilde\h$-module map $j$ follow from the universal property of induced modules. It remains to show that $j$ is an isomorphism.

\smallskip\noindent
\nxt $j$ is injective: Let $K \subset J(M_\mathrm{gs})$ be the kernel of $j$. Since $J(M_\mathrm{gs})$ is bounded below, so is $K$. From the definition of $H$ we see that $H$ acts as zero on $K_\mathrm{gs}$. By Lemma \ref{lem:H-eval-on-induced}, the $H$-eigenspace of eigenvalue $0$ in $J(M_\mathrm{gs})$ is $M_\mathrm{gs}$. Thus $K_\mathrm{gs} \subset M_\mathrm{gs}$. Since the embedding $M_\mathrm{gs} \hookrightarrow M$ is injective, also the map $j$ is injective on $M_\mathrm{gs}$. Hence $K_\mathrm{gs}=\{0\}$. By Lemma \ref{lem:M_gs=0_implies_M=0} then also $K=\{0\}$.

\smallskip\noindent
\nxt $j$ is surjective: Let $M'$ be the image of $J(M_\mathrm{gs})$ in $M$. The quotient module $M/M'$ is bounded below. Suppose that $M/M' \neq \{0\}$. By Lemma \ref{lem:M_gs=0_implies_M=0} then also $(M/M')_\mathrm{gs} \neq \{0\}$. Pick a nonzero $u \in (M/M')_\mathrm{gs}$ and write $u = v + M'$ for some $v \in M$. Since $Hu=0$ we have $Hv = m'$ with $m' \in M'$. By Lemma \ref{lem:H-eval-on-induced}, $J(M_\mathrm{gs})$ is a sum of $H$-eigenspaces with non-negative integral eigenvalues. Hence so is $M'$ and we can write $m' = m_0' + \sum_{k>0} m_k'$ with $H m_k' = k m_k'$. Set
\be
  w = v - \sum_{k>0} \frac{1}{k} m_k' \ .
\ee
The vector $w$ satisfies $w + M' = u$ and $Hw = Hv - \sum_{k>0} m_k' = m_0'$. Since $0$ is the lowest $H$-eigenvalue on $M'$, we have $a_k m_0' = 0$ for all $k>0$. Suppose there is a $k>0$ such that $a_k w \neq 0$. Then $a_k w \in M'$ (since $a_k u = 0$ in $M/M'$) and $H a_k w = [H,a_k]w + a_k H w = - k a_k w + a_k m_0' = -k a_kw$. But by Lemma \ref{lem:H-eval-on-induced} there are no negative $H$-eigenvalues on $M'$ and so $a_k w = 0$ for all $k>0$. This shows that $w \in M_\mathrm{gs}$. But $M_\mathrm{gs}$ is contained in the image of $j$ by construction and so $u = w + M' = 0$ in $M/M'$, in contradiction to our assumption $u \neq 0$.
It follows that $(M/M')_\mathrm{gs} = \{0\}$ and by Lemma \ref{lem:M_gs=0_implies_M=0} thereby $M/M' = \{0\}$.
\end{proof}

To make contact with Theorem \ref{thm:ind-equiv-untw}, observe that if for a given $\h$-module $R$ one restricts the $\hat\h$-action on the induced $\hat\h$-module $\Ind(R)$ to an $\tilde\h$-action, one obtains the induced module $J(R)$ (which is independent of the $\h$-action). This follows since both have the same PBW basis $\{ \alpha^{i_1}_{-m_1} \cdots \alpha^{i_n}_{-m_n} v_j \}$ already used above.

\bigskip

\begin{proof}[Proof of Theorem \ref{thm:ind-equiv-untw}]
Denote by $\mathrm{Gs} : \Rep_{\flat,1}(\hat\h) \to \Rep(\h)$ the functor that acts on objects as $\mathrm{Gs}(M) = M_\mathrm{gs}$ and on morphisms $f: M \to N$ as $\mathrm{Gs}(f) = f|_{M_\mathrm{gs}}$ (since $f$ is an $\hat\h$-intertwiner, $f$ maps ground states to ground states).

\smallskip\noindent
\nxt $\mathrm{Gs} \circ \Ind \cong \Id$: The $\h$-module $R$ is embedded in $\Ind(R)$, and this embedding restricts to a (natural) isomorphism $R \to (\Ind(R))_\mathrm{gs}$.

\smallskip\noindent
\nxt $\Ind \circ \mathrm{Gs} \cong \Id$: For an $\hat\h$-module $M$, the embedding $M_\mathrm{gs} \hookrightarrow M$ induces an $\hat\h$-module map $j_M : \Ind(M_\mathrm{gs}) \to M$. Restricting $\Ind(M_\mathrm{gs})$ to an $\tilde\h$-module gives $J(M_\mathrm{gs})$ and Theorem \ref{thm:tilde-h-all-induced} then states that the map $j_M$ is an isomorphism. The universal property of induced modules shows that $j_M$ is natural in $M$.
\end{proof}

\subsection{Proof of Lemma \ref{lem:Qf-exists}}\label{app:lemma-Qf-exists}

It remains to show that a map $Q_f$ with the properties stated in Lemma \ref{lem:Qf-exists} exists. We will make use of the subalgebra $\tilde\h \subset \hat\h$ and the induced $\tilde\h$-modules $J(V)$ introduced in Appendix \ref{app:ind-equiv}. Recall that restricting $\Ind(S)$ to an $\tilde\h$-module results in $J(S)$. 

For $r \in R$ let $F_r : S \to T \subset \Ind(T)$ be the map $s \mapsto f(r \otimes s)$. By the universal property of induced $\tilde\h$-modules, this lifts to a unique map $\bar F_r : J(S) \to \Ind(T)$ of $\tilde\h$-modules. Define 
\be
  Q_f(r \otimes v) := \bar F_r(v) \ .
\ee
By construction, $Q_f(r\otimes s) = F_r(s) = f(r \otimes s)$, so that $Q_f$ satisfies property 1 in Lemma \ref{lem:Qf-exists}. Since $Q_f(r \otimes -)$ is an $\tilde\h$-intertwiner from $\Ind(S)$ to $\Ind(T)$ it satisfies property 2 for all $m \neq 0$. For $m=0$ we verify property 2 on the spanning set $r \otimes (a^1_{-m_1} \cdots a^k_{-m_k} s)$ of $R \otimes \Ind(S)$ (here $a^i \in \h$ and $m_i >0$). Let $b \in \h$ and let $\eps$ be the parity sign $(-1)^{|b|(|a^1|+\cdots+|a^k|)}$. We have
\ba
  &b_0 \, Q_f\big( (\id \otimes (a^1_{-m_1} \cdots a^k_{-m_k})(r \otimes s) \big)
  \overset{(1)}=
  b_0 a^1_{-m_1} \cdots a^k_{-m_k} \, Q_f(r \otimes s) 
  \nonumber \\
  &\qquad   \overset{(2)}=
  \eps \cdot
  a^1_{-m_1} \cdots a^k_{-m_k} \, b_0 \, f(r \otimes s) 
  \overset{(3)}=
  \eps \cdot
  a^1_{-m_1} \cdots a^k_{-m_k} \, f\big( (b \otimes \id + \id \otimes b)(r\otimes s) \big)
  \nonumber \\
  &\qquad   \overset{(4)}= \eps \cdot
  Q_f\big( (\id \otimes (a^1_{-m_1} \cdots a^k_{-m_k}) (b \otimes \id + \id \otimes b_0)(r\otimes s) \big)
  \nonumber \\
  &\qquad   \overset{(5)}=
  Q_f\big((b \otimes \id + \id \otimes b_0)\, (\id \otimes (a^1_{-m_1} \cdots a^k_{-m_k})(r\otimes s) \big) \ .
\ea
Step (1) is the $\tilde\h$-intertwiner property of $Q_f(r \otimes -)$, step (2) uses $[b_0,a_m]=0$ for all $a,b,m$ and that $Q_f$ equals to $f$ on ground states (property 1), equality (3) holds since $f$ is an $\h$-intertwiner, step (4) and (5) are again the $\tilde\h$-intertwiner property and $[b_0,a_m]=0$.

\subsection{Proof of Theorem \ref{thm:G-bijective} in cases c) and d)} \label{app:G-surjective-cd}

Since we already have a construction of $V_f$ in case b), the most elegant and conceptual way to proceed would be to use M\"obius covariance to relate cases c) and d) to b), see \cite[Sect.\,4]{Dolan:1989vr} (for irreducible $\hat\h$-representations and $\h$ purely even). However, this requires to extend the source of the vertex operators to $\Indtwutw(A) \otimes \Indtwutw(B)$ (cf.\ Remark \ref{rem:VO-def}\,(ii)), which is not treated here. Instead, a direct proof of existence is given for each case separately. 

\medskip

To start with, recall the expression for the half-infinite sums of modes $T(a)^{x,\eps}_m$ in Definition \ref{def:vertex-op}\,(iii) and restrict to $x=1$,  
\be
  T(a)^{1,\eps}_m = \sum_{k=0}^\infty {{1/2} \choose k} (-1)^{k} \cdot a_{m + \eps k} \ .
\ee
In a representation of $\hat\h_{(\mathrm{tw})}$ on which $T(a)^{1,\eps}_m$ is well-defined, we obtain the commutator ($a,b \in \h$, $m,n \in \Zb+\alpha$, $\alpha \in \{0,\frac12\}$)
\ba
\big[\,T(a)^{1,\eps}_m \,,\, T(b)^{1,\eps}_n \,\big] 
&= \sum_{k,l=0}^\infty
 {{1/2} \choose k} {{1/2} \choose l}
  (-1)^{k+l} [ a_{m + \eps k}, b_{n + \eps l} ]
\nonumber\\
&= (a,b) (-1)^{m+n} \sum_{k=0}^{-\eps(m+n)}
 {{1/2} \choose k} {{1/2} \choose {-k-\eps(m+n)}} (m + \eps k)
\nonumber\\
&= (a,b)  \big( m \delta_{m+n,0} - (m + \tfrac12 \eps) \delta_{m+n+\eps,0} \big) \ .
\label{eq:TT-comm}
\ea

As an auxiliary device, we introduce a Lie algebra $\tilde \h_\alpha$ for $\alpha \in \{0,\frac12\}$. Namely, $\tilde \h_\alpha = \h \otimes t^\alpha \Cb[t,t^{-1}] \oplus \Cb K$ with Lie bracket, for $a,b \in \h$ and $m,n \in \Zb+\alpha$,
\be\label{eq:tilde-h-bracket}
  [\tilde a_m,\tilde b_n] = (a,b) \cdot \big\{ m \delta_{m+n,0} - \big(m+\tfrac12\big) \delta_{m+n+1,0}  \big\} \cdot K \ ,
\ee
and where $\tilde a_m = a \otimes t^m$. The element $K$ is central. As a vector space, $\tilde \h_\alpha$ is equal to $\hat\h_{(\mathrm{tw})}$, but the bracket is different. Comparing the bracket \eqref{eq:tilde-h-bracket} to the commutator \eqref{eq:TT-comm} shows that for a representation $Q$ in $\Rep_{\flat,1}(\hat\h)$ (resp.\ $\Rep_{\flat,1}(\hat\h_\mathrm{tw})$), the assignment
\be
  \rho(\tilde a_m) = T(a)^{1,+}_m 
  \quad , \qquad
  \rho(K) = \id 
\ee
defines a representation of $\tilde \h_0$ (resp.\ $\tilde \h_{\frac12}$) on $Q$. The assumption that $Q$ is bounded below guarantees that the infinite sum in $T(a)^{1,+}_m$ truncates to a finite sum when acting on an arbitrary element of $Q$.

\medskip

Let us concentrate on case c) in Definition \ref{def:vertex-op} for concreteness. Case d) can be proved along the same lines and will not be treated explicitly. 

Pick $X,Y \in \svect$ and $R \in \Rep\lf(\h)$ and fix an even linear map $f : X \otimes R \to Y$. We will show that there is a vertex operator $V : \Rb_{\ge 0} \times (X \otimes \Ind(R)) \to \overline\Indtw(Y)$ which satisfies $G(V)=f$. This will be done in two steps:
\begin{enumerate}
\item Construct an even linear map $V(1) : X \otimes \Ind(R) \to \overline\Indtw(Y)$ which satisfies condition (iii) in Definition \ref{def:vertex-op}, restricted to $x=1$. Explicitly,
\be
  T(a)^{1,-}_{m+\frac12} \circ V(1) = i \, V(1) \circ \big( \id \otimes T(a)^{1,+}_m \big) \ .
\ee
The map $V(1)$ must in addition obey $\Pgs \circ V(1)|_{X \otimes R} = f$.

\item Extend $V(1)$ to $V(x)$ for all $x \in \Rb_{> 0}$ and show that conditions (i)--(iii) in Definition \ref{def:vertex-op} hold.
\end{enumerate}
Step 1.\ is equivalent to the following problem: give a bilinear pairing $P :  \Indtw(Y)' \times (X \otimes \Ind(R)) \to \Cb$ subject to the condition, for all $\hat\gamma \in  \Indtw(Y)'$, $v \in X$, $\hat r \in R$ and $a \in \h$, $m \in \Zb$,
\be\label{eq:P-pairing-mode-moving}
  P\big( \,\rho(\tilde a_{-m-\frac12}) \hat\gamma \,,\, v \otimes \hat r \,\big)
  = (-1)^{|a||\hat\gamma|} \,(-i) \cdot 
  P\big( \,\hat\gamma \,,\, (\id_X \otimes \rho(\tilde a_m))(v \otimes \hat r) \,\big) \ .
\ee
To see this, note that the canonical pairing $\Indtw(Y)' \times \overline\Indtw(Y) \to \Cb$ satisfies
\ba
  \langle T(a)^{1,+}_m \varphi, w \rangle
  &=
  \sum_{k=0}^\infty {{1/2} \choose k} (-1)^k \cdot  
  \langle a_{m + k} \varphi, w \rangle
\nonumber\\
  &\overset{\eqref{eq:am-dual-action}}= - (-1)^{|\varphi||a|}
  \sum_{k=0}^\infty {{1/2} \choose k} (-1)^k \cdot  
  \langle  \varphi,  a_{-m - k} w \rangle
\nonumber\\
  &= - (-1)^{|\varphi||a|}
  \langle \varphi, T(a)^{1,-}_{-m} w \rangle \ ,
\ea
and so
\ba
&P(T(a)^{1,+}_{-m-\frac12} \hat\gamma,a \otimes \hat r)
= \langle T(a)^{1,+}_{-m-\frac12} \hat\gamma , V(1)(a \otimes \hat r) \rangle
\nonumber\\
&=  - (-1)^{|a||\hat\gamma|} \langle  \hat\gamma , T(a)^{1,-}_{m+\frac12} V(1)(a \otimes \hat r) \rangle
= - (-1)^{|a||\hat\gamma|} i  \langle  \hat\gamma , V(1) (\id \otimes T(a)^{1,+}_{m})(a \otimes \hat r) \rangle
\nonumber\\
&= - (-1)^{|a||\hat\gamma|} i  P(\hat\gamma , (\id \otimes T(a)^{1,+}_{m})(a \otimes \hat r) \rangle \ .
\ea

The pairing $P$ will be defined on a pair of PBW-bases for $\Indtw(Y)'$ and $\Ind(R)$. Recall the basis $\{ \alpha^i \}_{i \in \Ic}$ fixed in Section \ref{sec:lie-super}. Pick any ordering on $\Ic$ and define an ordering on $\Ic \times \Zb$ via $(i,m)<(i',m')$ if $m<m'$ or if $m=m'$ and $i<i'$. The set of vectors
\be\label{eq:PBW-basis-IndY*}
  \rho((\widetilde{\alpha^{i_1}})_{-m_1-\frac12}) \cdots
  \rho((\widetilde{\alpha^{i_n}})_{-m_n-\frac12}) \gamma_j \ ,
\ee
where $(i_1,m_1) \ge \cdots \ge (i_n,m_n)$ (and `$>$' of two consecutive odd basis elements), $m_k\ge 0$ for all $k$, and $\gamma_j$ is an element of some basis of $Y^*$, forms a basis of $\Indtw(Y)'$. To see this, one can use that $\rho(\tilde a_m) = a_m + (\text{higher modes})$ and relate \eqref{eq:PBW-basis-IndY*} to a PBW-basis of the induced $\hat\h_\mathrm{tw}$-module $\Indtw(Y)' \cong \Indtw(Y^*)$. Or one can show directly that $\rho$ turns an induced $\hat\h_\mathrm{tw}$-module into an induced $\tilde \h_{\frac12}$-module. 
Similarly, a basis of $\Ind(R)$ is given by 
\be\label{eq:PBW-basis-IndR}
  \rho((\widetilde{\alpha^{i_1}})_{-m_1}) \cdots
  \rho((\widetilde{\alpha^{i_n}})_{-m_n}) r_j \ ,
\ee
where $(i_1,m_1) \ge \cdots \ge (i_n,m_n)$ (or `$>$' for consecutive odd elements), $m_k> 0$ for all $k$, and $r_j$ is an element of some basis of $R$. The case $m_k=0$ is excluded because the zero modes are part of the subalgebra of $\hat\h$ used in the induction.

Fix $v \in X$. The pairing $P$ will be uniquely defined on a pair of basis vectors $(e_I,f_J)$ with $e_I$ of the form \eqref{eq:PBW-basis-IndY*} with $n_I$ modes acting on the ground state and $f_J$ of the form \eqref{eq:PBW-basis-IndR} with $n_J$ modes:
\begin{itemize}
\item $n_I=0$ and $n_J=0$: $P(\gamma_i,v \otimes r_j) = \langle \gamma_i, f(v \otimes r_j) \rangle$, where $f$ is the even linear map $X \otimes R \to Y$ fixed at the start of the argument.
\item $n_I=0$ and $n_J>0$: $P(\gamma_i,v \otimes f_J) = 0$.
\item $n_I>0$: Use condition \eqref{eq:P-pairing-mode-moving} to move all modes from the left argument of $P$ to the right one, then express the resulting product of modes in the right argument of $P$ in the basis \eqref{eq:PBW-basis-IndR}; this leads to one of the previous two cases.
\end{itemize}
This prescription defines the bilinear pairing $P$. However, it is not a priori clear that $P$ satisfies  \eqref{eq:P-pairing-mode-moving} for all choices of $a,m,\hat\gamma,\hat r$. This is what we will establish next.

\medskip

By expressing $\hat r \in \Ind(R)$ as a sum of PBW-basis vectors, the definition of $P$ gives
\ba
  &P\big( \, \rho((\widetilde{\alpha^{i_1}})_{-m_1-\frac12}) \cdots
  \rho((\widetilde{\alpha^{i_n}})_{-m_n-\frac12}) \gamma \, , \, v \otimes \hat r \, \big)
  \nonumber \\
  & \qquad = (-1)^{\text{parity signs}}(-i)^n \cdot 
  P\big( \, \gamma \, , \, \rho((\widetilde{\alpha^{i_n}})_{m_n}) \cdots
  \rho((\widetilde{\alpha^{i_1}})_{m_1}) (v \otimes \hat r) \, \big)
   \label{eq:P-modemove-aux1}
\ea
whenever $\rho((\widetilde{\alpha^{i_1}})_{-m_1-\frac12}) \cdots
  \rho((\widetilde{\alpha^{i_n}})_{-m_n-\frac12})$ is ordered as in the PBW-basis.
From \eqref{eq:tilde-h-bracket} we see that for $k,l \le -\tfrac12$ or for $k,l \ge 0$ we have $[\tilde a_k,\tilde a_l]=0$. Thus \eqref{eq:P-modemove-aux1} continues to hold independent of the ordering, as long as $m_k \ge 0$ for all $k$ (the parity signs in the reordering are the same on both sides). It follows that \eqref{eq:P-pairing-mode-moving} holds for all $a,\hat\gamma,\hat r$ and all $m \ge 0$.

For $m<0$ we proceed by induction on the number $n$ of $\tilde a_{-m-\frac12}$-modes in left argument of $P$:

\medskip\noindent
\nxt If $n=0$, the left hand side of \eqref{eq:P-pairing-mode-moving} is zero as $\rho(\tilde a_{-m-\frac12})\gamma = 0$ for all $m<0$. The right hand side is zero, too, as for any $\hat r$ in the basis \eqref{eq:PBW-basis-IndR}, $\rho(\tilde a_{m}) \hat r$ can be reordered to a basis vector of the form \eqref{eq:PBW-basis-IndR} as well, and so $P$ vanishes by the second point in the list of defining properties.

\medskip\noindent
\nxt Let $n>0$ and suppose \eqref{eq:P-pairing-mode-moving} holds for all $\hat\gamma = e_I$ with $n_I \le n$. Let now $e_I$ be a basis element with $n_I=n$. Pick $b \in \h$ and $k \ge 0$ such that $\tilde b_{-k-\frac12} e_I$ is again an element of the basis \eqref{eq:PBW-basis-IndY*}. Let $a \in \h$ and $m<0$ be arbitrary. In the following calculation, all $\rho$'s are omitted and on the right hand side $f_J$ is written instead of $v \otimes f_J$, and $\tilde a_m$ instead of $\id \otimes \rho(\tilde a_m)$. We have:
\ba
&P( \tilde a_{-m-\frac12} \tilde b_{-k-\frac12} e_I ,  f_J )
=
(-1)^{|a||b|}
P( \tilde b_{-k-\frac12} \tilde a_{-m-\frac12}  e_I , f_J )
+
P( [\tilde a_{-m-\frac12},\tilde b_{-k-\frac12}] e_I , f_J )
\nonumber\\
&\overset{(1)}=
(-1)^{|a||b|}(-1)^{|b|(|a|+|e_I|)}
(-i)\,P(  \tilde a_{-m-\frac12}  e_I , \tilde b_{k} f_J )
+
P( [\tilde a_{-m-\frac12},\tilde b_{-k-\frac12}]  e_I ,  f_J )
\nonumber\\
&\overset{(2)}=
-(-1)^{(|a|+|b|)|e_I|}
P(  e_I , \tilde a_{m}  \tilde b_{k} f_J )
-(-1)^{|a||b|} P(  e_I , [\tilde b_{k},\tilde a_{m}] f_J )
\nonumber\\
&\overset{(3)}=
-(-1)^{(|a|+|b|)|e_I|}(-1)^{|a||b|}
P(  e_I , \tilde b_{k} \tilde a_{m}   f_J )
\nonumber\\
&\overset{(4)}=
-(-1)^{|a|(|e_I|+|b|)}(-i)^{-1}
P(  \tilde b_{-k-\frac12} e_I , \tilde a_{m}   f_J ) \ .
\label{eq:surj-proof-aux1}
\ea
Equalities 1 and 4 are the already established statement that \eqref{eq:P-pairing-mode-moving} holds for all $m \ge 0$. Equality 2 is, firstly, the induction assumption, and secondly, the observation that $[\tilde a_{-m-\frac12},\tilde b_{-k-\frac12}] = [\tilde a_m,\tilde b_k]$, together with the fact that $K$ acts as 1 on all modules. To see that the parity signs work out in equality 3 note that $[\tilde b_{k},\tilde a_{m}]$ can only be non-zero for $|a|=|b|$.

\void{
Details: By definition
\be
  [\tilde a_m,\tilde b_n] = (a,b) \cdot \big\{ m \delta_{m+n,0} - \big(m+\tfrac12\big) \delta_{m+n+1,0}  \big\} \cdot K \ ,
\ee
and so
\ba
  [\tilde a_{-m-\frac12},\tilde b_{-k-\frac12}] 
  &= 
  (a,b) \cdot \big\{ (-m-\tfrac12) \delta_{-m-\frac12-k-\frac12,0} - \big(-m-\tfrac12+\tfrac12\big) \delta_{-m-\frac12-k-\frac12+1,0} ]  \big\} \cdot K
  \nonumber\\
  &= 
  (a,b) \cdot \big\{ - (m+\tfrac12) \delta_{-m-k-1,0} + m \delta_{-m-k,0}   \big\} \cdot K
  \nonumber\\
  &= 
  (a,b) \cdot \big\{  m \delta_{m+k,0} - (m+\tfrac12) \delta_{m+k+1,0}    \big\} \cdot K
  \nonumber\\
  &= [\tilde a_m,\tilde b_k] \ .
\ea
}

\medskip

This completes step 1. Next we turn to step 2, the extension of $V(1)$ to $V(x)$. It will improve readability if we abuse notation and write $[L_m,V(x)]$ to mean $L_m \circ V(x) - V(x) \circ (\id \otimes L_m)$. We start by reducing the extension problem to the condition 
\be\label{eq:L0L1V(1)-comm-condition}
[L_0-L_{-1},V(1)] = \tfrac{\sdim(\h)}{16} \, V(1) \ .
\ee 
Namely, define
\be \label{eq:V(x)-from-V(1)}
  V(x) = x^{-\frac{\sdim(\h)}{16}} \, \exp\!\big( \ln(x) L_0 \big) \circ V(1) \circ \big\{ \id \otimes \exp\!\big(-\ln(x) L_0 \big) \big\} \ .
\ee 
By local finiteness, the exponentials converge and one obtains a well-defined linear map $X \otimes \Ind(R) \to \overline\Indtw(Y)$. The $x$-derivative of a matrix element $\langle \hat\gamma , V(x)(v \otimes \hat r) \rangle$ is (omitting the $\langle \hat\gamma , \dots (v \otimes \hat r) \rangle$ from the notation):
\be
\tfrac{d}{dx} V(x) = -\tfrac{\sdim(\h)}{16} x^{-1} V(x) + x^{-1} [L_0 , V(x)] \ .
\ee
To compute $[L_{-1},V(x)]$ we use
$e^{-\ln(x)L_0} L_{-1} e^{\ln(x)L_0} = x^{-1} L_{-1}$ to find
\ba
[L_{-1},V(x)]
&= 
x^{-1} x^{-\frac{\sdim(\h)}{16}} \, e^{ \ln(x) L_0 } \circ  [L_{-1},  V(1)] \circ (\id \otimes e^{-\ln(x) L_0 })
\nonumber\\
&= 
x^{-1} [L_{0},  V(x)]
-
\tfrac{\sdim(\h)}{16} \,x^{-1} V(x) \ ,
\ea
where in the last step condition \eqref{eq:L0L1V(1)-comm-condition} was substituted in the form $[L_{-1},V(1)] =[L_0,V(1)] - \frac{\sdim(\h)}{16} V(1)$. This shows that inside matrix elements we have $\tfrac{d}{dx} V(x) = [L_{-1},V(x)]$ as required, so that condition (ii) in Definition \ref{def:vertex-op} holds. The explicit form of the $x$-dependence in \eqref{eq:V(x)-from-V(1)} also shows that the smoothness requirement in (i) is satisfied.

\medskip

It remains to prove \eqref{eq:L0L1V(1)-comm-condition}. In terms of the pairing $P$, we need to verify that for all $\hat\gamma \in  \Indtw(Y)'$, $v \in X$ and $\hat r \in \Ind(R)$,
\be \label{eq:P-L0L1-rule}
  P\big(\, (L_0-L_1) \hat\gamma \,,\, v \otimes \hat r \,\big)
  =
  P\big(\, \hat\gamma \,,\, v \otimes (L_0-L_{-1})  \hat r \,\big)
  +
  \tfrac{\sdim(\h)}{16}  \cdot
  P\big(\, \hat\gamma \,,\, v \otimes \hat r \,\big) \ .  
\ee
To start with, suppose that $\hat\gamma = \gamma \in Y^*$ is a ground state. Then by \eqref{eq:Virasoro-modes-twisted} the left hand side of \eqref{eq:P-L0L1-rule} is equal to $\tfrac{\sdim(\h)}{16} P(\gamma,v \otimes \hat r)$. Thus we need to verify that $P(\gamma,v \otimes (L_0-L_{-1})\hat r) = 0$. 
If also $\hat r = r \in R$ is a ground state, the same calculation as in \eqref{eq:x-dep-gs-case_c} (with $x=1$) shows that
\be
P(\gamma , v \otimes L_{-1}  r ) 
\overset{\eqref{eq:x-dep-gs-case_c}}= 
\tfrac12 \,P(\gamma , \Omega^{(22)} v \otimes  r )
\overset{\eqref{eq:virasoro-untwisted}}= P(\gamma ,  v \otimes  L_0 r ) \ ,
\ee
as required. For $\hat r$ arbitrary, we will need the identity
\be \label{eq:L0L1-T-commute}
  \big[\,L_0-L_{-1} \,,\, \rho(\tilde a_m) \,\big] = -(m+\tfrac12)\, \rho(\tilde a_m) + m \, \rho(\tilde a_{m-1}) \ .
\ee
To see this, first note that $[L_m,a_k] = - k a_{m+k}$, as is immediate from \eqref{eq:virasoro-untwisted} and \eqref{eq:Virasoro-modes-twisted}. We have, for all $a\in\h$ and all $m \in \Zb$ or $m \in \Zb+\frac12$,
\ba
&[L_0-L_{-1},T(a)^{1,+}_{m}] + (m+\tfrac12) T(a)^{1,+}_{m} - m T(a)^{1,+}_{m-1}
\nonumber\\
&=
\sum_{k=0}^\infty 
{{1/2} \choose k} (-1)^k \big\{
-(m+k) (a_{m+k}-a_{m+k-1}) + m (a_{m+k}-a_{m+k-1}) + \tfrac12 a_{m+k}
\big\}
\nonumber\\
&= 
\sum_{k=0}^\infty 
{{1/2} \choose k} (-1)^k k a_{m+k-1}
+ 
\sum_{k=0}^\infty 
{{1/2} \choose k} (-1)^k (\tfrac12-k) a_{m+k}
\nonumber\\
&= 
\sum_{k=0}^\infty \Bigg\{ - {{1/2} \choose k+1}(k+1) + {{1/2} \choose k} (\tfrac12-k) \Bigg\} 
(-1)^k a_{m+k} = 0 \ .
\ea
An analogous calculation gives the adjoint relation
\be \label{eq:L0L1-T-commute2}
  \big[\,\rho(\tilde a_{-m-\frac12}) \,,\, L_0-L_1 \,\big] = -(m+\tfrac12)\, \rho(\tilde a_{-m-\frac12}) + m \, \rho(\tilde a_{-(m-1)-\frac12} ) \ .
\ee

\begin{remark}
In terms of fields, one would write $[L_m,a(z)] = (m+1)z^m a(z) + z^{m+1} \tfrac{\partial}{\partial z} a(z)$, so that $[L_0-L_{-1},a(z)] = a(z) + (z-1)\tfrac{\partial}{\partial z} a(z)$. It is then easy to evaluate $[L_0-L_{-1},T(a)^{1,+}_m] = \oint   z^m(z-1)^{\frac12} [L_0-L_{-1} , a(z) ] \tfrac{dz}{2 \pi i} $.
\end{remark}

Since we already know \eqref{eq:P-L0L1-rule} on ground states, we may assume that $\hat r = \rho(\tilde b_{-m}) f_J$ for some $b \in \h$, $m >0$ such that we obtain an element in the basis \eqref{eq:PBW-basis-IndR}. Then 
\be
  (L_0 - L_{-1})\rho(\tilde b_{-m}) f_J
  =
  [ L_0 - L_{-1} , \rho(\tilde b_{-m})]  f_J
  +
  \rho(\tilde b_{-m}) (L_0 - L_{-1}) f_J \ .
\ee
When inserting this into $P(\gamma,v \otimes - )$ and using \eqref{eq:L0L1-T-commute}, both summands on the right hand side give zero separately, since by \eqref{eq:P-pairing-mode-moving} for a ground state $\gamma$ and for all $a \in \h$, $k > 0$, we have $P(\gamma,(\id \otimes \rho(\tilde a_{-k})(v \otimes \hat r)) = 0$.

The general case of \eqref{eq:P-L0L1-rule} now follows by induction on the number of $\tilde a_m$-modes in the left argument of $P$ via the equalities (the same abbreviations as in \eqref{eq:surj-proof-aux1} apply),
\ba
&P\big(\, (L_0-L_1) \tilde a_{-m-\frac12} \hat\gamma \,,\, \hat r \,\big)
\nonumber\\
&= 
P\big(\, [L_0-L_1 , \tilde a_{-m-\frac12}] \hat\gamma \,,\, \hat r \,\big)
+
P\big(\, \tilde a_{-m-\frac12} (L_0-L_1)  \hat\gamma \,,\, \hat r \,\big)
\nonumber\\
&\overset{(2)}=
(-1)^{|a||\hat\gamma|}\,(-i)\Big( 
P\big(\, \hat\gamma \,,\, [\tilde a_{m} , L_0-L_{-1} ] \hat r \,\big)
+
P\big(\, (L_0-L_1)  \hat\gamma \,,\, \tilde a_{m} \hat r \,\big) 
\Big)
\nonumber\\
&\overset{(3)}= 
(-1)^{|a||\hat\gamma|}\,(-i)\Big( 
P\big(\, \hat\gamma \,,\, [\tilde a_{m} , L_0-L_{-1} ] \hat r \,\big)
+
P\big(\,  \hat\gamma \,,\, (L_0-L_{-1}) \tilde a_{m} \hat r \,\big)
  + 
  \tfrac{\sdim(\h)}{16}  \,
  P\big(\, \hat\gamma \,,\, \tilde a_{m} \hat r \,\big) 
  \Big)
\nonumber\\
&=
(-1)^{|a||\hat\gamma|}\,(-i)\Big( 
P\big(\, \hat\gamma \,,\, \tilde a_{m} ( L_0-L_{-1})  \hat r \,\big)
  + 
  \tfrac{\sdim(\h)}{16}  \,
  P\big(\, \hat\gamma \,,\, \tilde a_{m} \hat r \,\big) 
  \Big)
\nonumber\\
&= 
P\big(\, \tilde a_{-m-\frac12}  \hat\gamma \,,\, ( L_0-L_{-1})  \hat r \,\big)
  + 
  \tfrac{\sdim(\h)}{16}  \,
  P\big(\, \tilde a_{-m-\frac12} \hat\gamma \,,\, \hat r \,\big) \ .  
\ea
In step 2, expression \eqref{eq:L0L1-T-commute2} is used to convert the commutator involving $L$'s into $\tilde a$'s which are then moved to the other argument using \eqref{eq:P-pairing-mode-moving} and converted back into an $L$-commutator via \eqref{eq:L0L1-T-commute}. In step 3 the induction assumption is used to move $L_0-L_1$ to the other argument via \eqref{eq:P-L0L1-rule}.
The completes the proof of the commutator \eqref{eq:L0L1V(1)-comm-condition}.

\medskip

We have now proved Theorem \ref{thm:G-bijective} in case c). Case d) works along the same lines and will not be treated explicitly.

\section{Computation of four-point blocks in Table \ref{tab:4pt-blocks}}\label{app:4pt-calc}

\subsection{Cases 000 and 001 from normal ordered products}

\subsubsection*{Untwisted vertex operators}

If all representations are taken from the untwisted sector $\Cc_0$, it is straightforward to obtain the Coulomb gas expression for products of several vertex operators -- or rather a generalisation thereof to the non-semisimple and super setting used here. Let us briefly go through the calculation.

\medskip

Recall the notation $\Omega^{(is)}_m$ and $E^{(is)}_{\pm,0}$ introduced in \eqref{eq:Omega-ij-v1} and \eqref{eq:E-ij-v1}. The exchange relation \eqref{eq:h-Em-comm} implies that for $m>0$
\be\label{eq:vo-untwisted-product-aux1}
  \Omega^{(is)}_m  \, E^{(js)}_-(y) 
  = E^{(js)}_-(y) \, \big( \, \Omega^{(is)}_m + y^m \cdot \Omega^{(ij)} \, \big)
\ee
To compute the product of several normal ordered exponentials we first repeat the standard mode commutation exercise to obtain, for $x>y>0$,
\ba
  E^{(is)}_+(x) \, E^{(js)}_-(y) &\overset{\rule{1.7em}{0pt}}= \textstyle
  \exp\!\Big( \sum_{m>0}  \tfrac{x^{-m}}{-m} \Omega_m^{(is)} \Big) \, E^{(js)}_-(y)
  \nonumber \\
  &\overset{\eqref{eq:vo-untwisted-product-aux1}}= \textstyle
  E^{(js)}_-(y) \, \exp\!\Big(\sum_{m>0} \tfrac{-1}{m} (\tfrac{y}{x})^m \cdot  \Omega^{(ij)} + \sum_{m>0}  \tfrac{x^{-m}}{-m} \Omega_m^{(is)} \Big) 
  \nonumber \\
  &\overset{\rule{1.7em}{0pt}}= \exp\!\big( \ln(1{-}\tfrac yx) \cdot  \Omega^{(ij)} \big) \, E^{(js)}_-(y) \, E^{(is)}_+(x)  
  \ .
 \label{eq:E+isE-js-com}
\ea
The condition $x>y$ is needed for convergence. 

Now pick $R_0,\dots,R_s,P_1,\dots,P_{s-2} \in \Rep\lf(\h)$. Let $f_i : R_{i} \otimes P_i \to P_{i-1}$ with $1 \le i \le s{-}1$ be $\h$-intertwiners, where it is understood that $P_0 \equiv R_0$ and $P_{s-1} \equiv R_s$.
The product of vertex operators
\be
  \bL{\blockA{\scriptstyle R_0}{\scriptstyle R_1}{\scriptstyle \hspace{-.5em}f_1;x_1}\blockC{\scriptstyle P_1}{\scriptstyle R_2}{\scriptstyle \hspace{.5em} f_{2};x_2}\blockB{\scriptstyle P_2} \raisebox{1em} 
  {~\dots~} 
\blockA{\scriptstyle P_{s-2}}{\scriptstyle R_{s-1}}{\scriptstyle f_{s-1};x_{s-1}}\blockB{\scriptstyle R_s}}  \quad ,
\ee
with $x_1>x_2>\cdots>x_{s-1}>0$, is given by
\ba
&
\Pgs \circ V(f_1;x_1) 
\circ \big[\id_{R_1} \otimes V(f_2;x_2) \big]
\circ \cdots
\circ \big[\id_{R_1 \otimes \cdots \otimes R_{s-2}} \otimes V_{f_{s-1}}(x_{s-1})\big]
\nonumber\\
&
= ~
f_1
\circ (\id_{R_1} \otimes f_2)
\circ \cdots
\circ (\id_{R_1 \otimes \cdots \otimes R_{s-2}} \otimes f_{s-1})
\nonumber\\
& \hspace{5em}
\circ
\exp\hspace{-2.5pt}\Big( \sum_{1 \le i<j \le s} \hspace{-.6em} \ln(x_i-x_j) \cdot \Omega^{(ij)} ~+~  \ln(4)  \hspace{-.6em} \sum_{1 \le i<j \le s} \hspace{-.6em} \Omega^{(ij)} \Big) \ ,
\label{eq:untw-n-point-block}
\ea
where it is understood that $x_s\equiv 0$. This product of vertex operators, restricted to ground states, is an even linear map $R_1 \otimes R_2 \otimes \cdots \otimes R_s \to R_0$. 

To arrive at \eqref{eq:untw-n-point-block}, first move all the $E_0^{(12)}(x_i)$, which are endomorphisms of $R_i \otimes \Ind(P_i)$, all the way to the right. By Lemma \ref{lem:Qf-exists}, moving zero modes through $Q_f$ results in the action of the zero mode on the tensor product, this results in $\exp(  \sum_{j=i+1}^s \ln(x_i) \, \Omega^{(ij)})$ for each $E_0$. Moving all $E_+$ through all $E_-$ results in $\exp( \sum_{1 \le i<j \le s } \ln(1-\frac{x_j}{x_i}) \, \Omega^{(ij)})$. Composing these two gives \eqref{eq:untw-n-point-block}. The $\ln(4)$ terms arise due to the normalisation of the vertex operators in \eqref{eq:V(f)-on-ground-states}.

Note that choosing $s=3$ and $x_1=1$, $x_2=x$  produces the four-point block quoted in case 000 of Table \ref{tab:4pt-blocks}.

\begin{example} Let us see how the general expression \eqref{eq:untw-n-point-block} specialises in the two examples treated in Section \ref{sec:examples-VO}, using the notation introduced there.

\smallskip\noindent
(i) Single free boson, $\h = \Cb^{1|0}$: Choose the one-dimensional $\h$-representations $P_i = \Cb_{p_i}$ for some $p_i \in \Cb$. Then $\exp( \ln(x_i-x_j) \cdot \Omega^{(ij)}) = (x_i - x_j)^{p_i p_j}$ and one finds the standard Coulomb gas expression (the $\ln(4)$'s have been absorbed into the constant and it is understood that $x_s=0$)
\be
\eqref{eq:untw-n-point-block} ~=~
\text{(const)} \, \delta_{p_0,p_1+\cdots+p_{s-1}} \prod_{1 \le i<j \le s} (x_i-x_j)^{p_i p_j} \ .
\ee
(ii) Single pair of symplectic fermions, $\h = \Cb^{0|2}$: For $f : A \ast P \to D$ and $g: B \ast C \to P$ we get
\ba
&\Pgs \circ V(f;x) 
\circ \big( \id_{A} \otimes V(g;y) \big)|_{A \otimes B \otimes C}
\nonumber \\
& =
f \circ (\id_{A} \otimes g) \circ 
\exp\!\big( 
\ln(4(x-y)) \cdot \Omega^{(12)} 
+ \ln(4x) \cdot \Omega^{(13)} 
+ \ln(4y) \cdot \Omega^{(23)} 
\big)
\label{eq:example-4pt-sympferm}
\ea 
Since any product of three modes in $\h$ is zero, monomials built out of four or more of $\Omega^{(12)}$, $\Omega^{(13)}$, $\Omega^{(23)}$ are zero. Thus the above exponential terminates with the term of power three. Such products of symplectic fermion vertex operators were first computed in \cite{Gaberdiel:2006pp}.
\end{example}

\subsubsection*{Twisted vertex operators}

Let $R,S \in\Rep\lf(\h)$ and $X,Y,Z\in\svect$ and fix even linear maps of super-vector spaces $f : F(R) \otimes Z \to X$ and $g : F(S) \otimes Y \to Z$. The computation for
\be
     \bL{\cbB XRSYZ{f;x}{\rule{0pt}{.68em}g;y}} 
   ~=~    
  \Pgs \circ V(f;x) \circ (\id_R \otimes V(g;y))  \big|_{R \otimes S \otimes Y} \ ,
\ee
where $x>y>0$, is similar to the one leading to \eqref{eq:untw-n-point-block}. Recall the notation $\tilde E^{(is)}$
from \eqref{eq:tilde-E^is_m-def}. Because 
$\sum_{m \in \Zb_{\ge 0}+\frac12} \frac{-1}m (\frac yx)^m = \ln(1{-}\sqrt{y/x}) -\ln(1{+}\sqrt{y/x})$,
the identity corresponding to \eqref{eq:E+isE-js-com} now reads
\be \label{eq:E+isE-js-com-twist}
  \tilde E^{(is)}_+(x)\, \tilde E^{(js)}_-(y) 
  = \exp\!\big( \ln \tfrac{\sqrt{x}-\sqrt{y}}{\sqrt{x}+\sqrt{y}} \cdot \Omega^{(ij)}\big) \, E^{(js)}_-(y) \, E^{(is)}_+(x)  
  \ .
\ee 
This results in, for $x>y>0$,
\ba
  &\Pgs \circ V(f;x) \circ (\id_R \otimes V(g;y))  \big|_{R \otimes S \otimes Y}
\nonumber\\
  &=~
 f \circ (\id_R \otimes g) \circ 
  \exp\!\Big(
  - \tfrac12 \ln(x) \cdot \Omega^{(11)}
  - \tfrac12 \ln(y)  \cdot \Omega^{(22)}
  +\ln\tfrac{\sqrt{x}-\sqrt{y}}{\sqrt{x}+\sqrt{y}} \cdot \Omega^{(12)} 
\nonumber\\
 & \hspace{23em} - \ln(4) \cdot (\Omega^{(11)} + \Omega^{(22)})
    \Big) \ .
\label{eq:4pt-RNNR}
\ea
Setting $x=1$ gives case 001 in Table \ref{tab:4pt-blocks}.

\subsection{Cases 010 and 100 from translation invariance}\label{app:010-100-transinv}

There are two further four-point blocks with two twisted representations which can be found from \eqref{eq:4pt-RNNR} by translation invariance. 
Write 
\be
  \nu(x,y,z) = \Pgs \circ V(f;x-z) \circ (\id_R \otimes V(g;y-z))  \big|_{R \otimes S \otimes Y} \ ; 
\ee  
this is a four-point block with $R,S,Y$ inserted at $x,y,z$, respectively. Omitting the precise morphism-dependence gives
\be \label{eq:nu-xyz-twotwist}
\nu(x,y,z) = \text{(morph.)} \circ \exp\!\Big(
  - \tfrac12 \ln(x{-}z) \cdot \Omega^{(11)}
  - \tfrac12 \ln(y{-}z)  \cdot \Omega^{(22)}
  +\ln\tfrac{\sqrt{x-z}-\sqrt{y-z}}{\sqrt{x-z}+\sqrt{y-z}} \cdot \Omega^{(12)} \Big) \ ,
\ee
where the exponential is an endomorphism of $R \otimes S \otimes Y$.
The $x$-dependence of the four-point block will be of the form $\nu(1,0,x)$ in case 010 and of the form $\nu(x,0,1)$ in case 100, respectively. To determine the morphism in front of the coordinate dependent part, we compute the $x$-expansion of the product of the two vertex operators explicitly to first non-trivial order.

\subsubsection*{Case 010}

The first two orders in the $x$-expansion are
\ba
\nu(1,0,x)
&= \text{(morph.)} \circ \exp\!\Big(
  - \tfrac12 \ln(1{-}x) \cdot \Omega^{(11)}
  - \tfrac12 \ln(x)  \cdot \Omega^{(22)} 
  \nonumber\\ & \hspace{10em} + (2N+1) \pi i \cdot \Omega^{(22)}
  +\ln\tfrac{\sqrt{1-x}-i \nu \sqrt{x}}{\sqrt{1-x}+ i \nu \sqrt{x}} \cdot \Omega^{(12)} \Big) 
  \nonumber\\ &
= \text{(morph.$'$)} \circ \exp\!\big(\! - \tfrac12 \ln(x)  \cdot \Omega^{(22)} \big) 
\circ \big( \id_{R \otimes S \otimes Y} - 2 i \nu \Omega^{(12)} x^{\frac12} + O(x) \big) \ .
\label{eq:calc-010-aux1}
\ea
In the first equality the ambiguity in choosing a branch of the logarithm and the square root is made explicit via the constants $N \in \Zb$ and $\nu \in \{ \pm 1\}$, respectively. In the second line, $N$ was absorbed into a redefinition of the constant morphism.

We want to use the expansion of the above functional form to conclude the full $x$-dependence of the four-point block in case 010 from its expansion to first non-trivial order. Pick $f: R \otimes Z \to X$ and $g : Y \otimes S \to Z$. We want to compute
\be \label{eq:two-twist-aux1}
\bL{\cbB XRYSZ{f;1}{\rule{0pt}{.68em}g;x}}
= \Pgs \circ V(f;1) \circ \big( \id_R \otimes V(g;x) \big) \ .
 \ee
To obtain the first non-trivial order, we will insert a sum over intermediate states for $\Ind(Z)$, namely
\be \label{eq:id-expansion-twist}
  \id_{\Ind(Z)} = \Pgs  + \sum_{m>0} \sum_{i\in\Ic} \frac1m \beta^i_{-m} \,\Pgs\, \alpha^i_m + \dots \quad ,
\ee  
where the sums run over positive elements in $\Zb + \frac12$ and the omission stands for terms with two or more $\alpha^i$ and $\beta^i$. Using this, we can evaluate
\ba
\eqref{eq:two-twist-aux1} &= 
B_0 + B_1 + \dots
\nonumber\\
&\overset{(*)}{=} 
f \circ (\id_R \otimes g) \circ \exp\!\big(- \ln(4) \cdot (\Omega^{(11)}+\Omega^{(33)}) \big) 
\nonumber\\
&\hspace{4em} 
 \circ \exp\!\big(-\tfrac12 \ln(x)\cdot \Omega^{(33)}\big) 
\circ (\id_{R\otimes Y \otimes S} - 2i x^{\frac12} \Omega^{(13)} + O(x))
~,
 \label{eq:RNRN-expansion}
\ea
where $B_i$ is the $i$'th order in the $x$-expansion and the equality (*) will explained in a moment. 
First note that comparing the small-$x$ expansion of $\nu(1,0,x)$ in \eqref{eq:calc-010-aux1} to \eqref{eq:RNRN-expansion} fixes the full expression for \eqref{eq:two-twist-aux1} to be the one given in Table \ref{tab:4pt-blocks} (that $\Omega^{(33)}$ and $\Omega^{(13)}$ appear instead of $\Omega^{(22)}$ and $\Omega^{(12)}$ is due to the reordering of tensor factors).

To obtain the $B_i$, insert \eqref{eq:id-expansion-twist} between $V(f;1)$ and $V(g;x)$. This gives
$B_0 = \Pgs \circ V(f;1) \circ (\id_R \otimes  \Pgs) \circ (\id_R \otimes V(g;x))$ and 
\ba
B_1 &=  2 \sum_{j\in\Ic}  \Pgs \circ V(f;1) \circ (\beta^j_{-\frac12})^{(2)} \circ (\id_R \otimes  \Pgs ) \circ (\alpha^j_{\frac12})^{(2)} \circ (\id_R \otimes V(g;x))
\nonumber\\
&\overset{(**)}{=} 
 - 2i\,x^{\frac12}  \sum_{j\in\Ic}  \Pgs \circ V(f;1) \circ (\beta^j)^{(1)} \circ (\id_R \otimes  \Pgs ) \circ (\id_R \otimes V(g;x)) \circ (\alpha^j_{0})^{(3)} 
\nonumber\\
&=
- 2i\,x^{\frac12} \cdot B_0 \circ \Omega^{(13)} \ .
\ea
To obtain (**), one uses 
case c) in part (iii) of Definition \ref{def:vertex-op} for $m=0$, together with 
\be
  T(\alpha^j)^{x,-}_{\frac12} = \alpha^j_{\frac12} + (\text{neg.\ modes})
  \quad , \quad
  T(\alpha^j)^{x,+}_{0} = \alpha^j_{0} + (\text{pos.\ modes}) \ ,
\ee  
as well as case b), which gives $\Pgs \circ V(f;1) \circ (\beta^j_{-\frac12})^{(2)} = - \Pgs \circ V(f;1) \circ (\beta^j)^{(1)}$. Finally, to get (*) in \eqref{eq:RNRN-expansion}, substitute the expressions in \eqref{eq:V(f)-on-ground-states} for the action of vertex operators on ground states.

\subsubsection*{Case 100}

The first two orders in the $x$-expansion are
\ba
\nu(x,0,1)
&= \text{(morph.)} \circ \exp\!\Big(
  - \tfrac12 \ln(1{-}x) \cdot \Omega^{(11)}
  +\ln\tfrac{1 - \nu \sqrt{1-x}}{1 + \nu \sqrt{1-x}} \cdot \Omega^{(12)} \Big) 
  \nonumber\\ &
= \text{(morph.)} \circ \exp\!\big( \nu  \ln\tfrac{x}{4} \cdot \Omega^{(12)} \big) 
\circ \big( \id_{R \otimes S \otimes Y} + \tfrac{x}2 \big(\Omega^{(11)} + \nu \Omega^{(12)}\big) + O(x^2) \big) \ .
\label{eq:calc-100-aux1}
\ea
In the first equality the ambiguity in the square root branch cut is captured by $\nu \in \{ \pm 1\}$ while the choice for the logarithm has been absorbed into the morphism.

Pick an even linear map $f : Y \otimes P \to X$ and an $\h$-intertwiner $g: R \otimes S \to P$. In the untwisted sector, the sum over intermediate states for $\id_{\Ind(P)}$ is of the same form as \eqref{eq:id-expansion-twist} except that the sum now runs over $m \in \Zb_{>0}$. The expansion of the four-point block is
\ba
& \bL{\cbB XYRSP{f;1}{\rule{0pt}{.68em}g;x}} 
~=~ \Pgs \circ V(f;1) \circ (\id_Y \otimes V(g;x)) =  B_0 +  B_1 + \dots
\nonumber \\
&\overset{(*)}{=} f \circ (\id_Y \otimes g) \circ \exp\!\big(\ln(x)\cdot \Omega^{(23)} - \ln(4) \cdot (\Omega^{(22)} + \Omega^{(23)}+ \Omega^{(33)}) \big) 
\nonumber \\
&\hspace{7.1em}
\circ \big( \id_{Y \otimes R \otimes S} + \tfrac12 x \cdot ( \Omega^{(22)} + \Omega^{(23)}) + O(x^2) \big)
\label{eq:RRNN-expansion}
\ea
Before looking at the details for equality (*) let us compare the above expansion to \eqref{eq:calc-100-aux1}. Firstly, in \eqref{eq:calc-100-aux1} we have to replace $\Omega^{(11)}$ by $\Omega^{(22)}$ and $\Omega^{(12)}$ by $\Omega^{(23)}$. Next, we see that $\nu=+1$ and that the morphism in \eqref{eq:calc-100-aux1} has to be chose as $f \circ (\id_Y \otimes g) \circ \exp\!\big(\!- \ln(4) \cdot (\Omega^{(22)} + \Omega^{(33)}) \big)$. This then produces the four-point block given in Table \ref{tab:4pt-blocks}.

Equality (*) is found by computing $B_0$ and $B_1$ as
\ba
  B_0 &~\,=\,~ \Pgs \circ V(f;1) \circ (\id_Y \otimes  \Pgs) \circ \big(\id_Y \otimes V(g;x)\big)
  \nonumber \\
  &\overset{\eqref{eq:V(f)-on-ground-states}}=
f \circ \exp\!\big(-\ln(4) \cdot \Omega^{(22)}\big) \circ
(\id_Y \otimes  g) \circ \exp\!\big((\ln(x)+\ln(4))\cdot \Omega^{(23)}\big) 
  \nonumber \\
&~\,=\,~ f \circ (\id_Y \otimes  g) \circ \exp\!\big( \ln(x) \cdot \Omega^{(23)} - \ln(4) \cdot (\Omega^{(22)} + \Omega^{(23)}+ \Omega^{(33)}) \big) \ .
\ea
and 
\ba
B_1 &=  \sum_{j\in\Ic}  \Pgs \circ V(f;1) \circ (\beta^j_{-1})^{(2)} \circ (\id_Y \otimes  \Pgs ) \circ (\alpha^j_{1})^{(2)} \circ (\id_Y \otimes V(g;x))
\nonumber\\
&\overset{(**)}= 
 \tfrac12 x \sum_{j\in\Ic}  \Pgs \circ V(f;1) \circ (\beta^j_{0})^{(2)} \circ (\id_Y \otimes  \Pgs ) \circ (\id_Y \otimes V(g;x)) \circ (\alpha^j_{0})^{(2)} 
\nonumber\\
&=  \tfrac12 x  \cdot B_0 \circ (\Omega^{(22)}+\Omega^{(23)}) \ .
\ea
For (**), firstly, one uses case c) in part (iii) of Definition \ref{def:vertex-op} together with 
\be
  T(\beta^j)^{1,+}_{-1} = \beta^j_{-1} - \tfrac12  \beta^j_{0} + (\text{pos.\ modes}) \ ;
\ee  
since $T(\beta^j)^{1,+}_{-1}$ vanishes on $\Pgs \circ V(f;1)$, we can replace $\beta^j_{-1}$ with $\tfrac12 \beta^j_{0}$. Secondly, case a) in part (iii) states that moving $\alpha^j_{1}$ past $V(g;x)$ gives $x \, \alpha^j \otimes \id$ plus a term that vanishes on ground states. 

\subsection{Contour integrals of currents} \label{sec:contour-int}

In this and the next two sections the language of currents and conformal blocks is used to derive differential equations for the four-point blocks of types 011 and 111 in Table \ref{tab:4pt-blocks}. This is an adaptation of the methods in \cite{Knizhnik:1984nr,Dixon:1986qv,Zamolodchikov:1987ae} to the present setting.

\medskip

Let us agree that $a(z)$, $b(z)$, etc., denotes an insertion of a current for $a,b,\dots \in \h$ at position $z$. 
By $\phi(x)$ we shall mean a state from a representation in $\Rep_{\flat,1}(\hat\h)$ and
by $\sigma(x)$ a state from a representation in $\Rep_{\flat,1}(\hat\h_\mathrm{tw})$. If we do not want to specify if the state lies in a twisted or untwisted representation, we write $\gamma(x)$. 

\medskip

By definition
\be
  (a_m \gamma)(x) = \oint_{|z-x|\ll 1} \hspace{-2em} (z-x)^m \, a(z)\gamma(x) \tfrac{dz}{2 \pi i} \ ,
  \label{eq:contour-1}
\ee
where $m \in \Zb$ if $\gamma$ lies in an untwisted representation and $m \in \Zb +\frac12$ for twisted representations. To avoid sign ambiguities we must decide on a convention for branch cuts of square roots. Define $s(z)$ to be the square root of $z$ with branch cut placed along the negative imaginary axis, i.e.\
\be
  s(r e^{i \theta}) = \sqrt{r} e^{i \theta/2} 
  \quad \text{for} \quad \theta \in [-\tfrac\pi2,\tfrac{3\pi}2) \ .
\ee
Analogously, we fix the convention that close to an insertion of a state from a twisted representation, the branch cut for currents is along the negative imaginary axis, shifted to the insertion point, see Figure \ref{fig:branch-cut-twisted}. In other words, close to $x$ the combination $s(z-x) \cdot a(z)\,\sigma(x)$ is single valued. In particular, in the twisted case the mode integral \eqref{eq:contour-1} stands for 
\be
  (a_{n+\frac12} \sigma)(x) = \oint_{|z-x|\ll 1}  \hspace{-2em} (z-x)^n s(z-x) \, a(z)\gamma(x) \tfrac{dz}{2 \pi i} 
  ~~ , \quad \text{where} \quad n \in \Zb \ ,
  \label{eq:contour-2}
\ee
so that the integrand is single valued close enough to $x$. 

\begin{figure}[bt]
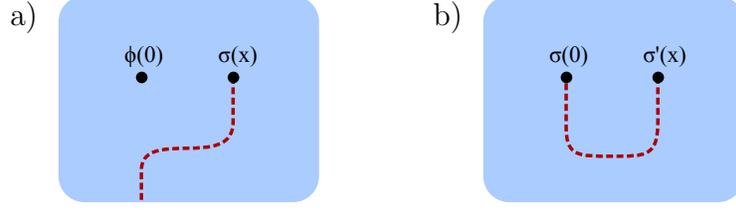

\begin{center}
\raisebox{2.4em}{a)}~
\ipic{2a}
\hspace{3em}
\raisebox{2.4em}{b)}~
\ipic{2b}
\end{center}
\caption{Branch cut conventions implicit in the mode exchange relations given in Definition \ref{def:vertex-op}: a) for currents in the presence of insertions $\sigma(x)\phi(0)$, where $\sigma$ is twisted and $\phi$ is untwisted,  and b) for currents in the presence of $\sigma(x)\sigma'(0)$, where both $\sigma$ and $\sigma'$ are twisted.}
\label{fig:branch-cut-3pt}
\end{figure}

To get some practise with this convention, let us recover the mode commutator postulated in cases c) and d) of Definition \ref{def:vertex-op}. For case c) consider the current $a(z)$ in the presence of insertions $\sigma(x)\phi(0)$. We place the branch cut of the twist field $\sigma(x)$ as shown in Figure \ref{fig:branch-cut-3pt}\,a) and take $z^n \sqrt{z-x}$ to have the same choice of branch cut (and to be positive for $z \gg 0$). Then by construction, $z^n \sqrt{z-x} \, a(z) \sigma(x) \phi(0)$ is single valued. Depending on whether $z$ is close to 0 or $\infty$, we have the expansions
\ba
  |z|>|x| &~:~
  z^n \sqrt{z-x}
  = 
  z^n s(z) \big(1-\tfrac{x}{z}\big)^{\frac12}
  =
  \sum_{k=0}^\infty {{1/2} \choose k} (-x)^{k} z^{n-k} s(z) \ ,
  \nonumber \\
  |z|<|x| &~:~ 
  z^n \sqrt{z-x}
  = 
  z^n i \sqrt{x-z} 
  = 
  i z^n \sqrt{x} \big(1-\tfrac{z}{x}\big)^{\frac12}
  =
  i x^{\frac12} \sum_{k=0}^\infty {{1/2} \choose k} (-x)^{-k} z^{n+k}  \ .
\ea
That $+i$ appears instead of $-i$ in the expansion for $|z|<|x|$ follows from our choice of branch cuts. In fact, the whole point of carefully defining the branch cuts is to keep track of these signs. 

Suppose now that $\sigma$ is a ground state. Then by definition, $a_m \sigma = 0$ for $m>0$ and so for all $n \in \Zb_{>0}$
\ba
0~ 
&=~ 
\oint_{|z-x|\ll 1} \hspace{-2em} z^n \sqrt{z-x} \cdot a(z) \sigma(x) \phi(0) \tfrac{dz}{2 \pi i}
~=~ 
\oint_{|z|>|x|} \hspace{-1.5em} (\cdots) \tfrac{dz}{2 \pi i}
-
\oint_{|z|<|x|} \hspace{-1.5em} (\cdots) \tfrac{dz}{2 \pi i}
\nonumber \\
&\overset{(*)}=~
 \sum_{k=0}^\infty {{1/2} \choose k} (-x)^{k} a_{n-k+\frac12} \big( \sigma(x)\phi(0) \big)
\nonumber \\
& \hspace{2em}
 -
 (-1)^{|a||\sigma|}
 i x^{\frac12} \sum_{k=0}^\infty {{1/2} \choose k} (-x)^{-k} \sigma(x) (a_{n+k}\phi)(0) \ .
\ea
The omissions both stand for the original integrand. In (*) the expansions in the relevant domains have been substituted and the definition of the mode integrals was inserted. The resulting identity is case c) of Definition \ref{def:vertex-op}. (The parity sign disappears in Definition \ref{def:vertex-op} because the vertex operator there has not been evaluated on a particular element of the representation $A$.)

For case d) we consider the current $a(z)$ in the presence of insertions $\sigma(x)\sigma'(0)$. The branch cuts for $a(z)$ and for the function $z^{n-1} \sqrt{z} \, \sqrt{z-x}$ are as in Figure \ref{fig:branch-cut-3pt}\,b). The expansions now are
\ba
  |z|>|x| ~:~~
  z^{n-1} \sqrt{z} \, \sqrt{z-x}
  ~&=~ 
  z^n \big(1-\tfrac{x}{z}\big)^{\frac12}
  ~=~
  \sum_{k=0}^\infty {{1/2} \choose k} (-x)^{k} z^{n-k} \ ,
  \nonumber \\
  |z|<|x| ~:~~ 
  z^{n-1} \sqrt{z} \, \sqrt{z-x}
  ~&=~ 
  i z^{n-1} s(z) \sqrt{x} \big(1-\tfrac{z}{x}\big)^{\frac12}
  \nonumber \\
  &=~
  i x^{\frac12} \sum_{k=0}^\infty {{1/2} \choose k} (-x)^{-k} z^{n-1+k} s(z) \ .
\ea
An analogous contour integral calculation to the one before now gives case d) of Definition \ref{def:vertex-op}.

\subsection{Cases 011, 101, 110 from differential equations and translation invariance}

Let $R,S,P\in\Rep\lf(\h)$ and $X,Y\in\svect$ and let 
$f: S \otimes P \to R$,
$g: X \otimes Y \to P$.
We will compute the block
\be 
F(x,y) := \bL{\cbB RSXYP{f;x}{\rule{0pt}{.68em}g;y}} \quad : ~~ S \otimes X \otimes Y \to R \ .
\label{eq:case011-aux1}
\ee
by finding a differential equation for the $y$-dependence and then using the asymptotics to fix the remaining $x$-dependence. Specialising to $x=1$ gives the four-point block in case 011 in Table \ref{tab:4pt-blocks}. By not specialising to $x=1$ from the start we can then use translation invariance to obtain the blocks in the cases 101 and 110.

\medskip

Fix $\phi^*$ in the dual $R^*$ of the space of ground states in $\Ind(R)$, $\phi$ a ground state in $S$, $\tau \in \Ind(X)$ and a ground state $\sigma \in Y$. Assume that $a_m \tau = 0$ for all $m > \frac12$ (so that $\tau$ is built by acting with modes of grade $-\frac12$ from a ground state).

Consider a conformal block of the form $B(z) := \langle \phi^*| a(z) \phi(x) \tau(y) \sigma(0) \rangle$ for some $a \in \h$ and the function $f(z) := z^{\frac12} \, (z-y)^{-\frac12}$. We choose the branch cuts for $B(z)$ and $f(z)$ as in Figure \ref{fig:branch-cut-3pt}\,b). For large $z$ and for $z$ close to $y$ we have the expansions
\be
  f(z) = 1 + O(z^{-1}) ~~ , \quad
  f(z) = y^{\frac12} (z-y)^{-\frac12} + \tfrac12 y^{-\frac12} (z-y)^{\frac12} + O((z-y)^{\frac32}) \ .
\ee
Via contour deformation one obtains
\ba
  &\langle \phi^*| a_0 \, \big( \phi(x) \tau(y) \sigma(0) \big) \rangle
  = \int_{|z|>|x|} \hspace{-1em} f(z) \, B(z) \tfrac{dz}{2 \pi i}
  \nonumber\\ &
  = x^{\frac12} (x-y)^{-\frac12} \langle \phi^*| (a_0 \phi)(x) \tau(y) \sigma(0) \rangle 
  \nonumber\\ & \qquad
  + (-1)^{|a||\phi|} \Big( y^{\frac12} \langle \phi^*| \phi(x) (a_{-\frac12}\tau)(y) \sigma(0) \rangle
  +  \tfrac12 y^{-\frac12} \langle \phi^*| \phi(x) (a_{\frac12}\tau)(y) \sigma(0) \rangle \Big) \ .
  \label{eq:case011-aux2}
\ea
Abbreviate $\xi = b_{-\frac12} a_{-\frac12} \sigma'$ for a ground state $\sigma'$ and where $a$ and $b$ have the same parity. Iterating the above identity twice, starting with $\tau \leadsto a_{-\frac12} \sigma'$ and $a \leadsto b$, gives (the coordinates are not written out)
\ba
  y \cdot \langle \phi^*| \phi \xi \sigma \rangle
  =& 
  -\tfrac14 (b,a) \langle \phi^*| \phi \sigma' \sigma \rangle
  + \langle \phi^*| b_0 a_0 \, ( \phi \sigma' \sigma )\rangle
\nonumber \\ &
  - \big(1-\tfrac yx\big)^{-\frac12} \Big( \langle \phi^*| b_0 \, ( (a_0 \phi) \sigma' \sigma )\rangle
  + (-1)^{|a||b|}  \langle \phi^*| a_0 \, ( (b_0 \phi) \sigma' \sigma )\rangle \Big)
\nonumber \\ &
  + \big(1-\tfrac yx\big)^{-1}  \langle \phi^*|  (b_0 a_0 \phi) \sigma' \sigma \rangle \ .
  \label{eq:case011-aux3}
\ea
To obtain the Knizhnik-Zamolodchikov differential equation \cite{Knizhnik:1984nr} we set $\tau = L_{-1} \sigma'$. By \eqref{eq:Virasoro-modes-twisted} the Virasoro mode $L_{-1}$ acts on a twisted representation as
\be \label{eq:L-1_twisted}
  L_{-1} =
 \tfrac12 \sum_{i \in \Ic} \beta_{-\frac12}^i \alpha_{-\frac12}^i + 
    \sum_{m \in \Zb_{\ge 0} + \frac32} \sum_{i\in\Ic} \beta_{-m}^i \alpha_{-1+m}^i  \ ,
\ee 
so that in particular, $\tau = \frac12 \sum_{i \in \Ic} \beta_{-\frac12}^i \alpha_{-\frac12}^i \sigma'$. Combining this with \eqref{eq:case011-aux3} leads to the following first oder differential equation for $F(x,y)$:
\ba
 y \tfrac{\partial}{\partial y} F(x,y) =& 
 \big( \tfrac12 \Omega^{(11)} - \tfrac{\sdim(\h)}8 \big) \circ F(x,y)
  -  \big(1-\tfrac yx\big)^{-\frac12} \sum_{i\in\Ic} \beta^i \circ F(x,y) \circ (\alpha^i)^{(1)}
\nonumber \\ &
  + \tfrac12 \, \big(1-\tfrac yx\big)^{-1} F(x,y) \circ \Omega^{(11)}
  \label{eq:case011-aux4}
\ea
The logarithmic derivatives are easily integrated:
\be
  \big(1-\tfrac yx\big)^{-\frac12}  = y \tfrac{\partial}{\partial y} \ln\tfrac{1-\sqrt{1-y/x}}{1+\sqrt{1-y/x}} \quad , \qquad
  \big(1-\tfrac yx\big)^{-1} = y \tfrac{\partial}{\partial y} \ln\tfrac{y}{x-y} \ .
\ee
It remains to find the solution with the correct asymptotics for $y \to 0$ to match the product of vertex operators in \eqref{eq:case011-aux1}. From \eqref{eq:V(f)-on-ground-states} we read off
\ba
   F(x,y) 
   \sim f & \circ \exp\!\Big( \big(\ln(x)+\ln(4)\big)\cdot \Omega^{(12)} + \tfrac12\ln(y)\,\big(-\tfrac{\sdim \h}4 + \Omega^{(22)}\big)\Big) 
\nonumber \\ &
   \circ \big\{ \id_S \otimes \big( g \circ (1 \otimes \id_{X \otimes Y})\big)\big\} \ .
  \label{eq:case011-aux5}
\ea
This motivates an ansatz of the form $F(x,y) =  f \circ E(x,y) \circ \big\{ \id_S \otimes \big( g \circ (1 \otimes \id_{X \otimes Y})\big)\big\}$. Substituting the ansatz into the differential equation \eqref{eq:case011-aux4} and using that $f : S \otimes P \to R$ is an $\h$-intertwiner we get the solution
\ba
  E(x,y) = 
  \exp\!\Big( 
   & \ln(y) \cdot \big( \tfrac12 \Omega^{(11)}+ \Omega^{(12)}+ \tfrac12 \Omega^{(22)} - \tfrac{\sdim(\h)}8 \big) 
\nonumber \\ &
  - \ln\tfrac{1-\sqrt{1-y/x}}{1+\sqrt{1-y/x}} \cdot \big(\Omega^{(11)}+ \Omega^{(12)}\big)
  + \tfrac12 \, \ln\tfrac{y}{x-y} \cdot \Omega^{(11)} + u(x) \Big) \ ,
\ea
where $u(x)$ is a function which will be determined now. The asymptotic behaviour of $E(x,y)$ for small $y$ is
\ba
  E(x,y) \sim 
   \exp\!\Big( &
    \ln(y) \cdot \big( \tfrac12 \Omega^{(22)} - \tfrac{\sdim(\h)}8 \big) 
\nonumber \\ &
    +  \ln(x) \cdot \big(\tfrac12 \Omega^{(11)} + \Omega^{(12)} \big)
   + \ln(4) \cdot  \big(\Omega^{(11)}+ \Omega^{(12)}\big)
  + u(x) \Big) \ ,
 \ea
and comparing to \eqref{eq:case011-aux5} we read off $u(x) = - \big( \tfrac12 \ln(x) + \ln(4)\big) \cdot \Omega^{(11)}$.  Altogether, for $x>y>0$,
\ba
  &F(x,y)
  \equiv \Pgs \circ V(f;x) \circ (\id_S \otimes V(g;y) \big)|_{S \otimes X \otimes Y}
\nonumber \\ &
  = y^{- \frac{\sdim(\h)}8 } \cdot 
  f \circ 
  \exp\!\Big( 
    \ln(y) \cdot \big( \tfrac12 \Omega^{(11)}+ \Omega^{(12)}+ \tfrac12 \Omega^{(22)}  \big) 
  - \ln\tfrac{1-\sqrt{1-y/x}}{1+\sqrt{1-y/x}} \cdot \big(\Omega^{(11)}+ \Omega^{(12)}\big)
\nonumber \\ & \hspace{7em}
  + \ln\tfrac{y}{x(x-y)} \cdot \tfrac12 \Omega^{(11)} - \ln(4) \cdot \Omega^{(11)}
    \Big) 
  \circ \big\{ \id_S \otimes \big( g \circ (1 \otimes \id_{X \otimes Y})\big)\big\} \ .
  \label{eq:case-011-with-xy}
\ea
Specialising $(x,y)$ to $(1,x)$ gives the formula in case 011 of Table \ref{tab:4pt-blocks}. 

\medskip

In order to use translation invariance to find the four-point blocks in cases 101 and 110 we need to bring \eqref{eq:case-011-with-xy} into a form where none of the $\Omega^{(ij)}$ act on the ``internal'' representation $P$. This is achieved in the followings slightly more complicated looking reformulation
\ba
  &F(x,y)  =
  \big\{ \widetilde\ev_S \otimes \id_R \big\}
  \circ 
  \exp\!\Big( 
  \tfrac12 \big( - \tfrac{\sdim(\h)}4 + \Omega^{(33)} \big) \ln(y) 
   -\ln \tfrac{1-\sqrt{1-y/x}}{1+\sqrt{1-y/x}} \cdot \Omega^{(13)} 
\nonumber \\
& \hspace{17em}
  + \ln\tfrac{y}{x(x-y)} \cdot \tfrac12 \Omega^{(11)} 
   - \ln(4) \cdot \Omega^{(11)} \Big)
\nonumber \\
&  \qquad
  \circ
  \big\{ \id_{S \otimes S^*} \otimes \big( f \circ (\id_S \otimes g) \big) \big\}
  \circ
  \big\{ \id_S \otimes \widetilde\coev_S \otimes 1 \otimes \id_{X \otimes Y} \big\} \ ,
    \label{eq:case-011-with-xy-external}
\ea
where the exponential is now an endomorphism of $S \otimes S^* \otimes R$ and where we used the evaluation and coevaluation maps $\widetilde\ev_S : S \otimes S^* \to \Cb$ and
$\widetilde\coev_S : \Cb \to S^* \otimes S$ of $\svect$.

As in Section \ref{app:010-100-transinv} we write $\nu(x,y,z) = F(x-z,y-z)$. Then the functional form in cases 101 and 110 in Table \ref{tab:4pt-blocks} is given by $\nu(x,1,0)$ and $\nu(0,1,x)$, respectively.

\subsubsection*{Case 101}

The specialised four-point block reads
\ba
\nu(x,1,0)
  =
  \big\{ \widetilde\ev_S \otimes \id_R \big\}
  &\circ 
  \exp\!\Big(\! 
   -\ln \tfrac{ i \nu \, \sqrt{1/x-1}+ 1}{ i \nu \, \sqrt{1/x-1} - 1} \cdot \Omega^{(13)} 
  - \ln(x(1{-}x)) \cdot \tfrac12 \Omega^{(11)} 
   - \ln(4) \, \Omega^{(11)} \Big)
\nonumber \\
& 
  \circ
  \big\{ \id_{S \otimes S^*} \otimes \big( \text{morph.} \big) \big\}
  \circ
  \big\{ \id_S \otimes \widetilde\coev_S \otimes \id_{X \otimes Y} \big\} \ ,
    \label{eq:case-101-2ndorder}
\ea
The ambiguity in choosing the square root branch is captured by $\nu \in \{ \pm 1\}$ while the choice of branch for the logarithm has been absorbed into the morphism. The expansion of the exponential is
\be
  \exp\!\big(\cdots\big)
  = 
  \exp\!\big(- \big(\tfrac12 \ln(x)+\ln(4)\big) \cdot  \Omega^{(11)} \big) \circ
  \big( \id_{S \otimes S^* \otimes R} + 2 i \nu x^{\frac12} \cdot \Omega^{(13)} + O(x) \big) \ .
    \label{eq:case-101-aux1}
\ee

Pick a morphism $f : X \ast Z \equiv U(\h) \otimes X \otimes Z \to R$ and $g : S \otimes Y \to Z$. The expansion of the product of vertex operators is given by
\ba
&\bL{\cbB RXSYZ{f;1}{\rule{0pt}{.68em}g;x}} 
= \Pgs \circ V(f;1) \circ (\id_X \otimes V(g;x)) = B_0 + B_1 + \dots
\nonumber\\
&\overset{(*)}=
 (\id_R \otimes \ev_S) \circ
  \exp\!\big(- ( \tfrac12\ln(x)+\ln(4) ) \cdot \Omega^{(33)}\big) \circ \big( \id_{R \otimes S^* \otimes S} - 2 i x^{\frac12} \cdot \Omega^{(13)}   + O(x) \big)
\nonumber\\
&\qquad \circ \big\{  (f \circ (1 \otimes \id_X \otimes g))  \otimes \id_{S^* \otimes S} \big\} 
\nonumber\\
&\qquad 
\circ 
\big\{ \id_X \otimes \big[ (\tau_{Y,S} \otimes \id_{S^* \otimes S}) \circ (\id_Y \otimes \coev_S \otimes \id_S)
\circ \tau_{S,Y} \big] \big\} \ .
    \label{eq:case-101-aux2}
\ea
Relative to \eqref{eq:case-101-aux1} the ``$S$-sling'' has been moved from the left to the right. In any case, we see that in \eqref{eq:case-101-2ndorder} we must take $\nu=-1$ and fix the morphism part to be $(f \circ (1 \otimes \id_X \otimes g))$. This recovers case 101 in Table \ref{tab:4pt-blocks}.

To verify (*) in \eqref{eq:case-101-aux2} compute
\ba
B_0 &\overset{\eqref{eq:V(f)-on-ground-states}}= 
f \circ (1 \otimes \id_{X} \otimes g) \circ \exp\!\big(- ( \tfrac12\ln(x)+\ln(4) ) \cdot \Omega^{(22)}\big) \ ,
\nonumber \\
B_1 &\overset{\eqref{eq:id-expansion-twist}}=  2 \sum_{j\in\Ic}  \Pgs \circ V(f;1) \circ (\beta^j_{-\frac12})^{(2)} \circ (\id_R \otimes  \Pgs ) \circ (\alpha^j_{\frac12})^{(2)} \circ \big(\id_R \otimes V(g;x)\big)
\nonumber \\
& \overset{(**)}= - 2 i x^{\frac12}  \sum_{j\in\Ic}  \beta^j  \circ B_0 \circ (\alpha^j)^{(2)} \ .
\ea
For (**) first recall from Definition \ref{def:vertex-op} that $T(a)_{-\frac12}^{x,+} = a_{-\frac12} + (\text{pos.\ modes})$ and $T(a)_{0}^{x,-} = a_0 + (\text{neg.\ modes})$. Case d) of that definition then gives
$\Pgs \circ V(f;1) \circ (\beta^j_{-\frac12})^{(2)} = - i  \beta^j \circ \Pgs \circ V(f;1)$,
while case b) gives $\alpha^j_{\frac12} \circ V(g;x)|_{S \otimes Y} = x^{\frac12} \, V(g;x) \circ (\alpha^j)^{(1)}|_{S \otimes Y}$. Finally, to obtain (*), insert evaluation and coevaluation maps for $S$ to produce an ``$S$-sling'' which allows one to move the modes acting on $S$ to the left.

\subsubsection*{Case 110}

Here one finds
\ba
\nu(0,1,x) 
  &=
  \big\{ \widetilde\ev_S \otimes \id_R \big\}
  \circ 
  \exp\!\Big( 
  \tfrac12 \big( - \tfrac{\sdim(\h)}4 + \Omega^{(33)} \big) \ln(1{-}x) 
   -\ln \tfrac{1-\sqrt{x}}{1+\sqrt{x}} \cdot \Omega^{(13)} 
\nonumber \\
& \hspace{17em}
  + \ln\tfrac{1{-}x}{x} \cdot \tfrac12 \Omega^{(11)} 
   - \ln(4) \cdot \Omega^{(11)} \Big)
\nonumber \\
&  \qquad
  \circ
  \big\{ \id_{S \otimes S^*} \otimes \big( \text{morph.} \big) \big\}
  \circ
  \big\{ \id_S \otimes \widetilde\coev_S \otimes \id_{X \otimes Y} \big\} \ ,
    \label{eq:case-110-2ndorder}
\ea
There is no square root ambiguity to determine, but let us compare the first order expansions anyway as a consistency check. We have
\be
  \exp\!\big(\cdots\big)
  = 
  \exp\!\big(- \big(\tfrac12 \ln(x)+\ln(4)\big) \cdot  \Omega^{(11)} \big) \circ
  \big( \id_{S \otimes S^* \otimes R} + 2  x^{\frac12} \cdot \Omega^{(13)} + O(x) \big) 
\ .
\ee
Fix $f : X \ast Z \equiv U(\h) \otimes X \otimes Y \to R$ and $g : Y \otimes S \to Z$. Then
\ba
&\bL{\cbB RXYSZ{f;1}{\rule{0pt}{.68em}g;x}}
= \Pgs \circ V(f;1) \circ (\id_X \otimes V(g;x)) = B_0 + B_1 + \dots
\nonumber \\
&\overset{(*)}=
 (\id_R \otimes \ev_S) \circ
  \exp\!\big(- ( \tfrac12\ln(x)+\ln(4) ) \cdot \Omega^{(33)}\big) \circ \big( \id_{R \otimes S^* \otimes S} + 2 x^{\frac12} \cdot \Omega^{(13)}   + O(x) \big)
\nonumber\\
&\qquad \circ \big\{  (f \circ (1 \otimes \id_X \otimes g))  \otimes \id_{S^* \otimes S} \big\} 
\circ 
\big\{ \id_{X \otimes Y} \otimes \coev_S \otimes \id_S \big\} \ .
\ea
Thus in \eqref{eq:case-101-2ndorder} we must again take the morphism part to be $(f \circ (1 \otimes \id_X \otimes g))$. This recovers case 110 in Table \ref{tab:4pt-blocks}. For (*) note
\ba
B_0 &\overset{\eqref{eq:V(f)-on-ground-states}}= 
f \circ (1 \otimes \id_{X} \otimes g) \circ \exp\!\big(- ( \tfrac12\ln(x)+\ln(4) ) \cdot \Omega^{(33)}\big) \ ,
\nonumber \\
B_1 &\overset{\eqref{eq:id-expansion-twist}}=  2 \sum_{j\in\Ic}  \Pgs \circ V(f;1) \circ (\beta^j_{-\frac12})^{(2)} \circ (\id_R \otimes  \Pgs ) \circ (\alpha^j_{\frac12})^{(2)} \circ \big(\id_R \otimes V(g;x)\big)
\nonumber \\
& \overset{(**)}= 2 x^{\frac12}  \sum_{j\in\Ic}  \beta^j  \circ B_0 \circ (\alpha^j)^{(2)} \ .
\ea
For (**) we have $\Pgs \circ V(f;1) \circ (\beta^j_{-\frac12})^{(2)} = - i  \beta^j \circ \Pgs \circ V(f;1)$ as in case 101, as well as $\alpha^j_{\frac12} \circ V(g;x) = i x^{\frac12} \, V(g;x) \circ (\alpha^j)^{(2)}$ as in case 010.

\subsection{Differential equation for case 111}\label{app:4ptblock-111}

\begin{figure}[bt]
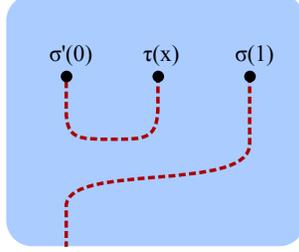

\begin{center}
\ipic{3}
\end{center}
\caption{Branch cut convention for currents in the presence of three twisted insertions $\sigma(1), \tau(x), \sigma'(0)$.}
\label{fig:branch-cut-4twisted}
\end{figure}

Pick super-vector spaces $W,X,Y,Z$ and a locally finite $\h$-module $P$, as well as maps $f : X \otimes P \to W$ and $g: Y \ast Z \equiv U(\h) \otimes Y \otimes Z \to W$. We need to compute the four-point block
\be 
F(x) := \bL{\cbB WXZYP{f;1}{\rule{0pt}{.68em}g;x}} \quad : ~~ X \otimes Y \otimes Z \to W \ .
\label{eq:case111-aux1}
\ee
To obtain the Knizhnik-Zamolodchikov equation in this case we need a second trick, due to \cite{Dixon:1986qv,Zamolodchikov:1987ae}. Namely, we will compute the four-point block with an additional insertion of $a(z)$ in terms of blocks without current insertions. Let
\be
  c(z) := \langle \sigma^*| \sigma(1) \, Q \, \big( a(z) \tau(x) \sigma'(0) \big) \rangle \ ,
\ee
where $Q$ is a polynomial in the zero modes, $\sigma^*,\sigma,\sigma'$ are ground states and $\tau \in Y$ has the property that $a_m \tau = 0$ for all $m > \frac12$. The choice of branch cuts for $c(z)$ is shown in Figure \ref{fig:branch-cut-4twisted}. Let 
\be \label{eq:u-3x-sqrt-def}
  u(z) = \sqrt{z(1-z)(z-x)}
\ee
have the same choice of branch cuts as $c(z)$ and be positive for $z \in (x,1)$. Then the function $\varphi(z) = u(z)c(z) / (z-x)$ is a holomorphic function on $\Cb \setminus \{x\}$. There is no singularity at $0$ since $\sigma'$ is a ground state and $u(z) \sim \text{(const)}\cdot z^{\frac12}$. For the same reason, there is no singularity at $1$. At $x$ the asymptotic behaviour is
\be
  \frac{u(z)}{z-x} = 
  \big( x(1-x) \big)^{\frac12} \cdot \frac{s(z-x)}{z-x} 
  + 
  \tfrac12 \, (1 - 2x) \, \big( x(1-x) \big)^{-\frac12} \cdot s(z-x)  + O\big((z-x)^{\frac32} \big)  \ .
\ee
On the other hand, since 
\be
  a(z)\tau(x) = \frac{s(z-x)}{(z-x)^2} \cdot a_{\frac12} \tau + \frac{s(z-x)}{z-x} \cdot a_{-\frac12} \tau + O((z-x)^{\frac12}) 
\ee   
we see that the asymptotic behaviour of $\varphi(z)$ close to $x$ is (the insertion points are omitted for brevity)
\ba
  \varphi(z) ~=~ &\Bigg( \frac{\big( x(1-x) \big)^{\frac12} }{ (z-x)^{2} } + \frac{\tfrac12 \, (1-2x) \, \big( x(1-x) \big)^{-\frac12}}{z-x} \Bigg)
  \langle \sigma^*| \sigma \, Q \, (a_{\frac12} \tau) \sigma'  \rangle
\nonumber \\
  &+~
  \frac{\big( x(1-x) \big)^{\frac12} }{ z-x } \cdot
  \langle \sigma^*| \sigma\,  Q \, (a_{-\frac12} \tau) \sigma' \rangle
  ~+~ \eta(z) \ ,
\ea
where $\eta(z)$ is now regular at $x$, i.e.\ it is a holomorphic function on $\Cb$. For $z\to \infty$, the leading behaviour of $u(z)/(z-x)$ is $z^{\frac12}$ and that of $c(z)$ is $z^{-\frac32}$ (since $\langle \sigma^*| a_{\frac12} \sigma Q \tau \sigma' \rangle$ is the most positive mode which contributes). Thus $\varphi(z)$ goes to zero as $z \to\infty$, and therefore also $\eta(z)$. It follows that $\eta(z)=0$. Altogether,
\ba
  c(z) ~=~ &\Bigg( \frac{\big( x(1-x) \big)^{\frac12} }{ u(z) (z-x) } + \frac{ \tfrac12\,(1-2 x) \, \big( x(1-x) \big)^{-\frac12}}{u(z)} \Bigg)
  \langle \sigma^*| \sigma \, Q \, (a_{\frac12} \tau) \sigma'  \rangle
\nonumber \\
  &+~
  \frac{\big( x(1-x) \big)^{\frac12} }{ u(z) } \cdot
  \langle \sigma^*| \sigma\,  Q \, (a_{-\frac12} \tau) \sigma' \rangle
  \ .
\label{eq:case111-aux2}
\ea
On the other hand,
\be
  \oint_{x<|z|<1} \hspace{-2em} c(z) \,\, \tfrac{dz}{2 \pi i} ~=~ \langle \sigma^*| \sigma(1) \, (Q a_0) \, \big( \tau(x) \sigma'(0) \big) \rangle \ .
\label{eq:case111-aux3}
\ee
To compute the same contour integral in \eqref{eq:case111-aux2} we will need
\ba
\oint_{x<|z|<1} \hspace{-1.5em} u(z)^{-1}  \,\, \tfrac{dz}{2 \pi i} 
&\overset{\text{(1)}}=  \tfrac{(- 2)(- i)}{2 \pi i}  \int_0^x\big( z (1-z) (x-z) \big)^{-\frac12} \, dz
\nonumber \\
&\overset{\text{(2)}}=
 \tfrac{2}{ \pi }  \int_0^1 \frac{dt}{\sqrt{ (1 -  t^2) (1-x t^2) }}  ~=~ \tfrac{2}{ \pi } K(x) \ .
\label{eq:case111-aux4}
\ea
In step 1, the factor of $-2$ arises from collapsing the contour to the interval $(0,1)$: the upper contribution gives a minus sign from the change of integration direction and the lower contribution has to pass through the branch cut, giving a relative minus sign which compensates the sign from the change in integration direction. The factor of $-i$ arises since by the choice of branch cuts for $u(z)$ in Figure \ref{fig:branch-cut-4twisted}, for $z \in (0,x)$ we have $u(z) = i \sqrt{(z (1-z) (x-z)}$, where the square root on the right hand side is real and positive. In step 2 the substitution $z = x \, t^2$ was made. $K(x)$ is the complete elliptic integral of the first kind. 
Since $\frac{\partial}{\partial x} u(z)^{-1} = \frac12 \, (z-x)^{-1} u(z)^{-1}$, we can also conclude that
\be
\oint_{x<|z|<1} \hspace{-1.5em} (z-x)^{-1} \, u(z)^{-1}  \,\, \tfrac{dz}{2 \pi i} = \tfrac{4}{ \pi } K'(x)  \ .
\label{eq:case111-aux5}
\ee
Putting these ingredients together provides us with the relation
\ba
\tfrac{ \pi }{2} \, \big( x(1-x) \big)^{-\frac14}
\langle \sigma^*| \sigma \, (Q a_0) \, \big( \tau \sigma' \big) \rangle 
 ~=~  & 
 \langle \sigma^*| \sigma \, Q \, (a_{\frac12} \tau) \sigma'  \rangle
 \cdot 2\tfrac{\partial}{\partial x}  \big( x^{\frac14} (1-x)^{\frac14} K(x)  \big) 
\nonumber\\
  & +~
  \langle \sigma^*| \sigma\,  Q \, (a_{-\frac12} \tau) \sigma' \rangle 
  \cdot
  x^{\frac14} (1-x)^{\frac14}  K(x) 
  \ .
\ea
Iterating this relation gives, for $\sigma''$ also a ground state,
\ba
\tfrac{ 1 }{2} \, 
\langle \sigma^*| \sigma \, (b_{-\frac12} a_{-\frac12} \sigma'') \sigma' \big) \rangle 
 ~=~  & 
 \frac{\pi^2/8}{x(1-x) K(x)^2} 
 \langle \sigma^*| \sigma \, (b_0 a_0) \, \sigma'' \sigma' \big) \rangle 
\nonumber\\
  & -~
 \frac{\frac12 (b,a) \cdot \tfrac{\partial}{\partial x}  \big( x^{\frac14} (1-x)^{\frac14} K(x)  \big) }{x^{\frac14} (1-x)^{\frac14} K(x)} 
 \langle \sigma^*| \sigma \sigma'' \sigma' \big) \rangle 
  \ .
\label{eq:case111-aux6}
\ea
From the above equation for conformal blocks we can read off the differential equation for \eqref{eq:case111-aux1}. Namely, for $E$ an endomorphism of $\Ind(P)$ abbreviate
\be
  H(E;x) := \Pgs \circ V(f;1) \circ (\id_X \otimes E)\circ (\id_X \otimes V(g;x)) \ .
\label{eq:case111-aux6.5}
\ee
and $G(x) = x^{\frac14} (1-x)^{\frac14} K(x)$, and recall the expansion \eqref{eq:L-1_twisted} for $L_{-1}$. Then \eqref{eq:case111-aux6} becomes
\be
  \tfrac{\partial}{\partial x} H(\id;x) =  \frac{\pi^2/8}{x(1-x) K(x)^2} \, H(\Omega^{(11)},x)
 -
 \frac{\frac12 \sdim(\h) \, G'(x)}{G(x)} \, H(\id;x)
  \ .
\ee
A solution to this first order differential equation is
\ba
  F(x) \equiv  H(\id;x) 
  = 
  \big(\tfrac2\pi G(x)\big)^{- \frac{\sdim(\h)}2} 
\cdot  f
&\circ 
\exp\! \big(\! - \tfrac\pi2 \, \tfrac{K(1{-}x)}{K(x)} \cdot  \Omega^{(22)}  \big)  
\nonumber \\
&\circ \big\{ \id_X \otimes \big( g \circ (1 \otimes \id_{Y \otimes Z})\big)\big\} \ .
\label{eq:case111-aux7}
\ea
The only non-trivial point in verifying that the above function is a solution is the identity
\be
  \frac{\partial}{\partial x} \frac{K(1-x)}{K(x)} = - \frac{\pi/4}{x(1-x)\,K(x)^2} \ ,
\ee
which in turn follows from Legendre's identity for the complete elliptic integrals of the first and second kind \cite[17.3.13]{Abra-Steg-book}. To establish that \eqref{eq:case111-aux7} is indeed the solution we are looking for, we need to verify the asymptotic behaviour. From \eqref{eq:V(f)-on-ground-states} we know
\be
  F(x) 
   \sim 
   x^{-\frac{\sdim(\h)}8} \cdot
   f \circ \exp\!\big(  (\tfrac12\ln(x) - \ln(4))\cdot \Omega^{(22)}\big) 
   \circ \big\{ \id_X \otimes \big( g \circ (1 \otimes \id_{Y \otimes Z})\big)\big\} \ .
  \label{eq:case111-aux8}
\ee
To find the asymptotic behaviour of \eqref{eq:case111-aux7} use \cite[17.3.9\,\&\,17.3.21]{Abra-Steg-book}
\ba
  K(x) ~&=~ \tfrac{\pi}2 \, {}_2F_1\big(\tfrac12,\tfrac12;1;x\big) ~=~ \tfrac{\pi}2 \big(1 + \tfrac14  x + O(x^2) \big) \ ,
  \nonumber \\
  q(x) ~&=~ \exp(- \pi K(1-x)/K(x)) ~=~ \tfrac{x}{16} + 8 \big( \tfrac{x}{16} )^2 + O(x^3) \ .
\ea
The expansion for $q(x)$ implies $-\tfrac{\pi}2  K(1-x)/K(x) = \frac12 \ln(x/16) + O(x)$, so that the asymptotic behaviour of \eqref{eq:case111-aux7} for $x \to 0$ reproduces \eqref{eq:case111-aux8}. This concludes the description how case 111 in Table \ref{tab:4pt-blocks} was found.

\begin{example} In the two examples treated in Section \ref{sec:examples-VO}, the four-point block $F(x)$ in \eqref{eq:case111-aux7} specialises as follows:

\smallskip\noindent
(i) Single free boson, $\h = \Cb^{1|0}$: Take $P$ to be the one-dimensional $\h$-representations $P = \Cb_{p}$ for some $p \in \Cb$ and take $W = X = Y = Z = \Cb^{1|0}$. Then $\Omega^{(11)}$ acts on $P$ as $p^2$ and we find
\be
  F(x) = \text{(const)} \cdot \big( x(1-x) \big)^{-\frac18} \, K(x)^{-\frac12} \, q(x)^{\frac12 p^2} \ .
\ee
This matches \cite[Eqn.\,(1.1)]{Zamolodchikov:1987ae}.

\smallskip\noindent
(ii) Single pair of symplectic fermions, $\h = \Cb^{0|2}$: Take $P = U(\h)$ and once more $W = X = Y = Z = \Cb^{1|0}$. Then $\Omega^{(11)}$ squares to zero on $P$ and the exponential sum terminates after two terms:
\be
  F(x) = 
  \text{(const${}_1$)} \cdot \big( x(1-x) \big)^{\frac14} K(x) 
  +
  \text{(const${}_2$)} \cdot \big( x(1-x) \big)^{\frac14} K(1-x) \ ,
\ee
in agreement with the two linearly independent solutions to the level-two null vector condition, see \cite{Kausch:1995py} and \cite[Eqn.\,(E.9)]{Gaberdiel:2006pp}.
\end{example}

\subsubsection*{Evaluating case 111 for descendent states}

Take $W,X,Y,Z$ and $P$ be as in the previous section, let $\sigma, \sigma', \sigma''$ be ground states and let $\tau^* \in \Indtw(W)'$ have the property that $\tau^* a_m = 0$ for all $m < -\frac12$. 
For $Q$ a polynomial in zero modes set
\be
  d(z) := \langle \tau^*| \sigma(1) a(z) \, Q \, \big( \sigma''(x) \sigma'(0) \big) \rangle \ ,
\ee
with choice of branch cuts as in Figure \ref{fig:branch-cut-4twisted}. Recall the function $u(z)$ from \eqref{eq:u-3x-sqrt-def}. Since $\sigma, \sigma', \sigma''$ are ground states, the function $\psi(z) := u(z) d(z)$ extends to a holomorphic function on all of $\Cb$. From the choice of branch cuts we see that for $z>1$ and real, $u(z) = - i \sqrt{z(z-1)(z-x)}$, where the square root is real and positive. For large $z$ we have the expansions
\ba
  u(z) &= - i \cdot z^{\frac32} + \tfrac{i}2 (x+1) \cdot z^{\frac12} + O(z^{-\frac12}) \ ,
\nonumber \\
  d(z) &= 
  z^{-\frac12} \cdot A_-  +  z^{-\frac32} \cdot A_+ + O(z^{-\frac52})  \ ,
\nonumber \\
  \psi(z) &=  - i z \cdot A_- +  \tfrac{i}2 (x+1) A_- - i A_+ + O(z^{-1}) \ ,
\label{eq:case111desc-aux2}
\ea
where $A_\pm = (-1)^{|\sigma||a|} \langle \tau^*| a_{\pm \frac12} \sigma(1) \, Q \, \big( \sigma''(x) \sigma'(0) \big) \rangle$. Since $\psi(z)$ is holomorphic on $\Cb$, the $O(z^{-1})$ contribution is actually zero, so that we have expressed $\psi$ in terms of $A_\pm$. 

Next we want to remove the $z$-dependence via integration along a contour enclosing $(0,x)$ as in \eqref{eq:case111-aux3}. For this we need the integral
\be
\oint_{x<|z|<1} \hspace{-1.5em} z \, u(z)^{-1}  \,\, \tfrac{dz}{2 \pi i} 
= \tfrac{2}{\pi} \big( K(x) - E(x) \big) \ ,
\label{eq:case111desc-aux1}
\ee
which follows by writing $z = 1 - (1-z)$. Up to the factor of pi over two, the contour integral of $(1-z) / u(z)$ produces the complete elliptic integral of the second kind, $E(x) = \int_0^1 (1-x t^2)^{\frac12} / (1-t^2)^{\frac12} dt$.

Dividing the identity we found for $\psi(z)$ in \eqref{eq:case111desc-aux2} by $u(z)$ and integrating as in \eqref{eq:case111-aux3} gives
\be
 \tfrac{i \pi}{2} \langle \tau^*| \sigma \, (a_0 Q) \, \big( \sigma'' \sigma') \big) \rangle
  = K(x) \cdot A_+ + \big(\tfrac{1-x}2  K(x) -  E(x) \big) \cdot A_-      \ .
\ee
From this we can formulate an identity satisfied by the composition of twisted vertex operators. To this end define a version of $H(E,x)$ as in \eqref{eq:case111-aux6.5} but without the projection to ground states,
\be
  J(Q;x) := V(f;1) \circ Q^{(2)} \circ (\id_X \otimes V(g;x)) ~:~ X \otimes Y \otimes Z \to \overline\Indtw(W) \ .
\label{eq:case111desc-J-def}
\ee
Then, if $\tau^* \in \Indtw(W)'$ satisfies $\tau^* a_m = 0$ for all $m < -\frac12$ as before, we obtain
\be
  \tfrac{i \pi}{2} \, \tau^* J(a_0 Q;x)
  = 
  K(x) \cdot \tau^* a_{\frac12}^{(1)} J(Q;x)
  + 
  \big(\tfrac{1-x}2  K(x) -  E(x) \big) \cdot \tau^* a_{-\frac12}^{(1)} J(Q;x)      \ .
\label{eq:case111desc-J-rec}
\ee

\newcommand\arxiv[2]      {\href{http://arXiv.org/abs/#1}{#2}}
\newcommand\doi[2]        {\href{http://dx.doi.org/#1}{#2}}
\newcommand\httpurl[2]    {\href{http://#1}{#2}}

\end{document}